\colorlet{HBlue}{teal!75!blue!95!white}
\colorlet{HOrange}{orange!80!black}
\newcommand{\quotes}[1]{``#1''} 
\newtheorem*{theorem*}{Theorem}
\newtheorem*{problem*}{Problem}
\newtheorem*{corollary*}{Corollary}
\theoremstyle{definition}
\newtheorem*{definition*}{Definition}
\theoremstyle{plain}
\newtheorem{thm}{Theorem}[section]
\newtheorem{lem}[thm]{Lemma}
\newtheorem{prop}[thm]{Proposition}
\theoremstyle{definition}
\theoremstyle{remark}
\newtheorem*{case*}{Case}
\newtheorem*{subcase*}{Subcase}
\newtheorem*{subsubcase*}{Subsubcase}
\numberwithin{equation}{section}
\newcommand{\al}{\alpha}
\providecommand{\keywords}[1]{\noindent \textit{Keywords:} #1}
\providecommand{\subject}[1]{\noindent \textit{Mathematics Subject Classification:} #1}
\title{Tiling the Sphere with Regular Polygons}
\author[1]{Hoi Ping Luk\thanks{hoi@connect.ust.hk. The research was supported in part by the funding of Academic Career in Pilsen 2024 under Plze\v{n}sk\'y kraj a Z\'apado\v{c}esk\'a univerzita v Plzni.}}
\author[2]{Roman Nedela\footnote{nedela@kma.zcu.cz.}}
\author[3]{Christopher Purcell\footnote{ccppurcell@gmail.com.}}
\affil[1,2,3]{Z\'{a}pado\v{c}esk\'a univerzita v Plzni} 
\begin{document}
\maketitle

\begin{abstract}
		We give a complete classification of edge-to-edge tilings of the
		sphere by regular polygons under a unified framework. Without
		assuming convexity of the tiles or polyhedrality of the underlying
		graph, our proof is independent of the Johnson-Zalgaller
		classification of solids with regular faces (1967), which took
		over 200 pages. We apply a blend of trigonometric, algebraic
		and combinatorial tools of independent interest. \\[2ex]
\keywords{Classification, Spherical tilings, Regular polygons, Division of spaces} \\

\subject{05B45, 52C20, 51M20, 52B10, 51M10}
\end{abstract}

\section{Introduction}

Highly regular geometric objects have been investigated since
antiquity, the Platonic and Archimedean solids being prominent
examples. The more general class of polyhedra having regular
faces was studied by Johnson \cite{J66}, Grunbaum \cite{gj}, Zalgaller
\cite{ZaR} and others in the second half of the 20th century. As well
as the five Platonic solids and thirteen Archimedean solids, this
class includes the prisms, antiprisms and $92$ additional polyhedra
called Johnson solids \cite{J66} and denoted by $J_1,\dots,J_{92}$. The
proof that the collection is complete was given by Zalgaller in 1967
taking 220 pages of the monograph \cite{ZaR,ZaE}.

In the present paper, we classify the spherical tilings\footnote{
We consider exclusively edge-to-edge tilings; a
complementary classification of the remaining tilings was given
recently in~\cite{aehj}.
}
whose tiles
are regular spherical polygons; such tilings can be viewed as
analogous to solids with regular faces. Crucially, our classification
does not depend on the aforementioned classification by Johnson and
Zalgaller. The following theorem is our main result; the tilings are
depicted in Figures~1--4.

\begin{theorem*} 
		The edge-to-edge spherical tilings by regular polygons are 
		\begin{itemize}
				\item the five Platonic tilings,
				\item the thirteen Archimedean tilings,
				\item the twenty-five tilings corresponding to circumscribable
						Johnson solids: 
						\begin{align*}
								J_1, J_2, J_3, J_4, J_5, J_6, J_{11}, J_{19}, J_{27},
								J_{34}, J_{37}, J_{62}, J_{63}, \\ 
								J_{72}, J_{73}, J_{74}, J_{75}, J_{76}, J_{77},
								J_{78}, J_{79}, J_{80}, J_{81}, J_{82}, J_{83},
						\end{align*} 
				\item the infinite families of prisms and antiprisms, 
				\item the infinite families of hosohedra and dihedra.
		\end{itemize} 
\end{theorem*}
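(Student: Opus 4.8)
The plan is to reduce the global classification to a local, vertex-by-vertex analysis governed by a single real parameter, the common edge length, and then to glue the local data together using Euler's formula and a sharp monotonicity principle. First I would establish that every tile shares one edge length $\ell \in (0,\pi)$: adjacent regular tiles meet along a full common edge, so they have equal side length, and connectivity of the tiling propagates this to all tiles. Next I would record the fundamental trigonometric identity for a regular spherical $n$-gon of edge length $\ell$. Splitting the polygon into $2n$ congruent right triangles from its centre, with central half-angle $\pi/n$ and half-edge $\ell/2$, Napier's rules give the interior angle $\alpha_n(\ell)$ via
\[
\sin\frac{\alpha_n(\ell)}{2} = \frac{\cos(\pi/n)}{\cos(\ell/2)}.
\]
The two features I would extract are that $\alpha_n(\ell)$ is strictly increasing in $\ell$ on its domain of existence (which is itself bounded by $\ell \le 2\pi/n$), and that $\alpha_n(\ell) > (n-2)\pi/n$ for $\ell > 0$, the Euclidean angle being the $\ell \to 0$ limit.

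The local condition at a vertex of degree $k$ incident to tiles of sizes $n_1, \ldots, n_k$ is the angle-sum equation $\sum_i \alpha_{n_i}(\ell) = 2\pi$. Two consequences organise the whole enumeration. First, feeding in $\alpha_{n_i} > (n_i - 2)\pi/n_i$ yields the Diophantine inequality $\sum_i 1/n_i > (k-2)/2$, which together with $\alpha_{n_i} > \pi/3$ forces $3 \le k \le 5$ and restricts each vertex type to a short list, infinite only through the parametrised families $(4,4,n)$ and $(3,3,3,n)$ (and the degenerate digonal types with some $n_i = 2$). Second, and this is the engine of the argument, strict monotonicity of each $\alpha_{n_i}$ makes $\sum_i \alpha_{n_i}(\ell)$ strictly increasing, so each admissible vertex type $T$ determines \emph{at most one} edge length $\ell_T$ solving its angle-sum equation. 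A tiling realising several vertex types $T_1, \ldots, T_m$ must therefore satisfy the coincidence $\ell_{T_1} = \cdots = \ell_{T_m}$.

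I would then split into cases by the number of vertex types. The vertex-transitive tilings (one type) impose no compatibility beyond existence of $\ell_T$; enumerating the admissible single types and checking global realizability with $V - E + F = 2$ recovers the five Platonic and thirteen Archimedean tilings, while the parametrised types $(4,4,n)$ and $(3,3,3,n)$ yield the prisms and antiprisms, and the digonal types the hosohedra and dihedra. The tilings with several vertex types are the delicate case: here the equalities $\ell_{T_i} = \ell_{T_j}$ among transcendental quantities are genuine coincidence conditions, and I would show that, after imposing the combinatorial constraints from Euler's formula and from how faces close up around each vertex, only finitely many combinations survive, namely the twenty-five listed tilings. Finally I would verify that each surviving combinatorial candidate is metrically realizable at its common $\ell$ and that no two candidates coincide.

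I expect the principal obstacle to be precisely this multi-vertex-type case: proving that the coincidences $\ell_{T_i} = \ell_{T_j}$ hold for exactly the twenty-five listed combinations and fail for all others requires controlling transcendental equations in $\ell$, for which I would clear $\cos(\ell/2)$ from the identity above and eliminate $\ell$ to obtain algebraic relations purely among the $\cos(\pi/n_i)$, then combine these with monotonicity and sign estimates to bound the admissible combinations. A secondary difficulty is keeping the argument independent of any convexity or polyhedrality assumption: I must show a priori that the angle formula forces each tile to be convex (it yields $\alpha_n \in (0,\pi)$) and treat maps whose $1$-skeleton is not $3$-connected, so that the degenerate hosohedral and dihedral families are captured by the same framework rather than excluded by fiat.
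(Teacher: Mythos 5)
Your proposal contains two genuine gaps, either of which is fatal as written. First, your closing claim that the angle formula forces every tile to be convex ($\alpha_n\in(0,\pi)$) is false, and the theorem you are proving witnesses this: $J_1$ contains a square that is a hemisphere ($\alpha_4=\pi$), while $J_2$, $J_4$ and $J_5$ each contain a concave tile (e.g.\ $\alpha_5=\tfrac{6}{5}\pi$ in $J_2$ and $\alpha_8=2\alpha_4>\pi$ in $J_4$). Your identity $\sin\tfrac{\alpha_n}{2}=\cos(\pi/n)/\cos(\ell/2)$ parametrises only the convex branch: for a fixed right-hand side below $1$ there are two admissible angles, $\alpha$ and $2\pi-\alpha$, so both the strict monotonicity of $\ell\mapsto\alpha_n(\ell)$ and your key uniqueness claim that each vertex type determines at most one edge length $\ell_T$ collapse once concave tiles are admitted --- and they must be admitted, since the paper assumes neither convexity nor polyhedrality. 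The paper confronts exactly this ambiguity: its same-edge-length relation \eqref{Eq-alm-aln-linear} is invariant under $\alpha\mapsto 2\pi-\alpha$, concave solutions such as $\alpha_4=2\pi-\cos^{-1}\tfrac{1}{2}(\sqrt{5}-3)$ arise and must be analysed in Proposition~\ref{Prop-al3=pi/2-2pi/5}, and Lemma~\ref{Lem-cong} is needed to show a tiling contains at most one tile that is not strictly convex. Without replacements for these steps your engine misenumerates even the degree-$3$ vertex cases.

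Second, your single-type case commits precisely the ``enduring error'' of Gr\"unbaum that the paper warns about: having one vertex type does not imply vertex-transitivity, and ``checking global realizability with $V-E+F=2$'' cannot decide how the tiles assemble. Seven of the twenty-five Johnson tilings ($J_{27}$, $J_{34}$, $J_{37}$, $J_{72},\dots,J_{75}$) have all vertices of a single type; worse, $aC$ and $J_{27}$ (likewise $aD$ and $J_{34}$, $eC$ and $J_{37}$, $eD$ and $J_{72},\dots,J_{75}$) share identical face vectors and vertex-type data (see Tables~\ref{tab:eD} and~\ref{tab:OIaCeCaD}), so no counting argument of the kind you invoke can distinguish them. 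This is why the paper proves Theorem~\ref{thm:weakly} by genuine tile-by-tile assembly: propagating angle arrangements around vertices, rotating hemispheres along great circles (to pass between $aD$ and $J_{34}$), and rotating or diminishing pentagonal cupolas (to generate $J_{72},\dots,J_{83}$ from $eD$), supported by the subdivision operations of Lemma~\ref{lem:subdiv}. Your plan has no mechanism for any of this, and its one-type case would wrongly output only the Platonic, Archimedean, prism and antiprism tilings. Your same-edge-length elimination idea is, by contrast, genuinely parallel to the paper's use of \eqref{Eq-alm-aln-linear} with Gr\"obner bases, so the multi-type algebra could in principle be carried through --- but only after the two gaps above are repaired.
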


\newif\ifdraft
\draftfalse
\ifdraft
FIXME
\else

\begin{figure}[h!]
\centering
\begin{tikzpicture}
\tikzmath{
\s=0.75;
\h=1.75;
\XS=2.15;
}
\node [inner sep=0] (image) at (0,0) 
            {\includegraphics[height=\h cm]{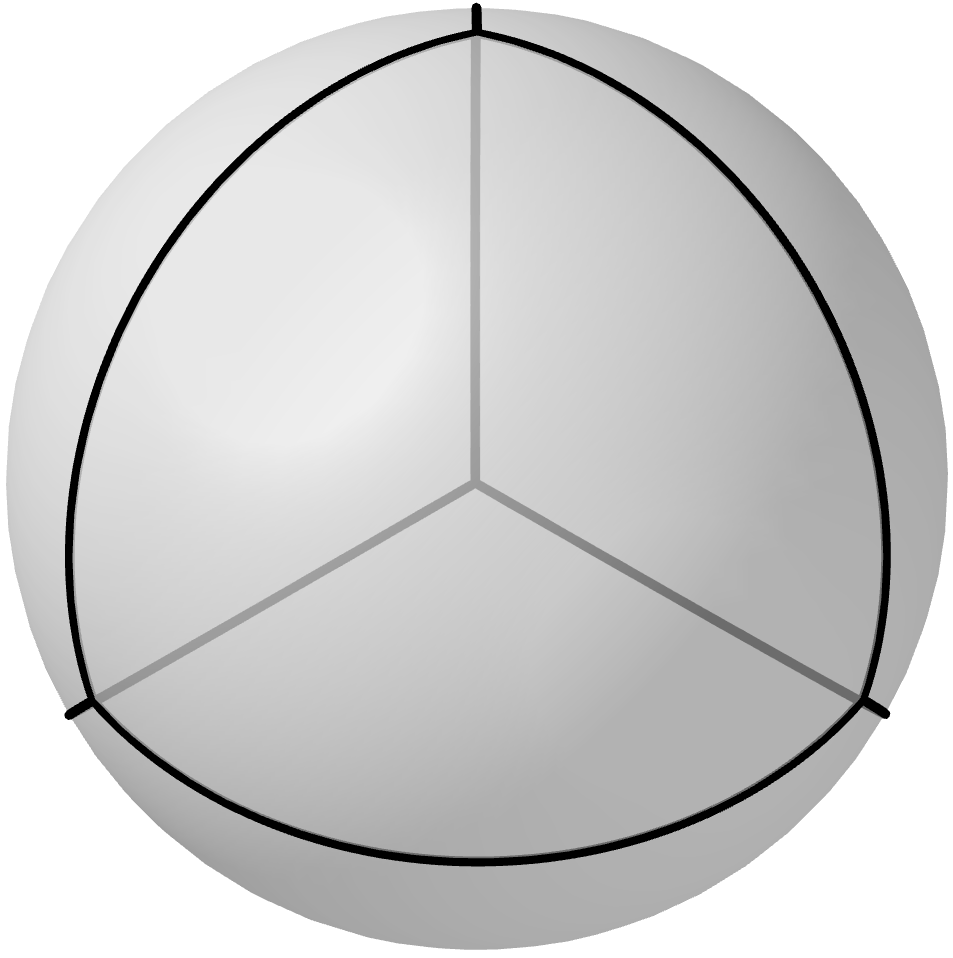}};

\node [inner sep=0] (image) at (\XS,0) 
            {\includegraphics[height=\h cm]{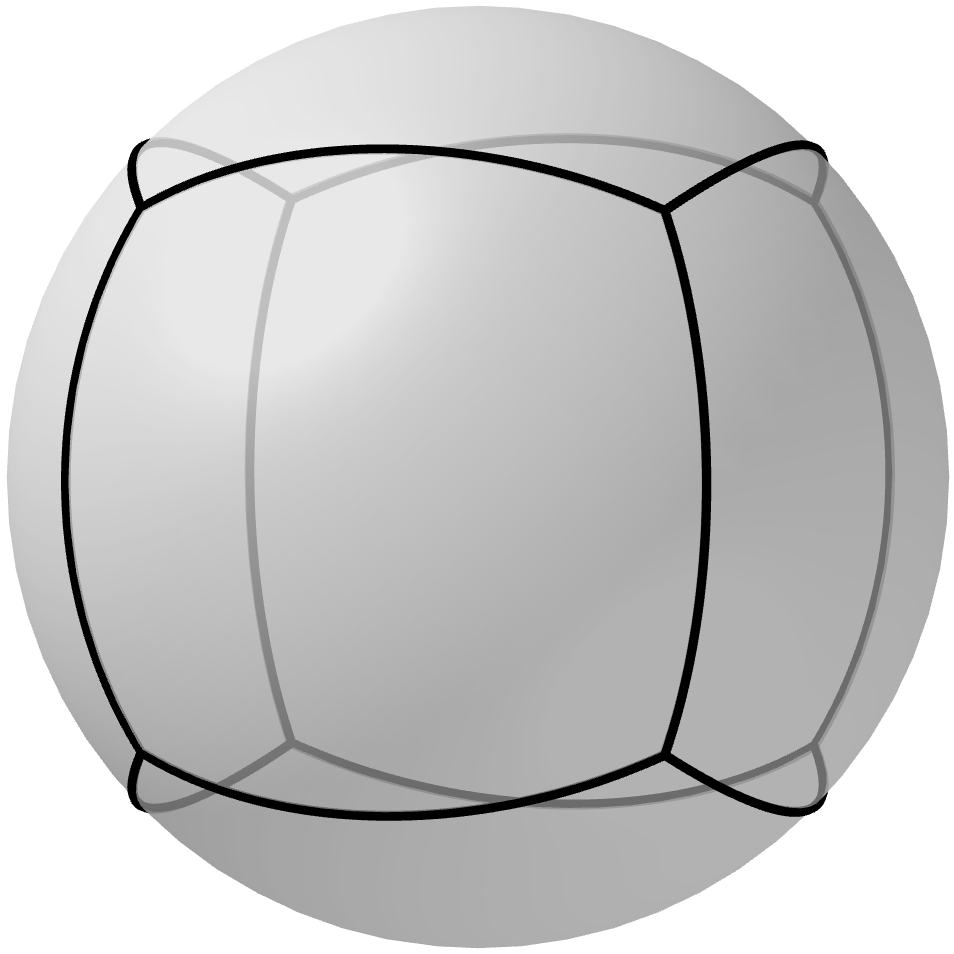}};

\node [inner sep=0] (image) at (2*\XS,0) 
            {\includegraphics[height=\h cm]{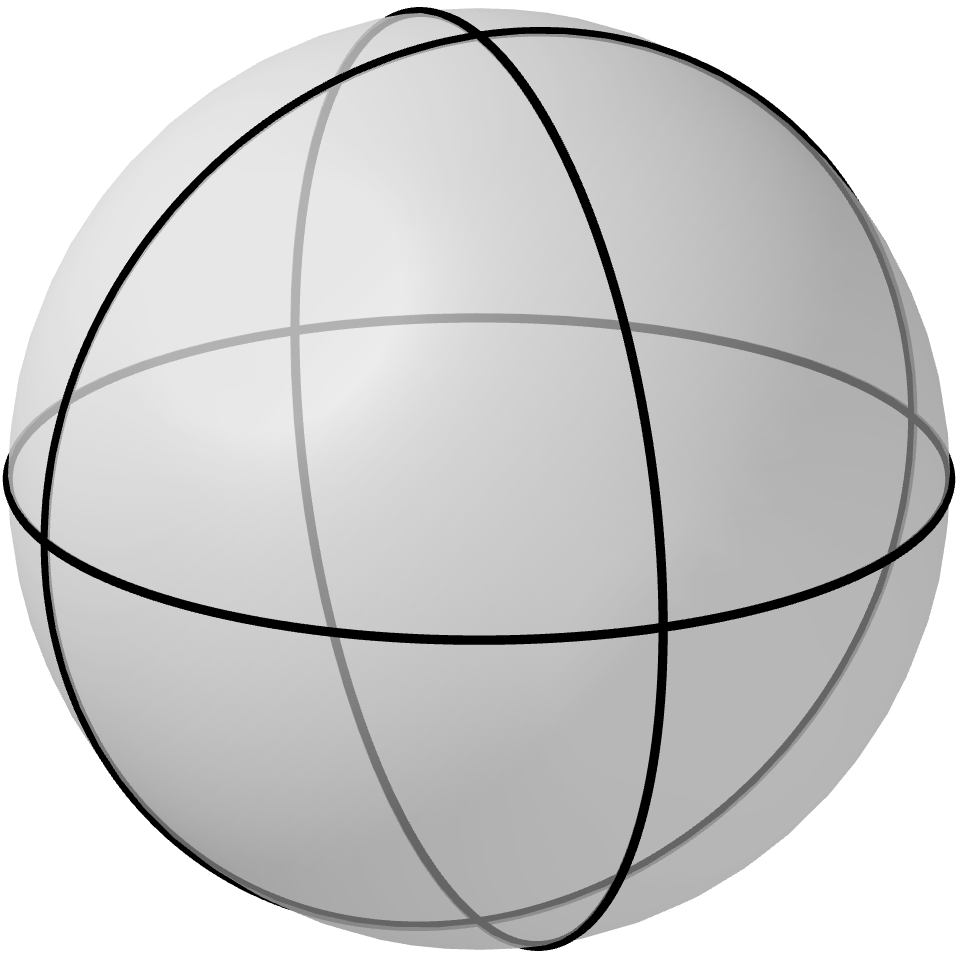}};

\node [inner sep=0] (image) at (3*\XS,0) 
            {\includegraphics[height=\h cm]{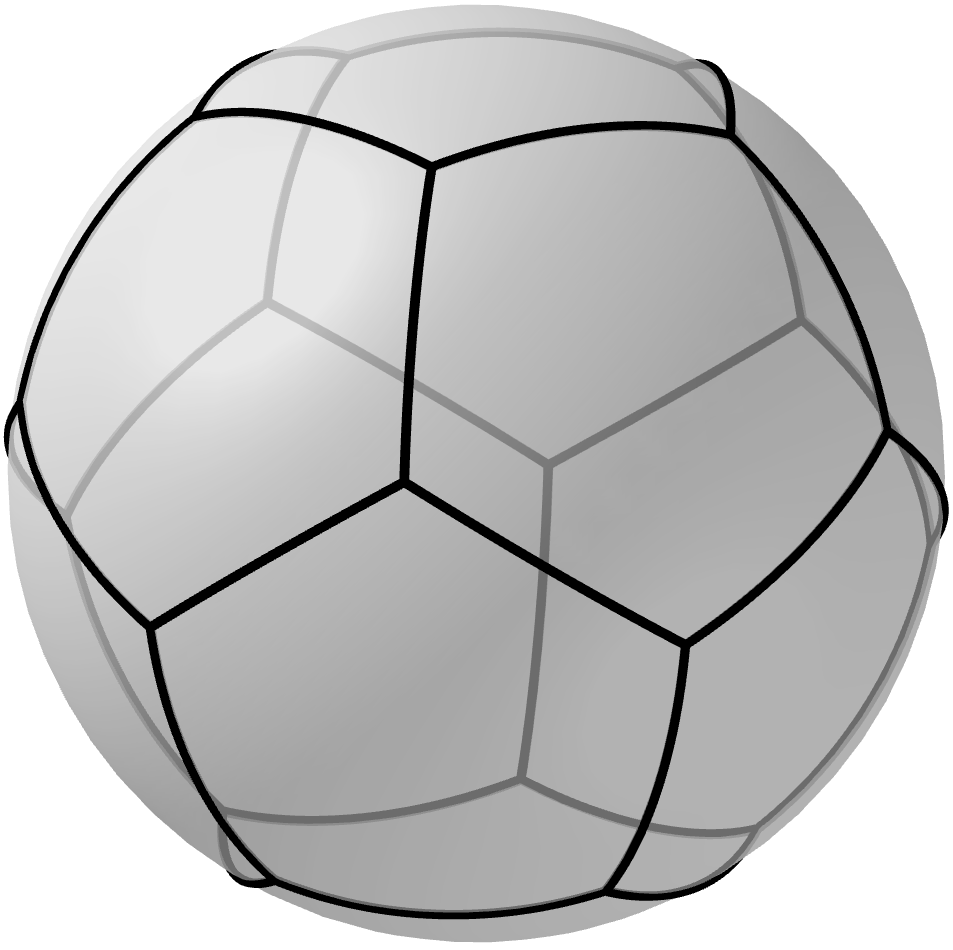}};

\node [inner sep=0] (image) at (4*\XS,0) 
            {\includegraphics[height=\h cm]{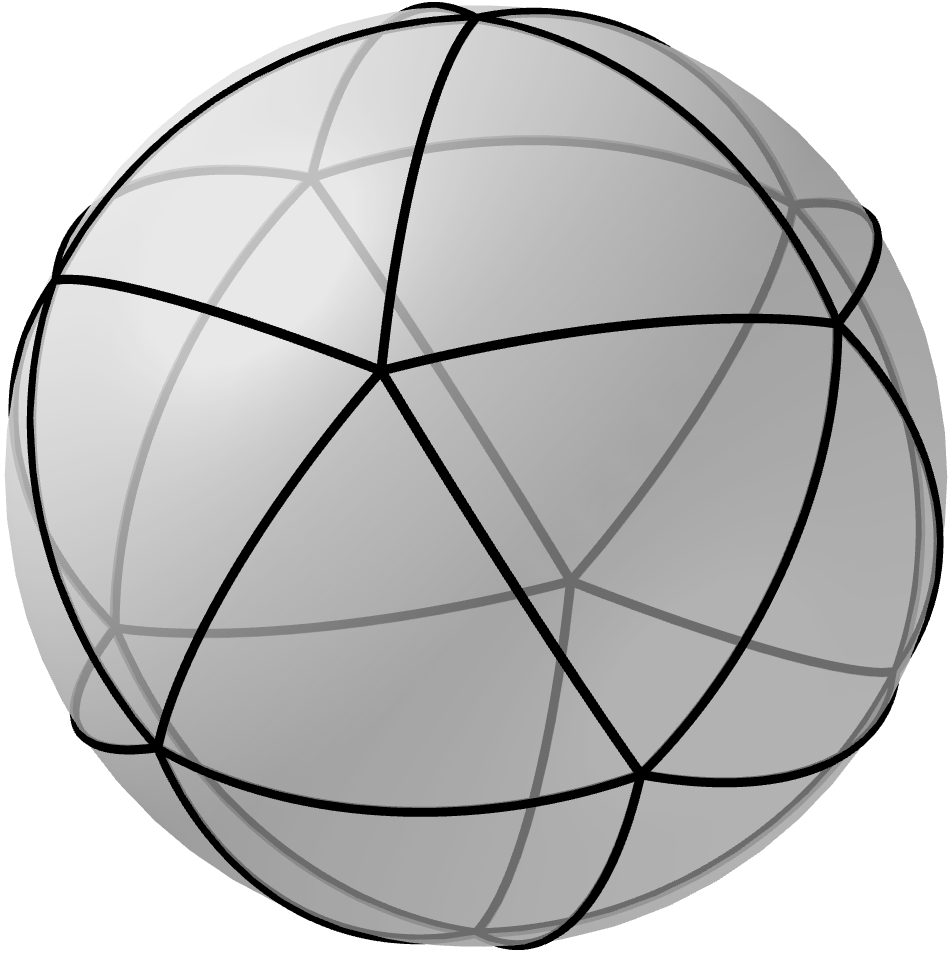}};
\end{tikzpicture}
\caption{Platonic}
\end{figure}


\begin{figure}[h!]
\centering
\begin{tikzpicture}
\tikzmath{
\s=0.75;
\h=1.75;
\XS=2.15;
\YS=2.15;
}
\begin{scope}[] 
\node [inner sep=0] (image) at (0,0) 
            {\includegraphics[height=\h cm]{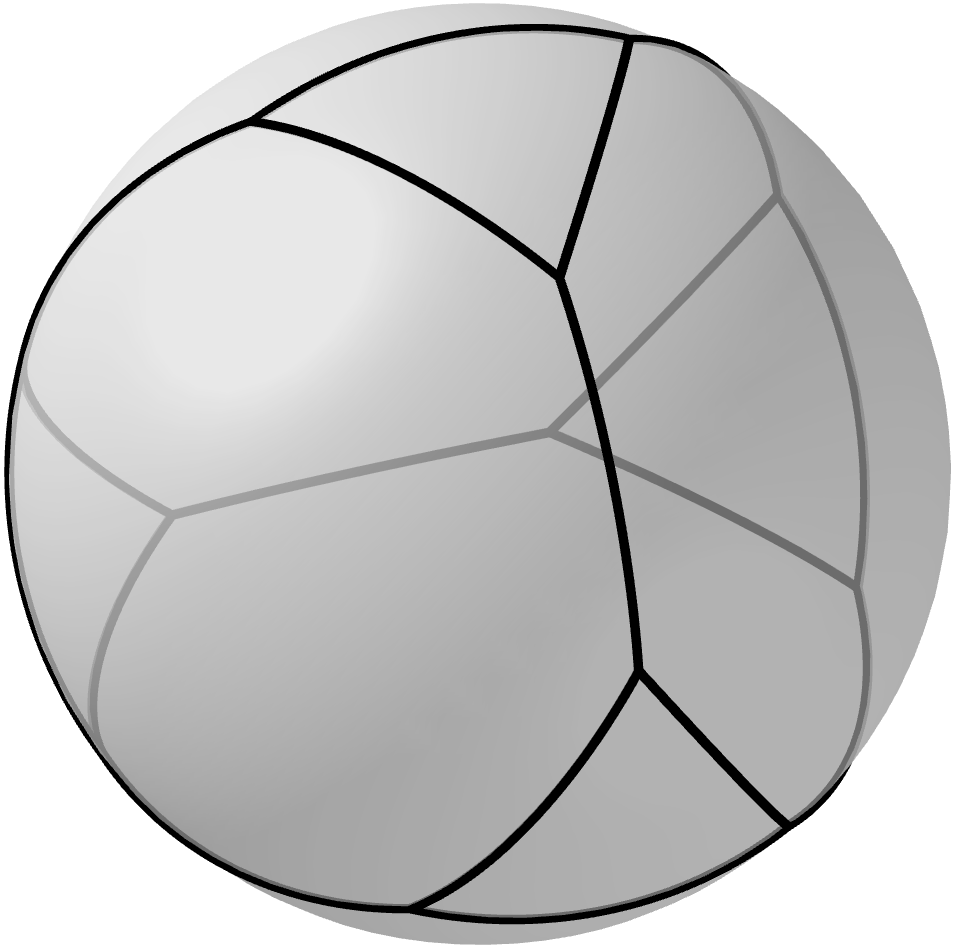}};
\node at (0,-2*\s) {\small };

\node [inner sep=0] (image) at (\XS,0) 
            {\includegraphics[height=\h cm]{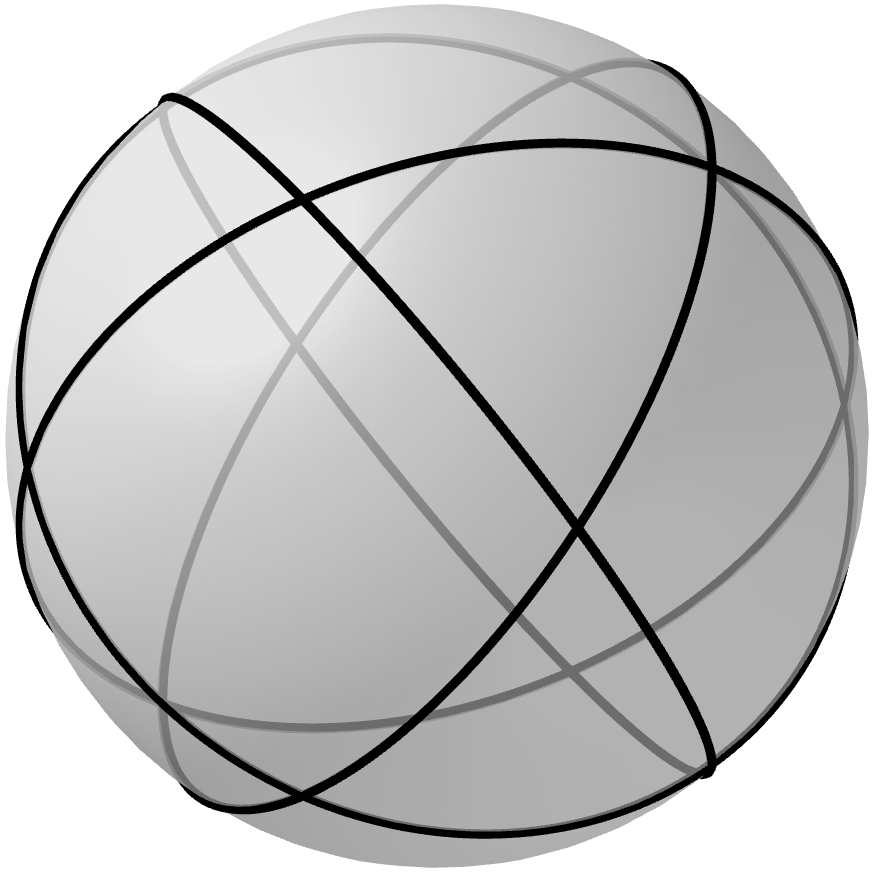}};
\node at (\XS,-2*\s) {\small };

\node [inner sep=0] (image) at (2*\XS,0) 
            {\includegraphics[height=\h cm]{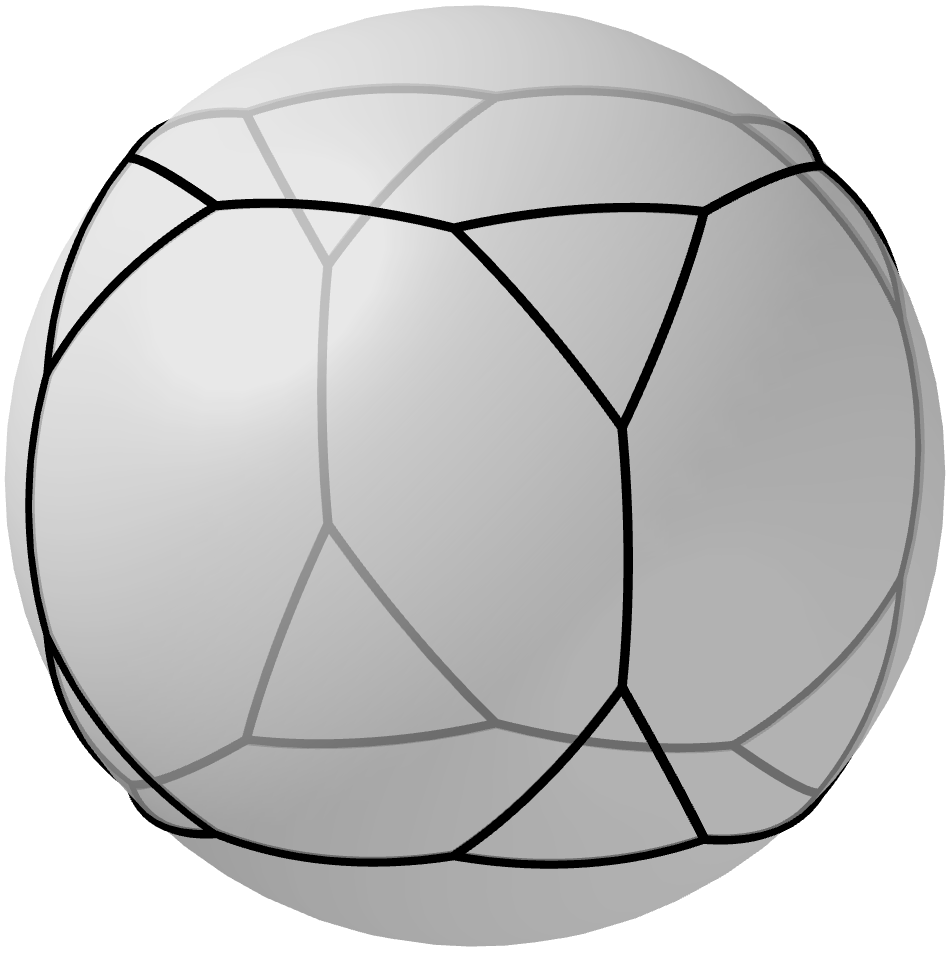}};
\node at (2*\XS,-2*\s) {\small };

\node [inner sep=0] (image) at (3*\XS,0) 
            {\includegraphics[height=\h cm]{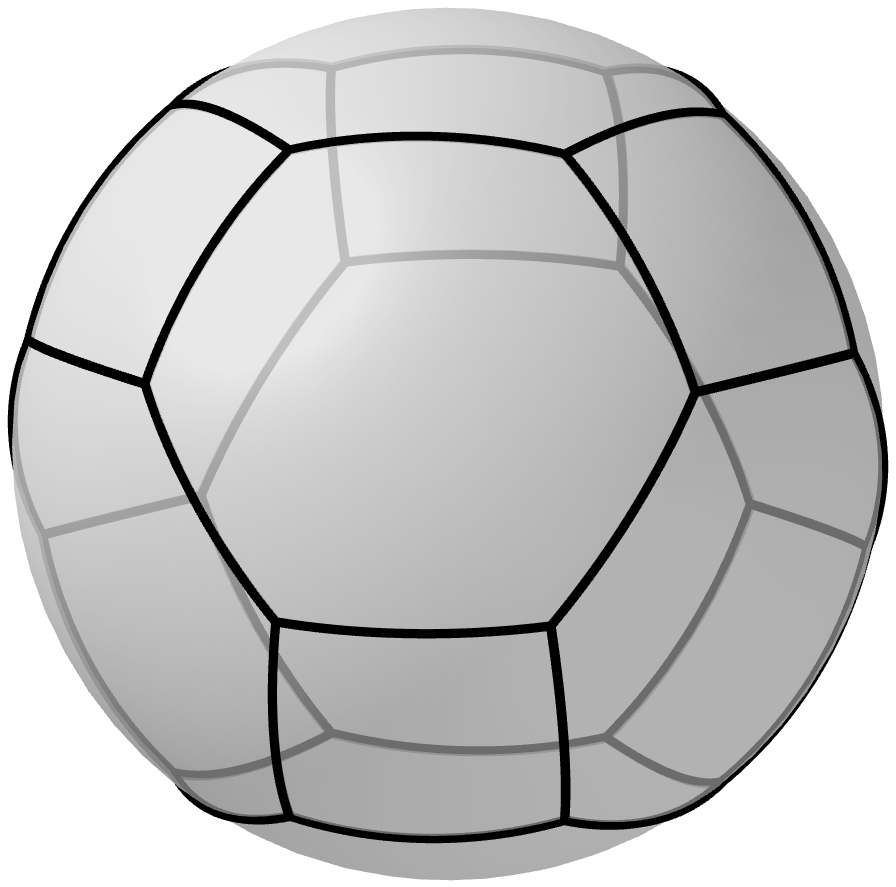}};
\node at (3*\XS,-2*\s) {\small };

\node [inner sep=0] (image) at (4*\XS,0) 
            {\includegraphics[height=\h cm]{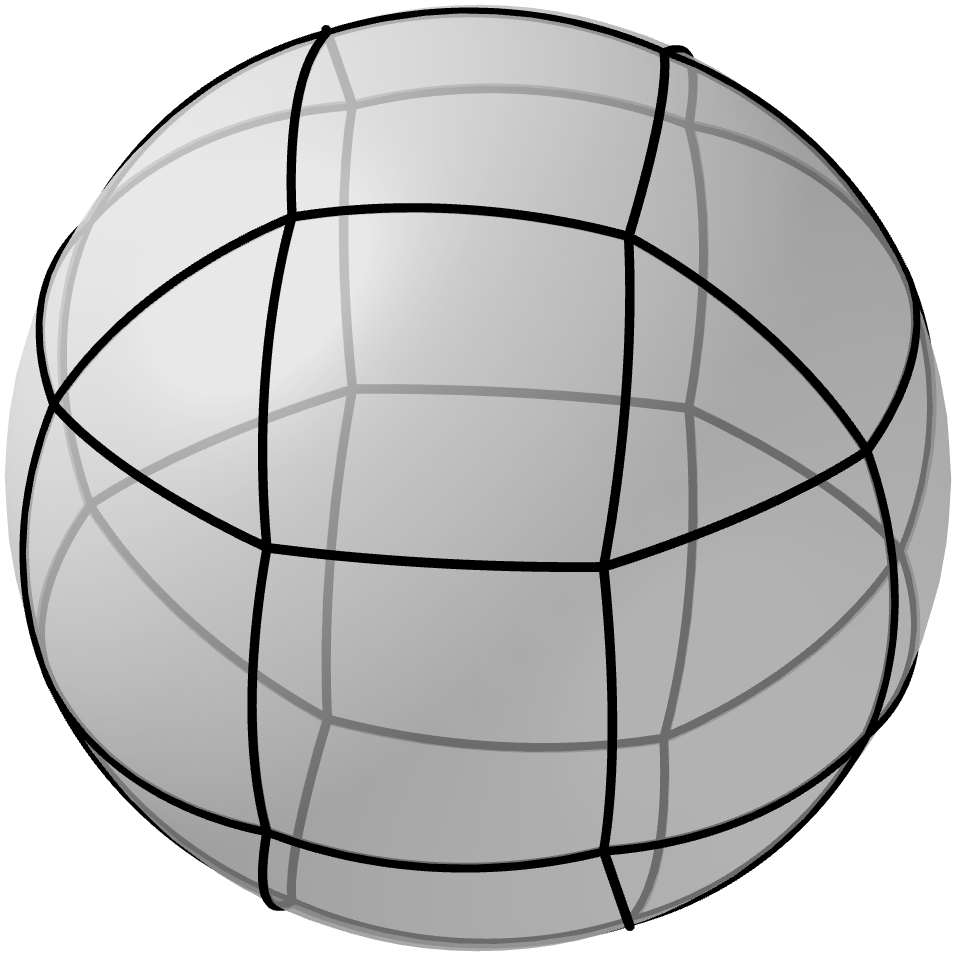}};
\node at (4*\XS,-2*\s) {\small };
\end{scope} 

\begin{scope}[yshift=-\YS cm] 
\node [inner sep=0] (image) at (0,0) 
            {\includegraphics[height=\h cm]{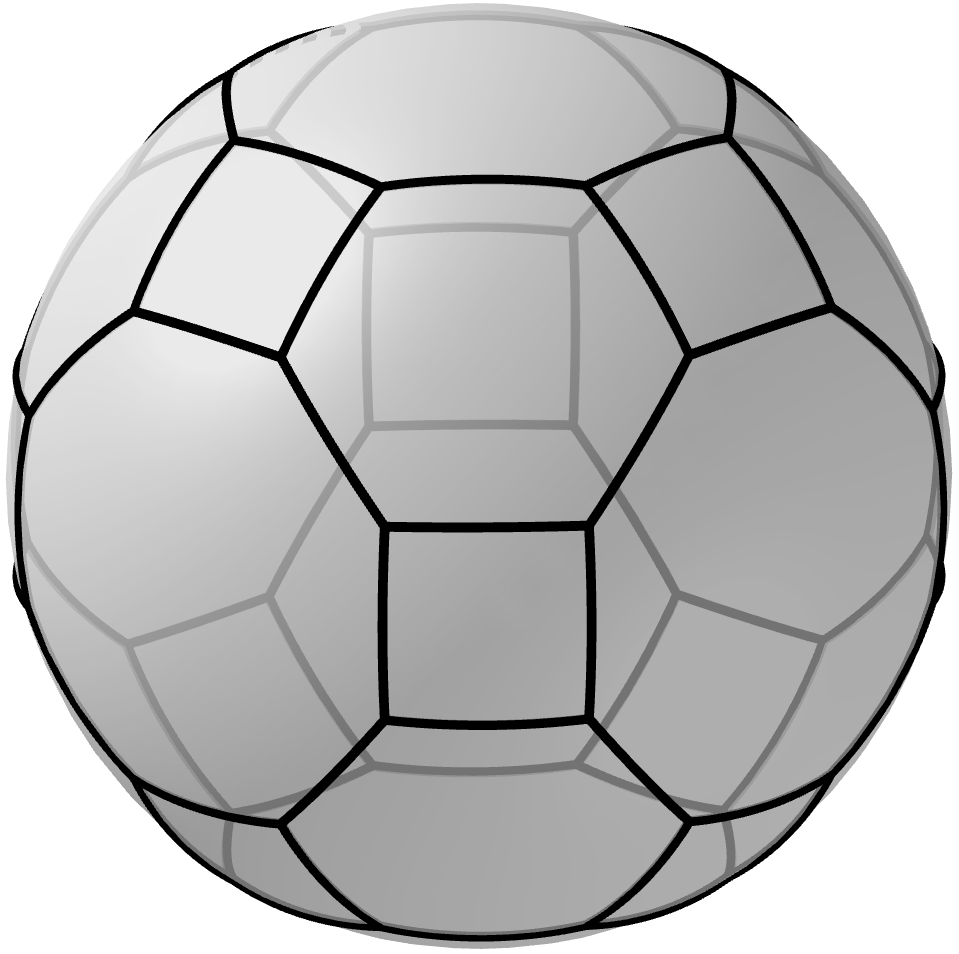}};
\node at (0,-2*\s) {\small };

\node [inner sep=0] (image) at (\XS,0) 
            {\includegraphics[height=\h cm]{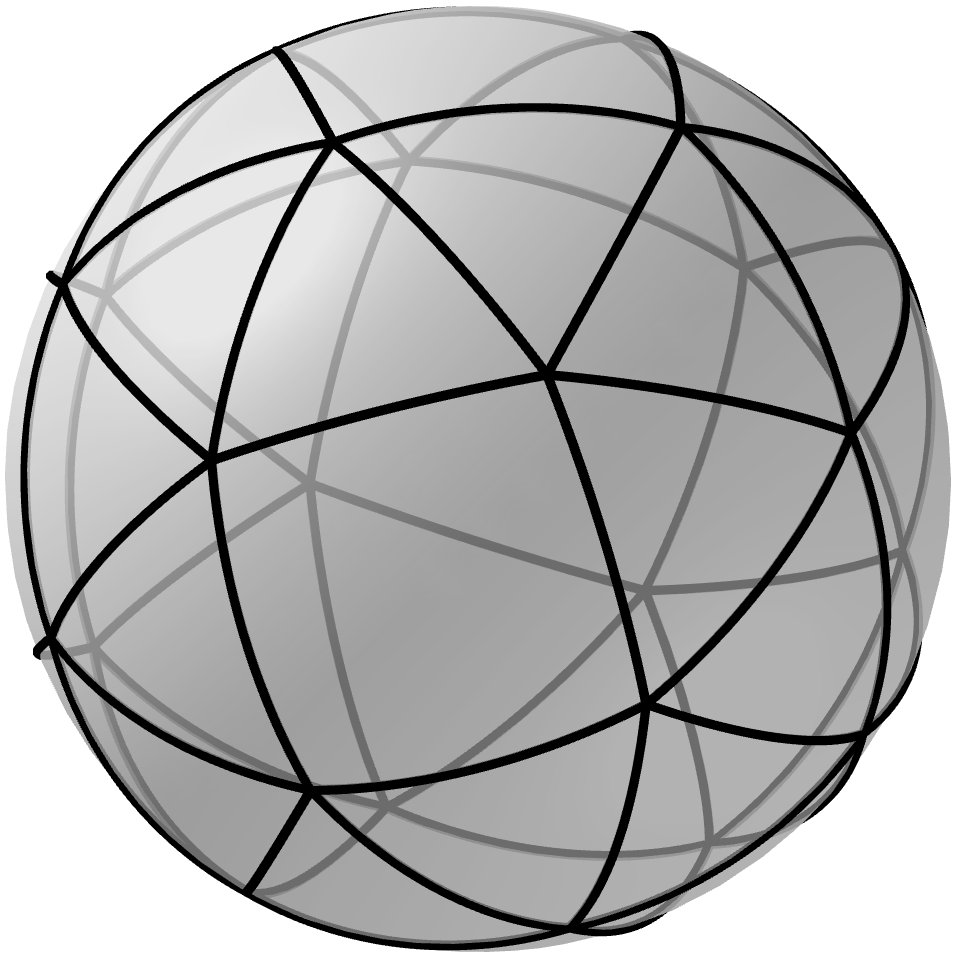}};
\node at (\XS,-2*\s) {\small };

\node [inner sep=0] (image) at (2*\XS,0) 
            {\includegraphics[height=\h cm]{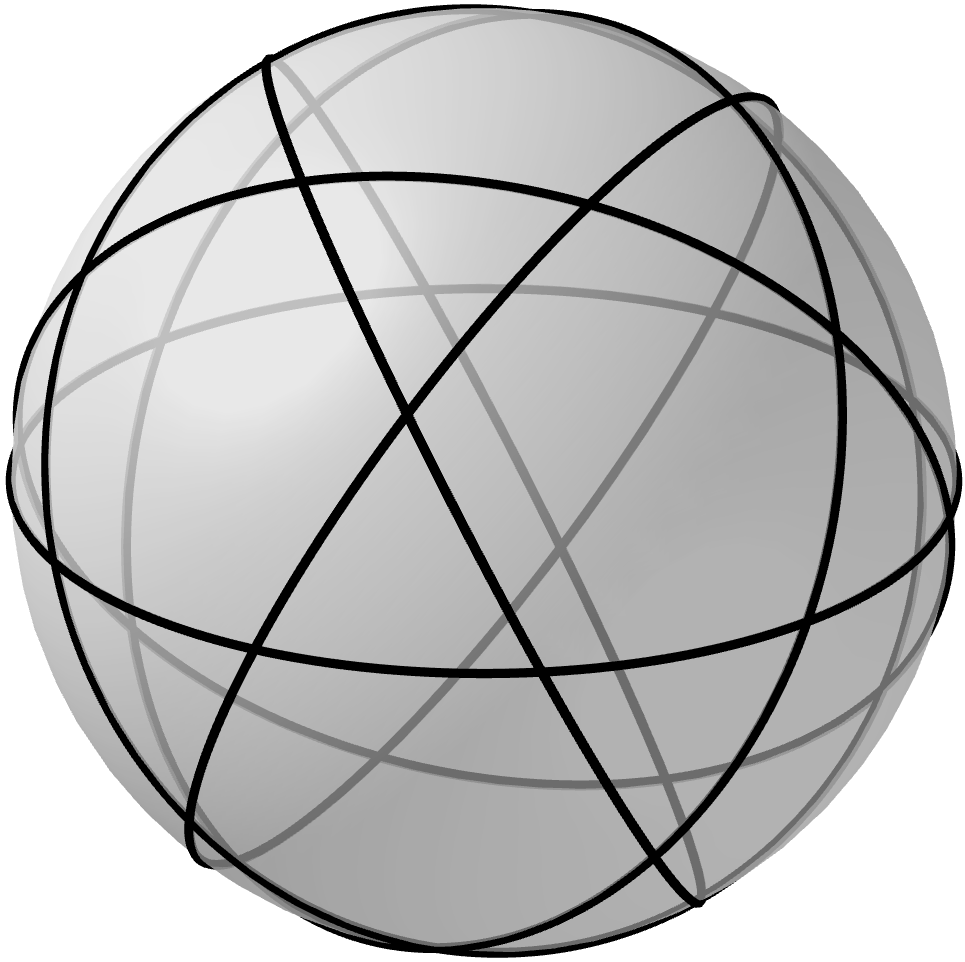}};
\node at (2*\XS,-2*\s) {\small };

\node [inner sep=0] (image) at (3*\XS,0) 
            {\includegraphics[height=\h cm]{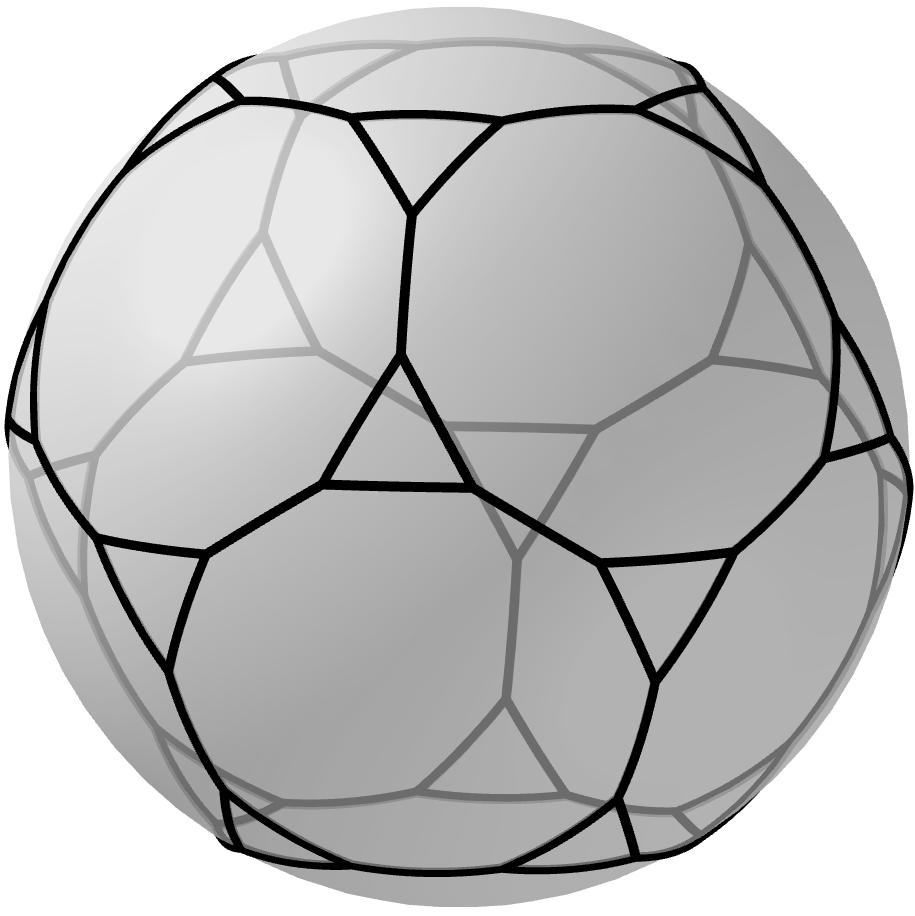}};
\node at (3*\XS,-2*\s) {\small };

\node [inner sep=0] (image) at (4*\XS,0) 
            {\includegraphics[height=\h cm]{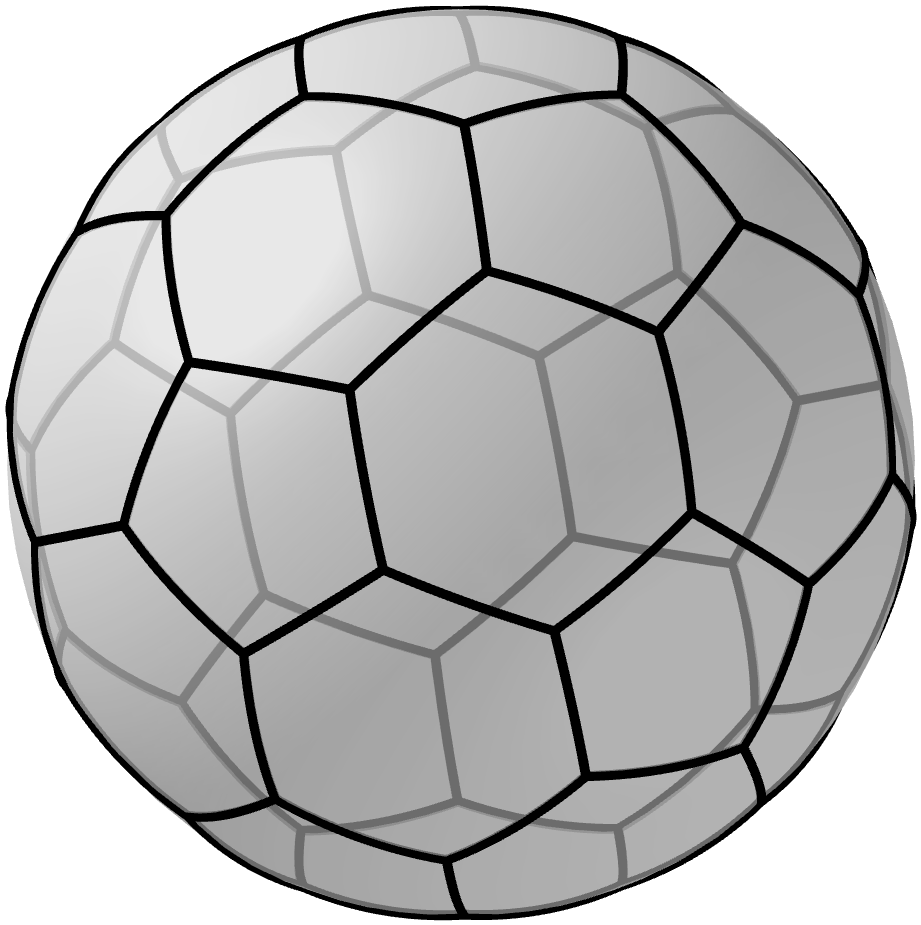}};
\node at (4*\XS,-2*\s) {\small };
\end{scope} 

\begin{scope}[xshift=\XS cm, yshift=-2*\YS cm] 
\node [inner sep=0] (image) at (0,0) 
            {\includegraphics[height=\h cm]{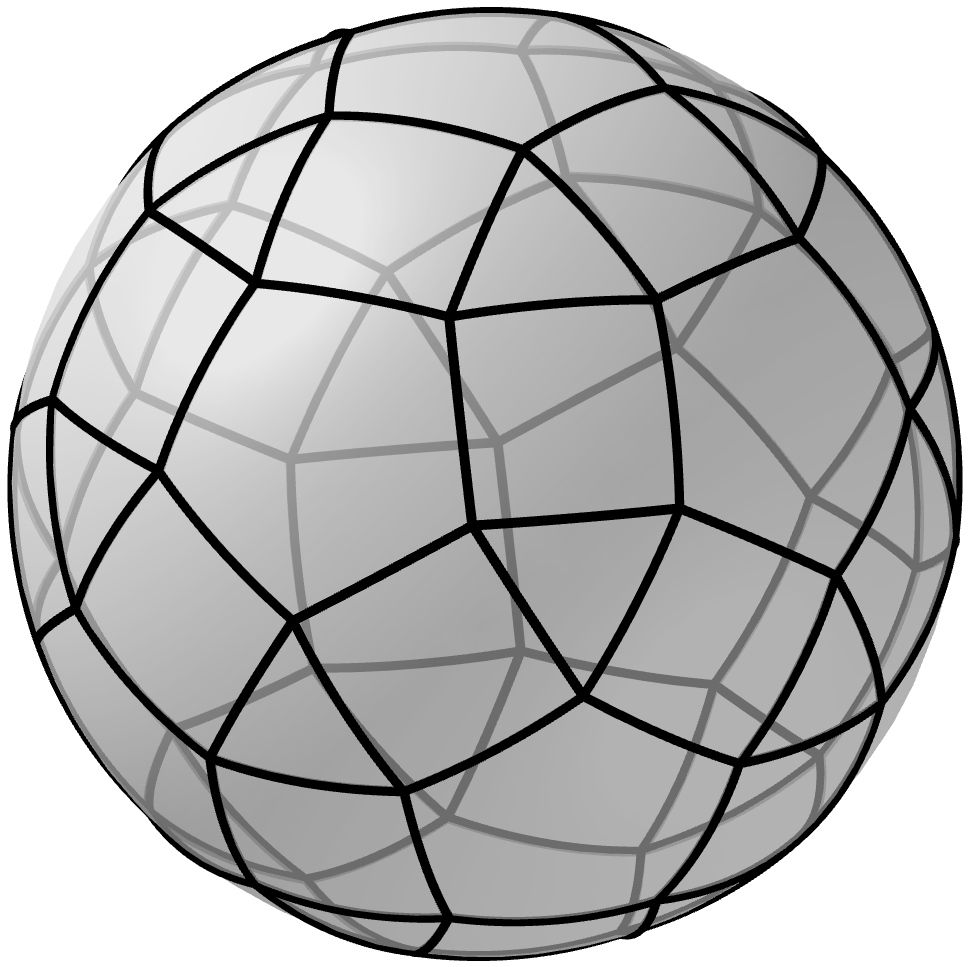}};
\node at (0,-2*\s) {\small };

\node [inner sep=0] (image) at (\XS,0) 
            {\includegraphics[height=\h cm]{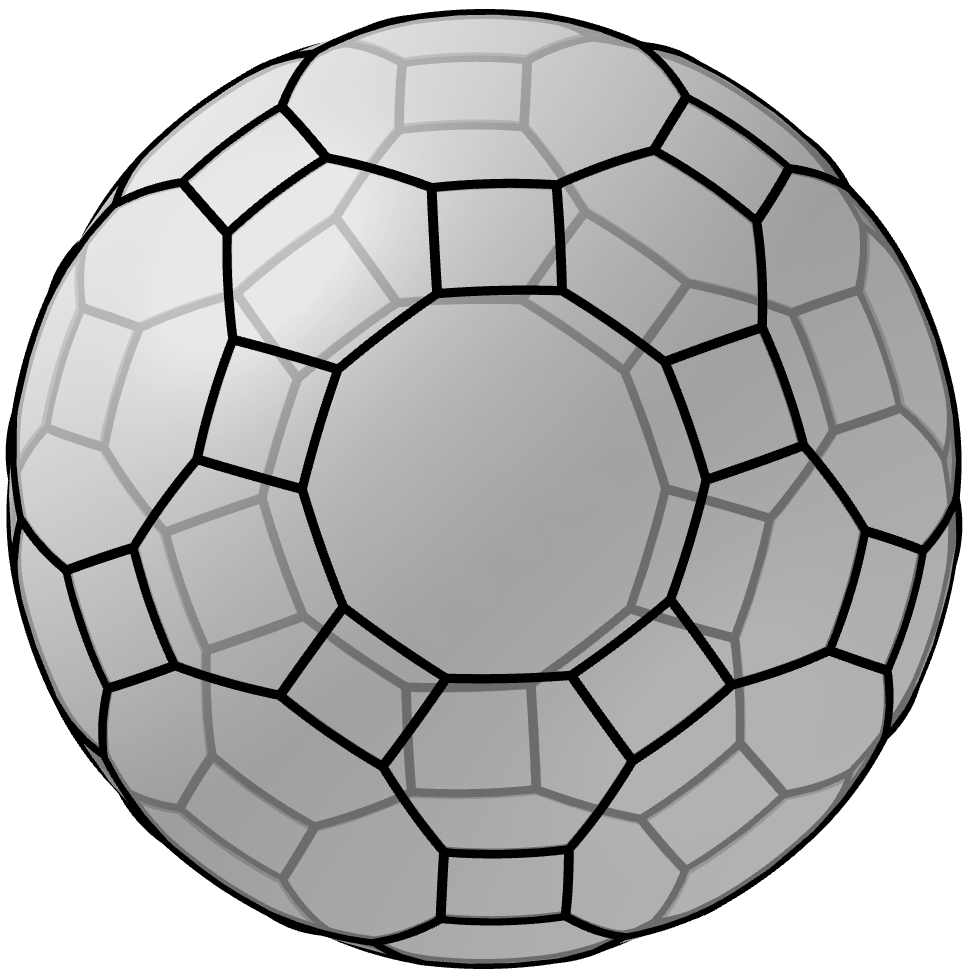}};
\node at (\XS,-2*\s) {\small };

\node [inner sep=0] (image) at (2*\XS,0) 
            {\includegraphics[height=\h cm]{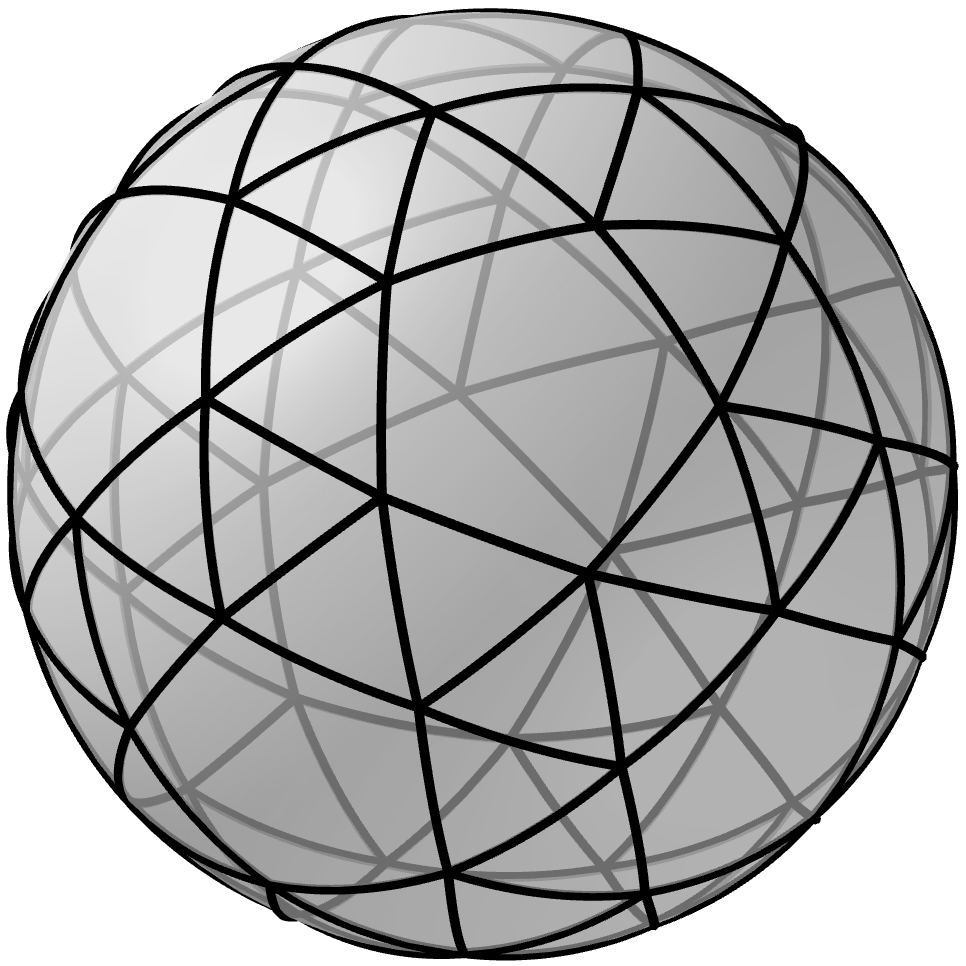}};
\node at (2*\XS,-2*\s) {\small };
\end{scope} 
\end{tikzpicture}
\caption{Archimedean}
\end{figure}


\begin{figure}[h!]
\centering
\begin{tikzpicture}
\tikzmath{
\s=0.75;
\h=1.75;
\XS=2.15;
\YS=2.15;
}
\begin{scope}[] 
\node [inner sep=0] (image) at (0,0) 
            {\includegraphics[height=\h cm]{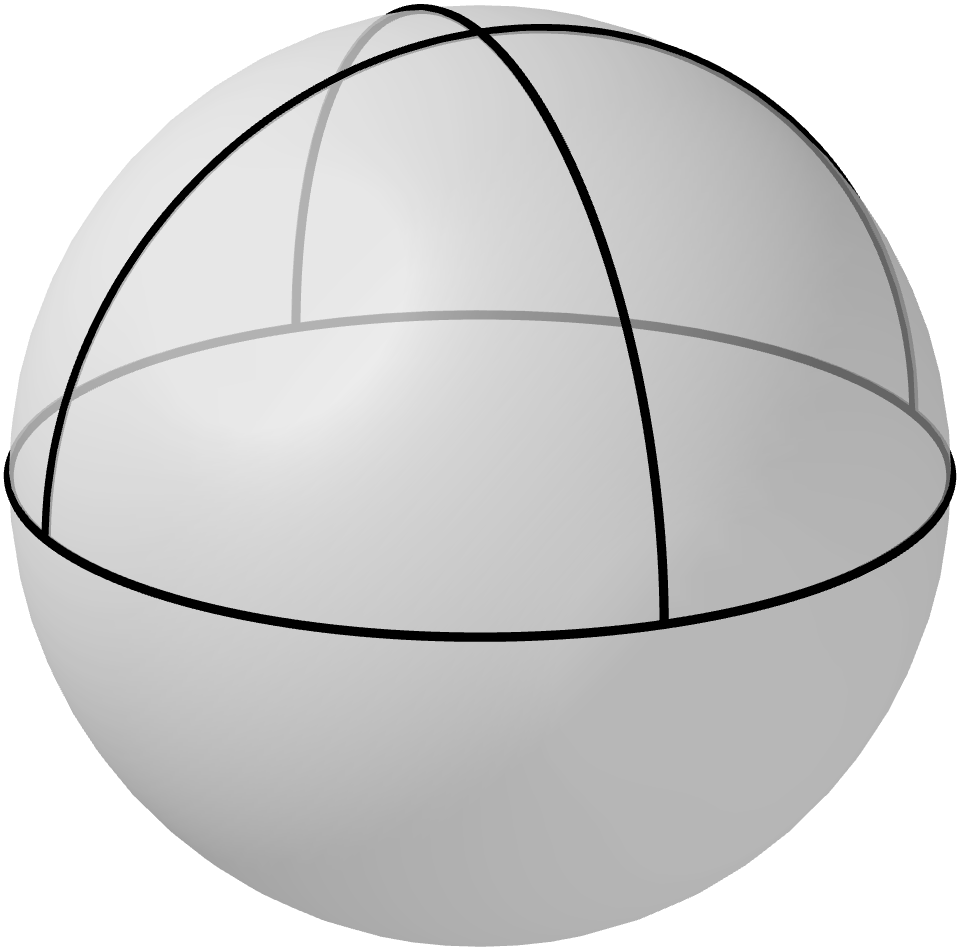}};
\node at (0,-2*\s) {\small };

\node [inner sep=0] (image) at (\XS,0) 
            {\includegraphics[height=\h cm]{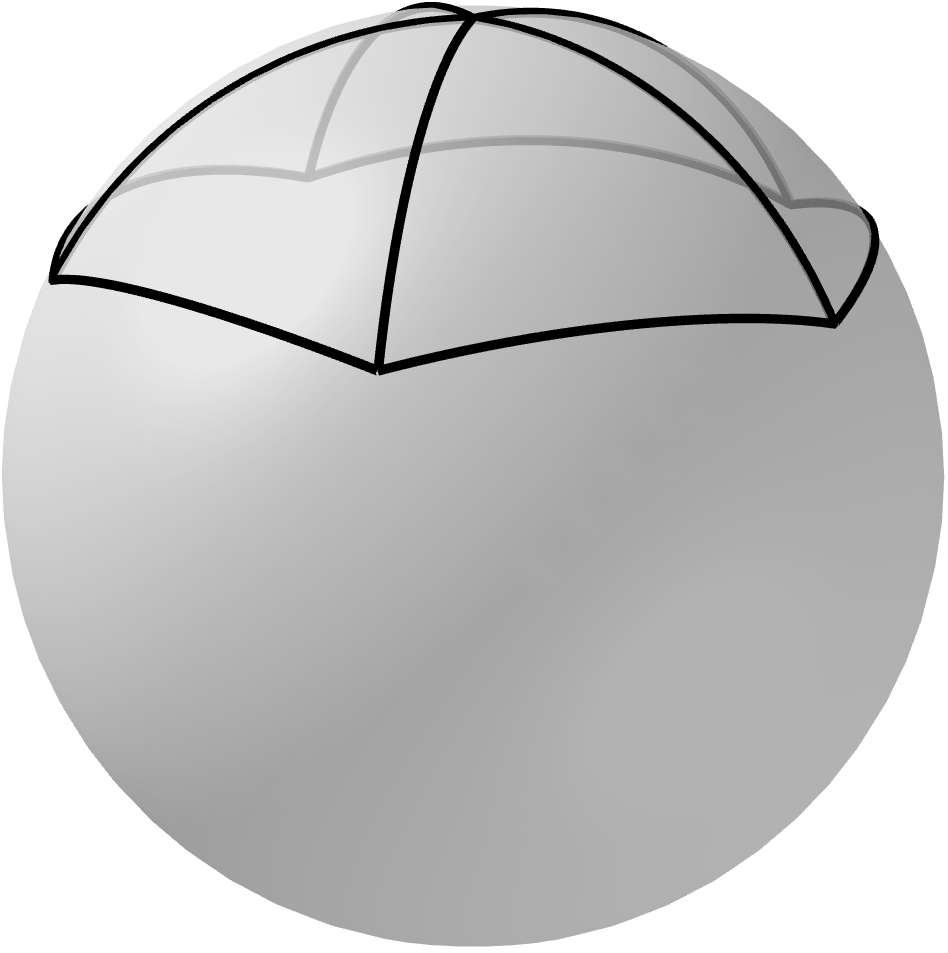}};
\node at (\XS,-2*\s) {\small };

\node [inner sep=0] (image) at (2*\XS,0) 
            {\includegraphics[height=\h cm]{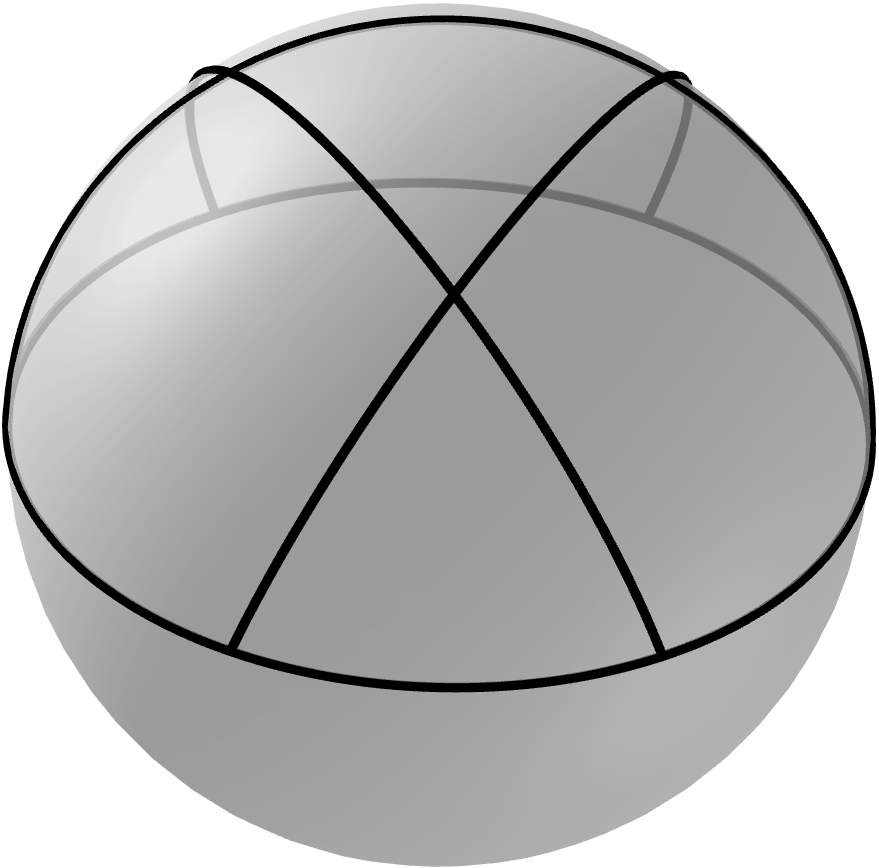}};
\node at (2*\XS,-2*\s) {\small };

\node [inner sep=0] (image) at (3*\XS,0) 
            {\includegraphics[height=\h cm]{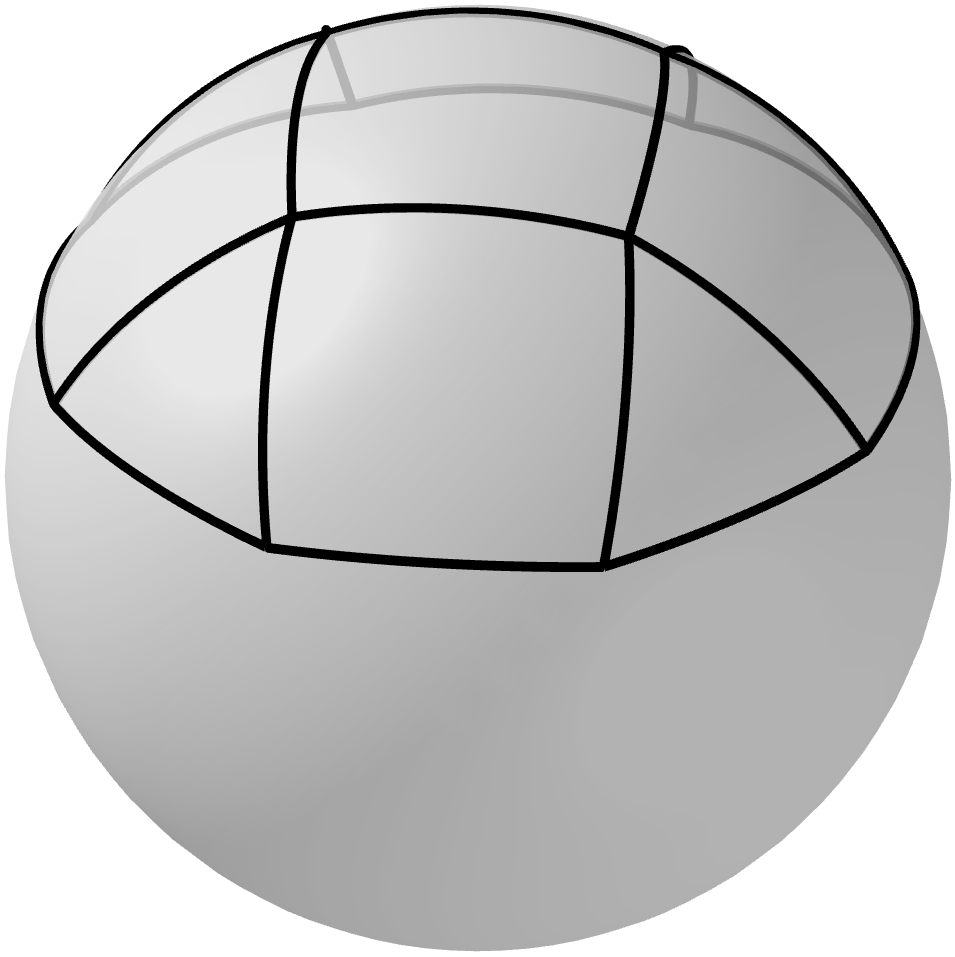}};
\node at (3*\XS,-2*\s) {\small };

\node [inner sep=0] (image) at (4*\XS,0) 
            {\includegraphics[height=\h cm]{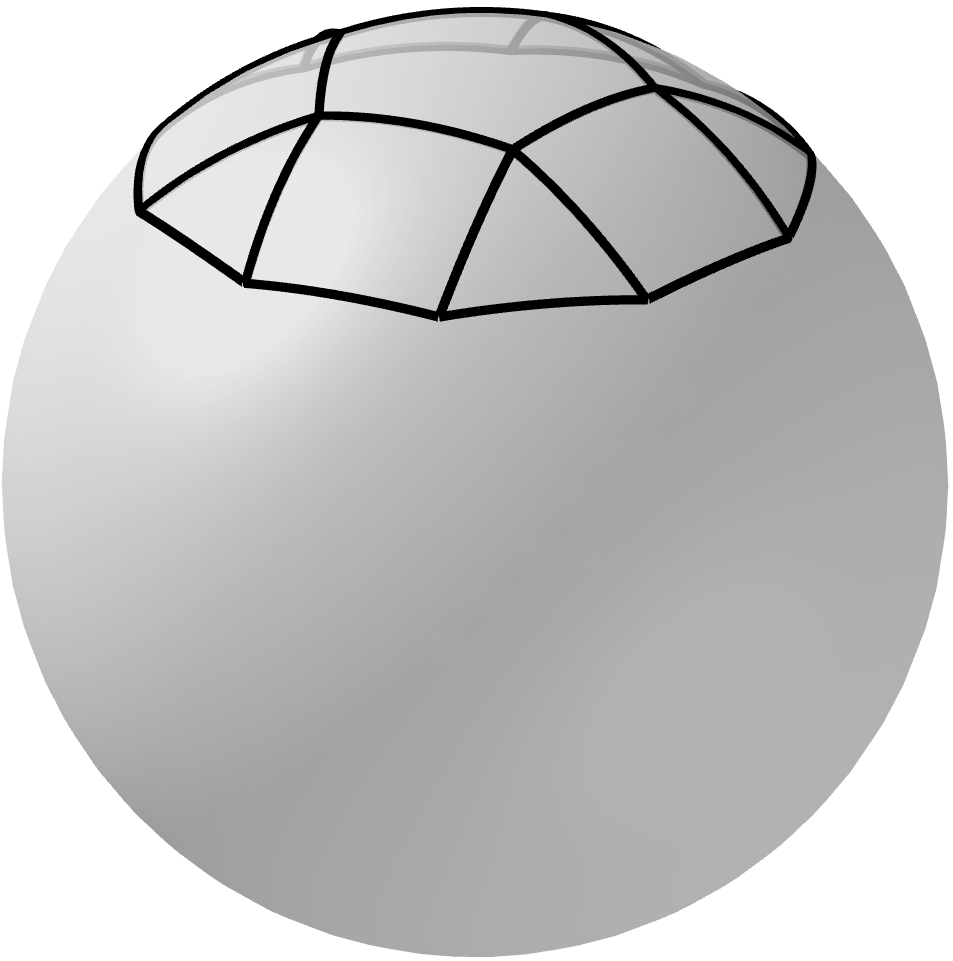}};
\node at (4*\XS,-2*\s) {\small };
\end{scope} 

\begin{scope}[yshift=-\YS cm] 
\node [inner sep=0] (image) at (0,0) 
            {\includegraphics[height=\h cm]{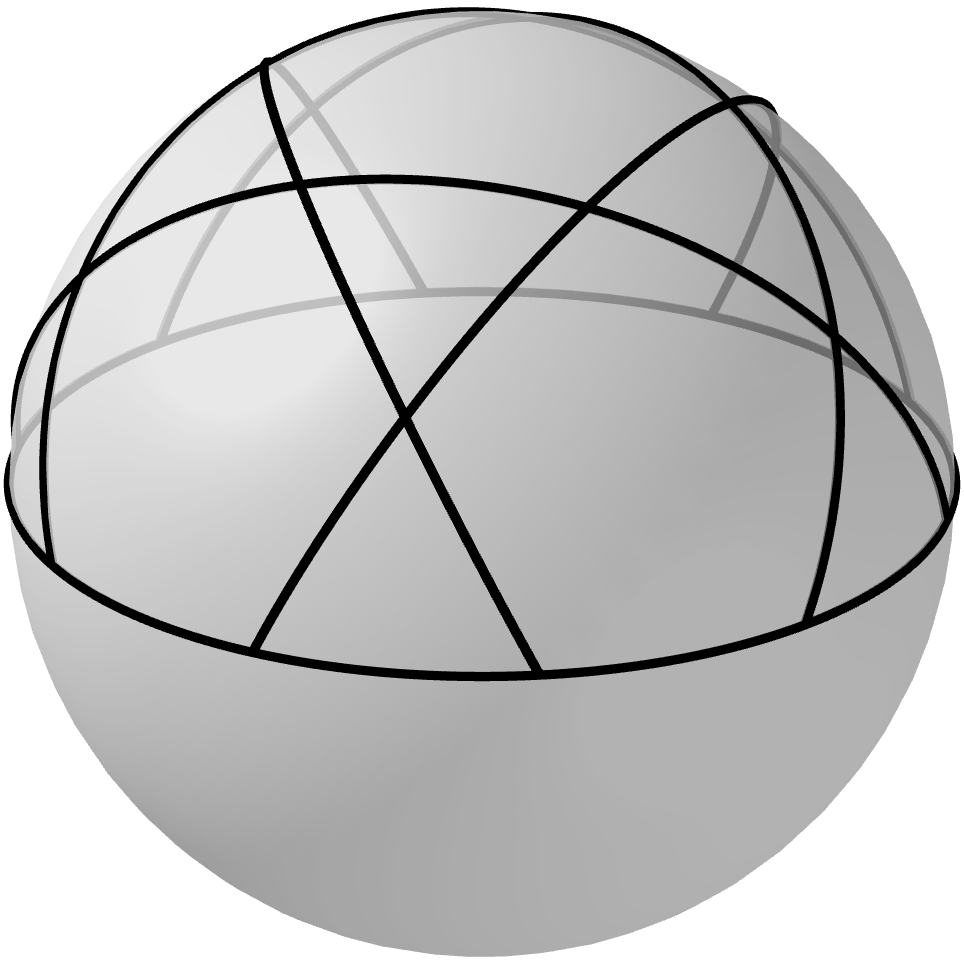}};
\node at (0,-2*\s) {\small };

\node [inner sep=0] (image) at (\XS,0) 
            {\includegraphics[height=\h cm]{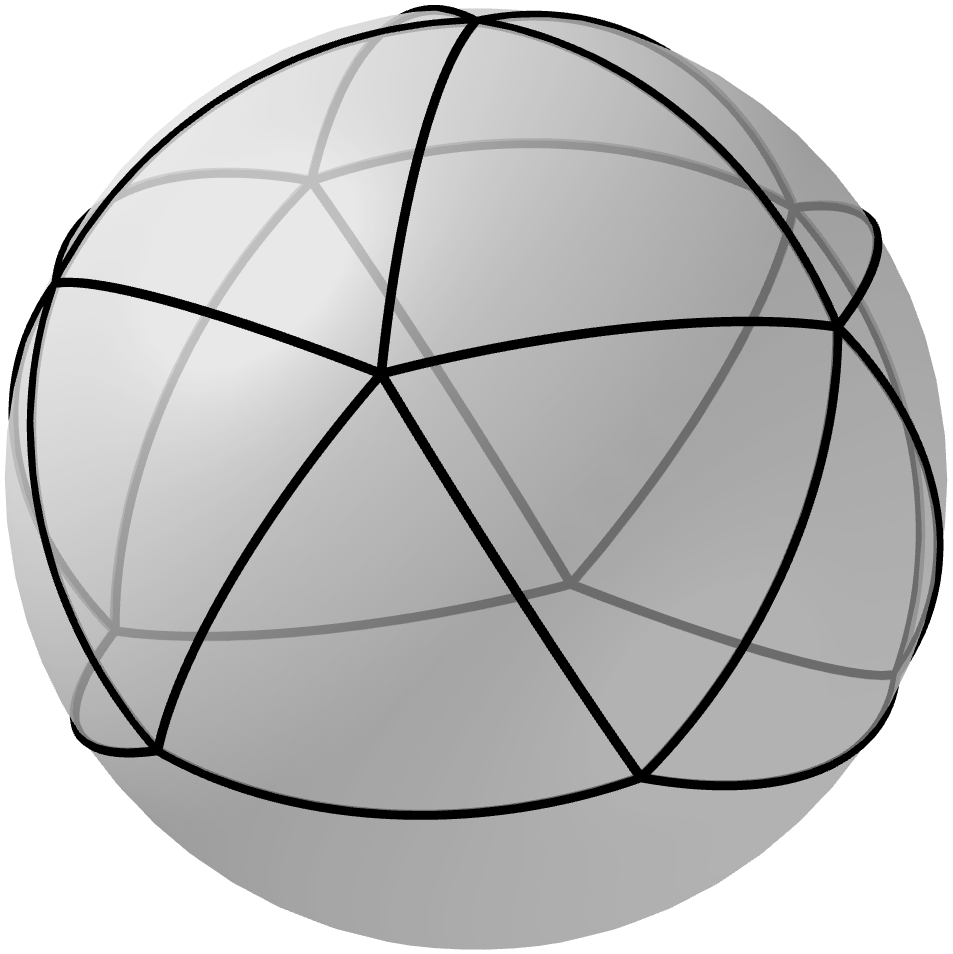}};
\node at (\XS,-2*\s) {\small };

\node [inner sep=0] (image) at (2*\XS,0) 
            {\includegraphics[height=\h cm]{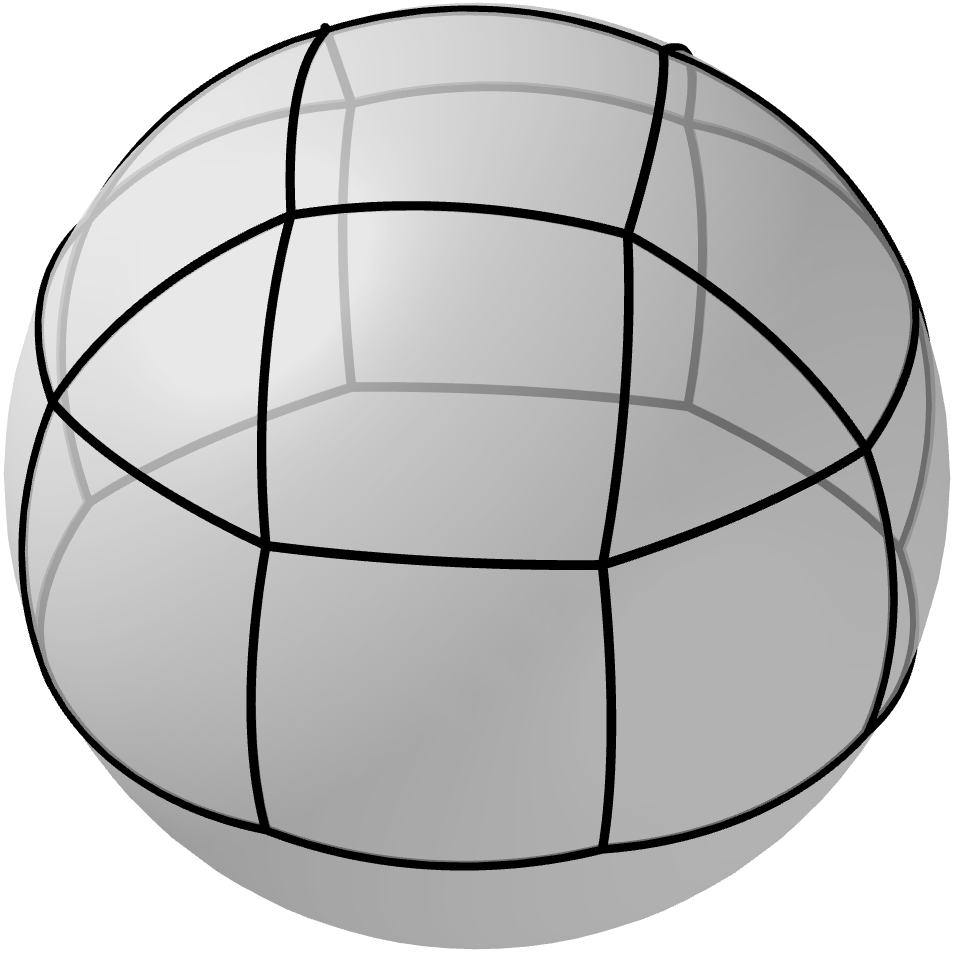}};
\node at (2*\XS,-2*\s) {\small };

\node [inner sep=0] (image) at (3*\XS,0) 
            {\includegraphics[height=\h cm]{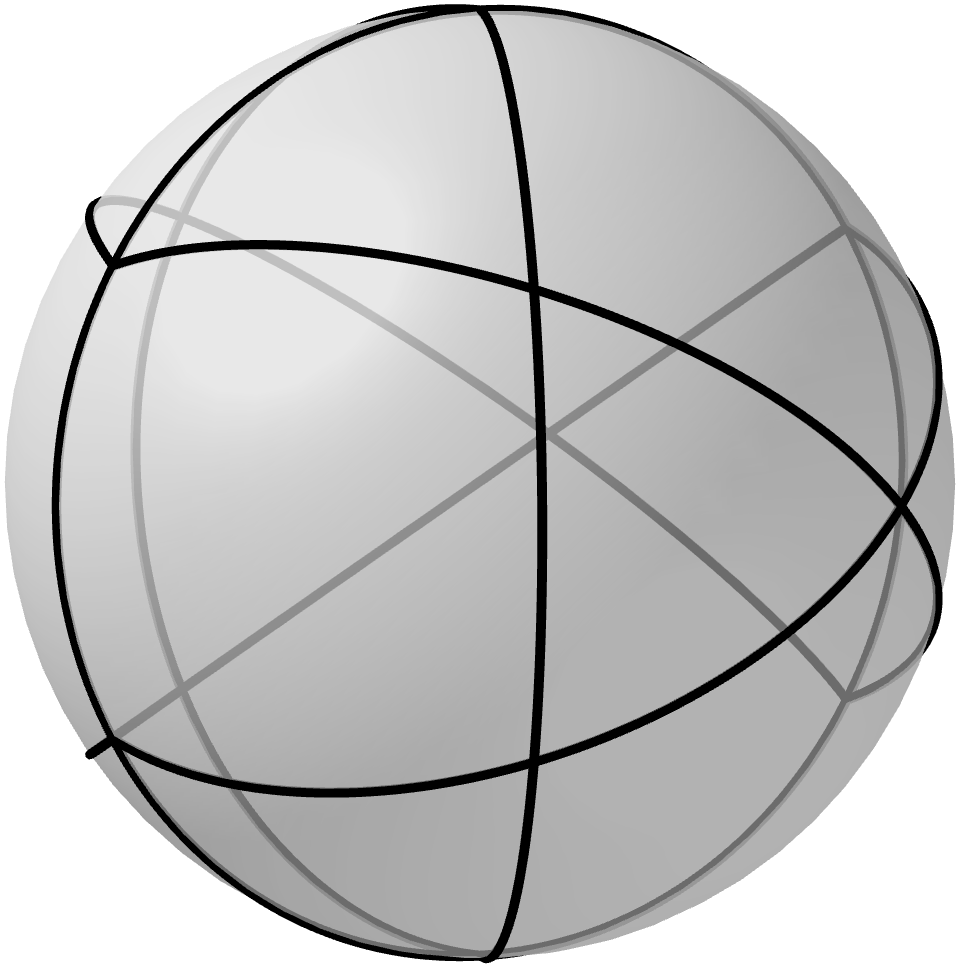}};
\node at (3*\XS,-2*\s) {\small };

\node [inner sep=0] (image) at (4*\XS,0) 
            {\includegraphics[height=\h cm]{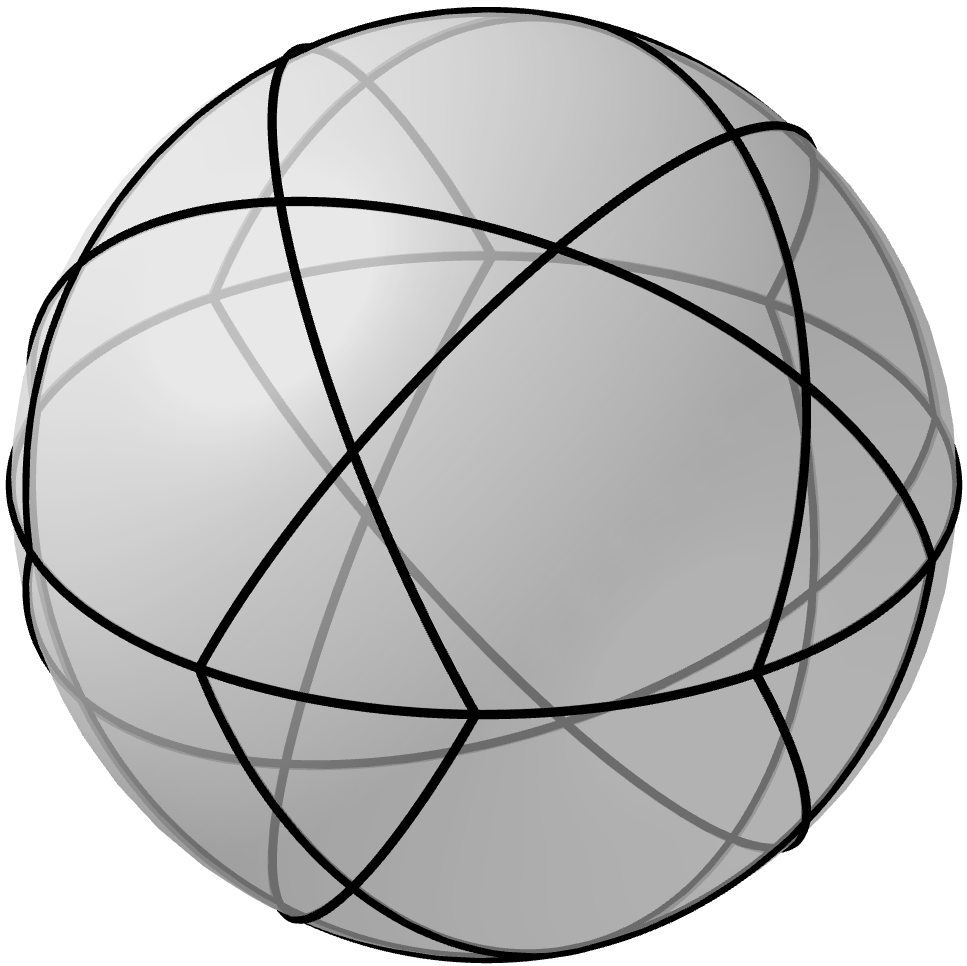}};
\node at (4*\XS,-2*\s) {\small };
\end{scope} 

\begin{scope}[yshift=-2*\YS cm] 
\node [inner sep=0] (image) at (0,0) 
            {\includegraphics[height=\h cm]{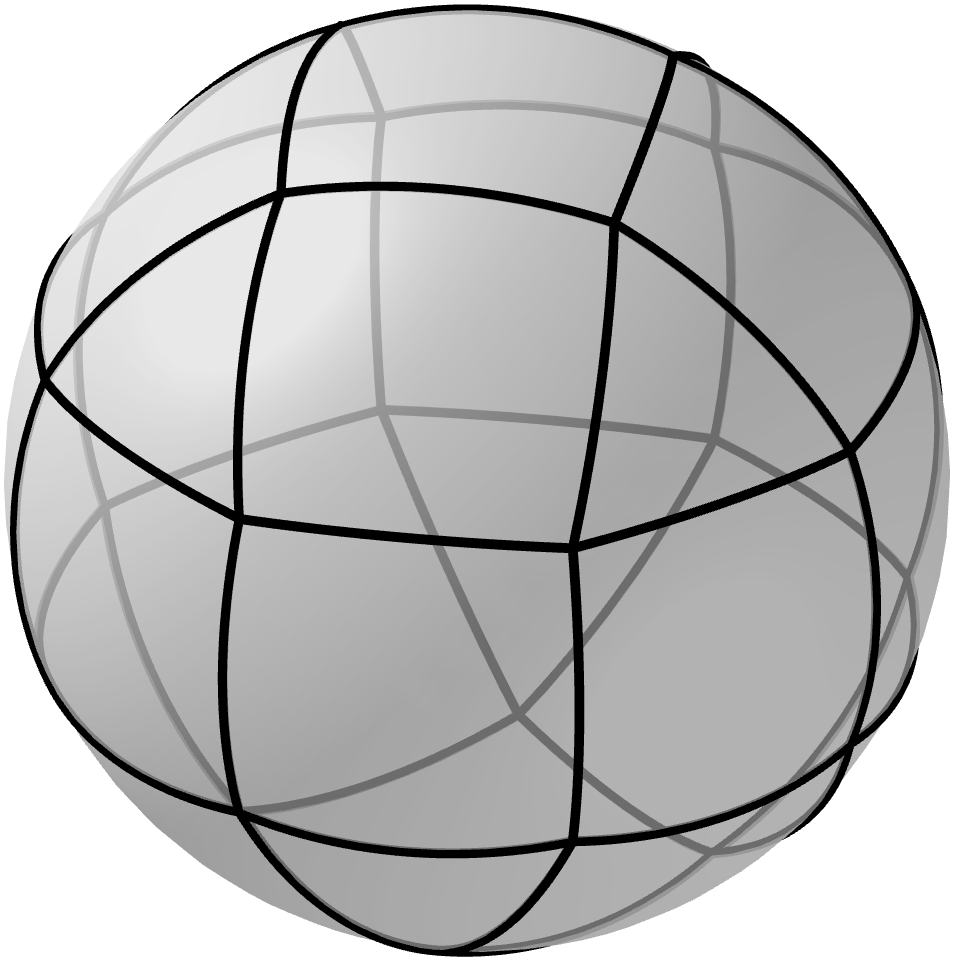}};
\node at (0,-2*\s) {\small };

\node [inner sep=0] (image) at (\XS,0) 
            {\includegraphics[height=\h cm]{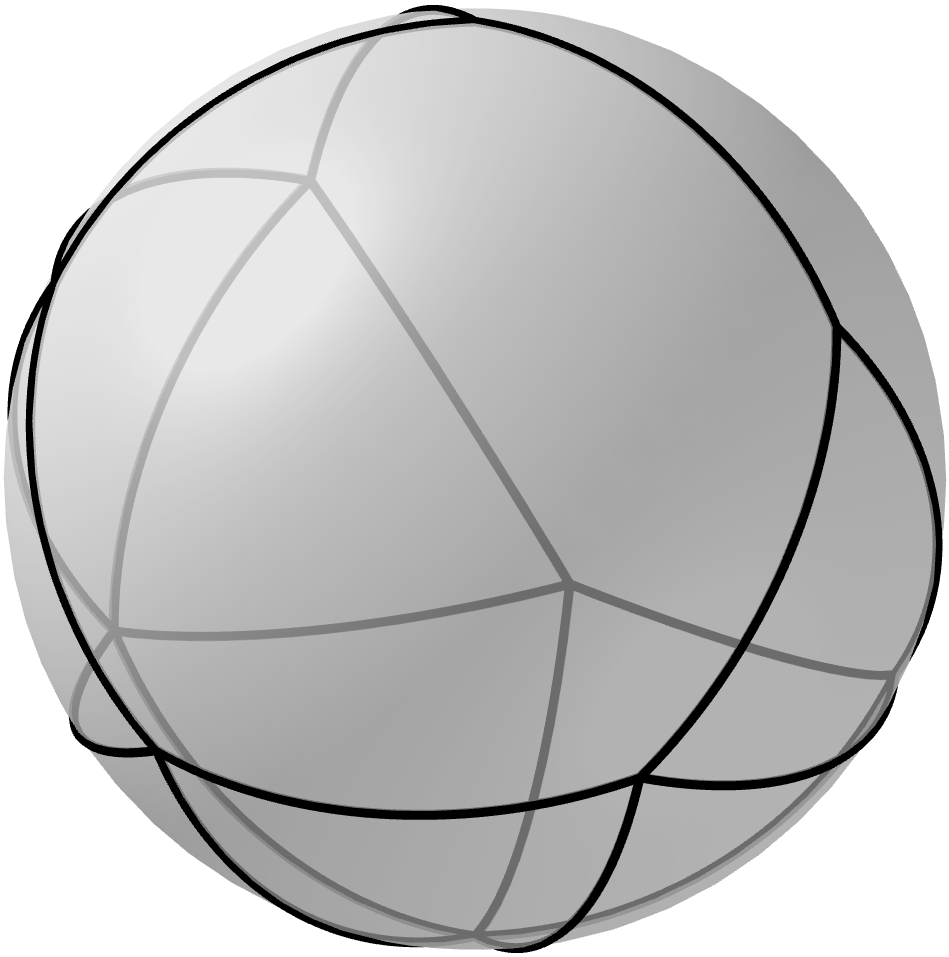}};
\node at (\XS,-2*\s) {\small };

\node [inner sep=0] (image) at (2*\XS,0) 
            {\includegraphics[height=\h cm]{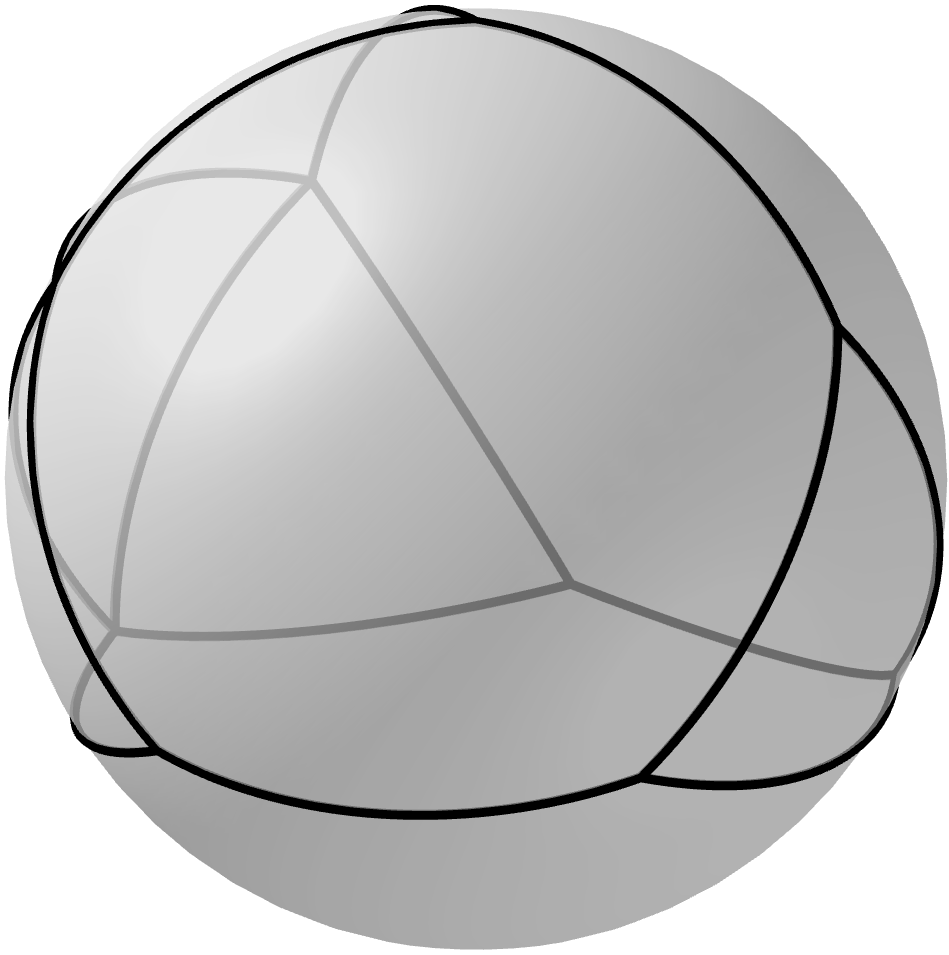}};
\node at (2*\XS,-2*\s) {\small };

\node [inner sep=0] (image) at (3*\XS,0) 
            {\includegraphics[height=\h cm]{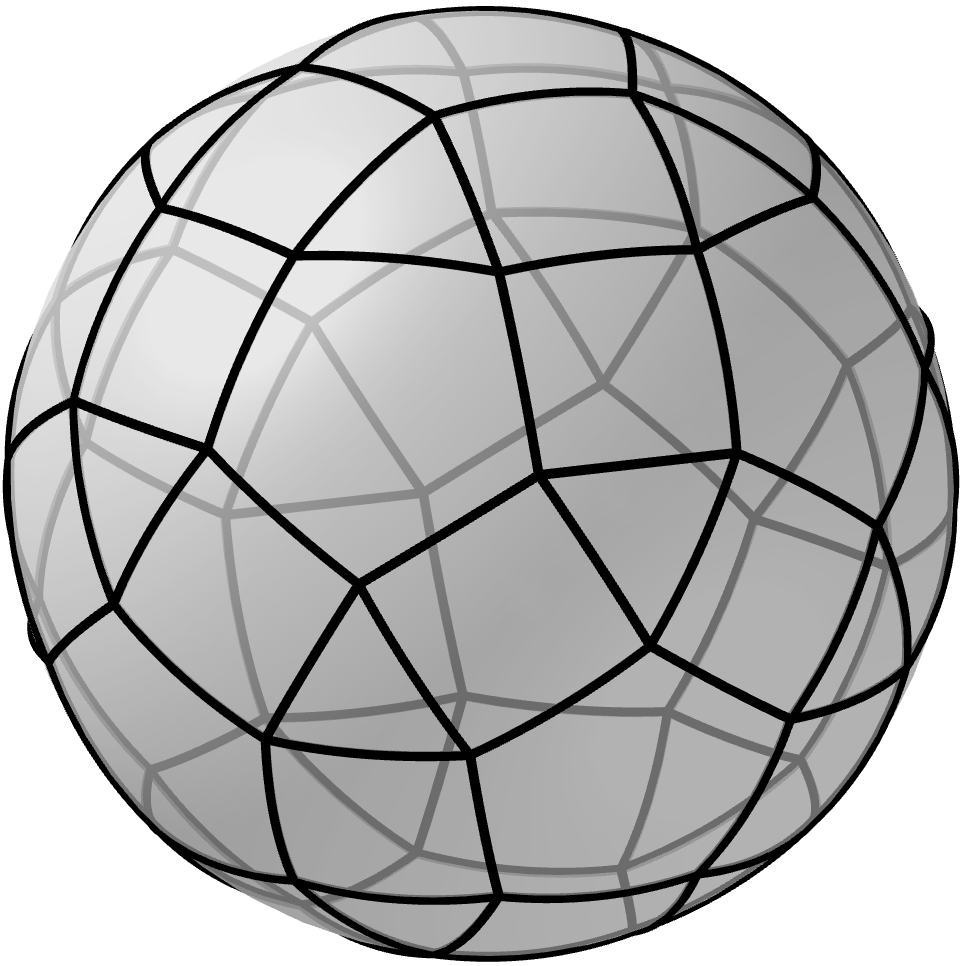}};
\node at (3*\XS,-2*\s) {\small };

\node [inner sep=0] (image) at (4*\XS,0) 
            {\includegraphics[height=\h cm]{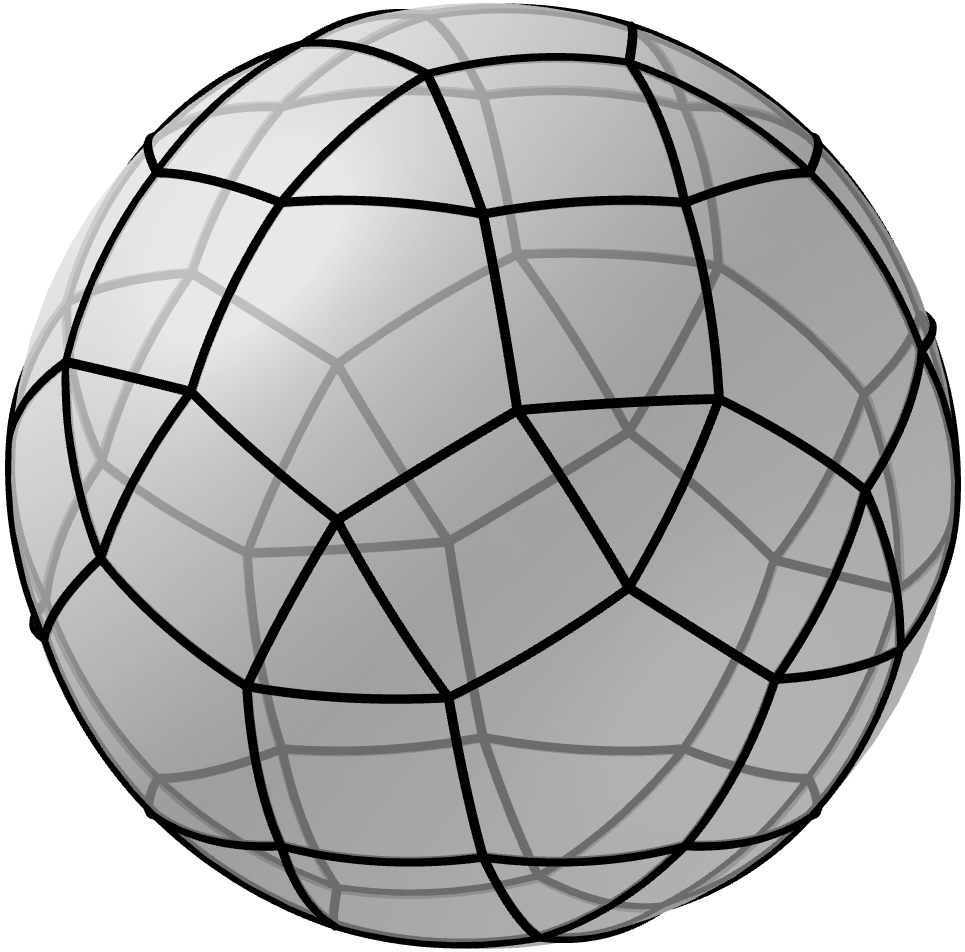}};
\node at (4*\XS,-2*\s) {\small };
\end{scope} 

\begin{scope}[yshift=-3*\YS cm] 
\node [inner sep=0] (image) at (0,0) 
            {\includegraphics[height=\h cm]{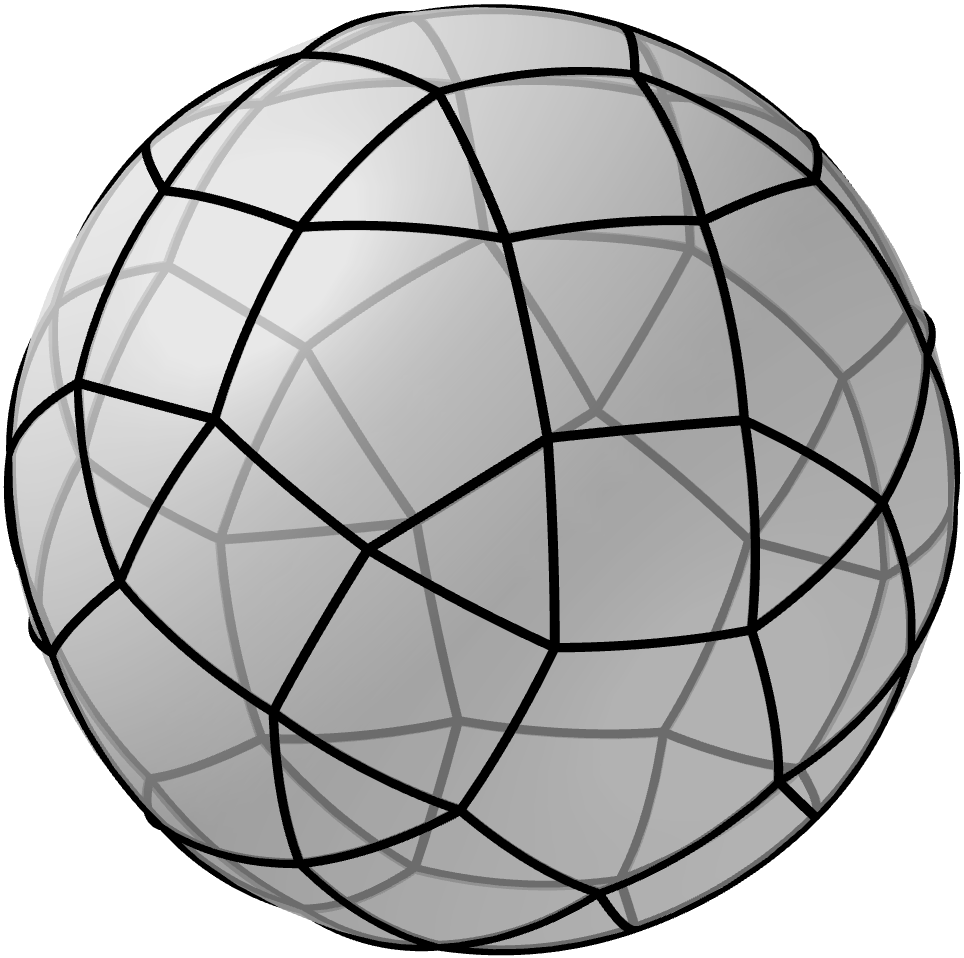}};
\node at (0,-2*\s) {\small };

\node [inner sep=0] (image) at (\XS,0) 
            {\includegraphics[height=\h cm]{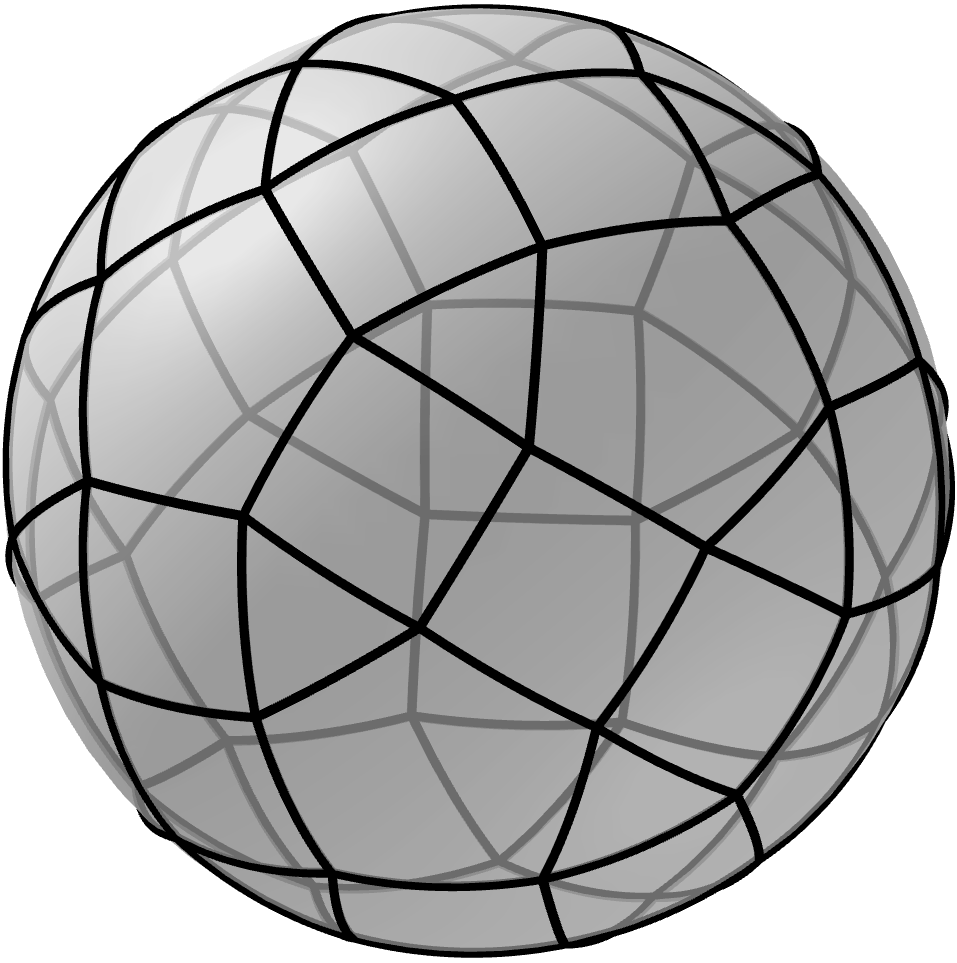}};
\node at (\XS,-2*\s) {\small };

\node [inner sep=0] (image) at (2*\XS,0) 
            {\includegraphics[height=\h cm]{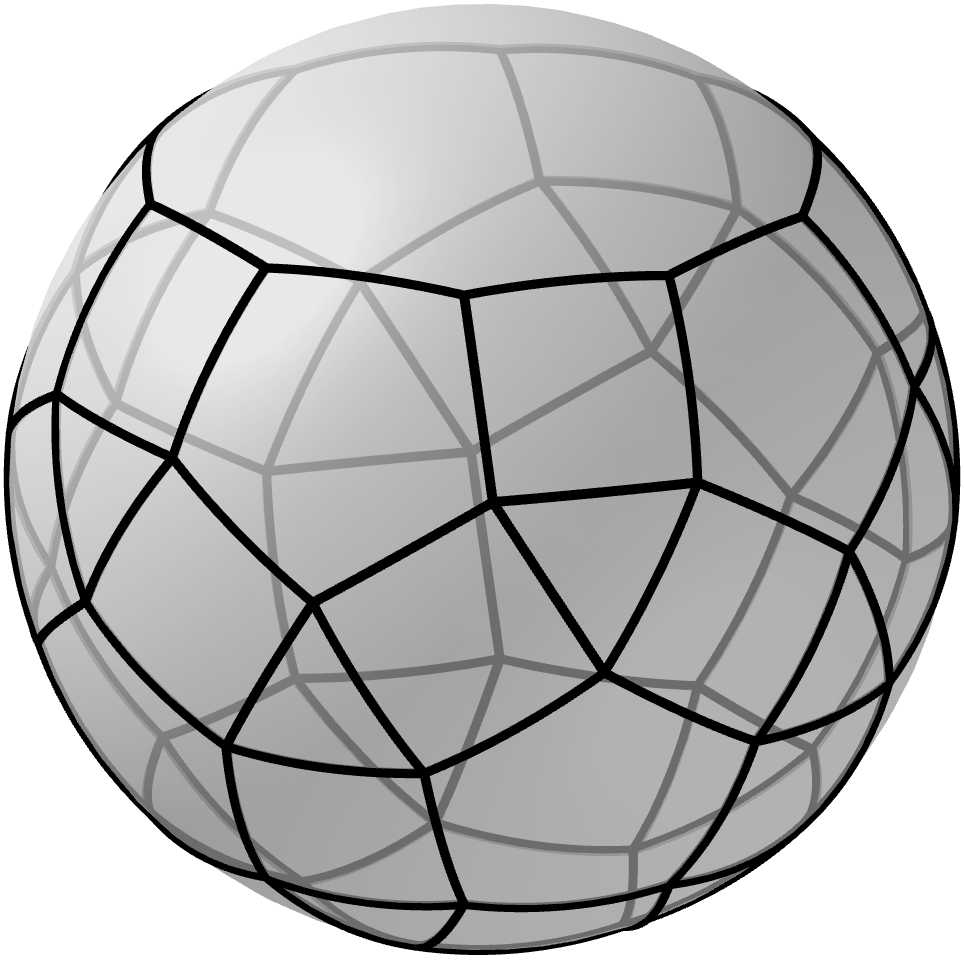}};
\node at (2*\XS,-2*\s) {\small };

\node [inner sep=0] (image) at (3*\XS,0) 
            {\includegraphics[height=\h cm]{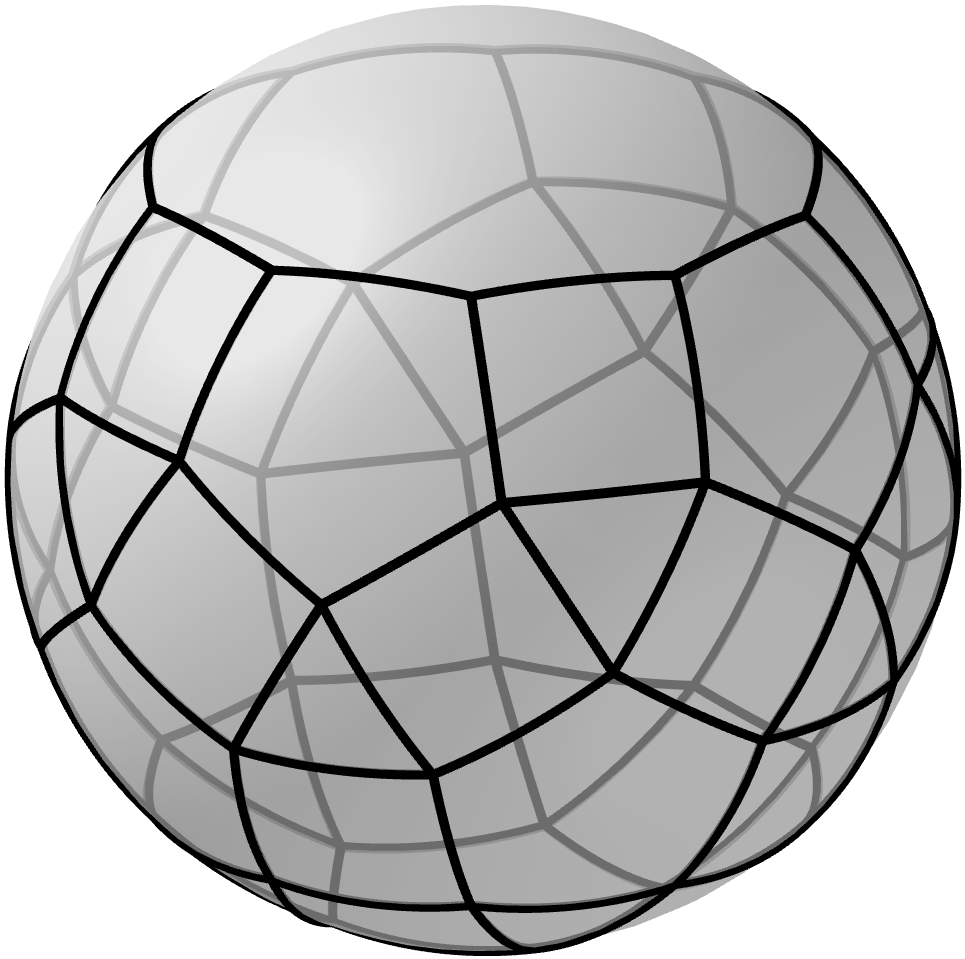}};
\node at (3*\XS,-2*\s) {\small };

\node [inner sep=0] (image) at (4*\XS,0) 
            {\includegraphics[height=\h cm]{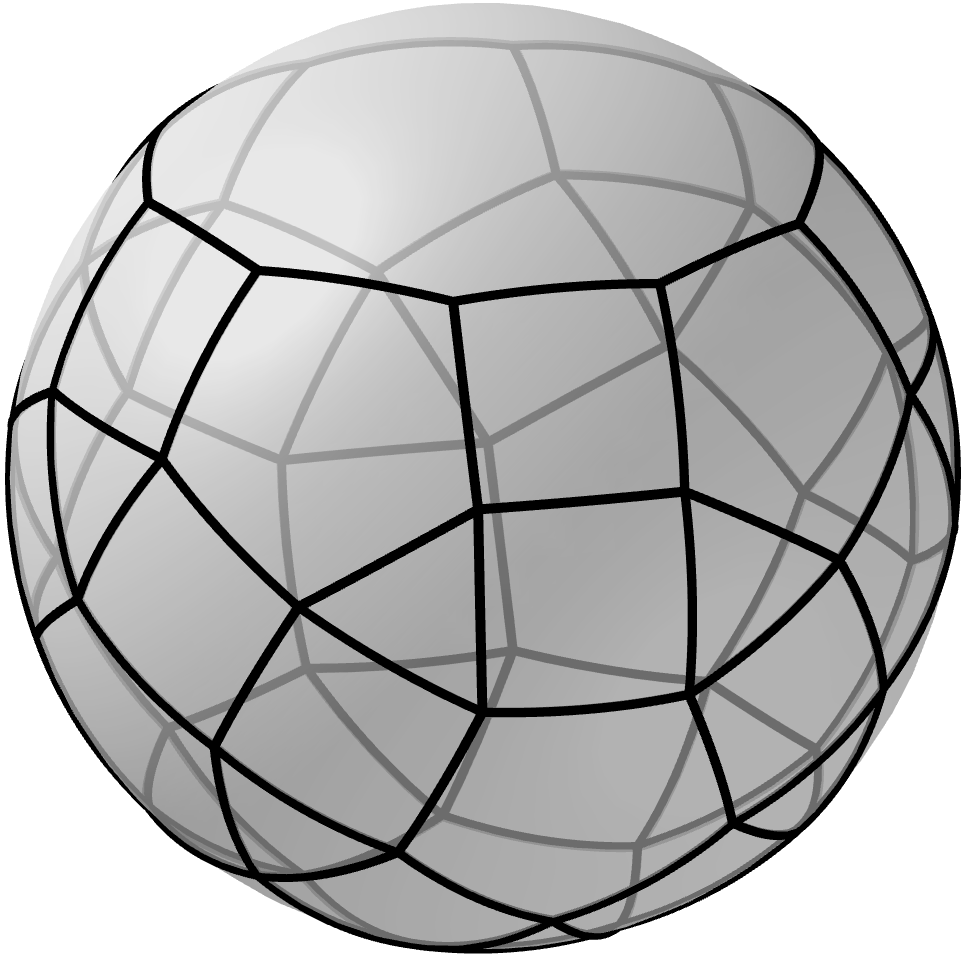}};
\node at (4*\XS,-2*\s) {\small };
\end{scope} 

\begin{scope}[yshift=-4*\YS cm] 
\node [inner sep=0] (image) at (0,0) 
            {\includegraphics[height=\h cm]{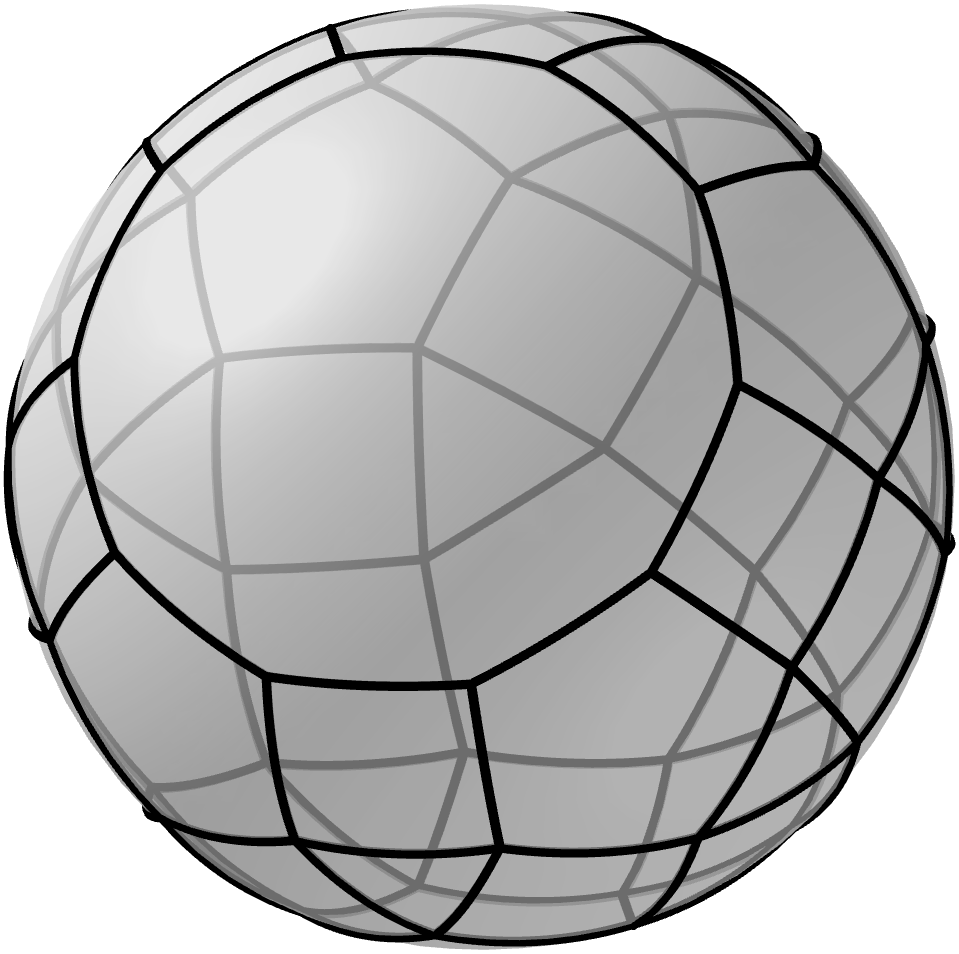}};
\node at (0,-2*\s) {\small };

\node [inner sep=0] (image) at (\XS,0) 
            {\includegraphics[height=\h cm]{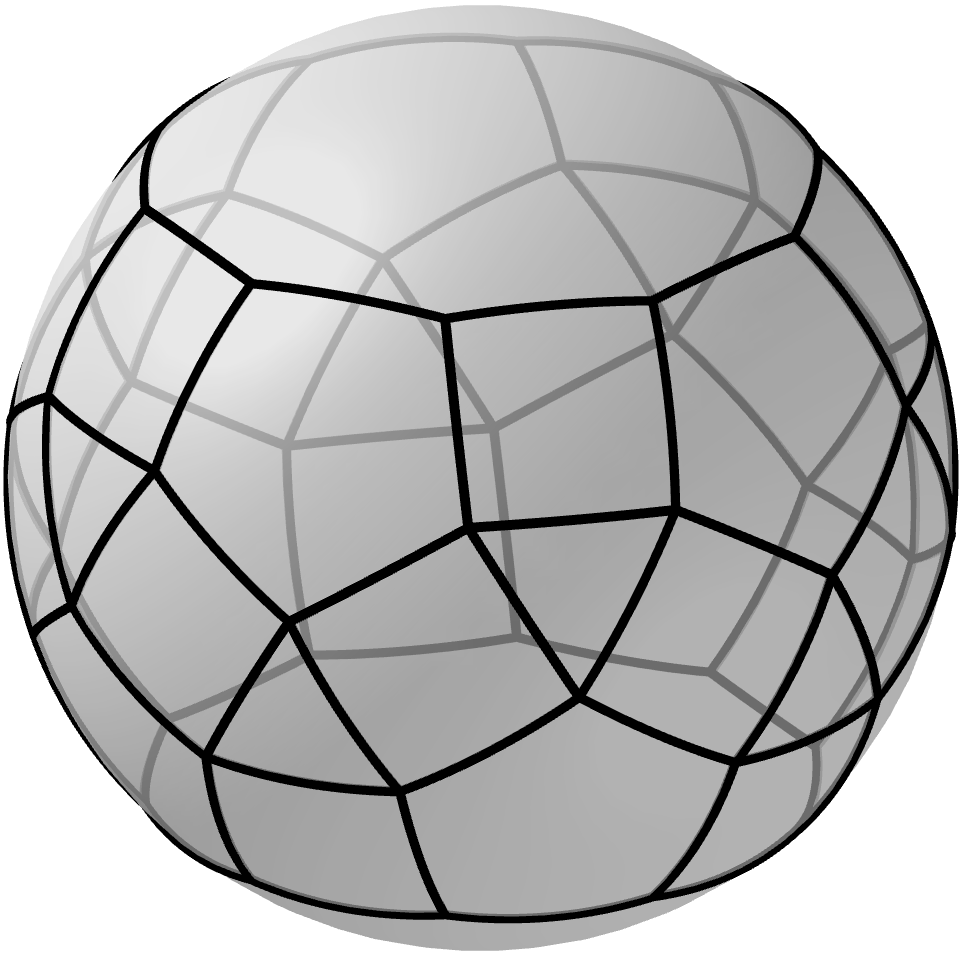}};
\node at (\XS,-2*\s) {\small };

\node [inner sep=0] (image) at (2*\XS,0) 
            {\includegraphics[height=\h cm]{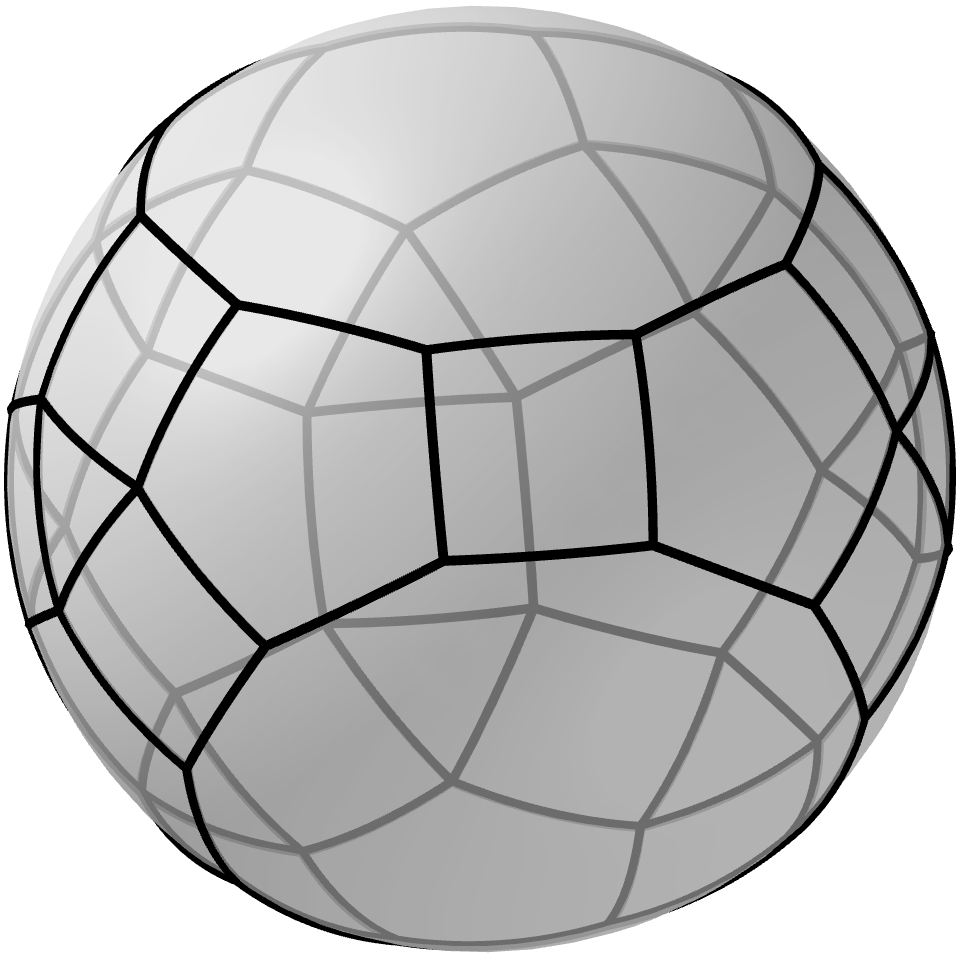}};
\node at (2*\XS,-2*\s) {\small };

\node [inner sep=0] (image) at (3*\XS,0) 
            {\includegraphics[height=\h cm]{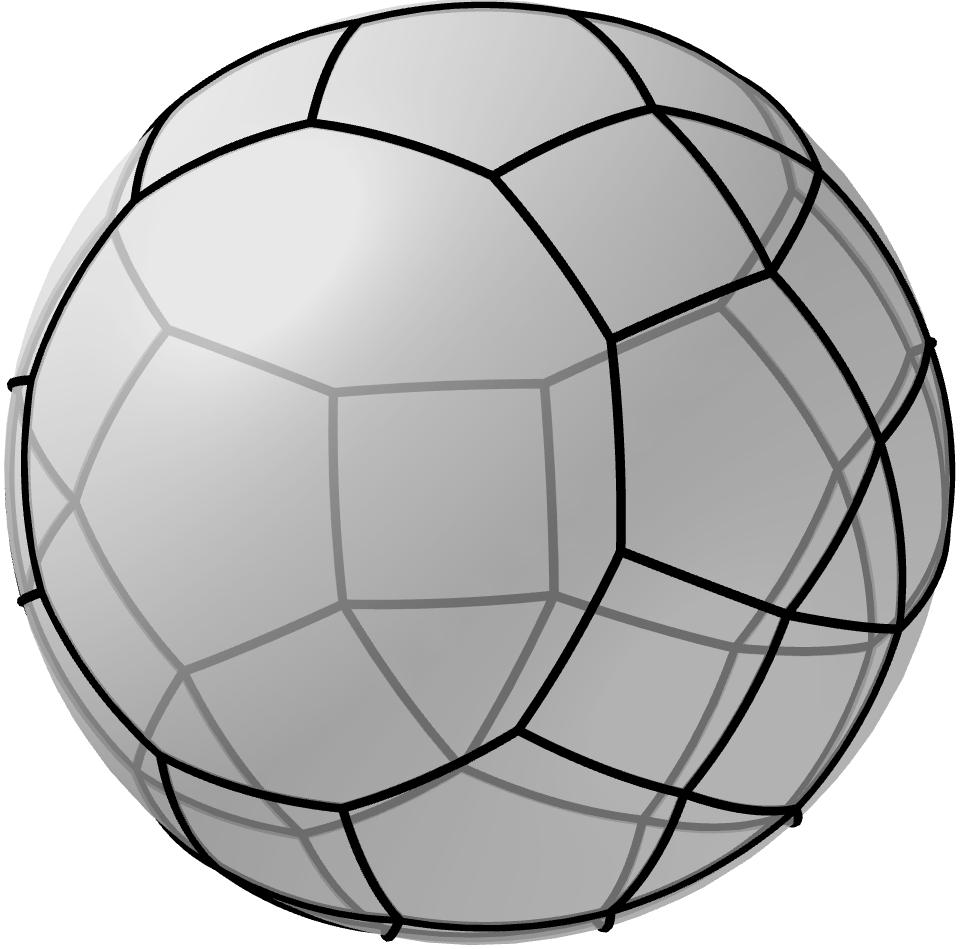}};
\node at (3*\XS,-2*\s) {\small };

\node [inner sep=0] (image) at (4*\XS,0) 
            {\includegraphics[height=\h cm]{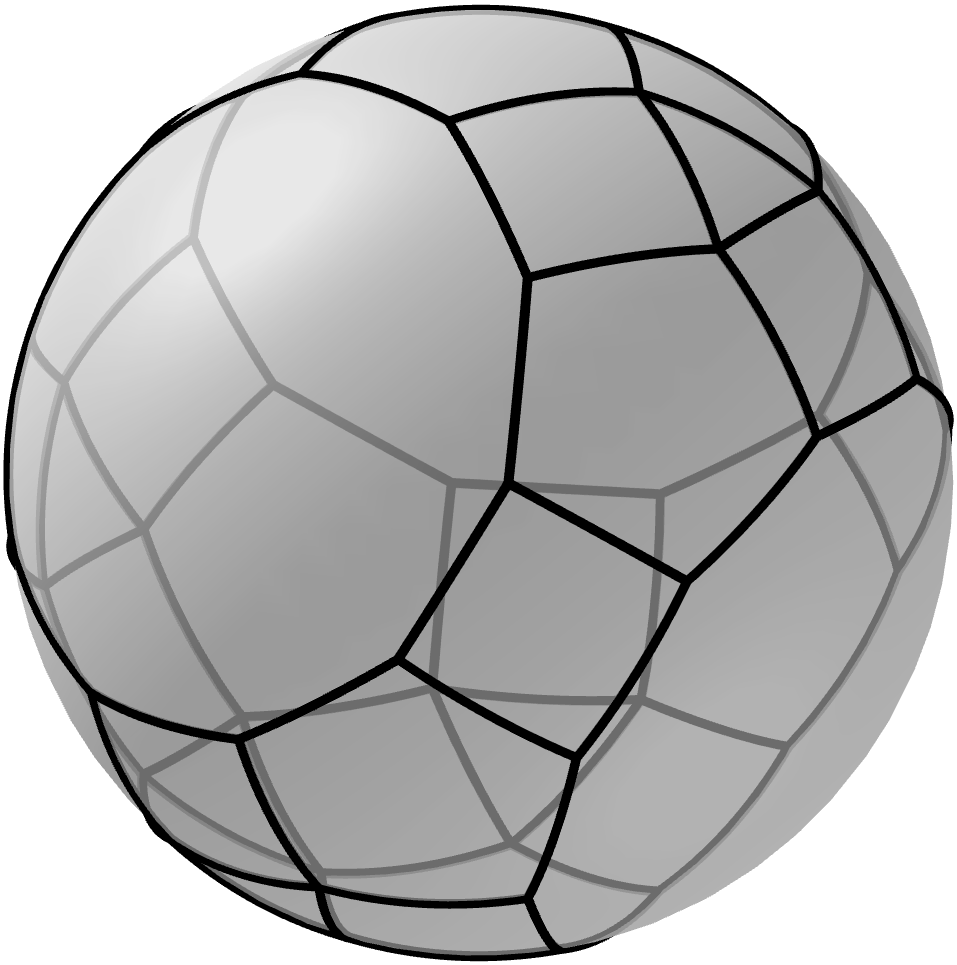}};
\node at (4*\XS,-2*\s) {\small };
\end{scope} 

\end{tikzpicture}
\caption{Circumscribable Johnson solids}
\end{figure}


\begin{figure}[h!]
\centering
\begin{tikzpicture}
\tikzmath{
\s=0.75;
\h=1.75;
\XS=2.15;
\YS=2.15;
}
\begin{scope}[]
\node [inner sep=0] (image) at (0,0) 
            {\includegraphics[height=\h cm]{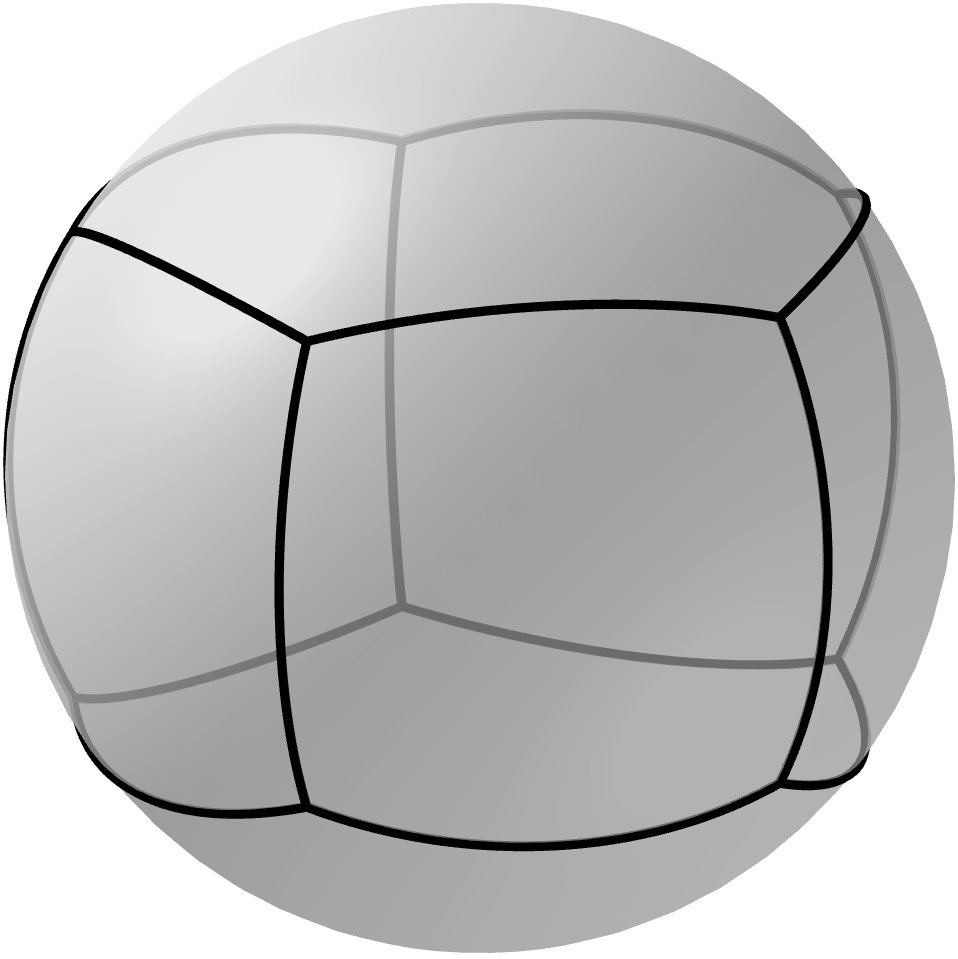}};

\node [inner sep=0] (image) at (\XS,0) 
            {\includegraphics[height=\h cm]{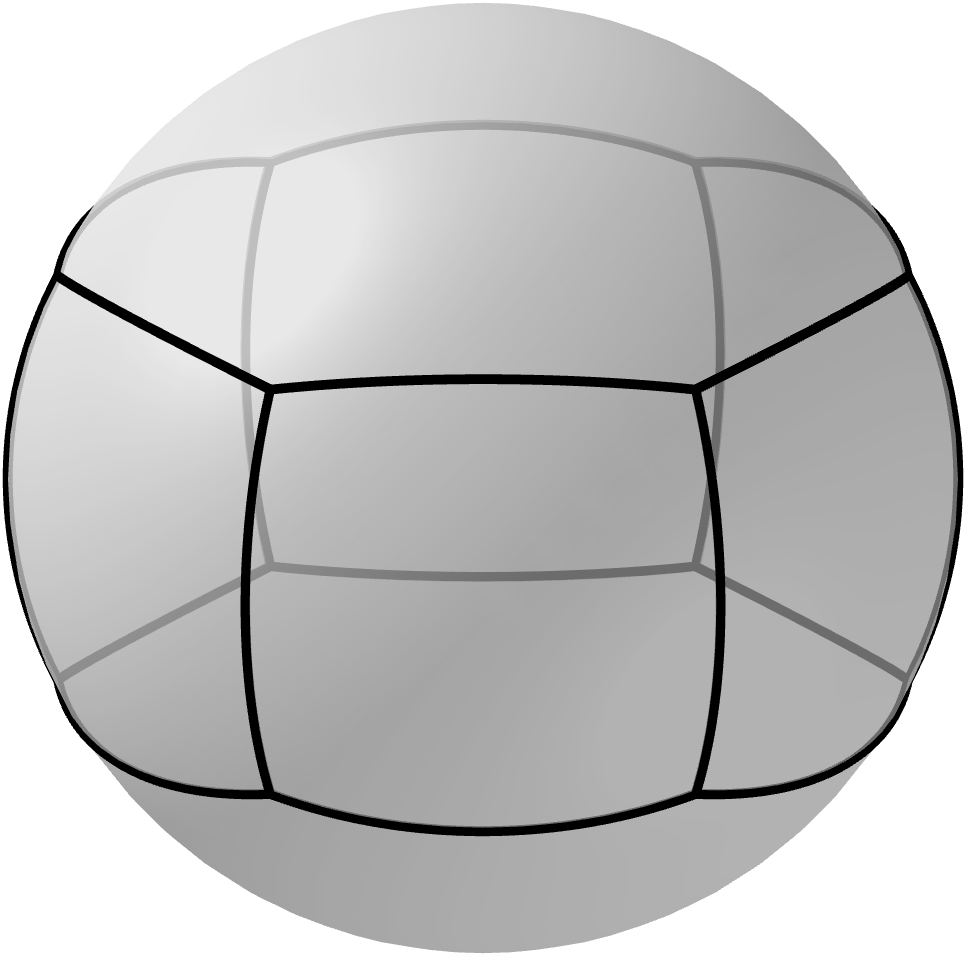}};

\node [inner sep=0] (image) at (2*\XS,0) 
            {\includegraphics[height=\h cm]{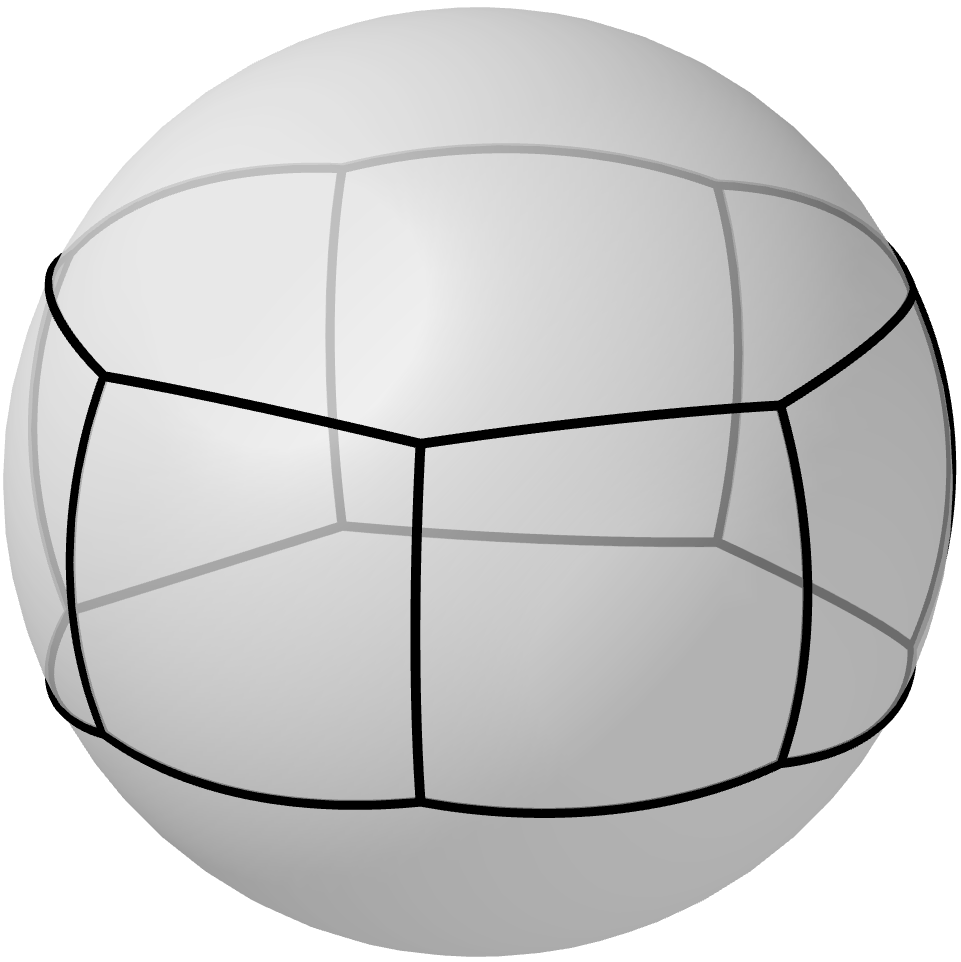}};

\node [inner sep=0] (image) at (3*\XS,0) 
            {\includegraphics[height=\h cm]{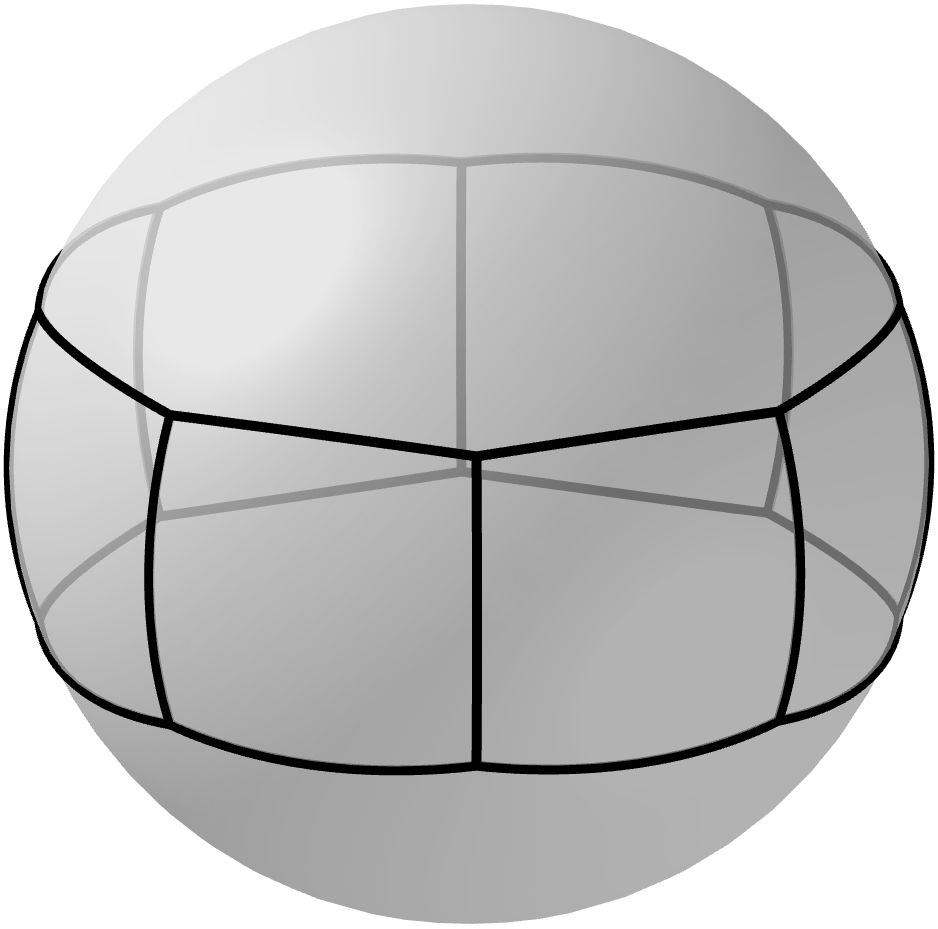}};

\node [inner sep=0] (image) at (4*\XS,0) 
            {\includegraphics[height=\h cm]{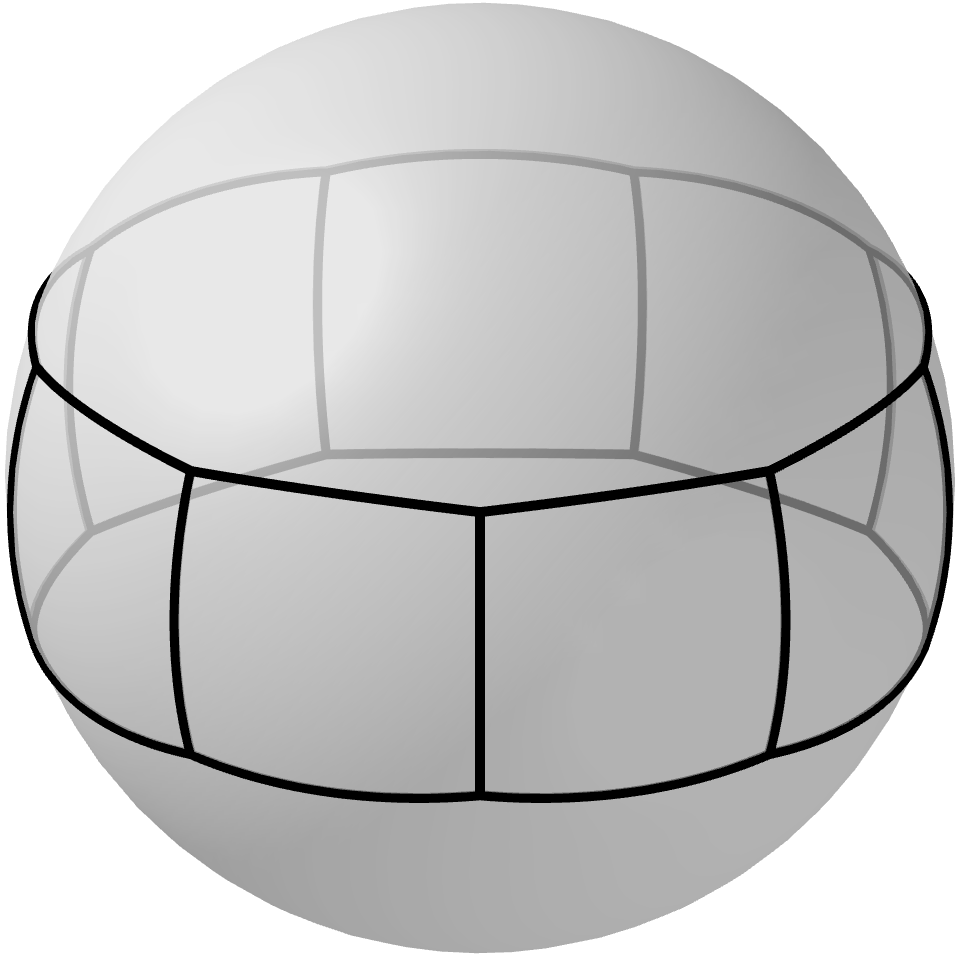}};
\end{scope}

\begin{scope}[yshift=-\YS cm]

\node [inner sep=0] (image) at (0,0) 
            {\includegraphics[height=\h cm]{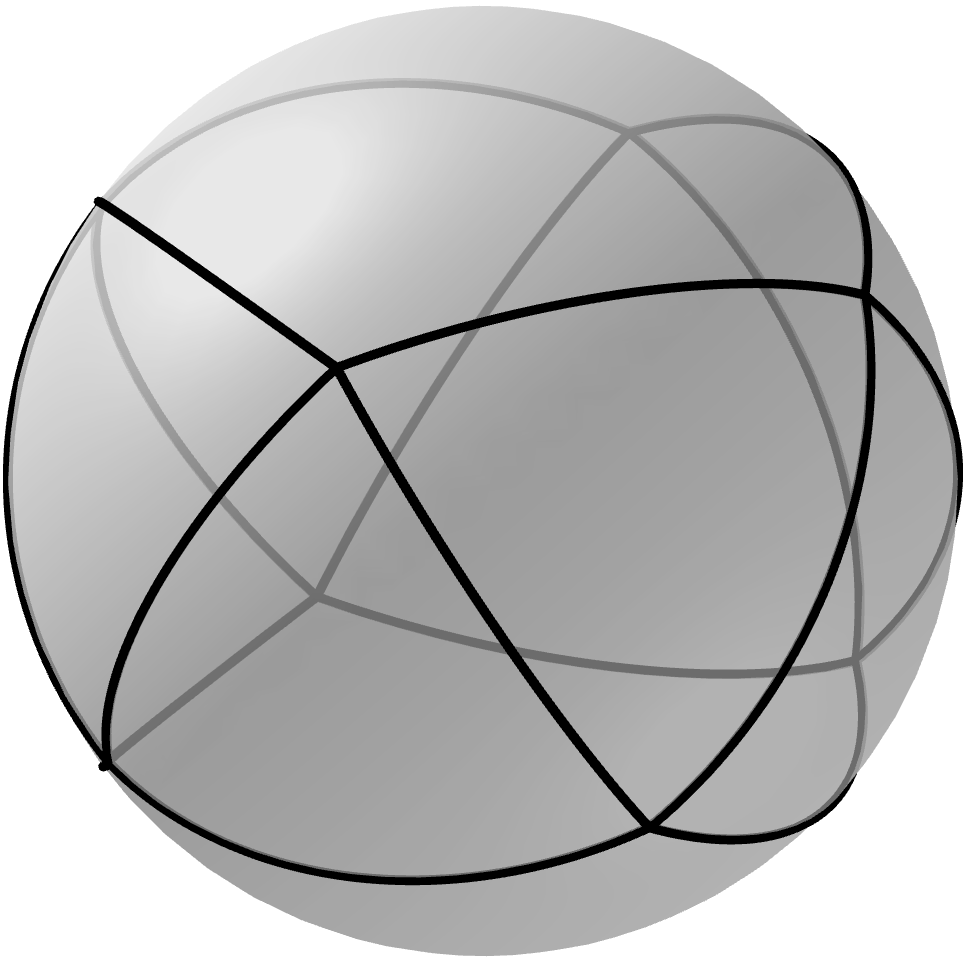}};

\node [inner sep=0] (image) at (\XS,0) 
            {\includegraphics[height=\h cm]{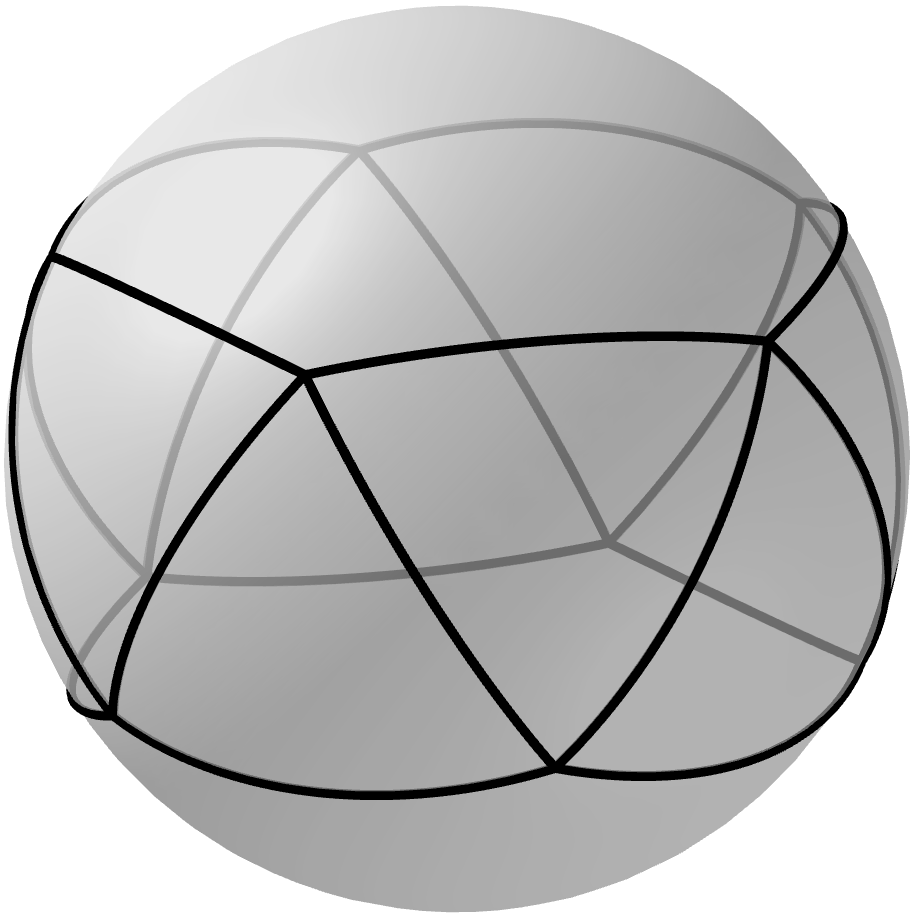}};

\node [inner sep=0] (image) at (2*\XS,0) 
            {\includegraphics[height=\h cm]{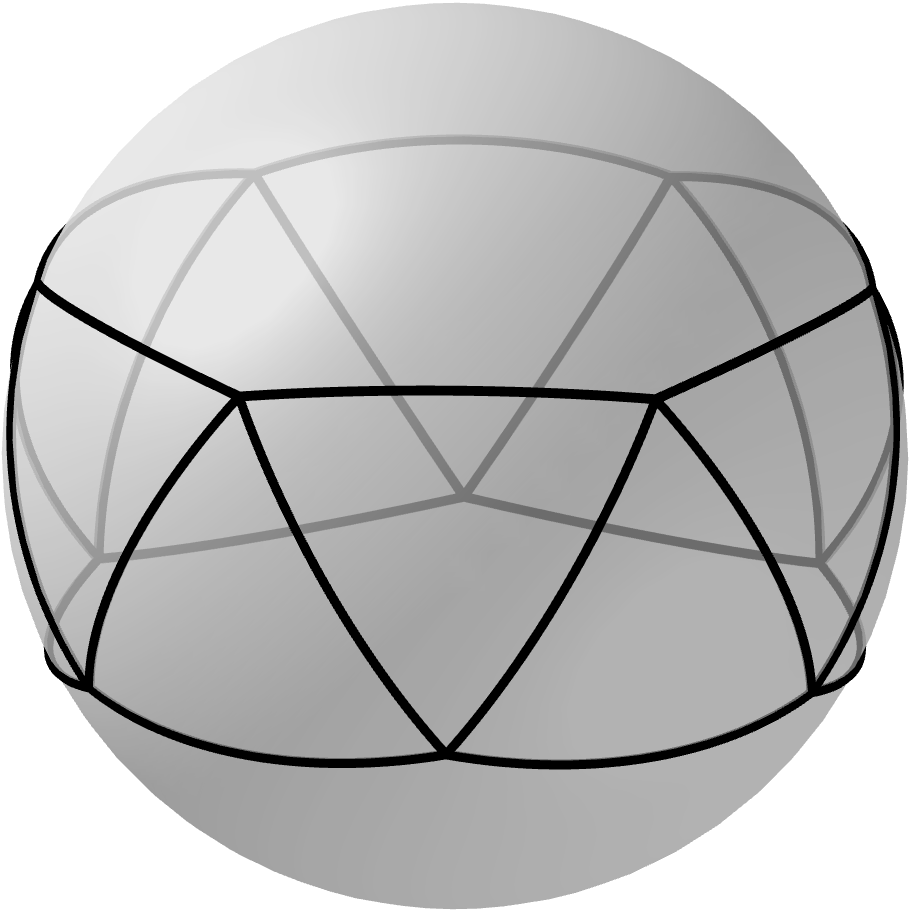}};

\node [inner sep=0] (image) at (3*\XS,0) 
            {\includegraphics[height=\h cm]{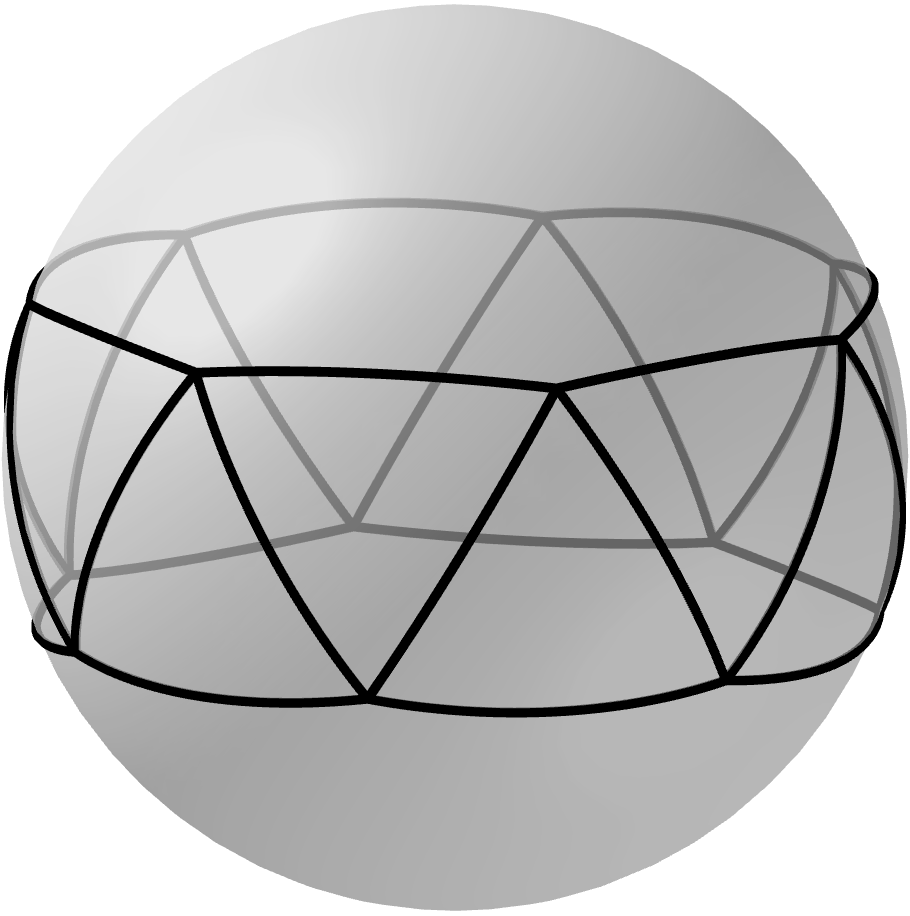}};

\node [inner sep=0] (image) at (4*\XS,0) 
            {\includegraphics[height=\h cm]{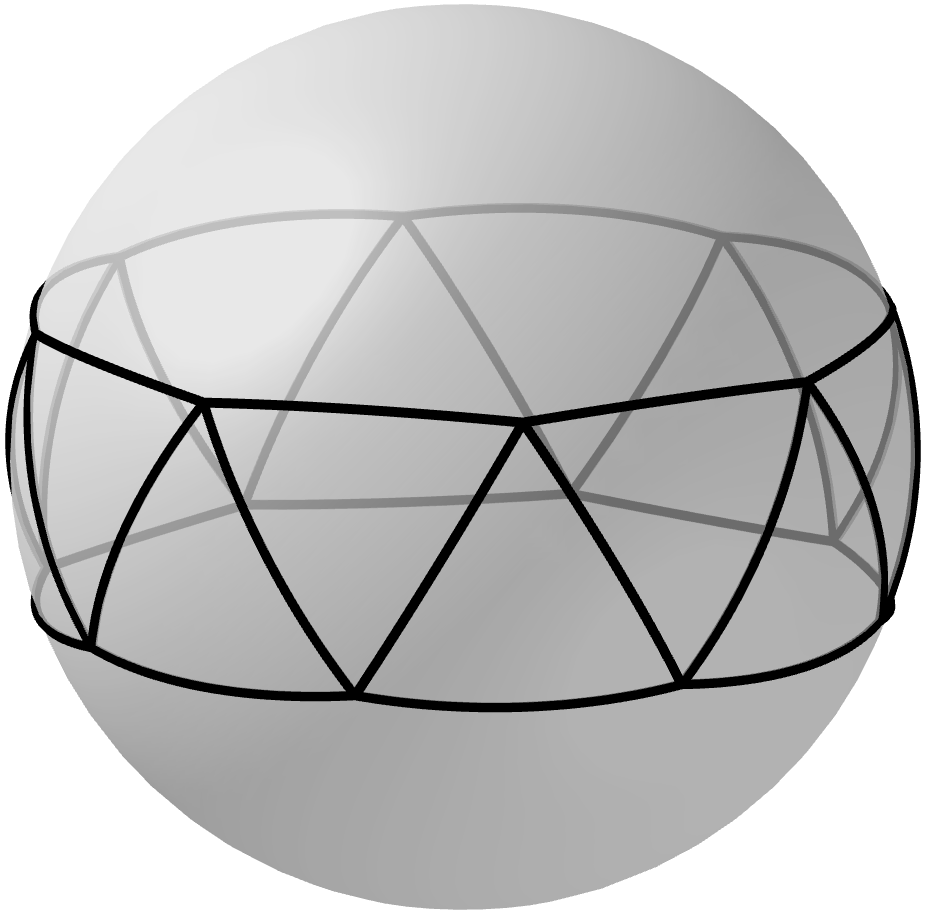}};

\end{scope}

\end{tikzpicture}
\caption{Prisms and antiprisms}
\end{figure}


\begin{figure}[h!]
\centering
\begin{tikzpicture}
\tikzmath{
\s=0.75;
\h=1.75;
\XS=2.15;
\YS=2.15;
}
\begin{scope}[]
\node [inner sep=0] (image) at (0,0) 
            {\includegraphics[height=\h cm]{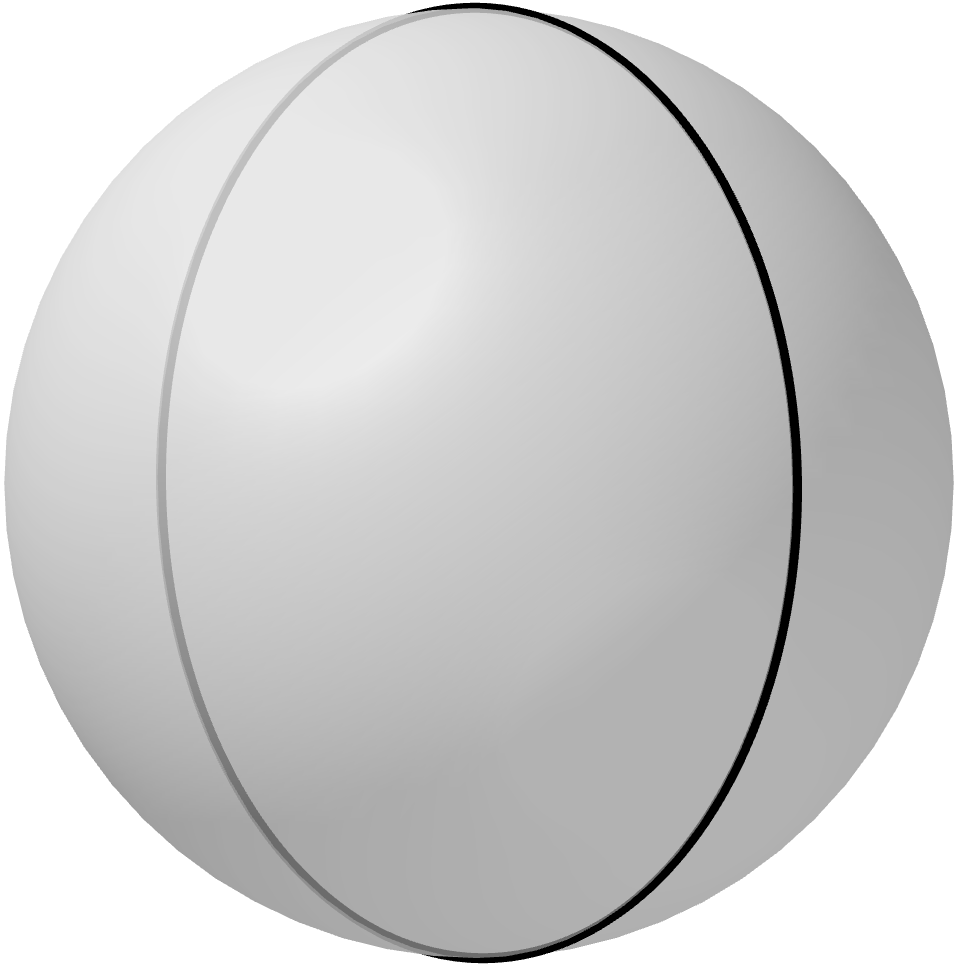}};

\node [inner sep=0] (image) at (\XS,0) 
            {\includegraphics[height=\h cm]{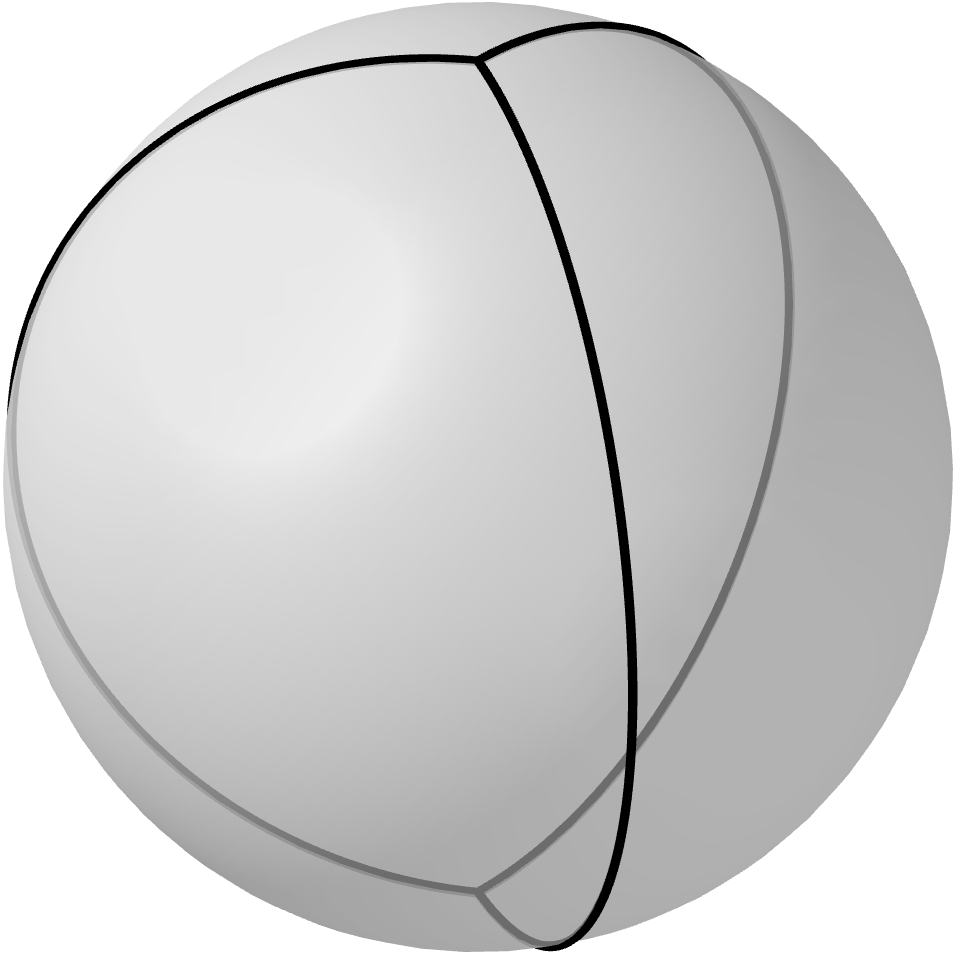}};

\node [inner sep=0] (image) at (2*\XS,0) 
            {\includegraphics[height=\h cm]{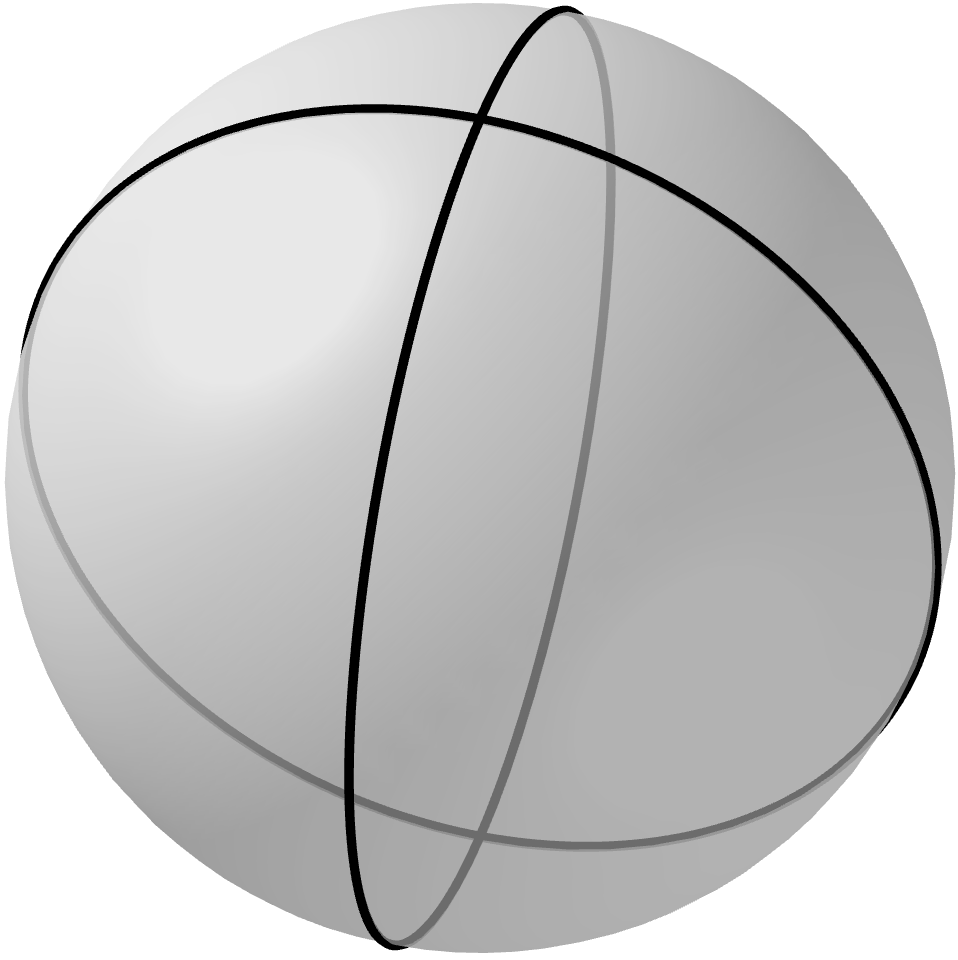}};

\node [inner sep=0] (image) at (3*\XS,0) 
            {\includegraphics[height=\h cm]{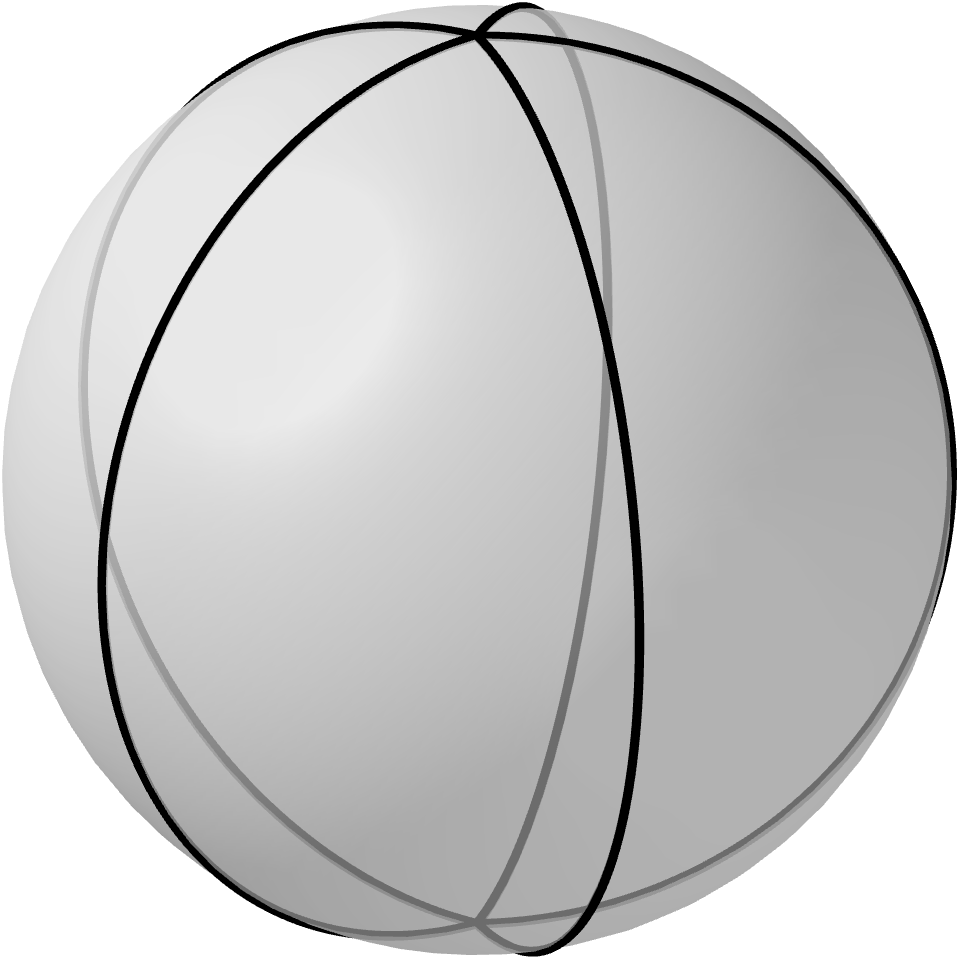}};

\node [inner sep=0] (image) at (4*\XS,0) 
            {\includegraphics[height=\h cm]{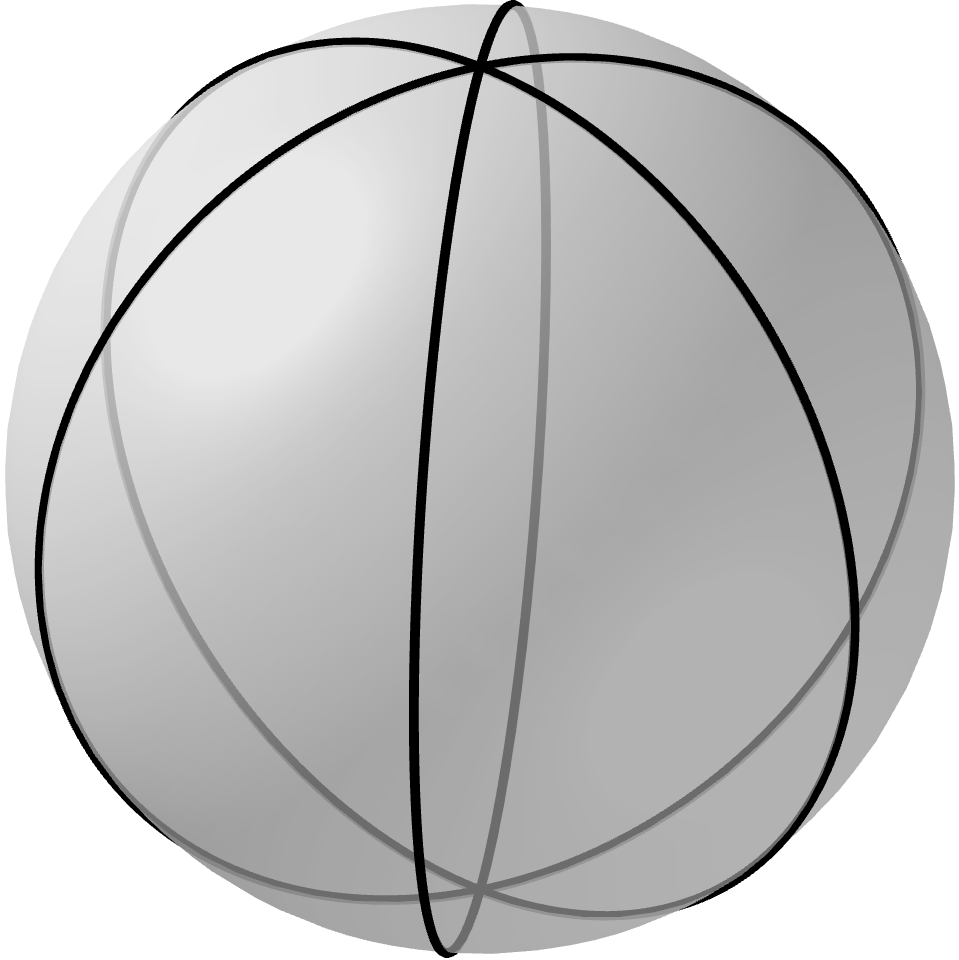}};
\end{scope}

\begin{scope}[yshift=-\YS cm]

\node [inner sep=0] (image) at (0,0) 
            {\includegraphics[height=\h cm]{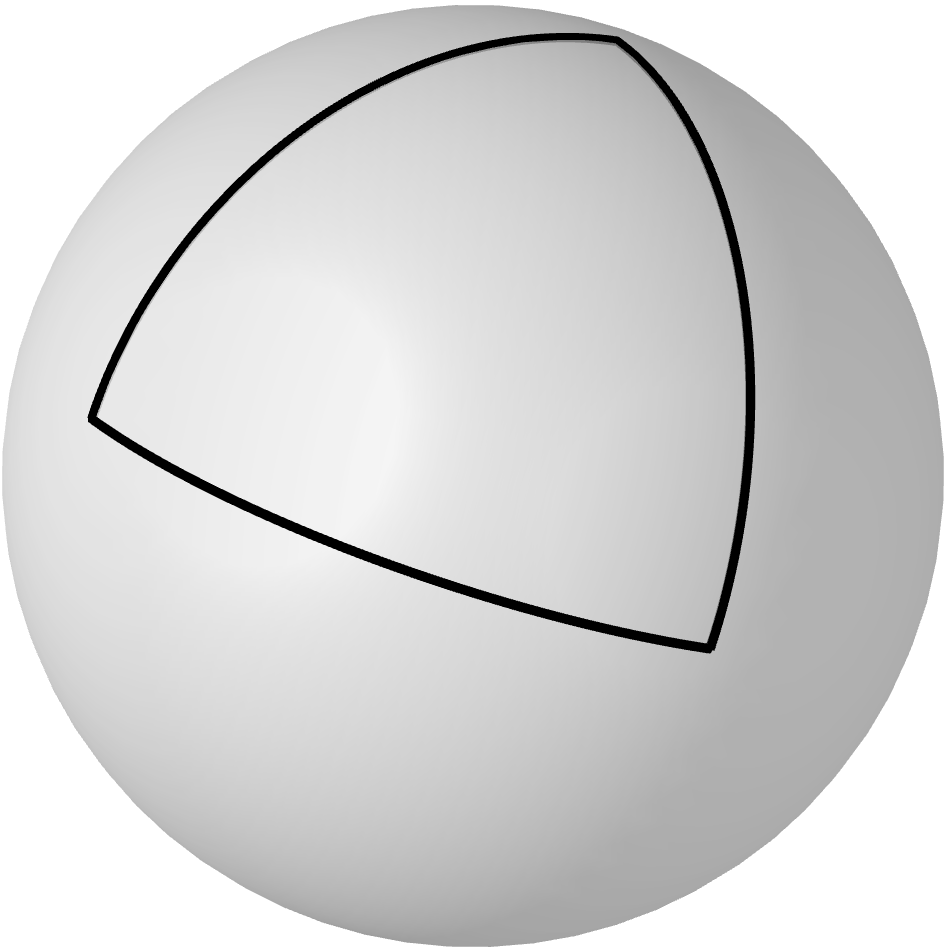}};

\node [inner sep=0] (image) at (\XS,0) 
            {\includegraphics[height=\h cm]{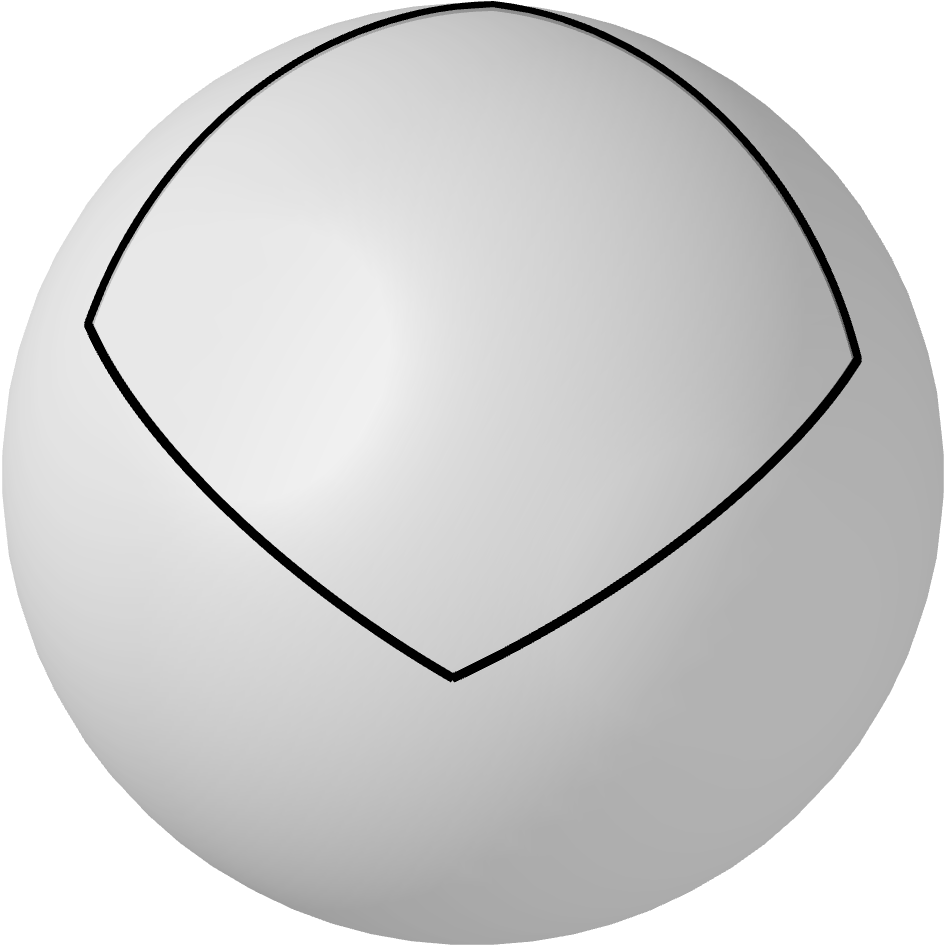}};

\node [inner sep=0] (image) at (2*\XS,0) 
            {\includegraphics[height=\h cm]{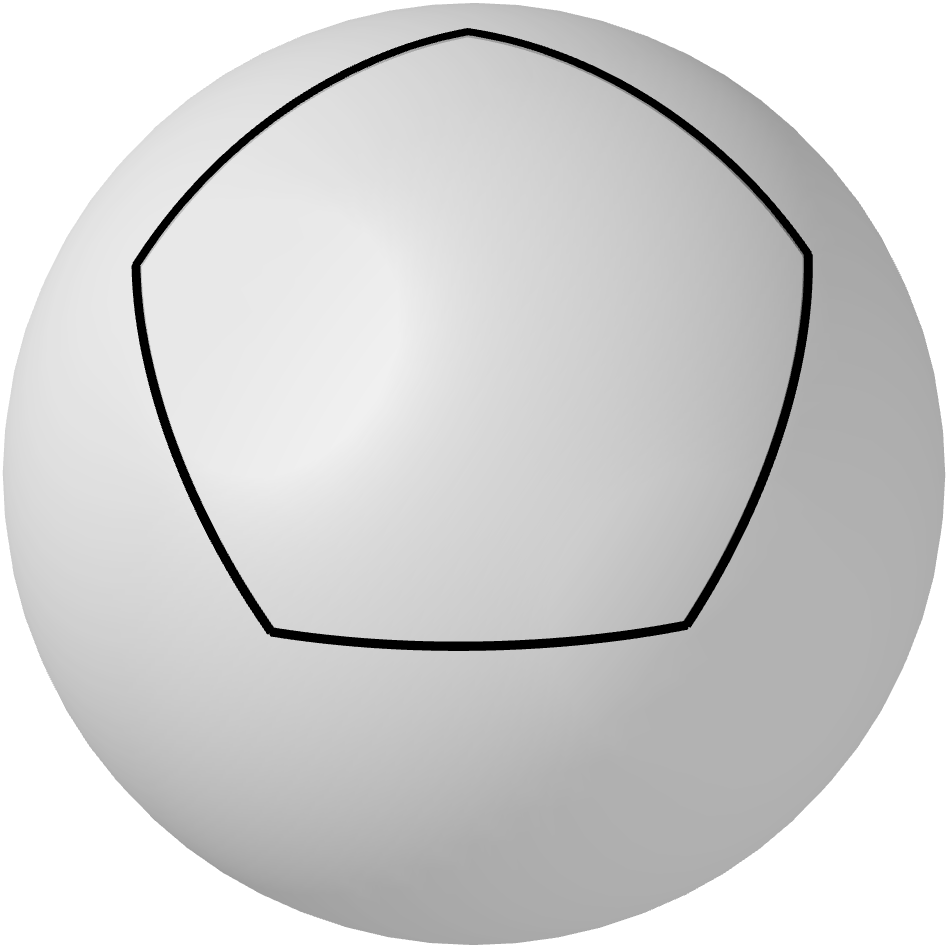}};

\node [inner sep=0] (image) at (3*\XS,0) 
            {\includegraphics[height=\h cm]{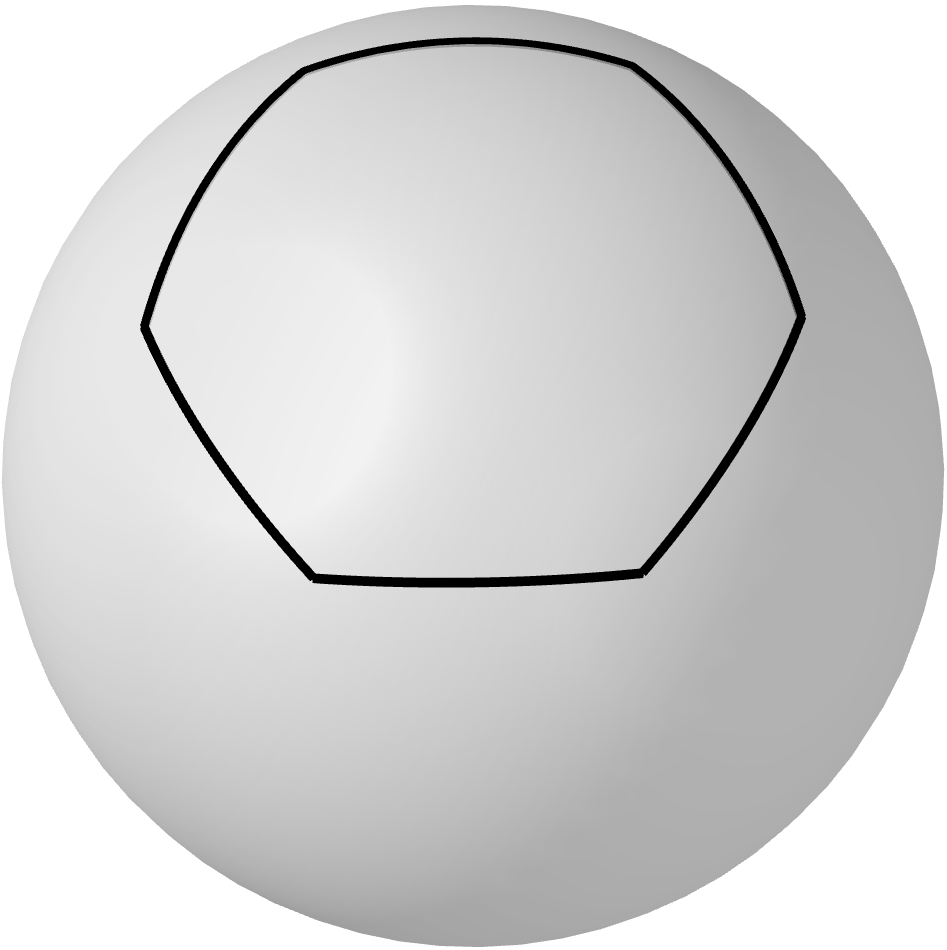}};

\node [inner sep=0] (image) at (4*\XS,0) 
            {\includegraphics[height=\h cm]{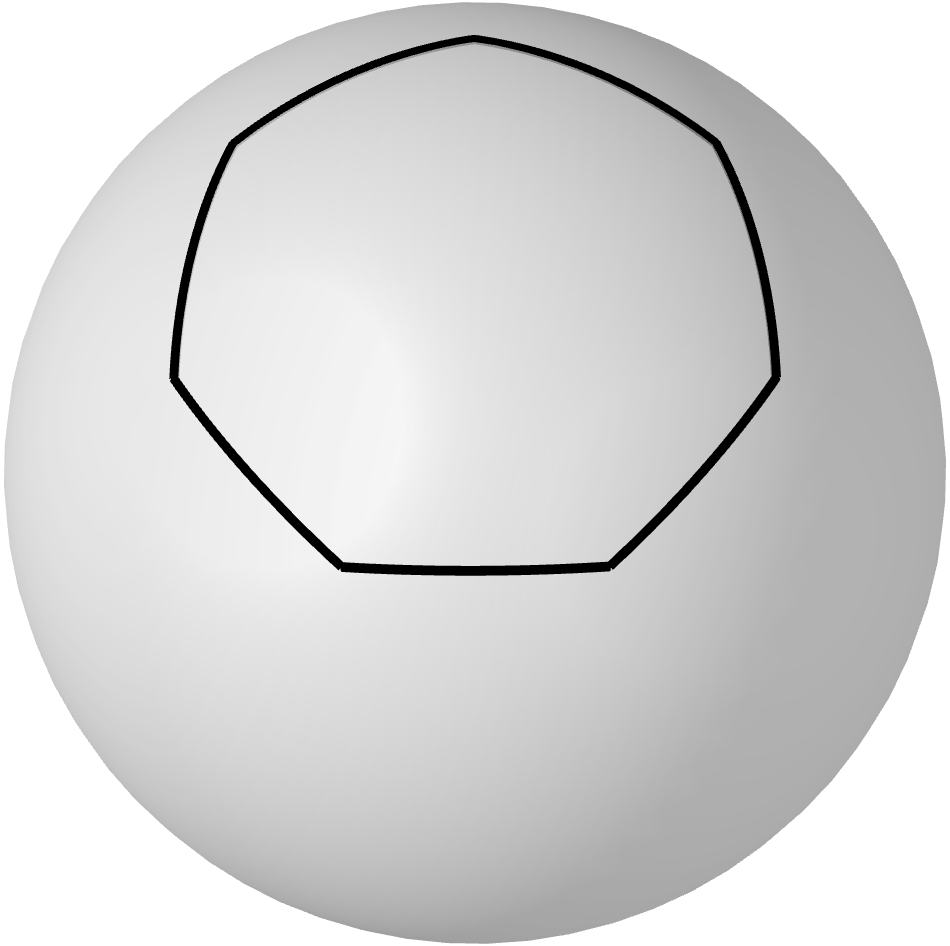}};

\end{scope}

\end{tikzpicture}
\caption{Hosohedra and dihedra}
\end{figure}

\fi


Not only is our theorem proved independently of the Johnson-Zalgaller
classification, but neither convexity of the tiles nor polyhedrality
of the underlying graph is assumed. The global structure is easily
identifiable with that of the solids with regular faces; we abuse
terminology and identify each spherical tiling with its corresponding
solid\footnote{In particular, we refer to the tilings corresponding to
Johnson solids as {\em Johnson tilings.}}. In effect, we
rediscover this structure in the context of spherical geometry.

We note that, armed with the Johnson-Zalgaller classification and the
additional assumptions of convexity and polyhedrality,
one can check
which of the Johnson solids is
circumscribable and obtain a subset of the tilings in our main
theorem. This is part of the approach used to prove Theorem 1.5 in
\cite{ahs} which is unpublished at the time of writing.
Their assumptions exclude hosohedra, dihedra and the following Johnson
solids which have a concave tile: $J_2$, $J_4$ and $J_5$.

We have created interactive 3-dimensional models of the tilings which
can be accessed via the following link: 
\url{https://www.geogebra.org/m/tnzzuugq}. Various data associated
with the tilings, including the length of edges and size of angles, is
presented in Tables~\ref{tab:standalone}, \ref{tab:eD} and
\ref{tab:OIaCeCaD} in Appendix~\ref{app:data}. 

We remark that spherical tilings by
regular polygons often appear as structural models of molecules in
the physics of hard materials, inorganic and organic chemistry and in
microbiology (for example, \cite{gtx,wsw,wjb}) which further motivates
the investigation of the properties of these exceptional structures.   

The rest of the paper is organised as follows. In the next section, we
recall and define terminology and notation, including some basic facts
of spherical geometry. We also prove several preliminary results and
technical lemmas. In Section~\ref{Sec-Tilings}, we prove our main
result.

\section{Preliminaries}

{\bf Spherical tilings.} A \emph{spherical tiling}, or tiling for
short, is a tiling of the sphere in which the surface is
partitioned by great arcs into bounded regions called \emph{spherical
polygons}. In what follows we always assume that the underlying
surface of a tiling is the unit sphere and we exclusively
consider edge-to-edge tilings. Convexity and polyhedrality are not
assumed. The one-skeleton of a tiling is a $2$-connected planar graph
(not necessarily polyhedral) and the two-skeleton projects (by a
stereometric projection) to a $2$-connected
plane graph. A tiling is \emph{convex} if all its polygons are convex,
and it is \emph{strictly convex} if it is convex and the boundary of any
polygon is not formed by a great circle. A stereometric projection
from the centre of a convex polyhedron contained in a unit ball
determines a spherical tiling. Vice-versa, a convex tiling $T$ with a
polyhedral underlying graph determines a polyhedron $P$ which projects
to $T$. 

\medskip 
\noindent
{\bf Angle-valued function.} A given spherical tiling $T$ determines a
function $\Phi$ associating to each angle its value from the interval
$(0,2\pi)$. If $T$ is convex $\Phi$ takes values from $(0,\pi]$. It
transpires that $T$ can be reconstructed from its associated plane
graph $G=G(T)$ and $\Phi$. In what follows we will use this fact
implicitly by drawing the plane graph $G(T)$ and labeling its angles
by angle-values. Two tilings $T_1$ and $T_2$ are isomorphic if and
only if there is a map isomorphism $G(T_1)\to G(T_2)$ preserving the
angle-values. Thus the pair $(G(T),\Phi)$ determines the tiling $T$ up
to isometry.
 
\medskip
\noindent
{\bf Regular spherical polygons.} An equilateral polygon on the
sphere with all angles equal will be called \emph{regular}. 
We denote by $\al_m$ the value of the angle associated to each vertex
of a regular $m$-gon.
A regular $m$-gon is \emph{convex}
if $\alpha_m\leq\pi$ and it is \emph{strictly convex} if
$\alpha_m<\pi$. The boundary of an $m$-gon is a great circle if and
only if it is a hemisphere, which is equivalent to $\alpha_m = \pi$.

A spherical polygon with two sides is called a {\em digon} and
a tiling of a sphere by digons is called a {\em hosohedron}. 

\begin{prop}
		\label{Prop-digon}
		Every tiling of the sphere by regular polygons having at least one digon is a
		hosohedron.
\end{prop}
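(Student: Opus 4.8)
The plan is to show that the presence of a single digon forces every edge of the tiling to be a semicircle of length $\pi$, and that this in turn forces every tile to be a digon.

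First I would record that in any edge-to-edge tiling $T$ by regular polygons all edges share a common length. Each regular polygon is in particular equilateral, so all edges of a single tile are equal; two tiles sharing an edge therefore have the same edge length, and since the tiling is connected (its one-skeleton is a $2$-connected planar graph), this common value propagates across the whole of $T$. Call it $a$. This fact is the engine of the argument, so I would state it at the outset.

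Next I would pin down $a$ using the hypothesised digon. Let $D$ be a digon of $T$ with vertices $N$ and $S$; its two sides are great arcs joining $N$ to $S$, and by the previous step they have equal length $a$. If these two arcs lie on distinct great circles, then, since two distinct great circles of the unit sphere meet precisely in a pair of antipodal points, $N$ and $S$ must be antipodal. If instead both arcs lie on a single great circle, they are the two complementary arcs between $N$ and $S$, and equality of their lengths again forces $N$ and $S$ to be antipodal (this is also the boundary case $\alpha_2=\pi$ in which $D$ is a hemisphere). Either way $N$ and $S$ are antipodal, so each side of $D$ is a semicircle and hence $a=\pi$.

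Finally I would propagate this globally. Since every edge of $T$ now has length $\pi$, the two endpoints of every edge are antipodal. Taking any tile $P$ with boundary cycle $v_1 v_2 \cdots v_m$, the relation $v_{i+1}=-v_i$ holds for every $i$, so $v_3=-v_2=v_1$. Because the one-skeleton is $2$-connected, the boundary of each face is a genuine cycle with pairwise distinct vertices, so the equality $v_3=v_1$ can only mean that the cycle has already closed, i.e.\ $m=2$. Hence every tile of $T$ is a digon and $T$ is a hosohedron. I expect the only genuinely delicate points to be the justification that a regular digon's sides must be semicircles (where the antipodal-intersection property of great circles does the work) and the exclusion of a face being traversed with a repeated vertex (where I would lean on the $2$-connectivity of the one-skeleton recorded in the preliminaries); the remainder is immediate.
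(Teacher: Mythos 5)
Your proposal is correct and takes essentially the same route as the paper: both arguments rest on the equivalence \emph{regular polygon is a digon $\Leftrightarrow$ edge length is $\pi$}, combined with the constancy of edge length across an edge-to-edge tiling, to force every tile to be a digon. You additionally supply the details the paper leaves implicit --- the antipodal-intersection argument for great circles in one direction, and the face-cycle argument $v_{i+1}=-v_i$ (using $2$-connectivity) for the converse --- while the paper's closing chain-of-digons paragraph is rendered unnecessary in your version by appealing directly to the definition of a hosohedron as a tiling by digons.
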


\begin{proof}
		A regular spherical polygon is a digon if and only if it has edge length
		$\pi$ on the unit sphere. Therefore, if there is at least one
		digon in the tiling, then all the tiles are digons.

		The given digon shares an edge (or two) and both vertices with a
		digon. If there are just two tiles, we are done. Otherwise, the
		new digon also shares an edge and both vertices with a third digon
		and so on. Repeating this argument gives the result.
\end{proof}

A tiling with exactly two tiles is a {\em dihedron}. Such a tiling has
vertices of degree 2. We show that dihedra are the only tilings with
such vertices.

\begin{prop}
		\label{Prop-deg2}
		Every tiling of the sphere by regular polygons having at least one vertex of
		degree 2 is a dihedron.
\end{prop}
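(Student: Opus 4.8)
The plan is to analyse the local picture at a degree-$2$ vertex $v$ and show it forces the sphere to be covered by exactly two tiles. First I would record the combinatorial structure: since the one-skeleton is $2$-connected, every face is bounded by a cycle, so the two faces meeting at $v$ are genuinely distinct; call them $F_1$ and $F_2$, regular $m_1$- and $m_2$-gons. The two edges $e_1,e_2$ incident to $v$ each separate a corner of $F_1$ from a corner of $F_2$, so both edges are shared by $F_1$ and $F_2$; the one consequence I actually need is that $F_1$ and $F_2$ have a common edge, hence a common edge length $\ell$. The corner angles at $v$ satisfy $\alpha_{m_1}+\alpha_{m_2}=2\pi$, but by itself this is automatic, and the real content will come from coupling it to the shared edge length. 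Before that, I would dispatch the degenerate case: if either face is a digon, Proposition~\ref{Prop-digon} makes the tiling a hosohedron, in which every vertex has degree equal to the number of tiles, so a degree-$2$ vertex leaves exactly two tiles, a dihedron. Hence I may assume $m_1,m_2\ge 3$.

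The heart of the argument is a single trigonometric identity. Dissecting a regular $m$-gon of edge length $\ell$ into $2m$ congruent right triangles (centre, vertex, edge-midpoint) and applying Napier's rules yields
$$\sin\frac{\alpha_m}{2}=\frac{\cos(\pi/m)}{\cos(\ell/2)}.$$
As a sanity check, letting $\ell\to 0$ recovers the Euclidean angle $(m-2)\pi/m$, and $\alpha_m=\pi$ occurs exactly when $\ell=2\pi/m$, i.e.\ when the boundary is a great circle.

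Feeding $\alpha_{m_2}=2\pi-\alpha_{m_1}$ into this identity gives $\sin(\alpha_{m_1}/2)=\sin(\alpha_{m_2}/2)$; since $F_1$ and $F_2$ share the edge length $\ell$, this forces $\cos(\pi/m_1)=\cos(\pi/m_2)$ and hence $m_1=m_2$. But two regular $m$-gons with the same edge length have the same angle, so $\alpha_{m_1}=\alpha_{m_2}$, and with sum $2\pi$ each equals $\pi$. Thus both faces are hemispheres with $\ell=2\pi/m_1$. To globalise, I would observe that the great circles bounding $F_1$ and $F_2$ both contain the arc $e_1$, so they coincide; therefore $F_1$ and $F_2$ are complementary hemispheres that already exhaust the sphere, the tiling has exactly two tiles, and it is a dihedron.

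I expect the main obstacle to be conceptual rather than computational: recognising that the angle condition $\alpha_{m_1}+\alpha_{m_2}=2\pi$ carries no information on its own, and that the decisive constraint is the equality of edge lengths forced by the shared edge, which only bites once the angle--edge formula is in hand. Establishing that formula cleanly, together with quarantining the digon case via Proposition~\ref{Prop-digon} and invoking $2$-connectivity to guarantee $F_1\neq F_2$, is the only real work.
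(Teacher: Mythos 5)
Your identity $\sin\tfrac{1}{2}\alpha_m=\cos(\pi/m)/\cos(\ell/2)$ is correct (it is equivalent to \eqref{Eq-isos-cos2/m}) and does yield $m_1=m_2$, but the next step --- ``two regular $m$-gons with the same edge length have the same angle, so $\alpha_{m_1}=\alpha_{m_2}$, and with sum $2\pi$ each equals $\pi$'' --- is a genuine gap, and it falls into exactly the trap your own computation exposes: the identity determines only $\sin\tfrac{1}{2}\alpha_m$, which is invariant under $\alpha\mapsto 2\pi-\alpha$. A strictly convex regular $m$-gon and its complementary concave regular $m$-gon have the same edge length and angles $\alpha$ and $2\pi-\alpha$; the paper records this symmetry in the remark following \eqref{Eq-alm-aln-linear}, and its congruence result (Lemma~\ref{Lem-cong}) asserts uniqueness only for $m$-gons \emph{within one tiling}, via a separate hemisphere argument --- it is not the general statement you invoke. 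Concretely, a convex regular $m$-gon of angle $\alpha<\pi$ together with its complement is an edge-to-edge tiling by two regular $m$-gons, all of whose $m$ vertices have degree $2$ with incident angles $\alpha$ and $2\pi-\alpha$. This \emph{is} a dihedron in the paper's sense (exactly two tiles), so the proposition covers it, but your claimed conclusion --- both faces hemispheres, $\alpha=\pi$, edge $2\pi/m$ --- is false for it, and your globalisation step, which relies on both boundaries being great circles through $e_1$, does not apply. As written, your argument proves the proposition only in the hemispherical subcase and would ``classify away'' the generic dihedra.

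The repair needs no trigonometry at all, and it is the paper's actual proof: at the degree-$2$ vertex the two incident angles satisfy $\alpha_{m_1}+\alpha_{m_2}=2\pi$; a neighbour $u$ along a shared edge is incident to the same two polygons, which by regularity already contribute angle $\alpha_{m_1}+\alpha_{m_2}=2\pi$ at $u$, so $u$ cannot meet a third tile and also has degree $2$; propagating around the common boundary shows every vertex has degree $2$, whence the two faces exhaust the sphere and the tiling is a dihedron. Your quarantine of digons via Proposition~\ref{Prop-digon} and your use of $2$-connectedness to separate $F_1$ from $F_2$ are fine (and more careful than the paper), and even your application of the angle--edge formula to a possibly concave face is salvageable via complementation; but the decisive claim that the shared edge length pins both angles to $\pi$ is wrong, and without it your route does not close.
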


\begin{proof}
		Let $\al$ and $\al'$ be the angles of the polygons incident to the
		given vertex, and note that $\al+\al' = 2\pi$.
		Its neighbour is incident to the same two polygons; it cannot be
		incident to a third, otherwise the sum of the angles incident with
		this vertex is larger than $2\pi$.
		Repeating this argument, we observe that every vertex of the
		tiling is of degree 2, hence the result.
\end{proof}

For the rest of the paper, we assume that the degree of every vertex
and the number of edges in every polygon is at least 3. We remark that
hosohedra and dihedra are not realisable as solids.

For $\alpha_m < \pi$, Figure \ref{Fig-reg-mgon-trin} shows the regular
$m$-gon being triangulated into $m$ isosceles triangles with base edge
$x$ and side edges $r$, where $r$ denotes the radius from its centre
to two adjacent vertices with $\alpha = \alpha_m$. 

\begin{figure}[h!]
\centering
\begin{tikzpicture}[>=latex]
\tikzmath{
\r=2;
\th=360/8;
}

\foreach \a in {0,1,2,3,4} {
\tikzset{rotate=\a*\th}
\draw[]
	(90+1.5*\th:\r) -- (90+2.5*\th:\r)
;
}

\foreach \aa in {-1,1} {
\tikzset{xscale=\aa}
\draw[dashed]
	(0,0) -- (270+0.5*\th:\r)
;

\draw[gray!30, dashed]
	(0,0) -- (270+1.5*\th:\r)
;

\node at (270+0.3*\th:0.8*\r) {\footnotesize $\tfrac{1}{2}\alpha$};
\node at (270+0.75*\th:0.8*\r) {\footnotesize $\tfrac{1}{2}\alpha$};

\node at (270+1.5*\th:0.85*\r) {\small $\alpha$};

\node at (270+0.7*\th:0.5*\r) {\small $r$};
\node at (270+1.7*\th:0.5*\r) {\small {\color{gray!50}$r$}};

\node at (270+1*\th:1.1*\r) {\small $x$};
\node at (270+2*\th:1.1*\r) {\small $x$};
}

\node at (0,0.15*\r) {\footnotesize centre};
\node at (270:1.1*\r) {\small $x$};
\node at (270:0.4*\r) {\footnotesize $\tfrac{2}{m}\pi$};

\end{tikzpicture}
\caption{Triangulation of a regular $m$-gon with angles $\alpha:=\alpha_m$ and edges $x$}
\label{Fig-reg-mgon-trin}
\end{figure}
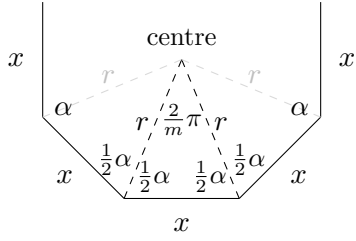

The spherical cosine law for angles on the isosceles triangle in
Figure \ref{Fig-reg-mgon-trin} gives
\begin{align}
		\label{Eq-isos-cosal}
		\cos \tfrac{1}{2}\alpha_m &= - \cos \tfrac{1}{2} \alpha_m \cos
		\tfrac{2}{m}\pi + \sin \tfrac{1}{2}\alpha_m \sin \tfrac{2}{m}\pi
		\cos r, \\ 
		\label{Eq-isos-cos2/m}
		\cos \tfrac{2}{m}\pi  &= -\cos^2
		\tfrac{1}{2}\alpha_m + \sin^2 \tfrac{1}{2}\alpha_m \cos x.
\end{align}

Then \eqref{Eq-isos-cos2/m} and $\cos^2\tfrac{1}{2} \alpha_m =
\frac{1}{2}(1+\cos \alpha_m)$ further imply
\begin{align}
		\label{Eq-x-al}
		\tfrac{1}{2}(1+\cos x) (1+\cos \alpha_m) = \cos x - \cos
		\tfrac{2}{m}\pi. 
\end{align}

Meanwhile, by \eqref{Eq-isos-cosal} and \eqref{Eq-isos-cos2/m}, we get
\begin{align} 
		\label{Eq-cosr}
		&\cot \tfrac{1}{2} \alpha_m =
		\frac{\sin\frac{2}{m}\pi}{1+\cos\frac{2}{m}\pi} \cos r, \\
		\label{Eq-cosx} 
		&\cos x = \cot^2 \tfrac{1}{2}\alpha_m + 
		\frac{\cos \frac{2}{m}\pi}{\sin^2 \frac{1}{2}\alpha_m}.
\end{align}

\begin{lem}
		\label{Lem-ang-asc-seq} 
		If $\al_m$ and $\al_n$ are the respective angles of strictly
		convex polygons having the same edge length, then $m<n$ if and
		only if $\al_m < \al_n$.
\end{lem}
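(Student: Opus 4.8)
The plan is to solve \eqref{Eq-x-al} for $\cos\alpha_m$ and thereby exhibit it as a strictly monotone function of $m$ once the common edge length is fixed. Since both polygons are assumed to have the same edge length $x$, I set $c:=\cos x$; as a genuine edge satisfies $0<x<\pi$ we have $c\in(-1,1)$, and in particular $1+c>0$, a fact I will lean on when dividing through by it. Rearranging \eqref{Eq-x-al} then yields the closed form
\begin{equation*}
\cos\alpha_m=\frac{c-1-2\cos\tfrac{2}{m}\pi}{1+c},
\end{equation*}
and the same identity holds with $m$ replaced by $n$ and the \emph{same} value of $c$, precisely because the edge lengths coincide.

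Next I would compare the two polygons by subtracting these identities, obtaining
\begin{equation*}
\cos\alpha_m-\cos\alpha_n=\frac{2\bigl(\cos\tfrac{2}{n}\pi-\cos\tfrac{2}{m}\pi\bigr)}{1+c}.
\end{equation*}
Because $m,n\geq 3$, both arguments $\tfrac{2}{m}\pi$ and $\tfrac{2}{n}\pi$ lie in $(0,\pi)$, on which cosine is strictly decreasing; hence $m<n$ is equivalent to $\tfrac{2}{m}\pi>\tfrac{2}{n}\pi$, equivalently to $\cos\tfrac{2}{n}\pi>\cos\tfrac{2}{m}\pi$. Since $1+c>0$, the displayed difference is then positive, i.e.\ $\cos\alpha_m>\cos\alpha_n$. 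Finally, strict convexity gives $\alpha_m,\alpha_n\in(0,\pi)$, where cosine is again strictly decreasing, so $\cos\alpha_m>\cos\alpha_n$ is equivalent to $\alpha_m<\alpha_n$. Chaining these equivalences delivers $m<n\iff\alpha_m<\alpha_n$, in both directions simultaneously.

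There is no serious obstacle in this argument; essentially all the content lies in rewriting \eqref{Eq-x-al} as the explicit formula for $\cos\alpha_m$ above and then invoking monotonicity of cosine. The only points that demand genuine care are the sign and range conditions: I must confirm $1+\cos x>0$ so that dividing by $1+c$ preserves the direction of inequalities, and verify that the arguments $\tfrac{2}{m}\pi,\tfrac{2}{n}\pi$ as well as the angles $\alpha_m,\alpha_n$ all lie in the single interval $(0,\pi)$ on which cosine is strictly monotone. Both hold under the standing assumptions $m,n\geq 3$ and strict convexity, so the biconditional follows at once from strict monotonicity rather than requiring the two implications to be treated separately.
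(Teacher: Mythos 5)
Your proof is correct and follows essentially the same route as the paper's: both rest on \eqref{Eq-x-al} (after noting $1+\cos x>0$) together with the strict monotonicity of cosine on $(0,\pi)$, applied to $\tfrac{2}{m}\pi,\tfrac{2}{n}\pi$ and to $\alpha_m,\alpha_n$. Solving explicitly for $\cos\alpha_m$ and subtracting is just a tidier packaging of the paper's direct inequality comparison, with the minor merit that your chain of equivalences yields both directions of the biconditional explicitly, whereas the paper writes out only the forward implication.
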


\begin{proof} 
		Let $x$ be the edge length. If $m<n$ then $\cos x - \cos
		\frac{2}{m}\pi > \cos x - \cos \frac{2}{n}\pi$. By
		\eqref{Eq-x-al}, we have $\frac{1}{2}(1
		+ \cos x) ( 1 + \cos \al_m) > \frac{1}{2}(1 + \cos x) (1 + \cos
		\al_m)$ which implies $\cos \al_m > \cos \al_n$; since
		$\al_m,\al_n<\pi$, we deduce that $\al_m <
		\al_n$ as required.
\end{proof}

The following lemma is easy but useful.

\begin{lem} 
		The angles $\alpha_m$ and $\alpha_n$ of a regular $m$-gon and a
		regular $n$-gon with the same edge length satisfy
		\begin{align}
				\label{Eq-alm-aln-linear}
				(1-\cos \alpha_n)(1 + \cos \alpha_m + 2\cos \tfrac{2}{m}\pi)
				= 
				(1-\cos \alpha_m)(1 + \cos \alpha_n + 2\cos \tfrac{2}{n}\pi).
		\end{align}
\end{lem}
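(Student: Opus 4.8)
The plan is to eliminate the common edge length by applying equation~\eqref{Eq-x-al} to both polygons. Since the regular $m$-gon and the regular $n$-gon are assumed to share the edge length $x$, equation~\eqref{Eq-x-al} holds simultaneously for $m$ and for $n$ with the \emph{same} value of $\cos x$. Treating $\cos x$ as the quantity to be eliminated then reduces the problem to a one-line algebraic manipulation.

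First I would rearrange \eqref{Eq-x-al} to isolate $\cos x$. Expanding $\tfrac{1}{2}(1+\cos x)(1+\cos\alpha_m) = \cos x - \cos\tfrac{2}{m}\pi$ and gathering the terms containing $\cos x$ gives
\[
\cos x = \frac{1 + \cos\alpha_m + 2\cos\tfrac{2}{m}\pi}{\,1 - \cos\alpha_m\,}.
\]
The identical computation for the $n$-gon yields the same expression with $m$ replaced by $n$. The denominator $1-\cos\alpha_m$ is nonzero: the angle-values of a genuine regular polygon lie in $(0,2\pi)$, so $\cos\alpha_m \neq 1$, and likewise $\cos\alpha_n \neq 1$.

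Because both fractions equal the single value $\cos x$, I would equate them,
\[
\frac{1 + \cos\alpha_m + 2\cos\tfrac{2}{m}\pi}{\,1 - \cos\alpha_m\,}
=
\frac{1 + \cos\alpha_n + 2\cos\tfrac{2}{n}\pi}{\,1 - \cos\alpha_n\,},
\]
and clear denominators by cross-multiplying, which produces precisely \eqref{Eq-alm-aln-linear}. I expect no genuine obstacle here, in keeping with the remark that the lemma is ``easy but useful''; the entire content is the elimination of $\cos x$ between two instances of \eqref{Eq-x-al}. The only point deserving explicit mention is the non-vanishing of the denominators, which is guaranteed by the range of the angle-values.
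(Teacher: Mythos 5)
Your proof is correct and essentially the same as the paper's: both eliminate the common edge length by equating two expressions for $\cos x$, and indeed solving \eqref{Eq-x-al} for $\cos x$ gives exactly the rational function $\frac{1+\cos\alpha_m+2\cos\frac{2}{m}\pi}{1-\cos\alpha_m}$ that the paper obtains from \eqref{Eq-cosx} after the half-angle substitutions $\cos^2\tfrac{1}{2}\theta=\tfrac{1}{2}(1+\cos\theta)$, $\sin^2\tfrac{1}{2}\theta=\tfrac{1}{2}(1-\cos\theta)$. The only difference is which equivalent rearrangement of \eqref{Eq-isos-cos2/m} you start from, and your explicit remark on the non-vanishing denominators is a small bonus.
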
 

\begin{proof}
		For a regular $m$-gon and a regular $n$-gon with the same edge
		length $x$, (\ref{Eq-cosx}) gives
		\begin{align}
				\label{Eq-alm-aln}
				\cot^2 \tfrac{1}{2}\alpha_m +
				\frac{\cos \frac{2}{m}\pi}{ \sin^2 \frac{1}{2}\alpha_m }
				=
				\cot^2 \tfrac{1}{2}\alpha_n + 
				\frac{\cos \frac{2}{n}\pi}{ \sin^2 \frac{1}{2}\alpha_n }. 
		\end{align}
		By substituting $\cos^2\frac{1}{2}\theta = \tfrac{1}{2}(1+\cos
		\theta)$ and $\sin^2\frac{1}{2}\theta = \tfrac{1}{2}(1-\cos
		\theta)$, we simplify \eqref{Eq-alm-aln} to obtain the desired
		result.
\end{proof}

Since $2\pi - \alpha_n$ and $2\pi - \alpha_m$ satisfy the above
equation, the exterior of every convex $m$-gon (respectively $n$-gon)
with angles $\alpha_m$ (respectively $\alpha_n$) is a complementary
concave $m$-gon (respectively $n$-gon) with angles $2\pi - \alpha_m$
(respectively $\alpha_n$) satisfying the same equation. 

\begin{lem}
		\label{Lem-cong} 
		In an edge-to-edge tiling by regular polygons all $m$-gons for a
		fixed $m\geq 3$ are congruent. In particular, the tiling contains
		at most one regular polygon which is not strictly convex.
\end{lem}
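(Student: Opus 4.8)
The plan is to reduce everything to a single scalar invariant—the common edge length—and then to close with an area count via the spherical excess formula. First I would observe that in an edge-to-edge tiling any two tiles sharing an edge have that edge in common, and since each tile is equilateral they then share the same edge length; as the dual (adjacency) graph of the tiling is connected (the one-skeleton being a $2$-connected plane graph), it follows that every tile has one and the same edge length $x$. Consequently, for each fixed $m\ge 3$, all regular $m$-gons in the tiling have edge length $x$, and by \eqref{Eq-x-al} the quantity $\cos\alpha_m$ is pinned down by $m$ and $x$ alone. Hence any two $m$-gons have angles $\alpha,\alpha'$ with $\cos\alpha=\cos\alpha'$, so either $\alpha=\alpha'$—in which case, sharing the same $m$, $x$ and angle, they are congruent—or $\{\alpha,\alpha'\}=\{\alpha^\ast,\,2\pi-\alpha^\ast\}$ with $\alpha^\ast\in(0,\pi]$, i.e.\ one is the convex $m$-gon and the other its concave complement.

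The key step is to rule out the second alternative by an area argument. Using the spherical excess formula, a regular $m$-gon with angle $\alpha$ has area $m\alpha-(m-2)\pi$, and this remains valid for the concave complement, which is a spherical disk, by Gauss--Bonnet. Summing the areas of the convex $m$-gon (angle $\alpha^\ast$) and its concave counterpart (angle $2\pi-\alpha^\ast$) yields exactly
\[
\bigl(m\alpha^\ast-(m-2)\pi\bigr)+\bigl(m(2\pi-\alpha^\ast)-(m-2)\pi\bigr)=4\pi,
\]
the area of the entire sphere. Since the tiles have pairwise disjoint interiors and each has positive area, a tiling containing both such $m$-gons would consist of exactly these two tiles, i.e.\ a dihedron; this is excluded by the standing assumption that every vertex has degree at least $3$ (cf.\ Proposition~\ref{Prop-deg2}). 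Therefore all $m$-gons of the tiling share a common angle and are congruent.

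For the \emph{in particular} clause I would argue in the same spirit: a non-strictly-convex regular polygon has $\alpha\ge\pi$, so its area $m\alpha-(m-2)\pi\ge 2\pi$. Two such tiles would have total area at least $4\pi$, forcing both areas to equal $2\pi$ (two hemispheres) and these to be the only tiles—again a dihedron, which is excluded. Hence at most one tile fails to be strictly convex. The one point deserving care—and the main obstacle to a fully rigorous write-up—is the reduction to a single edge length, namely the connectivity of the dual graph, together with the justification that the spherical excess formula applies to the concave complement; once these are secured, the two area computations finish the argument at once.
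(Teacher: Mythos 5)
Your proposal is correct, but it takes a genuinely different route from the paper's. The paper works with the circumradius: from \eqref{Eq-cosr} it deduces that for fixed $m$ the angle $\alpha_m$ and the radius $r$ increase together and that $\alpha_m\ge\pi$ if and only if $r\ge\tfrac{1}{2}\pi$, so any non-strictly-convex regular $m$-gon \emph{contains a hemisphere}; two tiles with disjoint interiors cannot both do so (outside the excluded dihedral case), and for fixed edge length \eqref{Eq-cosr} and \eqref{Eq-cosx} rule out the coexistence of a strictly convex and a concave $m$-gon, the strictly convex case then being settled by Lemma~\ref{Lem-ang-asc-seq}. You instead pin down $\cos\alpha_m$ from $m$ and $x$ via \eqref{Eq-x-al}, reduce to the dichotomy $\{\alpha,2\pi-\alpha\}$, and close both the main claim and the \emph{in particular} clause by a Gauss--Bonnet area count: the offending pair of tiles would have total area exactly (or at least) $4\pi$, forcing a two-tile tiling, i.e.\ a dihedron, excluded by the standing degree-$\ge 3$ assumption via Proposition~\ref{Prop-deg2}. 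Your argument is more explicit where the paper is terse (its ``the equations imply'' step is essentially your area computation in disguise), and you also make explicit the common-edge-length reduction via connectedness of the dual graph, which the paper leaves implicit; on the other hand, the paper's hemisphere-containment formulation yields geometric information ($r\ge\tfrac{1}{2}\pi$, the tile contains a hemisphere) that is reused later, e.g.\ in the prism-subdivision argument of Lemma~\ref{lem:subdiv} and in Proposition~\ref{Prop-al3=pi/2-2pi/5}, whereas your area bound proves only the lemma itself. Two small points to secure in a final write-up: \eqref{Eq-x-al} is derived from the triangulation of Figure~\ref{Fig-reg-mgon-trin}, which presupposes $\alpha_m<\pi$, so for a concave $m$-gon you should pass to its convex complement with angles $2\pi-\alpha_m$ (since $\cos(2\pi-\alpha_m)=\cos\alpha_m$, it satisfies the same equation --- exactly the remark the paper makes after \eqref{Eq-alm-aln-linear}); and the inversion of \eqref{Eq-x-al} for $\cos\alpha_m$ requires $1+\cos x\neq 0$, i.e.\ $x\neq\pi$, which holds because digons are excluded by Proposition~\ref{Prop-digon} and the standing assumptions.
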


\begin{proof}
		For a fixed $m$ and $\tfrac{1}{2}\alpha_m, r \in (0,\pi]$, the
		equation \eqref{Eq-cosr} implies that $\alpha_m$ is increasing if
		and only if $r$ is increasing. Moreover,
		$\frac{\sin\frac{2}{m}\pi}{1+\cos\frac{2}{m}\pi} \neq 0$ in
		\eqref{Eq-cosr} means $\tfrac{1}{2}\alpha_m = \tfrac{1}{2}\pi$ if
		and only if $r = \tfrac{1}{2}\pi$. When the equalities hold, the
		$m$-gon is a hemisphere. Therefore, a regular $m$-gon with angles
		$\ge\pi$ contains a hemisphere and hence there can be at most one
		such $m$-gon. Since the edge-length $x$ is fixed, equations
		(\ref{Eq-cosr}) and (\ref{Eq-cosx}) imply that given $m$ the
		tiling cannot have both strictly convex and concave regular
		$m$-gon. If all the regular $m$-gons are strictly convex, then the
		statement follows from Lemma~\ref{Lem-ang-asc-seq}.
\end{proof}
 
\medskip
\noindent
{\bf Dehn-Sommerville formulae.} In a tiling, let $f_{m}$ denote the
number of $m$-gons and $v_k$ the number of vertices of degree $k$; let
$v$, $e$ and $f$ denote the total numbers of vertices, edges and faces
respectively. Note that $f = \sum_{m\ge3} f_m$ and $v = \sum_{k\ge3}
v_k$. Recall Euler's polyhedral formula and the Dehn-Sommerville
formulae \cite{DS} below,
\begin{align}
\label{Eq-Euler}
&v-e+f=2, \\
\label{Eq-e-vk}
&2e = \sum_{k\ge3} kv_k, \\
\label{Eq-e-fm}
&2e=\sum_{m\ge3} m f_{m}.
\end{align}
The above gives a proof of the following well-known fact. 

\begin{lem}
		\label{Lem-n345-deg345}
		In an edge-to-edge spherical tiling by polygons with at least
		three edges and with minimum
		vertex degree at least $3$, 
		\begin{enumerate}
				\item there is a triangle, a quadrilateral or a pentagon;
				\item if there is a triangle, then there is a vertex of degree
						$3, 4$ or $5$; otherwise there is a vertex of degree $3$.
		\end{enumerate}
\end{lem}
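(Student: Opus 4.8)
The plan is to derive all three claims from Euler's formula \eqref{Eq-Euler} together with the edge-counting relations \eqref{Eq-e-vk} and \eqref{Eq-e-fm}, by forming suitable weighted combinations. The idea is that eliminating $e$ among these identities produces linear relations in the $f_m$ and $v_k$ whose right-hand sides are positive constants, while each individual summand acquires a definite sign once a lower bound on face size or vertex degree is imposed. Concretely, substituting \eqref{Eq-e-vk} and \eqref{Eq-e-fm} into $v-e+f=2$ and collecting terms, I would record the two ``centred at $6$'' identities
\begin{align*}
\sum_{m\ge3}(6-m)f_m + 2\sum_{k\ge3}(3-k)v_k &= 12, \\
\sum_{k\ge3}(6-k)v_k + 2\sum_{m\ge3}(3-m)f_m &= 12,
\end{align*}
together with the symmetric ``centred at $4$'' identity
\begin{align*}
\sum_{k\ge3}(4-k)v_k + \sum_{m\ge3}(4-m)f_m = 8.
\end{align*}

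For part (1), I would argue by contradiction: if there were no triangle, quadrilateral, or pentagon, then $f_m=0$ for $m\le 5$, so every surviving term $(6-m)f_m$ with $m\ge6$ is nonpositive; since the minimum degree is at least $3$, every term $(3-k)v_k$ is also nonpositive, and the first identity then forces $12\le 0$, a contradiction. Hence a face with at most five edges exists.

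For part (2), I would split on whether a triangle is present. If there is no triangle, then every face has at least four edges, so $(4-m)f_m\le 0$ for all $m$; assuming toward a contradiction that every vertex had degree at least $4$ would make each $(4-k)v_k\le 0$ as well, and the ``centred at $4$'' identity would give $8\le 0$. Hence a vertex of degree $3$ must exist. If instead a triangle is present, one can only claim a vertex of degree at most $5$: assuming every vertex had degree at least $6$ makes each $(6-k)v_k\le 0$, while $(3-m)f_m\le 0$ holds automatically from $m\ge3$, so the second identity again yields $12\le 0$.

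The computations are routine; the only points needing care are the sign bookkeeping of the coefficients and an honest explanation of why the conclusion is genuinely weaker in the triangle case. There the best available face bound is only $m\ge3$, which forces the face weight to be centred at $3$ and the vertex weight correspondingly at $6$ in order to keep the right-hand side positive; this is precisely why one can exclude minimum degree $\ge6$ but not minimum degree $\ge4$. I do not anticipate any substantive obstacle beyond selecting the three weightings so that the available vertex- and face-hypotheses line up with nonpositive summands.
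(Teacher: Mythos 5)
Your proof is correct and is essentially the paper's own argument in different packaging: the paper substitutes $f \le \tfrac{2}{n}e$ (with $n$ the minimal face size) together with \eqref{Eq-e-vk} into Euler's formula to obtain $2 \le \tfrac{1}{2n}\sum_{k\ge3}\bigl((2-k)n+2k\bigr)v_k$, and your three ``centred'' identities are exactly this relation kept as an identity and instantiated --- positivity of the coefficient forcing $n<6$ corresponds to your centred-at-$6$ face identity (part (1)), the case $n=3$ to your second identity ($k<6$), and the case $n\ge4$ to your centred-at-$4$ identity ($k<4$). Since both proofs are the same weighted Euler/Dehn--Sommerville sign bookkeeping and all three of your identities and sign analyses check out, there is nothing to fix.
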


\begin{proof}
		Let $n \ge 3$ be the minimal integer such that there is at least
		one $n$-gon in the given tiling.
		By \eqref{Eq-e-fm}, we get $f\le\tfrac{2}{n}e$.
		Substituting it and \eqref{Eq-e-vk} into \eqref{Eq-Euler} gives
		\begin{align*}
				2 = v - e + f \le v - (\tfrac{1}{2} - \tfrac{1}{n}) 2 e =
				\tfrac{1}{2n} \sum_{k\ge3} ( 2n - nk + 2 k )v_k. 
		\end{align*}
		Since the left-hand side is positive and $v_k$ is non-negative for
		every $k$,
		the coefficient $(2-k)n+2k$ is positive for some $n$ and $k$.

Since $k\ge3$ and $(2-k)n+2k>0$, we have $2n \ge 3n-6$, i.e., $n<6$.
Hence $n = 3,4$ or $5$, which gives the first statement of the lemma. 

For the second statement, $n\ge3$ and
$(2-k)n+2k>0$ implies $k<6$; if $n\ge4$, then the inequality implies
$k<4$, meaning that there is a degree $3$ vertex.
\end{proof}

\medskip \noindent {\bf Vertices in tilings by regular polygons.}
Since each tile is a {\em regular} polygon and the considered tilings
are edge-to-edge, by Lemma~\ref{Lem-cong} all $m$-gons in the
tiling are congruent. In particular, the angle incident to a vertex is
determined by the number of edges $m\geq 3$ of a polygon it belongs
to. In what follows such an angle will be denoted $\alpha_m$. Note
that for a fixed $m\geq 3$ the angle $\alpha_m$ depends on the
considered tiling. For a vertex $v$ of degree $d\geq 3$ in a tiling by
regular polygons we define its \emph{type} to be a multiset of size
$d$ containing the angle-values of the $d$ angles incident to $v$. We
denote a vertex type by a contracted form of the usual multiset
notation; for instance, a vertex is of type
$\alpha_l^a\alpha_m^b\alpha_n^c$ if it is incident to $a$ angles of
size $\alpha_l$, $b$ angles of size $\alpha_m$ and $c$ angles of size
$\alpha_n$. We define the {\em angle sum} of such a type to be
$a\alpha_l+b\alpha_m+c\alpha_n = 2\pi$. 

With some abuse of notation we
often identify a vertex with its type. Note that the type of a vertex
does not specify the arrangements of the angles (equivalently the
incident tiles). An expanded version is used to serve that purpose.
For example, up to rotation and reflection,
$\alpha_3\alpha_4^2\alpha_5$ has two distinct angle arrangements
$\alpha_3\alpha_4\alpha_5\alpha_4,\alpha_3\alpha_4\alpha_4\alpha_5$ as
shown in Figure \ref{Fig-vertex-arrangement}. We emphasise that two
vertices have the same angle arrangement if they are related by
reflection or rotation; for example $\al_5\al_4\al_4\al_3$ is the same
angle arrangement as $\al_3\al_4\al_4\al_5$.

\begin{figure}[h!]
\centering
\begin{tikzpicture}
\tikzmath{
\XS=5;
\r=1;
\n=4;
\nn=\n-1;
\th=360/\n;
}

\foreach \xs in {0,1} {
\tikzset{xshift=\xs*\XS cm}

\foreach \a in {0,...,\nn} {
\tikzset{rotate=\a*\th}
\draw[]
	(0,0) -- (90:\r) 
;
}

\draw[]
	(\r,0) -- (0,\r)
	(0,\r) -- (-\r,\r)
	(-\r,\r) -- (-\r,0)
;

}

\begin{scope} 

\draw[]
	(0,-\r) -- (\r,-\r)
	(\r,-\r) -- (\r,0)
	(-\r,0) -- (-1.5*\r, -0.5*\r)
	(0,-\r) -- (-0.5*\r, -1.5*\r)
	(-1.5*\r, -0.5*\r) -- (-0.5*\r, -1.5*\r)
;

\node at (0.3*\r, 0.2*\r) {\small $\alpha_3$};
\node at (-0.3*\r, 0.2*\r) {\small $\alpha_4$};
\node at (0.3*\r, -0.25*\r) {\small $\alpha_4$};
\node at (-0.3*\r, -0.25*\r) {\small $\alpha_5$};

\node at (0,-2*\r) {\small $\alpha_3\alpha_4\alpha_5\alpha_4$};

\end{scope} 

\begin{scope}[xshift=\XS cm] 

\draw[]
	(0,-\r) -- (-\r,-\r)
	(-\r,-\r) -- (-\r,0)
	(\r,0) -- (1.5*\r, -0.5*\r)
	(0,-\r) -- (0.5*\r, -1.5*\r)
	(1.5*\r, -0.5*\r) -- (0.5*\r, -1.5*\r)
;

\node at (0.3*\r, 0.2*\r) {\small $\alpha_3$};
\node at (-0.3*\r, 0.2*\r) {\small $\alpha_4$};
\node at (0.3*\r, -0.25*\r) {\small $\alpha_5$};
\node at (-0.3*\r, -0.25*\r) {\small $\alpha_4$};

\node at (0,-2*\r) {\small $\alpha_3\alpha_4\alpha_4\alpha_5$};

\end{scope} 

\end{tikzpicture}
\caption{The two angle arrangements $\alpha_3\alpha_4\alpha_5\alpha_4, \alpha_3\alpha_4\alpha_4\alpha_5$ of $\alpha_3\alpha_4^2\alpha_5$}
\label{Fig-vertex-arrangement}
\end{figure}
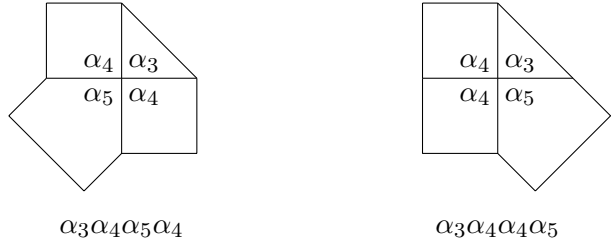

A vertex with partial incident angle information is represented by a
partial type. For example, a vertex having at least $a$ copies of
$\alpha_m$ and $b$ copies of $\alpha_n$ as incident angles is denoted
by $\alpha_m^a\alpha_n^b\cdots$.  
We define the {\em remainder} of a partial type to be the difference
between $2\pi$ and the sum of the incident angles and denote this
function by $R$; for example, the
remainder of $\al_m^a\al_n^b\cdots$ is $R(\al_m^a\al_n^b) = 2\pi - a\al_m
- b\al_n$.

\medskip
\noindent
{\bf Admissible set of vertices.} A set $S$ of types of vertices will
be called \emph{admissible} if there exists a spherical tiling $T$ by
regular polygons such that $S$ is the set of types of vertices of $T$.
The knowledge of the vertex types that can appear in a tiling is the
key to our classification. In this subsection we restrict the types
which can appear in an admissible set. 

The geometry of the sphere and the number of sides in a regular
polygon impose various constraints on the incident angle combinations
at a vertex. Subject to these constraints, we use \quotes{$=$} to
express the list of all possible full or partial vertices for a given
partial vertex. The list can be refined recursively. For example,
$\alpha_m^2\cdots = \alpha_m^3$ denotes that the only possible vertex
for $\alpha_m^2\cdots$ is $\alpha_m^3$. Since the angle sum is
always $2\pi$, the size of an angle limits the number of its
appearance at a vertex. In particular, the sum of the angles of a regular
$m$-gon with angles $\alpha_m$ is $m\alpha_m > (m-2)\pi$. Hence
\begin{align}
		\label{Ineq-alm-lb}
		\alpha_m > (1-\tfrac{2}{m})\pi.
\end{align}

\begin{lem}
		\label{lem:degreebound}
		The degree of a vertex in a tiling by regular polygons belongs to
		the set $\{3,4,5\}$.
\end{lem}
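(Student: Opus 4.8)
The plan is to bound the degree $d$ of an arbitrary vertex from above by combining the uniform lower bound on the incident angles with the fact that they must sum to exactly $2\pi$. First I would fix a vertex $v$ of degree $d$ and write its $d$ incident angles as $\alpha_{m_1},\dots,\alpha_{m_d}$, where each $m_i\ge 3$ by the standing assumption that every polygon has at least three edges; these satisfy the angle-sum condition $\sum_{i=1}^{d}\alpha_{m_i}=2\pi$.

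The key input is the lower bound \eqref{Ineq-alm-lb}, namely $\alpha_m>(1-\tfrac{2}{m})\pi$. Since the quantity $(1-\tfrac{2}{m})\pi$ is increasing in $m$ and takes its smallest value $\tfrac{\pi}{3}$ at $m=3$, every incident angle obeys the uniform strict inequality $\alpha_{m_i}>\tfrac{\pi}{3}$. Summing over the $d$ incident angles then yields
\begin{align*}
2\pi=\sum_{i=1}^{d}\alpha_{m_i}>d\cdot\tfrac{\pi}{3},
\end{align*}
so $d<6$, i.e. $d\le 5$. Combined with the standing hypothesis that every vertex has degree at least $3$, this gives $d\in\{3,4,5\}$, as claimed.

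There is no genuine obstacle here; the argument is a one-line pigeonhole-type estimate once \eqref{Ineq-alm-lb} is in hand. The only two points demanding minor care are that the bound $\alpha_m>\tfrac{\pi}{3}$ must be \emph{strict} (so that the estimate produces $d<6$ rather than $d\le 6$, ruling out degree $6$), and that the lower endpoint $3$ comes from the separate standing convention on vertex degrees rather than from the angle count itself.
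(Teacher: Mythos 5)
Your proof is correct and follows essentially the same route as the paper: both apply the lower bound \eqref{Ineq-alm-lb} to each incident angle, note the weakest case $m=3$ gives $\alpha_{m}>\tfrac{\pi}{3}$, and sum against the angle condition $2\pi$ to conclude $d<6$, with the floor $d\ge 3$ coming from the standing convention. The only difference is notational (you index the angles individually, the paper counts multiplicities $a_m$ of incident $m$-gons), and your remarks on the strictness of the inequality match the paper's implicit reasoning.
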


\begin{proof}
		Let $a_m\geq 0$ be the number of $m$-gons incident to a vertex of
		degree $d$. Then 
		$$ 
		2\pi = \sum_{m\geq 3} a_m\alpha_m> \sum_{m\geq
		3}a_m(1-\tfrac{2}{m})\pi\geq \tfrac{d}{3} \pi.
		$$ 
		It follows that $d<6$.
\end{proof}

The inequality $2\pi> \sum_{m\geq 3}a_m(1-\tfrac{2}{m})\pi$ further
restricts the sequence $\{a_m\}_{m\geq 0}$. Thus one can refine
Lemma~\ref{lem:degreebound} to determine the possible vertices in a
tiling as follows. The vertices incident to a
triangle
are
\begin{align}
		\label{List-deg3-al3}
		&\deg 3:& &\alpha_3\cdots = \alpha_3^3, \ \alpha_3^2\alpha_{m}, \
		\alpha_3\alpha_{m}^2, \ \alpha_3\alpha_m\alpha_n; \\
		\label{List-deg4-al3}
		&\deg 4:& &\alpha_3\cdots = \alpha_3^4, \ \alpha_3^3\alpha_{m}, \
		\alpha_3^2\alpha_{m}^2, \  \alpha_3^2\alpha_m\alpha_n, \
		\alpha_3\alpha_4^3, \ \alpha_3\alpha_4^2\alpha_5; \\
		\label{List-deg5-al3}
		&\deg 5:& &\alpha_3\cdots = \alpha_3^5, \ \alpha_3^4\alpha_4, \
		\alpha_3^4\alpha_5.
\end{align}


Similarly, the vertices without an incident triangle are 
\begin{align}
\label{List-al4}
&\alpha_4\cdots=\alpha_4^3, \, \alpha_4^2\alpha_{m},  \, \alpha_4\alpha_m^2 (m\le 7),  \, \alpha_4\alpha_m\alpha_n (m \le 7, m < n \le 19); \\
\label{List-al5}
&\alpha_5\cdots=\alpha_5^3, \, \alpha_4\alpha_5^2,  \, \alpha_5^2\alpha_{m}(m\le9), \, \alpha_5\alpha_6^2, \alpha_5\alpha_6\alpha_7.
\end{align}

We sum up the above in the following lemma.

\begin{lem}
		\label{Lem-all-vertices}
		A vertex in a tiling is one of those in \eqref{List-deg3-al3},
		\eqref{List-deg4-al3}, \eqref{List-deg5-al3}, \eqref{List-al4},
		\eqref{List-al5}. Notably, a vertex has an incident triangle,
		square or pentagon.
\end{lem}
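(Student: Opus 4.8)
The lemma merely collects the enumeration \eqref{List-deg3-al3}--\eqref{List-al5}, so the plan is to justify that enumeration from two facts already in hand: the degree bound $d\in\{3,4,5\}$ of Lemma~\ref{lem:degreebound} and the lower bound \eqref{Ineq-alm-lb}. Write $a_m$ for the number of incident $m$-gons at a vertex, so $d=\sum_m a_m$ (all sums over $m\ge3$). The angle-sum condition $\sum_m a_m\alpha_m=2\pi$ together with \eqref{Ineq-alm-lb} gives
\begin{align*}
2>\sum_m a_m\bigl(1-\tfrac{2}{m}\bigr)=d-2\sum_m\frac{a_m}{m},\qquad\text{hence}\qquad\sum_m\frac{a_m}{m}>\frac{d-2}{2}.\tag{$\star$}
\end{align*}
Every admissible multiset of incident polygons must satisfy $(\star)$, and applying it degree by degree produces each list.

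First I would dispose of the \enquote{notably} clause, which also organises the rest. If every incident polygon had at least six sides, then $\sum_m a_m/m\le d/6$, so $(\star)$ would force $d/6>(d-2)/2$, i.e.\ $d<3$, contradicting $d\ge3$; hence every vertex meets a triangle, a square, or a pentagon. Repeating the estimate with the weaker bound $m\ge4$ shows that a triangle-free vertex satisfies $\sum_m a_m/m\le d/4$, whence $(\star)$ forces $d<4$; thus \emph{every triangle-free vertex has degree exactly $3$}, confining the triangle-free analysis to triples.

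Next comes the degree-by-degree bookkeeping. For a vertex meeting a triangle I fix one factor $\alpha_3$ and split on $d\in\{3,4,5\}$. At $d=3$, $(\star)$ is too weak to bound the remaining two factors, so all four shapes of \eqref{List-deg3-al3} occur. At $d=4$ and $d=5$, $(\star)$ reads $\sum_m a_m/m>1$ and $>\tfrac{3}{2}$; testing the multisets $\{3,m_2,m_3,m_4\}$ and $\{3,m_2,m_3,m_4,m_5\}$ against these leaves exactly \eqref{List-deg4-al3} and \eqref{List-deg5-al3} (in particular a single triangle at $d=4$ survives only as $\alpha_3\alpha_4^3$ or $\alpha_3\alpha_4^2\alpha_5$, and $d=5$ forces $\alpha_3^5$, $\alpha_3^4\alpha_4$, or $\alpha_3^4\alpha_5$). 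For triangle-free vertices---now of degree $3$---I enumerate the triples $\{a,b,c\}$ with $4\le a\le b\le c$ via $(\star)$, which here reads $\tfrac{1}{a}+\tfrac{1}{b}+\tfrac{1}{c}>\tfrac{1}{2}$: the subcase $a=4$ reproduces \eqref{List-al4} (forcing $b\le7$ and $c\le19$ once $b,c\ge5$) and the subcase $a=5$ yields \eqref{List-al5}, the entry $\alpha_4\alpha_5^2$ common to both being a harmless overlap. Collecting the survivors proves the lemma.

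The one genuinely delicate point is arithmetic vigilance: $(\star)$ is nearly an equality on several borderline multisets---it admits $\{3,4,4,5\}$ but not $\{3,4,4,6\}$, and $\{4,5,19\}$ but not $\{4,5,20\}$---so the case checks must be carried out carefully enough to drop no legitimate type and retain no impossible one. The remaining care is to confirm that the trichotomy of the second paragraph, together with the reduction of triangle-free vertices to degree~$3$, leaves no vertex outside the five lists.
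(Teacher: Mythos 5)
Your proposal is correct and takes essentially the same approach as the paper: the paper obtains the lists \eqref{List-deg3-al3}--\eqref{List-al5} from the very same inequality $2\pi>\sum_{m\ge3}a_m\bigl(1-\tfrac{2}{m}\bigr)\pi$ (a consequence of \eqref{Ineq-alm-lb}) combined with the degree bound of Lemma~\ref{lem:degreebound}, leaving the case analysis implicit, while you simply recast it as $\sum_m a_m/m>\tfrac{d-2}{2}$ and carry out the enumeration explicitly. Your borderline checks (admitting $\{3,4,4,5\}$ and $\{4,5,19\}$, excluding $\{3,4,4,6\}$ and $\{4,5,20\}$) and your reduction of triangle-free vertices to degree $3$ match the paper's lists exactly.
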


\medskip
\noindent
{\bf Vertex homogeneity.}
Tilings in which every vertex has the same angle arrangement play an
important role in several of our arguments.
We call a
tiling that satisfies this local criterion {\em (strongly)
vertex-homogenous}. We adopt this terminology due to the inconsistent
use of the terms {\em semi-regular} and {\em Archimedean} in the
literature. Both of these are sometimes used to refer to objects
satisfying the global criterion of vertex-transitivity, excluding the
prisms, anti-prisms and Platonic solids and tilings by fiat.
Furthermore, the sets of objects satisfying these criteria do not
coincide: the elongated square gyrobicupola, otherwise known as
$J_{37}$, satisfies the local but not the global criterion. Several
authors have made errors as a result of this issue; the interested
reader is directed to \cite{G08}.

We use the term Archimedean to refer to the tilings corresponding to
the vertex-transitive solids, excluding the prisms, anti-prisms and
Platonic solids. There are thirteen such tilings and they are depicted
in Figure~2. We sometimes make use of Conway's notation to refer to
these objects (see Chapter 21 of \cite{bcg}). The notation is given in
Table~\ref{tab:conway} in Appendix~\ref{app:data}.

The vertex-homogenous tilings were characterised by Sommerville in
\cite{DS2}, albeit using different language; that he does not make the
previously mentioned error is corroborated by Gr\"unbaum in
\cite{G08}. We state the result using our own terminology below.

\begin{prop}
		\label{Prop-vertex-homog}
		A vertex-homogenous tiling is either a Platonic tiling,
		an Archimedean tiling, the Johnson tiling $J_{37}$, a prism or an anti-prism. 
\end{prop}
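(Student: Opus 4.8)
The plan is to let vertex-homogeneity collapse the problem to finitely many candidate configurations, to pin down the combinatorial type of each by Euler's formula, to realise the survivors as genuine spherical tilings, and only at the end to resolve the global structure. First I would fix the common vertex-configuration $C$ (a cyclic word up to rotation and reflection) carried by every vertex. By Lemma~\ref{lem:degreebound} its degree $d\in\{3,4,5\}$, and by Lemma~\ref{Lem-all-vertices} it is one of the finitely many listed types or a member of the families $\al_4^2\al_m$, $\al_3^3\al_m$. Writing $n_m$ for the number of $m$-gons in $C$, I double count vertex--$m$-gon incidences to get $m f_m=n_m v$, and use \eqref{Eq-e-vk} in the form $2e=dv$. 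Substituting into \eqref{Eq-Euler} gives the master relation
\[
v\Bigl(1-\tfrac{d}{2}+\sum_{m}\tfrac{n_m}{m}\Bigr)=2 .
\]
As $v>0$ the bracket is positive, and then it determines $v$, hence every $f_m$ and $e$, from $C$ alone; so each admissible configuration yields at most one combinatorial face-vector.

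Next I would run through the configurations of Lemma~\ref{Lem-all-vertices}, retaining only those whose master relation has a positive integer solution $v$ with all $f_m=n_mv/m$ integral. The one-type configurations $\al_3^3,\al_3^4,\al_3^5,\al_4^3,\al_5^3$ give the five Platonic tilings, the families $\al_4^2\al_m$ and $\al_3^3\al_m$ give the prisms and antiprisms, and the remaining feasible vectors are exactly the thirteen Archimedean ones together with $(3,4,4,4)=\al_3\al_4^3$. A configuration may pass this counting test yet still fail to assemble edge-to-edge, so I would then impose the closing-up condition: around any one face the cyclic list of bordering faces is forced by $C$ at each of its corners and must be globally consistent. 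This rejects the spurious survivors --- for instance $\al_3^2\al_6$, for which the hexagon would demand six surrounding triangles although the face count supplies only four.

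Then I would verify spherical realisability. All tiles share a single edge length $x$, and \eqref{Eq-x-al} shows that for fixed $m$ the angle $\al_m=\al_m(x)$ is continuous and strictly increasing on $(0,\tfrac{2}{m}\pi]$, rising from the Euclidean value $(1-\tfrac{2}{m})\pi$ to $\pi$ (the hemisphere, reached at $x=\tfrac{2}{m}\pi$). Hence $S(x):=\sum_m n_m\al_m(x)$ is continuous and strictly increasing, with $S(0^+)=\pi\bigl(d-2\sum_m n_m/m\bigr)<2\pi$, the inequality being exactly the positivity of the bracket above; and a direct endpoint estimate shows that $S$ surpasses $2\pi$ before the largest tile degenerates into a hemisphere. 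The intermediate value theorem then supplies the unique edge length with $S(x)=2\pi$, producing a genuine spherical tiling, with the prism and antiprism families indexed by $m\ge3$, one tiling for each value. (Equivalently one may project each configuration's classical convex realisation from the centre of the sphere, a construction that never invokes the Johnson--Zalgaller theorem.)

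Finally I would name each realised type and settle its global structure, and here lies the main obstacle. The common configuration does not always determine the tiling: $(3,4,4,4)$ admits two non-isomorphic edge-to-edge assemblies --- the rhombicuboctahedron and its once-twisted variant $J_{37}$ --- and both are vertex-homogenous, precisely the local-but-not-global phenomenon flagged before the proposition. I would show that this is the \emph{only} bifurcation by checking that for every other surviving type the bordering faces propagate uniquely outward from a single seed face, so that the labelled graph $(G,\Phi)$ is forced up to isomorphism. The two genuinely delicate points are therefore (i) cleanly excluding the counting-feasible but non-assemblable configurations, and (ii) separating the rhombicuboctahedron from $J_{37}$ without miscounting twisted variants.
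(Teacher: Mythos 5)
Your approach is correct in outline but genuinely different from the paper's treatment: the paper gives no proof of Proposition~\ref{Prop-vertex-homog} at all, quoting it as Sommerville's characterisation~\cite{DS2} (with Gr\"unbaum~\cite{G08} corroborating that the $eC$-versus-$J_{37}$ trap is avoided), and the machinery it actually develops, Theorem~\ref{thm:weakly}, classifies the strictly larger class of \emph{weakly} vertex-homogenous tilings by seed-and-propagate deductions and reduction operations (shrinking/truncation down to Platonic tilings, cutting along great circles, rotating cupolas), with no per-configuration Euler count anywhere. You instead extract the face vector from $v\bigl(1-\tfrac{d}{2}+\sum_m \tfrac{n_m}{m}\bigr)=2$ first, which pins down $v$, $e$ and each $f_m$ before any geometry and reduces the problem to finitely many assembly checks; and you correctly isolate the two genuine difficulties, namely the counting-feasible but non-assemblable words such as $(3,3,6)$, $(3,4,12)$ and $(4,5,10)$ --- your closing-up test is essentially the paper's parity Lemma~\ref{Lem-even-m}, which kills the latter two because an odd face cannot have alternating neighbours --- and the bifurcation of the word $\al_3\al_4^3$ into $eC$ and $J_{37}$. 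Two caveats: your step 4 is unnecessary for the proposition as stated (it only asserts membership in the list, and existence of the listed tilings is classical); moreover an angle sum of $2\pi$ at a single vertex does not by itself yield a global tiling, so your parenthetical projection of the known convex realisations is the argument to rely on there. Finally, note that the deferred propagation checks are precisely where the work lies, and that the paper's route gets the mixed-arrangement tilings $J_{27}$, $J_{34}$, $J_{72},\dots,J_{75}$ in the same sweep, whereas your fixed-cyclic-word framework would have to treat them separately.
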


The possible angle arrangements for a vertex-homogenous
tiling are as follows:
\begin{itemize}
		\item
		$\al_3^3,
		\al_4^3, \al_4^4, \al_3^5 \text{ or } \al_5^3$ if the tiling is
		Platonic; 
		\item
				$\al_3\al_6^2, \al_3\al_4\al_3\al_4, \al_3\al_8^2,
				\al_4\al_6^2, \al_3\al_4^3, \al_4\al_6\al_8, \al_3^4 \al_4,$\\
				$ \al_3\al_5\al_3\al_5, \al_3\al_{10}^2, \al_5\al_6^2,
				\al_3\al_4\al_5\al_4, \al_4\al_6\al_{10} \text{ or }
		\al_3^4\al_5$ \\ if the tiling is Archimedean; 
		\item
				$\al_4^2 \al_m$ or
		$\al_3^3\al_m$ for $m \ge 4$ if the tiling is respectively a prism
		or anti-prism; or 
\item 
		$\al_3\al_4^3$ if the tiling is $J_{37}$.
\end{itemize}

The following useful lemma gives a sufficient condition for a tiling
to be vertex-homogenous.

\begin{lem}
		\label{Lem-vertex-transitive} 
		If a tiling has a vertex of type
		$\al\beta\gamma$ and $\alpha\beta\dots = \alpha\gamma\dots =
		\beta\gamma\dots = \alpha\beta\gamma$, then the tiling is
		vertex-homogenous.
\end{lem}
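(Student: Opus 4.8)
The plan is to propagate the type $\alpha\beta\gamma$ outward from the given vertex along edges, using the connectivity of the one-skeleton. First I would record the elementary but crucial observation that underlies the whole argument: since every tile is a regular polygon, by Lemma~\ref{Lem-cong} each angle value $\alpha_k$ is contributed by a regular $k$-gon (unique up to congruence), and such a polygon contributes exactly the angle $\alpha_k$ at each of its incident vertices. Consequently, if an edge $e$ is shared by two tiles whose angles are $\phi$ and $\psi$, then \emph{both} endpoints of $e$ have partial type $\phi\psi\cdots$.

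Next I would analyse the given vertex $v_0$ of type $\alpha\beta\gamma$. It has degree $3$ and is incident to three tiles with angles $\alpha$, $\beta$, $\gamma$; the three edges emanating from $v_0$ are shared by the pairs of tiles realising the angle-pairs $\{\alpha,\beta\}$, $\{\beta,\gamma\}$ and $\{\alpha,\gamma\}$. Applying the observation above to each of these edges, every neighbour of $v_0$ has partial type $\alpha\beta\cdots$, $\beta\gamma\cdots$ or $\alpha\gamma\cdots$. The hypothesis $\alpha\beta\cdots = \alpha\gamma\cdots = \beta\gamma\cdots = \alpha\beta\gamma$ then forces each neighbour of $v_0$ to be of the full type $\alpha\beta\gamma$. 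The argument is insensitive to whether $\alpha,\beta,\gamma$ are distinct: if, say, $\alpha=\beta$, the relevant partial type is $\alpha^2\cdots$ and the hypothesis still applies.

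The key point is that this step is self-reproducing. Any vertex $w$ already known to be of type $\alpha\beta\gamma$ is, like $v_0$, incident to three regular polygons with angles $\alpha,\beta,\gamma$ whose pairwise shared edges realise exactly the three pairs above; hence every neighbour of $w$ is again forced to be of type $\alpha\beta\gamma$. Since the one-skeleton of the tiling is connected (indeed $2$-connected, as recalled in the Preliminaries), a straightforward induction on graph distance from $v_0$ shows that \emph{every} vertex of the tiling has type $\alpha\beta\gamma$. Finally, a vertex of degree $3$ admits only one angle arrangement up to rotation and reflection, so all vertices share the same arrangement and the tiling is vertex-homogenous, as claimed.

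I would expect the only real care to be taken in the bookkeeping of the self-reproducing step: verifying that at each newly reached vertex the three incident edges again realise precisely the pairs $\{\alpha,\beta\}$, $\{\beta,\gamma\}$, $\{\alpha,\gamma\}$, and that the two tiles meeting along any such edge are genuinely distinct, so that the partial type records both of their angles. None of this is hard, but it is the crux, since it is what lets the three pairwise hypotheses cover every edge encountered during the propagation.
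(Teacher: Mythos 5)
Your proposal is correct and follows essentially the same route as the paper: a neighbour of a vertex of type $\al\beta\gamma$ must have partial type $\al\beta\cdots$, $\al\gamma\cdots$ or $\beta\gamma\cdots$, hence is itself of type $\al\beta\gamma$ by hypothesis, and propagation through the connected one-skeleton gives the result. Your additional bookkeeping (both endpoints of an edge seeing the two incident angles, and the observation that a degree-3 vertex has a unique angle arrangement) merely makes explicit what the paper's shorter proof leaves implicit.
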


\begin{proof}
		A neighbour of a vertex of type $\alpha\beta\gamma$ is of type
		$\alpha\beta\cdots$, $\alpha\gamma\cdots$
		or $\beta\gamma\cdots$; that is, such a neighbour is also of type
		$\alpha\beta\gamma$ by assumption.
		Repeating this argument, we deduce that every vertex is of the
		same type, hence the result.
\end{proof}

\medskip\noindent
{\bf Subdivisions of tilings.}
Several tilings can be {\em subdivided} to generate new tilings. We
introduce three types of subdivision and prove that they preserve
regularity of the tiles. If a tiling has an $m$-gon and $\al_m =
2\al_3$, then we add a vertex in the centre of the $m$-gon and add
edges from the vertices of the $m$-gon to the central vertex to obtain
a new tiling; this operation is called a {\em pyramid subdivision}. If
a tiling has an $m$-gon for some even $m$ and $\al_m = \al_3 +\al_4$,
then we add a regular triangle to every other edge of the $m$-gon and
connect the new $\frac{m}{2}$ vertices in a cycle; we will prove that
the created edges do not cross and call this operation a 
{\em cupola subdivision} (see
Figure~\ref{fig:subdiv} for an example). We refer to the inverse of
this operation as {\em diminishing} a cupola.
If a tiling has an $m$-gon and $\al_m =
2\al_4$, 
then we add a square to each edge of the $m$-gon on its interior;
this operation is called a {\em prism subdivision}.
We now prove that these subdivisions can be performed in such a way
that the resulting tiling has regular tiles.

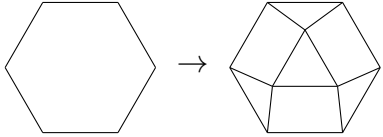
\begin{figure}[h!]
		\centering
		\begin{tikzpicture}
		\tikzmath{
				\r=1;
		}

		\foreach \a in {0,...,5}{
				\draw (\a*60:\r) -- (\a*60+60:\r);
		}		
\end{tikzpicture}
\begin{tikzpicture}
		\node at (0,0.75) {$\rightarrow$};
		\node at (0,0) {};
\end{tikzpicture}
\begin{tikzpicture}
		\tikzmath{
				\r=1;
				\rr=0.5;
		}

		\foreach \a in {0,...,5}{
				\draw (\a*60:\r) -- (\a*60+60:\r);
		}		

		\foreach \a in {0,...,2}{
				\draw (\a*120-30:\rr) -- (\a*120+90:\rr);
				\draw (\a*120:\r) -- (\a*120-30:\rr);
				\draw (\a*120+60:\r) -- (\a*120+90:\rr);
		}
		
\end{tikzpicture}
		\caption{Cupola subdivision for $m=6$}
		\label{fig:subdiv}
\end{figure}

\begin{lem}
		\label{lem:subdiv}
		If a tiling is obtained from a regular tiling by a pyramid
		subdivision, a cupola subdivision or a prism subdivision of an
		$m$-gon, then it is a regular tiling. 
\end{lem}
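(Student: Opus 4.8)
The plan is to handle the three subdivisions by one template: list the tiles that are created, verify that each is a regular spherical polygon of the common edge length $x$, and check that the angle sum at every new vertex is $2\pi$ while the angle sums at the old vertices are preserved. The only tools I expect to need are that the tiles of the starting tiling already share a common edge length $x$, the congruence of spherical triangles and quadrilaterals from prescribed side-and-angle data, and the half-angle relations \eqref{Eq-x-al}--\eqref{Eq-cosx} for a regular $m$-gon of side $x$.

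For the pyramid subdivision I would start from the centre-triangulation of Figure~\ref{Fig-reg-mgon-trin}, in which the $m$-gon splits into $m$ isosceles triangles with base $x$, base angles $\tfrac12\alpha_m$ and apex angle $\tfrac{2}{m}\pi$. The hypothesis $\alpha_m=2\alpha_3$ turns the base angles into $\alpha_3$, so each triangle has base $x$ with both adjacent angles $\alpha_3$; by angle-side-angle it is congruent to the regular triangle of side $x$ and is therefore equilateral, whence its apex angle $\alpha_3$ coincides with $\tfrac{2}{m}\pi$. Consequently the $m$ new triangles are regular, the new central vertex has angle sum $m\alpha_3=2\pi$, and every old vertex merely has its angle $\alpha_m$ partitioned as $\alpha_3+\alpha_3$.

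For the prism subdivision the $m$ inserted squares are regular of side $x$ by construction, and $\alpha_m=2\alpha_4$ forces two consecutive squares to fill the angle $\alpha_m$ at each old vertex, so they meet along a common radial edge. The region left over is bounded by the $m$ inner square-sides, each of length $x$, and is equiangular by the $m$-fold rotational symmetry; it is therefore a regular $m$-gon of side $x$, namely the complementary one of angle $2\pi-\alpha_m$ recorded after \eqref{Eq-alm-aln-linear}, and each new vertex has angle sum $2\alpha_4+(2\pi-\alpha_m)=2\pi$. For the cupola subdivision I would place regular triangles of side $x$ on alternate edges; at each old vertex one triangle uses up $\alpha_3$, and $\alpha_m=\alpha_3+\alpha_4$ leaves exactly $\alpha_4$ for the quadrilateral straddling the neighbouring untriangulated edge. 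That quadrilateral has base $x$, two flanking sides of length $x$ (the triangle legs) and both base angles $\alpha_4$, so the analogous congruence identifies it with the regular square; in particular its fourth side has length $x$. Hence the cycle through the triangle apexes has all sides of length $x$ and, again by symmetry, is a regular $\tfrac{m}{2}$-gon, and each interior vertex carries angle sum $\alpha_3+2\alpha_4+\alpha_{m/2}=2\pi$.

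The step I expect to be genuinely delicate is the embeddedness assertion already flagged in the definition of the cupola subdivision, namely that the edges joining the apexes do not cross (and, for the prism, that the inserted squares do not overlap and leave a non-degenerate central polygon). The congruence arguments above fix each added tile only up to the side of its edge on which it is placed, so I would instead build the cap one tile at a time around the boundary of the $m$-gon, using the exact angular fit ($\alpha_3+\alpha_4=\alpha_m$ for the cupola, $2\alpha_4=\alpha_m$ for the prism) to check that consecutive tiles share an edge and that the apexes occur in cyclic order on a common small circle, so the union is an embedded disc. Once this is secured the interior angle sums become automatic, since the surrounding tiles fill a neighbourhood of each such vertex; identities like $\alpha_3+2\alpha_4+\alpha_{m/2}=2\pi$ then emerge as consequences of the construction rather than as separate hypotheses.
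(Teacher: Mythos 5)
Your treatment of tile-regularity coincides with the paper's: angle-side-angle for the pyramid triangles, the three-sides-and-two-included-angles congruence for the cupola quadrilaterals, and rotational symmetry for the inner polygon; the angle-sum bookkeeping you add is correct but, as you yourself note, automatic once the construction is known to be embedded. The genuine gap is precisely at the step you flag as delicate and then do not carry out. ``Build the cap one tile at a time and check that the apexes occur in cyclic order on a common small circle, so the union is an embedded disc'' is a restatement of what must be proved, not an argument: the congruence data fix each added tile only up to rigid motion on one side of its base edge, and nothing you have said excludes a triangle placed on one edge overlapping the quadrilateral or the triangle two edges along, nor the apex cycle failing to be simple. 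The sub-claim that the apexes lie on a common small circle is itself unproven, and in fact presupposes the regularity of the inner $\tfrac{m}{2}$-gon that is at stake. The same circularity appears in your prism case: the assertion that ``the region left over is bounded by the $m$ inner square-sides'' already assumes the squares are pairwise non-overlapping and wind once around the interior, which is exactly the deferred point.

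The paper closes this with a short containment device you could adopt. Triangulate the $m$-gon from its centre as in Figure~\ref{Fig-reg-mgon-trin}, so each edge spans an isosceles triangle with base angles $\tfrac{1}{2}\alpha_m$. For the cupola, $\alpha_m=\alpha_3+\alpha_4>2\alpha_3$ by Lemma~\ref{Lem-ang-asc-seq}, hence $\tfrac{1}{2}\alpha_m>\alpha_3$ and the regular triangle erected on an edge is \emph{strictly contained} in its own isosceles sector; distinct added tiles therefore have disjoint interiors and the cycle through the apexes bounds a simple $\tfrac{m}{2}$-gon (note also $m\geq 6$, since $m$ is even and $\alpha_m>\alpha_4$). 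For the prism, $\alpha_m=2\alpha_4>\pi$ forces the $m$-gon to be concave, hence to contain a hemisphere, so the distance from each vertex to the centre exceeds $\tfrac{1}{2}\pi$ and each inserted square again fits inside the isosceles sector over its edge; non-overlap of the squares and simplicity of the inner boundary follow at once. With this quantitative containment supplied, your sequential construction becomes unnecessary and the remainder of your argument goes through as written.
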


\begin{proof}

		Consider one of the triangles created by a pyramid subdivision. We
		claim that this triangle is regular. Indeed, it has one edge of
		length $x$, the edge length of the $m$-gon in the original tiling.
		The angles adjacent to this edge are both equal to $\tfrac{1}{2}\al_m =
		\al_3$, since the new vertex is central and the new edges
		subdivide the angle of the $m$-gon. By definition, $\al_3$ is the
		angle of a regular triangle of edge length $x$. By
		angle-side-angle, the triangle is determined; that is, it must be
		regular.
		
		A cupola subdivision creates triangles, quadrilaterals and an
		$\frac{m}{2}$-gon; we must prove that these are simple in the
		sense that their edges do not cross. Consider the triangle formed
		by the centre of the $m$-gon and one of its edges (note that $m
		\ge 6$ since $m$ is even and $\al_m > \al_4$). This isoceles
		triangle has two angles of size $\tfrac{1}{2}\al_m > \al_3$ adjacent to the
		edge of the $m$-gon. Therefore, the regular triangle on that edge
		is strictly contained in the isoceles triangle. Hence, the cycle
		created by the cupola subdivision is the boundary of a simple
		$\frac{m}{2}$-gon. By rotational symmetry, the $\frac{m}{2}$-gon
		is regular. The triangles are regular by definition. 
		A quadrilateral is uniquely determined by three edge lengths and
		the two angles between them; for each created quadrilateral we
		know that three of its edges are of length $x$ and the two angles
		between them are $\al_4$,
		that is, the angle of a regular quadrilateral of edge length $x$.
		We deduce that the created quadrilaterals are regular as required.

		The squares created by a prism subdivision are regular with edge
		length $x$ by definition. Since $\al_m = 2\al_4 > \pi$, the
		$m$-gon is concave; that is, it contains a hemisphere and $x <
		\frac{2}{m}\pi$. The distance from a vertex of the $m$-gon to its
		centre is strictly greater than $\frac{1}{2}\pi$. Therefore, each
		square is contained in an isoceles triangle formed by the centre
		of the $m$-gon and one of its edges as in the above argument.
		Hence, the interiors of any two of these squares do not intersect.
		The cycle of new vertices is the boundary of a second $m$-gon,
		which is regular by rotational symmetry.
\end{proof}

We complete this section by proving the following useful lemma.

\begin{lem}
		\label{Lem-small-remainder}
		If a tiling has a vertex of type $\al_3\al_\ell^i \al_m^j
		\al_n^k$, then $\al_\ell^i \al_m^j \al_n^k\cdots =
		\al_3\al_\ell^i \al_m^j \al_n^k$. If a tiling has no triangle
		and a vertex of type $\al_4\al_\ell^i \al_m^j \al_n^k$, then
		$\al_\ell^i \al_m^j \al_n^k\cdots = \al_4\al_\ell^i \al_m^j
		\al_n^k$.
		
\end{lem}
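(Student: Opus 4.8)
Both statements follow the same template: I identify the remainder of the partial type as $\al_3$ (respectively $\al_4$), show that this value is \emph{strictly smaller} than every other angle-value occurring in the tiling, and then observe that a remainder of exactly this size can only be filled by a single angle of that size. For the first statement, the existence of a vertex of type $\al_3\al_\ell^i\al_m^j\al_n^k$ means its angle sum is $2\pi$, so $R(\al_\ell^i\al_m^j\al_n^k)=2\pi-i\al_\ell-j\al_m-k\al_n=\al_3$. Completing the partial vertex $\al_\ell^i\al_m^j\al_n^k\cdots$ to a full vertex therefore amounts to adjoining a nonempty collection of incident angles whose values sum to exactly $\al_3$.

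The crucial point is that $\al_3$ is the strict minimum among all angle-values in the tiling, and for this I first check that the triangle is strictly convex, i.e. $\al_3<\pi$. Suppose instead $\al_3\ge\pi$. The vertex has degree at least $3$, so besides the displayed $\al_3$ there are at least two further incident angles; each of these is either the angle of a triangle (hence equal to $\al_3\ge\pi$) or the angle of a polygon with at least four edges (hence $>\tfrac{\pi}{2}$ by \eqref{Ineq-alm-lb}), so in every case it exceeds $\tfrac{\pi}{2}$. Then the angle sum is at least $\al_3+\tfrac{\pi}{2}+\tfrac{\pi}{2}>2\pi$, a contradiction; hence $\al_3<\pi$. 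Now take any polygon present with $p\ge 4$ edges. If it is strictly convex, then Lemma~\ref{Lem-ang-asc-seq} gives $\al_3<\al_p$; if it is not strictly convex, then $\al_p\ge\pi>\al_3$. In either case $\al_p>\al_3$, so $\al_3$ is strictly smaller than every other angle-value (all tiles of a fixed size being congruent by Lemma~\ref{Lem-cong}).

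It follows that every incident angle is at least $\al_3$, with equality only for triangles; consequently a nonempty collection of incident angles summing to $\al_3$ must consist of a single $\al_3$ contributed by a triangle. This proves $\al_\ell^i\al_m^j\al_n^k\cdots=\al_3\al_\ell^i\al_m^j\al_n^k$. The second statement is entirely analogous with $\al_4$ in place of $\al_3$: the hypothesised vertex gives $R(\al_\ell^i\al_m^j\al_n^k)=\al_4$, and since the tiling has no triangle every angle exceeds $\tfrac{\pi}{2}$ by \eqref{Ineq-alm-lb}, so the two or more angles other than $\al_4$ at the (degree $\ge 3$) vertex already sum to more than $\pi$, forcing $\al_4<\pi$. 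As above, Lemmas~\ref{Lem-ang-asc-seq} and~\ref{Lem-cong} then make $\al_4$ the strict minimum angle-value, and filling a remainder of $\al_4$ requires exactly one $\al_4$.

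The only step that is not purely formal is the verification that the triangle (respectively quadrilateral) is strictly convex, for this is what guarantees that its angle is the \emph{global minimum} rather than being large; if one allowed $\al_3\ge\pi$, the remainder $\al_3$ could in principle be split among several smaller angles and the conclusion would fail. I expect this to be the main obstacle, but as shown it is dispatched by the lower bound \eqref{Ineq-alm-lb} together with the degree constraint, without any case analysis over the vertex lists.
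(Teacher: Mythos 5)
Your proof is correct and takes essentially the same route as the paper's: the remainder of the partial type equals $\al_3$ (respectively $\al_4$), which is strictly smaller than every other angle-value in the tiling, so completing the vertex to angle sum $2\pi$ forces exactly one additional angle of that size. The only difference is that you explicitly verify $\al_3<\pi$ (respectively $\al_4<\pi$) before invoking Lemma~\ref{Lem-ang-asc-seq}, which is stated only for strictly convex polygons, and you cover a possible non-strictly-convex $p$-gon via $\al_p\ge\pi>\al_3$; the paper applies Lemma~\ref{Lem-ang-asc-seq} directly and leaves this convexity bookkeeping implicit, so your version is, if anything, slightly more careful.
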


\begin{proof}
		In the first case, $R(\al_\ell^i \al_m^j \al_n^k) = \al_3
		< \al_p$ for $p>3$ by Lemma~\ref{Lem-ang-asc-seq}. Thus, if there
		is a vertex of
		type $\al_\ell^i \al_m^j \al_n^k\cdots$ other than
		$\al_3\al_\ell^i \al_m^j \al_n^k$, its angle sum cannot be
		equal to $2\pi$, hence the result. The proof for the triangle-free
		case is identical.
\end{proof} 

\section{Weakly Vertex-homogenous Tilings}

In this section we construct every possible tiling which is {\em weakly vertex-homogenous}, meaning that if every vertex is of the same type (but not necessarily of the same angle arrangement). This proves
the following theorem.

\begin{thm}
		\label{thm:weakly}
		A weakly vertex-homogenous tiling is either a Platonic tiling, an
		Archimedean tiling, one of the Johnson tilings $J_{27}$, $J_{34}$,
		$J_{37}$, $J_{72},\dots,J_{75}$, a prism or
		an anti-prism.
\end{thm}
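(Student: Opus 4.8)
The plan is to let $T$ be a weakly vertex-homogenous tiling with common vertex type $\tau$ and to exploit the fact that new tilings can only arise when $\tau$ admits more than one angle arrangement. By Lemma~\ref{Lem-all-vertices}, $\tau$ lies in the explicit finite list, and if $\tau$ has a unique angle arrangement up to rotation and reflection then $T$ is automatically \emph{strongly} vertex-homogenous, so Proposition~\ref{Prop-vertex-homog} identifies it as a Platonic tiling, an Archimedean tiling, $J_{37}$, a prism or an anti-prism, all of which occur in the conclusion. Every degree-$3$ type has a unique arrangement (a cyclic word of length three is unique up to symmetry), as do the degree-$5$ types $\al_3^5,\al_3^4\al_4,\al_3^4\al_5$ and the degree-$4$ types $\al_3^4,\al_3^3\al_m,\al_3\al_4^3$. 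Thus the only types that can contribute genuinely new tilings are the degree-$4$ types with two arrangements, namely $\al_3^2\al_m^2$ ($m\ge4$), $\al_3^2\al_m\al_n$ ($m\neq n$) and $\al_3\al_4^2\al_5$; and in each such case I may assume both arrangements actually occur in $T$, since otherwise Proposition~\ref{Prop-vertex-homog} again applies.

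For $\al_3^2\al_m^2$ the angle sum gives $\al_m=\pi-\al_3$, and combining $\al_3>\tfrac{1}{3}\pi$ from (\ref{Ineq-alm-lb}) with $\al_m>(1-\tfrac{2}{m})\pi$ yields $\al_3<\tfrac{2}{m}\pi$ and hence $m\in\{4,5\}$. The relations (\ref{Eq-Euler})--(\ref{Eq-e-fm}) then force the face vector ($8$ triangles and $6$ squares for $m=4$; $20$ triangles and $12$ pentagons for $m=5$), and I would reconstruct $T$ by growing outward from a single face: the appearance of an $\al_3\al_3\al_m\al_m$ vertex forces the two caps (triangular cupolae for $m=4$, pentagonal rotundae for $m=5$) to be joined in \emph{ortho} rather than \emph{gyro} position, yielding the triangular orthobicupola $J_{27}$ or the pentagonal orthobirotunda $J_{34}$, whereas the all-alternating assembly is the cuboctahedron or icosidodecahedron already covered above.

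The principal case is $\al_3\al_4^2\al_5$, where the single equation $\al_3+2\al_4+\al_5=2\pi$ in the common edge length, being monotone in that length, fixes the geometry uniquely, and the forced face vector is $20$ triangles, $30$ squares and $12$ pentagons. Here I would show that each pentagon, together with its adjacent squares and triangles, is the cap of a pentagonal cupola, that $T$ is obtained from the rhombicosidodecahedron by gyrating a set of such caps with pairwise disjoint supports, and that, since the twelve pentagons carry icosahedral symmetry, the maximal such set has three members; up to symmetry the nonempty sets then give precisely the single-, para-double-, meta-double- and triple-gyrate forms $J_{72},J_{73},J_{74},J_{75}$. Finally, for $\al_3^2\al_m\al_n$ with $m\neq n$ the angle bound together with the integrality $v=mf_m=nf_n$ and $3f_3=2v$ coming from (\ref{Eq-Euler})--(\ref{Eq-e-fm}) leaves only finitely many pairs $(m,n)$; since both arrangements must occur, I would trace the forced neighbours around an $m$-gon and an $n$-gon and obtain a contradiction in each surviving pair, so that no such tiling exists.

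I expect the reconstruction steps to be the main obstacle: verifying that a single $\al_3\al_3\al_m\al_m$ (respectively gyrated) vertex propagates to a \emph{globally} consistent orthobicupola/orthobirotunda (respectively gyrate) structure and to nothing else, and in particular executing the symmetry-reduced enumeration of cupola gyrations so that exactly the para/meta distinction and the ceiling of three gyrations emerge. The elimination of the phantom types $\al_3^2\al_m\al_n$ is more mechanical but still demands the careful local bookkeeping around faces indicated above.
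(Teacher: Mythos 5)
Your overall architecture is correct, and it takes a genuinely different route from the paper's. You dispatch every vertex type admitting a unique angle arrangement by observing that weak and strong homogeneity then coincide and citing Proposition~\ref{Prop-vertex-homog}; the paper instead proves the theorem constructively, almost without invoking Sommerville's result: it handles the degree-3 types via the parity Lemma~\ref{Lem-even-m} and the shrinking/truncation operation (reducing truncated tilings to Platonic ones), obtains prisms and antiprisms by cycle-of-squares and cycle-of-rhombi arguments, and builds $aC$, $aD$, $eC$, $J_{37}$ and $eD$ tile by tile. Your reduction buys considerable brevity at the price of leaning entirely on the quoted Proposition~\ref{Prop-vertex-homog}; that is legitimate within the paper's framework (the later classification section uses it in exactly this way), whereas the paper's version keeps this theorem essentially self-contained. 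Your identification of the three genuinely two-arrangement types ($\al_3^2\al_m^2$, $\al_3^2\al_m\al_n$, $\al_3\al_4^2\al_5$), the bound $m\in\{4,5\}$, and the forced face vectors are all correct and agree with the paper's computations.

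The weakness is that the three reconstruction steps you defer are precisely the substance of the paper's proof, and in two of them the paper uses devices your sketch does not anticipate. For mixed $\al_3^2\al_5^2$ the paper does not assemble rotundae directly: it shows that a vertex with arrangement $\al_3\al_3\al_5\al_5$ propagates along a cycle of such vertices, notes that $\al_3+\al_5=\pi$ on both sides so the cycle is a great circle, and rotates one hemisphere by $\frac{2}{n}\pi$ to reduce to $aD$; your claim that an $\al_3\al_3\al_m\al_m$ vertex \emph{forces} the ortho join is the right conclusion, but you would need either this trick or an equally careful band analysis to reach it (for $m=4$ the paper's direct tile-by-tile growth matches your plan). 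Likewise, for $\al_3\al_4^2\al_5$, the assertion that the gyrate vertices bound a pentagonal cupola is proved in the paper by following a cycle of $\al_3\al_4\al_4\al_5$ vertices and checking that the tiles on one side alternate triangle/square under a common pentagon; this, together with the rotation-by-$\frac{1}{5}\pi$ reduction to $eD$, is what makes ``$T$ is obtained from $eD$ by gyrating disjoint caps'' rigorous, and the ceiling of three non-overlapping cupolas is a concrete overlap check rather than a consequence of icosahedral symmetry per se. Finally, for $\al_3^2\al_m\al_n$ your Euler-based finite enumeration plus per-pair neighbour tracing would presumably succeed, but the paper has a uniform, pair-independent argument worth knowing: colour vertices black ($\al_3\al_3\al_m\al_n$) or white ($\al_3\al_m\al_3\al_n$), show the colour sequence around any $m$-gon must read white-black-black repeating, conclude $3\mid m$, and contradict $m\in\{4,5\}$ --- no enumeration required (Proposition~\ref{Prop-2al3almaln} executes this in full).
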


To construct the tilings in Theorem \ref{thm:weakly}, we begin with the following lemma.

\begin{lem}
		\label{Lem-even-m} 
		Along the boundary of an $m$-gon, if its vertices have the same
		angle arrangement $\alpha_l \vert \alpha_m \vert \alpha_n$ for
		distinct labels $l, m, n$ and there is only one $\alpha_m$
		incident to each vertex, then $m$ is even.
\end{lem}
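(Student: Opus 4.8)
The plan is to walk once around the boundary cycle of the $m$-gon and to two-colour its $m$ edges according to the tile lying on the far side of each, then to show that this colouring is forced to be alternating. First I would fix the $m$-gon and label its edges $e_1,\dots,e_m$ and its vertices $v_1,\dots,v_m$ in cyclic order, so that $v_i$ is the common endpoint of $e_{i-1}$ and $e_i$ (indices read modulo $m$). Since the tiling is edge-to-edge, each $e_i$ is shared between the $m$-gon and exactly one other tile, and I would record for each $e_i$ whether that neighbouring tile is an $l$-gon or an $n$-gon.

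Next I would exploit the hypothesis at a single vertex. At $v_i$ the angle arrangement is $\alpha_l\vert\alpha_m\vert\alpha_n$ and, by assumption, only one incident angle equals $\alpha_m$; that angle is necessarily the one contributed by the $m$-gon we are traversing. Hence at $v_i$ the sector of the $m$-gon sits between the sector of an $l$-gon and the sector of an $n$-gon, so the two edges $e_{i-1}$ and $e_i$ bounding the $m$-gon at $v_i$ are shared with an $l$-gon and with an $n$-gon, respectively. Because $l\neq n$, these two edges receive different colours. Carrying this out at every vertex shows that consecutive edges of the boundary cycle always carry different colours.

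Finally I would invoke the elementary fact that a proper two-colouring of the cycle $C_m$, in which consecutive edges are coloured differently, exists if and only if $m$ is even, which yields the claim. The step I expect to demand the most care is the second one: I must be sure the single-$\alpha_m$ hypothesis genuinely pins the unique $\alpha_m$ angle to the $m$-gon being traversed, and that the reflection ambiguity inherent in an unoriented arrangement $\alpha_l\vert\alpha_m\vert\alpha_n$ does not spoil the alternation. The latter is in fact harmless, because the colouring argument never needs a globally consistent orientation of the boundary walk---only the purely local fact that the two $m$-gon edges meeting at each vertex have distinct neighbour types---but this is precisely the point that should be stated explicitly rather than taken for granted.
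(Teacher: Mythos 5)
Your proposal is correct and follows essentially the same route as the paper: both arguments show that, because the unique $\alpha_m$ at each boundary vertex is the sector of the traversed $m$-gon, the two flanking boundary edges are shared with an $l$-gon and an $n$-gon, so the neighbour types alternate around the cycle and $m$ must be even. Your framing as a forced proper two-edge-colouring of $C_m$ (and your explicit remark that only the local distinctness at each vertex is needed, not a global orientation) is just a slightly more formal rendering of the paper's ``alternating labels $nn$ and $ll$'' propagation argument.
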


\begin{proof} 
		Without loss of generality, a vertex of the $m$-gon has an angle
		arrangement $\alpha_l \vert \alpha_m \vert \alpha_n$ as shown on
		the left of Figure \ref{Fig-even-mgon-bdry}. Then the hypothesis
		determines the angle arrangement in the next vertex and so on.
		This defines alternating labels $nn$ and $ll$ for the edges of the
		$m$-gon, meaning that the numbers of $l$-gons and $n$-gons along
		the boundary of the $m$-gon must be the same. Hence $m$ is even.

\begin{figure}[h!]
\centering
\begin{tikzpicture}
\tikzmath{
\XS=1.25;
\Y=1;
}

\foreach \xs in {0,1,2} {
\tikzset{xshift=\xs*\XS cm}
\draw[]
	(0,0) -- (\XS,0)
;
}

\foreach \aa in {-1,1} {
\tikzset{xshift=1.5*\XS cm, xscale=\aa}
\draw[]
	(0.5*\XS,0) -- (0.3*\XS,\Y)
	(0.5*\XS,0) -- (0.7*\XS,\Y)
;
\draw[]
	(1.5*\XS,0) -- (1.3*\XS,\Y)
	(1.5*\XS,0) -- (1.7*\XS,\Y)
;

\node at (0.25*\XS,0.25*\Y) {\small $\alpha_l$};
\node at (0.75*\XS,0.25*\Y) {\small $\alpha_n$};
\node at (1.25*\XS,0.25*\Y) {\small $\alpha_n$};
\node at (1.75*\XS,0.25*\Y) {\small $\alpha_l$};

\node at (0.5*\XS,-0.2*\Y) {\small $\alpha_m$};
\node at (1.5*\XS,-0.2*\Y) {\small $\alpha_m$};

\node at (2.5*\XS, 0.5*\Y) {$\cdots$};
}

\end{tikzpicture}
\caption{}
\label{Fig-even-mgon-bdry} \qedhere
\end{figure}
\end{proof}

If the polygons incident to a vertex are all congruent, then the
vertex must be of type $\al_3^3$, $\al_3^4$, $\al_3^5$, $\al_4^3$ or
$\al_5^3$ by Lemma~\ref{Lem-all-vertices}. It is straightforward to
verify that a weakly vertex-homogenous tiling with one of these types
must be Platonic.

The remaining vertices from the lists
\eqref{List-deg3-al3}--\eqref{List-al5} are:
\begin{align}
		\label{List-weakly}
&\alpha_3^2\alpha_{m}, \ \alpha_3\alpha_{m}^2, \
		\alpha_3\alpha_m\alpha_n, \\ \notag &\alpha_3^3\alpha_{m}, \
		\alpha_3^2\alpha_{m}^2, \  \alpha_3^2\alpha_m\alpha_n, \
		\alpha_3\alpha_4^3, \ \alpha_3\alpha_4^2\alpha_5, \\ \notag
																				&\alpha_3^4\alpha_4, \
		\alpha_3^4\alpha_5, \\ \notag &\alpha_4^2\alpha_{m},  \
		\alpha_4\alpha_m^2 (m\le 7),  \ \alpha_4\alpha_m\alpha_n (m \le 7,
		m < n \le 19); \\ \notag &\alpha_4\alpha_5^2, \
		\alpha_5^2\alpha_{m}(m\le9), \ \alpha_5\alpha_6^2,
		\alpha_5\alpha_6\alpha_7.
\end{align}

The vertex types of degree 5 in the above list are $\al_3^4\al_4$ and
$\al_3^4\al_5$. By vertex-to-vertex construction, it is easy to verify
that the weakly vertex-homogenous tilings with these types are,
respectively, the snub cube and the snub dodecahedron. We deal with
vertices of degree 3 and 4 separately in the sequel.

\subsubsection*{Degree 3 vertices}

Observe that weak and strong vertex-homogeneity coincide for tilings
with vertices of degree 3. We omit the modifiers from our terminology
when there is no ambiguity.

An immediate consequence of Lemma~\ref{Lem-even-m} is that if every
vertex in a tiling is of type $\al_\ell\al_m\al_n$, then either $\ell$
is even or $m = n$. 
This restricts the possible vertex types for vertex-homogenous tilings with
degree 3:
\begin{align*}
		&\al_3^3, \ \al_3\al_m^2 \text{ for even } m,\\
		&\al_4^2\al_\ell, \ \al_4\al_m\al_n \text{ for even } m \text{
		and } n, \\
		&\al_5^3, \ \al_5\al_m^2 \text{ for even } m.
\end{align*}

We depict a {\em shrinking} operation on a polygon with vertices of
degree 3 in Figure~\ref{Fig-shrink}. The inverse operation is called
{\em truncation}. 
Applying the shrinking operation to every triangle in a
vertex-homogenous tiling with vertices of type $\al_3\al_{2p}^2$ with
$p \ge 3$
yields a new vertex-homogenous tiling with vertices of type $\al_p^3$;
that is, the new tiling is Platonic. Thus, the original tiling is a
truncation of a Platonic tiling: for $p=3$, $p=4$ and $p=5$ the tilings are the truncated tetrahedron, the truncated cube and
the truncated dodecahedron respectively.
Similarly, the vertex-homogenous tilings with vertex type
$\al_4\al_6^2$ and $\al_5\al_6^2$ are the truncated octahedron and the
truncated icosahedron.

We apply the shrinking operation to every square in a
vertex-homogenous tiling with vertices of type $\al_4\al_{2p}\al_{2q}$
with $q > p \ge 3$
to obtain a new (strongly) vertex-homogenous tiling with angle
arrangement $\al_p\al_q\al_p\al_q$. In the following section, we will
see that
these tilings are Archimedean. In particular, the vertex-homogenous
tilings with vertex type $\al_4\al_6\al_8$ and $\al_4\al_6\al_{10}$
are the truncated cuboctahedron and the truncated icosidodecahedron
respectively.

\begin{figure}[h!]
\centering
\begin{subfigure}[t]{0.32\linewidth}
\centering
\begin{tikzpicture}[>=latex]
\tikzmath{
\r=0.5;
\rr=0.05;
\n=3;
\nn=\n-1;
\th=360/\n;
\XS=2;
}
\raisebox{2ex}{
\foreach \a in {0,...,\nn} {
\tikzset{rotate=\a*\th}
\draw[line width=1.5]
	(90:\r) -- (90+\th:\r)
;
\draw[]
	(90:\r) -- (90:2*\r)
;
};

\draw[->]
	(0.4*\XS,0) -- (0.6*\XS,0)
;

\begin{scope}[xshift=\XS cm]
\foreach \a in {0,...,\nn} {
\tikzset{rotate=\a*\th}
\draw[]
	(0,0) -- (90:\r) 
;
}
\fill[] (0,0) circle (\rr);
\end{scope}
}
\end{tikzpicture}
\caption{$\triangle$ $\to$ $\deg 3$ vertex}
\label{Subfig-shrink-deg3}
\end{subfigure}
\begin{subfigure}[t]{0.32\linewidth}
\centering
\begin{tikzpicture}[>=latex]
\tikzmath{
\r=0.5;
\rr=0.05;
\n=4;
\nn=\n-1;
\th=360/\n;
\XS=3;
}
\foreach \a in {0,...,\nn} {
\tikzset{rotate=\a*\th}
\draw[line width=1.5]
	(90:\r) -- (90+\th:\r)
;
\draw[]
	(90:\r) -- (90:2*\r)
;
};

\draw[->]
	(2.5*\r,0) -- (3.5*\r,0)
;

\begin{scope}[xshift=0.8*\XS cm]
\foreach \a in {0,...,\nn} {
\tikzset{rotate=\a*\th}
\draw[]
	(0,0) -- (90:\r) 
;
}
\fill[] (0,0) circle (\rr);
\end{scope}
\end{tikzpicture}
\caption{$\square$ $\to$ $\deg 4$ vertex}
\label{Subfig-shrink-deg4}
\end{subfigure}
\begin{subfigure}[t]{0.32\linewidth}
\centering
\begin{tikzpicture}[>=latex]
\tikzmath{
\r=0.5;
\rr=0.05;
\n=5;
\nn=\n-1;
\th=360/\n;
\XS=2.5;
}
\foreach \a in {0,...,\nn} {
\tikzset{rotate=\a*\th}
\draw[]
	(90:\r) -- (90+\th:\r)
	(90:\r) -- (90:2*\r)
;
};
\foreach \a in {0,...,\nn} {
\tikzset{rotate=\a*\th}
\draw[line width=1.5]
	(90:\r) -- (90+\th:\r)
;
\draw[]
	(90:\r) -- (90:2*\r)
;
};
\draw[->]
	(2.5*\r,0) -- (3.5*\r,0)
;
\begin{scope}[xshift=\XS cm]
\foreach \a in {0,...,\nn} {
\tikzset{rotate=\a*\th}
\draw[]
	(0,0) -- (90:\r) 
;
}
\fill[] (0,0) circle (\rr);
\end{scope}
\end{tikzpicture}
\caption{{\large $\pentagon$} $\to$ $\deg 5$ vertex}
\label{Subfig-shrink-deg5}
\end{subfigure}
\caption{The shrinking operation}
\label{Fig-shrink}
\end{figure}
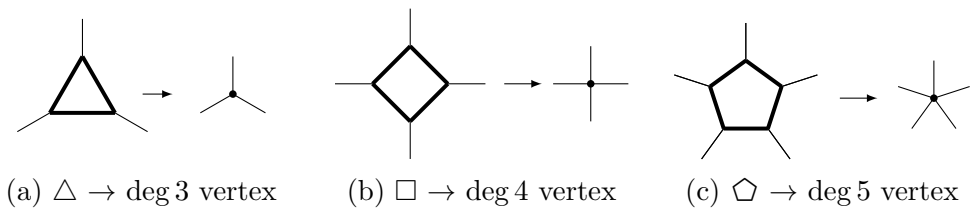

The remaining vertex types of degree 3 are of the form $\al_m\al_4^2$.
We claim that the vertex-homogenous tilings with these types are
prisms. Indeed, each square in such a tiling has exactly two
neighbouring squares on an opposite pair of edges, so there is a cycle
of squares. Each square has an $m$-gon on the other opposite pair of
edges and since $m \ge 3$, two neighbouring squares share the same
pair of $m$-gons. Hence the whole cycle shares the same pair of
$m$-gons by induction, and the claim follows.

\subsubsection*{Degree 4 vertices}

We complete the proof by constructing the tilings with vertices of
degree 4. The degree 4 types listed in \eqref{List-weakly} are $\al_3^3\al_m$, $\al_3^2\al_m^2$,
$\al_3^2\al_m\al_n$, $\al_3\al_4^3$ and $\al_3\al_4^2\al_5$. For each
type we show that the tiling is one of the tilings listed in
Theorem~\ref{thm:weakly} or derive a contradiction.

\medskip\noindent\emph{Type $\al_3^3\al_m$ for $m\ge 4$.} 
The tiling is an anti-prism. The argument is similar to the case of
the prisms. We fix an $m$-gon in the tiling and call a triangle black
if it shares an edge with the $m$-gon and white if it shares exactly
one vertex with the $m$-gon. Each vertex on the triangle is incident
to two black triangles and one white. On the other hand each black
triangle is incident to two white triangles and each white triangle is
incident to two black triangles. Hence there is a cycle of alternating
black and white triangles around the $m$-gon. 
If we pair each black triangle with its white neighbour going
clockwise around the $m$-gon, we see that the pairs form rhombi. Each
rhombus has a pair of rhombi on an opposite pair of edges and a pair of
$m$-gons on the other opposite pair. Since $m \ge 4$, two neighbouring
rhombi share the same pair of $m$-gons. Hence the whole cycle of
rhombi share the same pair of $m$-gons by induction; there are exactly
two $m$-gons and the tiling is an antiprism.

\medskip\noindent\emph{Type $\al_3^2\al_m^2$ for $m \ge 4$.}
The angle sum of $\alpha_3^2\alpha_m^2$ gives $\alpha_m=\pi -
\alpha_3$. Then \eqref{Ineq-alm-lb} further implies
$(1-\frac{2}{m})\pi < \alpha_m < \pi - \frac{1}{3}\pi =
\frac{2}{3}\pi$, which gives $m<6$. Hence $m=4,5$ and
$\alpha_3^2\alpha_m^2=\alpha_3^2\alpha_4^2, \alpha_3^2\alpha_5^2$.

\medskip\noindent\emph{Case $m=4$.} We first determine the tiling when
there is at least one vertex with angle arrangement
$\alpha_3\alpha_3\alpha_4\alpha_4$. Then we determine the tiling when
all vertices have angle arrangement $\al_3\al_4\al_3\al_4$.

We depict a vertex with angle arrangement $\al_3\al_3\al_4\al_4$
in the centre of Figure~\ref{Subfig-Tiling-J27}, with incident
triangles $\mathbf{1}$ and $\mathbf{2}$ and incident squares
$\mathbf{3}$ and $\mathbf{4}$. This vertex has two neighbours of
the same angle arrangement, which are depicted above and below it,
which determines the triangles $\mathbf{5}$ and $\mathbf{6}$ and
the squares $\mathbf{7}$ and $\mathbf{8}$. Continuing, we
determine the tiles $\mathbf{9, 10, 11, 12}$ and obtain the tiling
$J_{27}$.

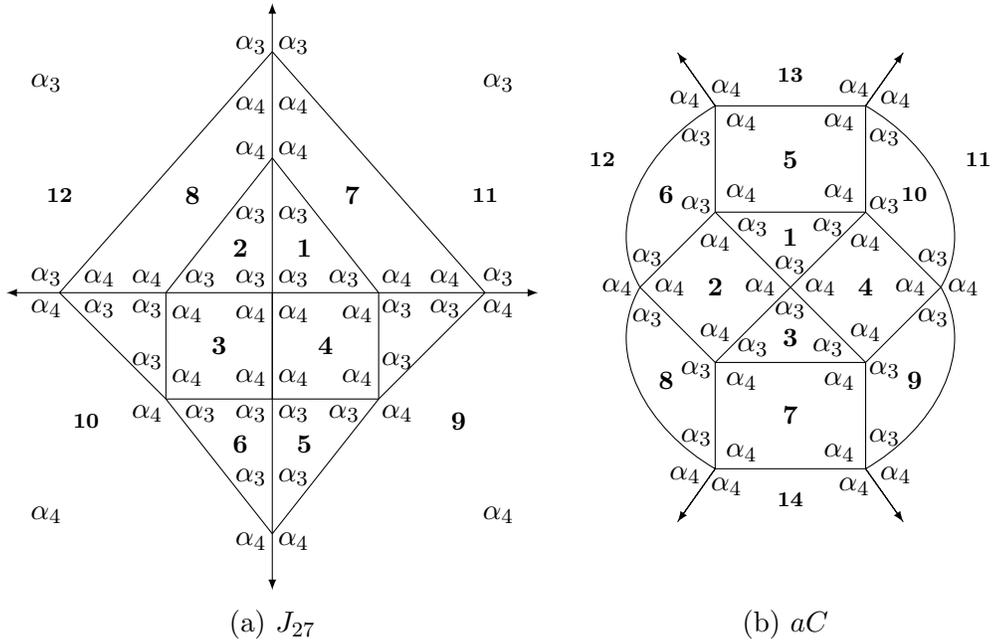
\begin{figure}[h!]
\centering
\begin{subfigure}[t]{0.575\linewidth}
\centering
\begin{tikzpicture}[>=latex]
\tikzmath{
\r=1;
\th=360/4;
\x=\r*cos(\th/2);
}

\foreach \aa in {-1,1} {
\tikzset{xshift=\aa*\x cm, xscale=\aa}
\foreach \a in {0,1,...,3} {
\tikzset{rotate=\a*\th}
\draw[]
	(90-0.5*\th:\r) -- (90+0.5*\th:\r)
;

\node at (0.6*\x,0.6*\x) {\small $\alpha_4$};
}
}

\foreach \bb in {-1,1} {
\tikzset{yscale=\bb}
\draw[]
	(0,\x) -- (0,2.5*\r)
;
\foreach \aa in {-1,1} {
\tikzset{xscale=\aa}
\draw[]
	(0,2.5*\r) -- (2*\x,\x) 
;
\node at (0.4*\x,1.75*\r) {\small $\alpha_3$};
\node at (0.4*\x,0.9*\r) {\small $\alpha_3$};
\node at (1.35*\x,0.9*\r) {\small $\alpha_3$};

\node at (0.4*\x,2.6*\r) {\small $\alpha_4$};
\node at (2.35*\x,0.9*\r) {\small $\alpha_4$};
}
}

\foreach \aa in {-1,1} {
\tikzset{xscale=\aa}
\node at (0.4*\x,3.2*\r) {\small $\alpha_4$};
\node at (3.25*\x,0.9*\r) {\small $\alpha_4$};
\node at (4.25*\x,0.7*\x) {\small $\alpha_4$};
\node at (4.25*\x,-2.25*\r) {\small $\alpha_4$};

\node at (2.35*\x,0.7*\x) {\small $\alpha_3$};
\node at (2.35*\x,-0.2*\r) {\small $\alpha_3$};
\node at (3.25*\x,0.7*\x) {\small $\alpha_3$};

\node at (4.25*\x,1.3*\x) {\small $\alpha_3$};
\node at (0.4*\x,4*\r) {\small $\alpha_3$};
\node at (4.25*\x,3.5*\r) {\small $\alpha_3$};
}

\foreach \aa in {-1,1} {
\tikzset{xscale=\aa}
\draw[]
	(2*\x,\x) -- (4*\x,\x) 
	(0,2.5*\r+2*\x) -- (4*\x,\x)
	(2*\x,-\x) -- (4*\x,\x)
;
\draw[->]
	 (4*\x,\x) -- (5*\x,\x) 
;
}

\draw[]
	(0,2.5*\r) -- (0,2.5*\r+2*\x)
;

\draw[->]
	(0,-2.5*\r) -- (0,-2.5*\r-0.75*\r)
;

\draw[->]
	(0,2.5*\r+2*\x) -- (0,2.5*\r+1.5*\x+\r)
;




\node at (0.6*\x,1.3*\r) {\footnotesize \bm{$1$}};
\node at (-0.6*\x,1.3*\r) {\footnotesize \bm{$2$}};
\node at (-\x,0) {\footnotesize \bm{$3$}};
\node at (\x,0) {\footnotesize \bm{$4$}};

\node at (0.6*\x,-1.3*\r) {\footnotesize \bm{$5$}};
\node at (-0.6*\x,-1.3*\r) {\footnotesize \bm{$6$}};

\node at (1.5*\x,2*\r) {\footnotesize \bm{$7$}};
\node at (-1.5*\x,2*\r) {\footnotesize \bm{$8$}};

\node at (3.5*\x,-\r) {\footnotesize \bm{$9$}};
\node at (-3.5*\x,-\r) {\scriptsize \bm{$10$}};

\node at (4*\x,2*\r) {\scriptsize \bm{$11$}};
\node at (-4*\x,2*\r) {\scriptsize \bm{$12$}};
\end{tikzpicture}
\caption{$J_{27}$}
\label{Subfig-Tiling-J27}
\end{subfigure}
\begin{subfigure}[t]{0.4\linewidth}
\centering
\begin{tikzpicture}[>=latex]
\tikzmath{
\r=1;
\th=360/4;
\x=\r*cos(0.5*\th);
}

\raisebox{5ex}{

\foreach \aa in {-1,1} {
\tikzset{xshift=\aa*\r cm}
\foreach \a in {0,1,2,3} {
\tikzset{rotate=\a*\th}
\draw[]
	(90:\r) -- (90+\th:\r)
;
}
}

\foreach \bb in {-1,1} {
\tikzset{yscale=\bb}
\draw[]
	(\r,\r) -- (-\r,\r)
	(\r,\r+2*\x) -- (-\r,\r+2*\x)
;
\node at (0,0.3*\r) {\small $\alpha_3$};
\node at (0.5*\r,0.8*\r) {\small $\alpha_3$};
\node at (-0.5*\r,0.8*\r) {\small $\alpha_3$};
\foreach \aa in {-1,1} {
\tikzset{xscale=\aa}
\draw[]
	(\r,\r) -- (\r,\r+2*\x)
	(2*\r, 0) to[out=60, in=-30] (\r,\r+2*\x)
;
\draw[->]
	(\r,\r+2*\x) -- (1.5*\r,\r+3*\x)
	(\r,\r+2*\x) -- (1.5*\r,\r+3*\x)
;
}
}

\foreach \aa in {-1,1} {
\tikzset{xscale=\aa}
\node at (0.4*\r,0) {\small $\alpha_4$};
\node at (1.6*\r,0) {\small $\alpha_4$};
\node at (\r,0.6*\r) {\small $\alpha_4$};
\node at (\r,-0.6*\r) {\small $\alpha_4$};

\node at (2.3*\r,0) {\small $\alpha_4$};

\foreach \bb in {-1,1}{
\tikzset{yscale=\bb}
\node at (0.65*\r,1.25*\r) {\small $\alpha_4$};
\node at (0.65*\r,2.2*\r) {\small $\alpha_4$};
\node at (0.85*\r,2.65*\r) {\small $\alpha_4$};
\node at (1.4*\r,2.5*\r) {\small $\alpha_4$};

\node at (1.25*\r,1.1*\r) {\small $\alpha_3$};
\node at (1.25*\r,2*\r) {\small $\alpha_3$};
\node at (1.9*\r,0.4*\r) {\small $\alpha_3$};
}
}

\node at (0,0.95*\x) {\footnotesize \bm{$1$}};
\node at (-\r,0) {\footnotesize \bm{$2$}};
\node at (0,-0.95*\x) {\footnotesize \bm{$3$}};
\node at (\r,0) {\footnotesize \bm{$4$}};

\node at (0,2.4*\x) {\footnotesize \bm{$5$}};
\node at (-1.65*\r,1.75*\x) {\footnotesize \bm{$6$}};
\node at (1.65*\r,1.75*\x) {\scriptsize \bm{$10$}};

\node at (0,-2.4*\x) {\footnotesize \bm{$7$}};
\node at (-1.65*\r,-1.75*\x) {\footnotesize \bm{$8$}};
\node at (1.65*\r,-1.75*\x) {\footnotesize \bm{$9$}};

\node at (2.5*\r,2.4*\x) {\scriptsize \bm{$11$}};
\node at (-2.5*\r,2.4*\x) {\scriptsize \bm{$12$}};

\node at (0,4*\x) {\scriptsize \bm{$13$}};
\node at (0,-4*\x) {\scriptsize \bm{$14$}};
}
\end{tikzpicture}
\caption{$aC$}
\label{Subfig-Cuboctahedron}
\end{subfigure}
\caption{The two tilings with $\alpha_3^2\alpha_4^2$, the triangular orthobicupola $J_{27}$ and the cuboctahedron $aC$}
\end{figure}

We now assume that every vertex in the tiling has angle arrangement
$\al_3\al_4\al_3\al_4$. We depict such a vertex in the centre
of Figure~\ref{Subfig-Cuboctahedron}.
Its
incident tiles $\mathbf{1,2,3,4}$ form a hexagon; since every vertex
has the same angle arrangement, we consider each
vertex on the boundary of this hexagon in turn and deduce the tiles
$\mathbf{5,\dots,10}$. Repeating this process determines the remaining
tiles and the tiling; namely, the cuboctahedron.

\medskip\noindent\emph{Case $m=5$.} 
In this case we begin by determining the tiling when every vertex has angle arrangement
$\al_3\al_5\al_3\al_5$; in such a tiling, every pentagon is adjacent
to five triangles. We then consider tilings with at least one
vertex having angle arrangement $\al_3\al_3\al_5\al_5$. 

We depict a pentagon $\mathbf{1}$ with five adjacent triangles
$\mathbf{2,\dots,6}$ in the
centre of Figure~\ref{Subfig-aD}. Tile $\mathbf{7}$, adjacent to
triangles $\mathbf{2}$ and $\mathbf{3}$, is a pentagon; we determine
the pentagons $\mathbf{8,\dots,11}$ similarly. Continuing, we
determine the triangles $\mathbf{12,\dots,16}$. Since every vertex has the same angle
arrangement, we determine that $\mathbf{17}$ and $\mathbf{18}$ are a
triangle and a pentagon respectively. We determine the rest of the
tiles in the same way, and the tiling is the icosidodecahedron.

Now suppose there is a vertex with angle arrangement
$\al_3\al_3\al_5\al_5$. It has a neighbour with which it shares two
triangles and a neighbour with which it shares two pentagons. Both of
these therefore have the same angle arrangement. Thus, there is a
cycle of such vertices. Observing that the cycle has angle
$\al_3+\al_5=\pi$ on both sides, we see that it is a great circle.
Rotating one of its hemispheres by $\frac{2}{n}\pi$, where $n$ is the
length of the cycle, we obtain a new tiling. The vertices on the cycle
now have angle arrangement $\al_3\al_5\al_3\al_5$.
In fact, every vertex in the new tiling has this arrangement: if a
vertex has angle arrangement $\al_3\al_3\al_5\al_5$, then by the
argument given above there is a cycle of such vertices forming a great
circle. This cycle necessarily intersects the original cycle, which is
a contradiction since these vertices have angle arrangement
$\al_3\al_5\al_3\al_5$ in the new tiling.
Therefore the new tiling is $aD$; the original tiling is obtained from
$aD$ by reversing the operation, that is, by rotating the hemisphere
by $\frac{2}{n}\pi$.
We depict the result of this operation in
Figure~\ref{Subfig-J34}; the tiling is $J_{34}$.

\begin{figure}[h!]
\centering

\begin{subfigure}[t]{0.3\linewidth}
\centering
\begin{tikzpicture}
\tikzmath{
\r=0.45;
\l=1.8*\r;
\R=2.75*\r;
\RR=4.25*\r;
\m=3;
\mm=\m-1;
\n=5;
\nn=\n-1;
\N=2*\n;
\Nn=\N-1;
\th = 360/\m;
\ph = 360/\n;
\Ph = 360/\N;
}

\foreach \a in {0,...,\Nn} {
\tikzset{rotate=\a*\Ph}
\draw[cyan!50, line width=2]
	(90-0.5*\Ph:\R) -- (90+0.5*\Ph:\R)
;
}

\foreach \a in {0,...,\nn} {
\tikzset{rotate=\a*\ph}
\draw[]
	(270:\r) -- (270+\ph:\r)
	(270:\r) -- (270-0.5*\ph:\l)
	(270:\r) -- (270+0.5*\ph:\l)
	(270-0.5*\ph:\l) -- (270-0.5*\Ph:\R) 
	(270+0.5*\ph:\l) -- (270+0.5*\Ph:\R)
	(270-0.5*\Ph:\R) -- (270:3.5*\r)
	(270+0.5*\Ph:\R) -- (270:3.5*\r)
	(270:3.5*\r) -- (270-0.5*\ph:\RR)
	(270:3.5*\r) -- (270+0.5*\ph:\RR)
;
}

\foreach \a in {0,...,\Nn} {
\tikzset{rotate=\a*\Ph}
\draw[]
	(90-0.5*\Ph:\R) -- (90+0.5*\Ph:\R)
;
}

\draw[] (0,0) circle (\RR);

\node at (0,0) {\tiny \bm{$1$}};
\node at (0,1.14*\r) {\tiny \bm{$2$}};
\node at ([rotate=1*\ph]0,1.14*\r) {\tiny \bm{$3$}};
\node at ([rotate=2*\ph]0,1.14*\r) {\tiny \bm{$4$}};
\node at ([rotate=3*\ph]0,1.14*\r) {\tiny \bm{$5$}};
\node at ([rotate=4*\ph]0,1.14*\r) {\tiny \bm{$6$}};

\node at (90+0.5*\ph:0.7*\R) {\tiny \bm{$7$}};
\node at ([rotate=1*\ph]90+0.5*\ph:0.7*\R) {\tiny \bm{$8$}};
\node at ([rotate=2*\ph]90+0.5*\ph:0.7*\R) {\tiny \bm{$9$}};
\node at ([rotate=3*\ph]90+0.5*\ph:0.7*\R) {\tiny \bm{$10$}};
\node at ([rotate=4*\ph]90+0.5*\ph:0.7*\R) {\tiny \bm{$11$}};

\node at (0,2.25*\r) {\tiny \bm{$12$}};
\node at ([rotate=1*\ph]0,2.25*\r) {\tiny \bm{$13$}};
\node at ([rotate=2*\ph]0,2.25*\r) {\tiny \bm{$14$}};
\node at ([rotate=3*\ph]0,2.25*\r) {\tiny \bm{$15$}};
\node at ([rotate=4*\ph]0,2.25*\r) {\tiny \bm{$16$}};

\node at (90+0.5*\ph:3*\r) {\tiny \bm{$18$}};
\node at (90+1.5*\ph:3*\r) {\tiny \bm{$20$}};
\node at (90+2.5*\ph:3*\r) {\tiny \bm{$22$}};
\node at (90+3.5*\ph:3*\r) {\tiny \bm{$24$}};
\node at (90+4.5*\ph:3*\r) {\tiny \bm{$26$}};

\node at (90:1.2*\R) {\tiny \bm{$17$}};
\node at ([rotate=1*\ph]90:1.2*\R) {\tiny \bm{$19$}};
\node at ([rotate=2*\ph]90:1.2*\R) {\tiny \bm{$21$}};
\node at ([rotate=3*\ph]90:1.2*\R) {\tiny \bm{$23$}};
\node at ([rotate=4*\ph]90:1.2*\R) {\tiny \bm{$25$}};

\node at (90+0.5*\ph:0.915*\RR) {\tiny \bm{$27$}};
\node at (90+1.5*\ph:0.915*\RR) {\tiny \bm{$28$}};
\node at (90+2.5*\ph:0.915*\RR) {\tiny \bm{$29$}};
\node at (90+3.5*\ph:0.915*\RR) {\tiny \bm{$30$}};
\node at (90+4.5*\ph:0.915*\RR) {\tiny \bm{$31$}};

\node at (0,1.15*\RR) {\tiny \bm{$32$}};

\end{tikzpicture}
\caption{$aD$}
\label{Subfig-aD}
\end{subfigure}
\begin{subfigure}[t]{0.3\linewidth}
\centering
\begin{tikzpicture}
\tikzmath{
\r=0.45;
\l=1.8*\r;
\R=2.75*\r;
\RR=4.25*\r;
\m=3;
\mm=\m-1;
\n=5;
\nn=\n-1;
\N=2*\n;
\Nn=\N-1;
\th = 360/\m;
\ph = 360/\n;
\Ph = 360/\N;
}

\foreach \a in {0,...,\Nn} {
\tikzset{rotate=\a*\Ph}
\draw[cyan!50, line width=2]
	(90-0.5*\Ph:\R) -- (90+0.5*\Ph:\R)
;
}

\foreach \a in {0,...,\nn} {
\tikzset{rotate=\a*\ph}
\draw[]
	(270:\r) -- (270+\ph:\r)
	(270:\r) -- (270-0.5*\ph:\l)
	(270:\r) -- (270+0.5*\ph:\l)
	(270-0.5*\ph:\l) -- (270-0.5*\Ph:\R) 
	(270+0.5*\ph:\l) -- (270+0.5*\Ph:\R)
	(90-0.5*\Ph:\R) -- (90:3.5*\r)
	(90+0.5*\Ph:\R) -- (90:3.5*\r)
	(90:3.5*\r) -- (90-0.5*\ph:\RR)
	(90:3.5*\r) -- (90+0.5*\ph:\RR)
;
}

\foreach \a in {0,...,\Nn} {
\tikzset{rotate=\a*\Ph}
\draw[]
	(90-0.5*\Ph:\R) -- (90+0.5*\Ph:\R)
;
}

\draw[] (0,0) circle (\RR);

\node at (0,0) {\tiny \bm{$1$}};
\node at (0,1.14*\r) {\tiny \bm{$2$}};
\node at ([rotate=1*\ph]0,1.14*\r) {\tiny \bm{$3$}};
\node at ([rotate=2*\ph]0,1.14*\r) {\tiny \bm{$4$}};
\node at ([rotate=3*\ph]0,1.14*\r) {\tiny \bm{$5$}};
\node at ([rotate=4*\ph]0,1.14*\r) {\tiny \bm{$6$}};

\node at (90+0.5*\ph:0.7*\R) {\tiny \bm{$7$}};
\node at ([rotate=1*\ph]90+0.5*\ph:0.7*\R) {\tiny \bm{$8$}};
\node at ([rotate=2*\ph]90+0.5*\ph:0.7*\R) {\tiny \bm{$9$}};
\node at ([rotate=3*\ph]90+0.5*\ph:0.7*\R) {\tiny \bm{$10$}};
\node at ([rotate=4*\ph]90+0.5*\ph:0.7*\R) {\tiny \bm{$11$}};

\node at (0,2.25*\r) {\tiny \bm{$12$}};
\node at ([rotate=1*\ph]0,2.25*\r) {\tiny \bm{$13$}};
\node at ([rotate=2*\ph]0,2.25*\r) {\tiny \bm{$14$}};
\node at ([rotate=3*\ph]0,2.25*\r) {\tiny \bm{$15$}};
\node at ([rotate=4*\ph]0,2.25*\r) {\tiny \bm{$16$}};

\node at (0,3*\r) {\tiny \bm{$17$}};
\node at ([rotate=1*\ph]0,3*\r) {\tiny \bm{$19$}};
\node at ([rotate=2*\ph]0,3*\r) {\tiny \bm{$21$}};
\node at ([rotate=3*\ph]0,3*\r) {\tiny \bm{$23$}};
\node at ([rotate=4*\ph]0,3*\r) {\tiny \bm{$25$}};

\node at (90+0.5*\ph:1.2*\R) {\tiny \bm{$18$}};
\node at ([rotate=1*\ph]90+0.5*\ph:1.2*\R) {\tiny \bm{$20$}};
\node at ([rotate=2*\ph]90+0.5*\ph:1.2*\R) {\tiny \bm{$22$}};
\node at ([rotate=3*\ph]90+0.5*\ph:1.2*\R) {\tiny \bm{$24$}};
\node at ([rotate=4*\ph]90+0.5*\ph:1.2*\R) {\tiny \bm{$26$}};

\node at (0,0.915*\RR) {\tiny \bm{$27$}};
\node at ([rotate=1*\ph]0,0.915*\RR) {\tiny \bm{$28$}};
\node at ([rotate=2*\ph]0,0.915*\RR) {\tiny \bm{$29$}};
\node at ([rotate=3*\ph]0,0.915*\RR) {\tiny \bm{$30$}};
\node at ([rotate=4*\ph]0,0.915*\RR) {\tiny \bm{$31$}};

\node at (0,1.15*\RR) {\tiny \bm{$32$}};

\end{tikzpicture}
\caption{$J_{34}$}
\label{Subfig-J34}
\end{subfigure}
\caption{The tilings with a pentagon having five adjacent triangles, the icosidodecahedron and Johnson's solid $J_{34}$ -- pentagonal orthobirotunda}
\label{Fig-Tilings-pen-5-adj-tri}
\end{figure}
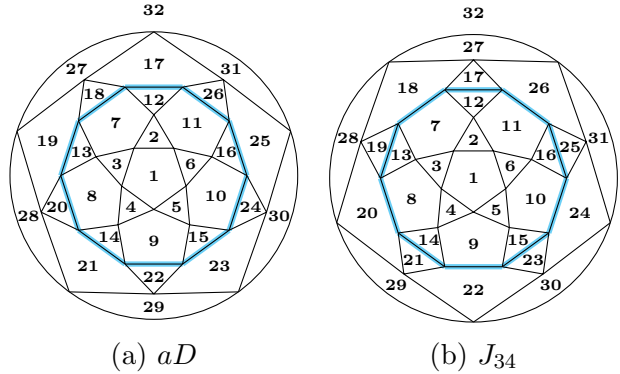

\medskip\noindent\emph{Type $\al_3^2\al_m\al_n$ for $n >m \ge 4$.}
There is no tiling with this type. We first derive a contradiction
when every vertex is of arrangement $\al_3\al_3\al_m\al_n$, then we
derive a contradiction when every vertex is of arrangement
$\al_3\al_m\al_3\al_n$. Finally we show that no tiling can have both
angle arrangements.

If every vertex has angle arrangement $\al_3\al_3\al_m\al_n$, then
every triangle in the tiling is adjacent to another triangle. We
depict a pair $\mathbf{1,2}$ of adjacent triangles
Figure~\ref{Subfig-al3al3almlaln}. The two other tiles incident to
$\mathbf{1}$ are $m$-gons or $n$-gons. Without loss of generality we
can assume that $\mathbf{3}$ is an $m$-gon; we denote the angle of
$\mathbf{4}$ by $\al$. The vertex of $\mathbf{1}$ not shared with
$\mathbf{2}$ has angle arrangement $\al_m\al_3\al\cdots$ which is
a contradiction whether $\al= \al_m$ or $\al=\al_n$. 

\begin{figure}[h!]
\centering
\begin{subfigure}[t]{0.4\linewidth}
\centering
\begin{tikzpicture}
\tikzmath{
\r=0.8;
\n=3;
\nn=\n-1;
\th=360/\n;
\y=\r*cos(0.5*\th);
\x=\r*sin(0.5*\th);
\XS=2*\x+\r;
}

\foreach \aa in {-1,1} {
\foreach \a in {0,...,\nn} {
\tikzset{yscale=\aa, yshift=\y cm, rotate=\a*\th}
\draw[]
	(90:\r) -- (90+\th:\r)
;
}
}

\foreach \aa in {-1,1} {
\tikzset{xscale=\aa}
\draw[]
	(\x,0) -- (\x+\r,0)
;

\node at (0.5*\x,0.35*\y) {\footnotesize $\alpha_3$};
}

\node at (0*\x,\y+0.5*\r) {\footnotesize $\alpha_3$};

\node at (-0.4*\x,\y+\r) {\footnotesize $\alpha$};
\node at (0.5*\x,\y+\r) {\footnotesize $\alpha_m$};
\node at (-1.25*\x,0.35*\y) {\footnotesize $\alpha$};
\node at (1.4*\x,0.35*\y) {\footnotesize $\alpha_m$};

\node at (0,\y+1.4*\r) {?};

\node at (0,\y) {\tiny \bm{$1$}};
\node at (0,-\y) {\tiny \bm{$2$}};
\node at (\x,1.5*\y) {\tiny \bm{$3$}};
\node at (-\x,1.5*\y) {\tiny \bm{$4$}};

\end{tikzpicture}
\caption{}
\label{Subfig-al3al3almlaln}
\end{subfigure}
\begin{subfigure}[t]{0.4\linewidth}
\centering
\begin{tikzpicture}
\tikzmath{
\r=1;
\n=4;
\nn=\n-1;
\th=360/\n;
\x=\r*cos(0.5*\th);
}
\foreach \a in {0,...,\nn} {
\tikzset{rotate=\a*\th}
\draw[]
	(0,0) -- (\r,\r) 
;
}
\foreach \aa in {-1,1} {
{
\tikzset{yscale=\aa}
\draw[]
	(-\r,\r)  -- (\r,\r) 
	(\r,\r) -- (1.75*\r,\r) 
	(-\r,\r) -- (-1.75*\r,\r) 
;
}
\tikzset{xscale=\aa}
\draw[]
	(\r,\r) -- (\r,1.75*\r)
;
}

\node at (0,0.4*\x) {\footnotesize $\alpha_3$};
\node at (0.6*\r,1.2*\x) {\footnotesize $\alpha_3$};
\node at (-0.6*\r,1.2*\x) {\footnotesize $\alpha_3$};

\node at (-0.35*\r,0) {\footnotesize $\alpha_n$};
\node at (-1.7*\x,1.2*\x) {\footnotesize $\alpha_n$};
\node at (-1.7*\x,-1.2*\x) {\footnotesize $\alpha_n$};

\node at (0.35*\r,0) {\footnotesize $\alpha_m$};
\node at (1.75*\x,1.2*\x) {\footnotesize $\alpha_m$};
\node at (1.75*\x,-1.2*\x) {\footnotesize $\alpha_m$};

\node at (0,-0.4*\x) {\footnotesize $\alpha_3$};
\node at (0.6*\r,-1.2*\x) {\footnotesize $\alpha_3$};
\node at (-0.6*\r,-1.2*\x) {\footnotesize $\alpha_3$};

\node at (0.7*\r,1.65*\x) {\footnotesize $\alpha_m$};
\node at (-0.7*\r,1.65*\x) {\footnotesize $\alpha_m$};

\node at (-1.25*\r,1.65*\x) {\footnotesize $\alpha_3$};
\node at (1.75*\x,1.75*\x) {\small $?$};

\node at (0,\x) {\tiny \bm{$1$}};
\node at (-\r,0) {\tiny \bm{$2$}};
\node at (0,-\x) {\tiny \bm{$3$}};
\node at (\r,0) {\tiny \bm{$4$}};

\node at (0,2*\x) {\tiny \bm{$5$}};
\end{tikzpicture}
\caption{}
\label{Subfig-al3almal3aln}
\end{subfigure}
\caption{Deriving a contradiction for type $\al_3^2\al_m\al_n$}
\end{figure}
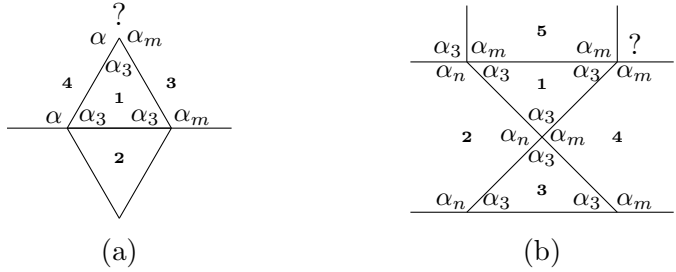

Now, we suppose that every vertex has angle arrangement
$\al_3\al_m\al_3\al_n$. Such a vertex, with incident tiles 
$\mathbf{1,2,3,4}$, is depicted in the centre of
Figure~\ref{Subfig-al3almal3aln}. 
If $\mathbf{5}$ is an $m$-gon, then the vertex incident with
$\mathbf{4}$ and $\mathbf{5}$ is of type $\al_3\al_m^2\cdots$, a
contradiction. Similarly, $\mathbf{5}$ is not an $n$-gon. We conclude
that there is no tiling in this case.

Now we must have vertices with angle arrangement
$\al_3\al_3\al_m\al_n$ and vertices with angle arrangement
$\al_3\al_m\al_3\al_n$. We call these vertices black and white
respectively. We call an edge black (white) if both its endpoints are
black (white); such edges are said to be monochromatic. It is clear
from the angle arrangements that an edge shared by an $n$-gon and an
$m$-gon is black and that an edge shared by two triangle is also
black. We can deduce that an edge shared by a triangle and an $m$-gon
is not monochromatic. If it is black
(Figure~\ref{Subfig-forbidden-black-app}), then the vertex of the triangle
not shared by the $m$-gon is incident with three triangles; if it is
white (Figure~\ref{Subfig-forbidden-white-app}), then that vertex is
incident with two $n$-gons. We claim that two consecutive vertices on
an $m$-gon cannot be white: since the edge between them is
monochromatic, the edge must be shared by an $n$-gon, but an edge
shared by an $m$-gon and an $n$-gon is black
(Figure~\ref{Subfig-forbidden-ww-app}). Furthermore, three consecutive
vertices on an $m$-gon cannot be black: the two edges between them
must be shared by an $n$-gon because they are monochromatic, so the
middle vertex is either of degree 2 or incident to two $n$-gons
(Figure~\ref{Subfig-forbidden-bbb-app}). Finally, three consecutive
vertices on an $m$-gon cannot be coloured white-black-white: the two
edges between them must be shared by triangles, so the middle vertex
is of degree 2 or it has angle arrangement $\al_3\al_n\al_3\al_m$, a
contradiction since such vertices are white
(Figure~\ref{Subfig-forbidden-wbw-app}). We conclude that the sequence of
colours around an $m$-gon is white-black-black-white-black-black and
so on and that $m$ is divisible by 3. This contradicts the fact that
$m\in \{4,5\}$.

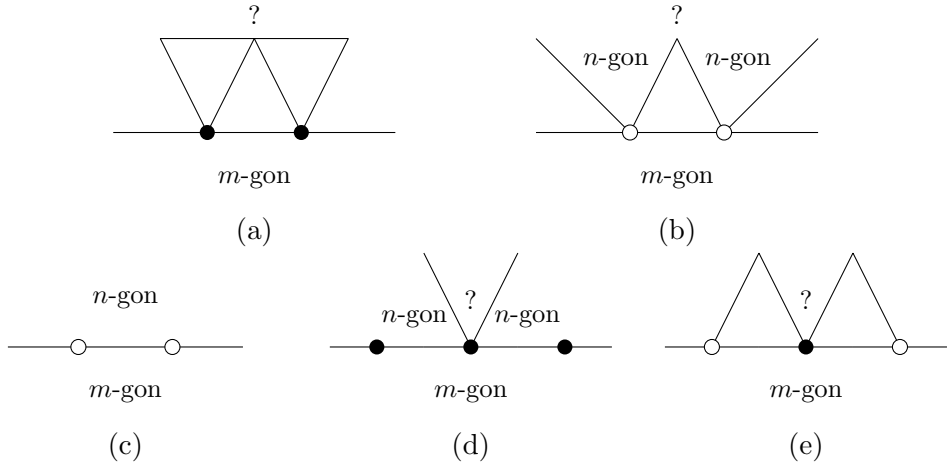
\begin{figure}[h!]
\centering
\begin{subfigure}[t]{0.4\linewidth}
\centering
\begin{tikzpicture}
\tikzmath{
\r=1.25;
\rr=0.08*\r;
}

\foreach \a in {-1,0,1} {
\tikzset{xshift=\a*\r cm}

\draw[]
	(-0.5*\r,0) -- (0.5*\r,0)
;
}

\foreach \a in {0,1} {
\tikzset{xshift=\a*\r cm}
\fill[]
	(-0.5*\r,0) circle (\rr)
;
}

\foreach \aa in {-1,1} {
\tikzset{xscale=\aa}

\draw[]
	(-0.5*\r,0) -- (0,\r)
	(-0.5*\r,0) -- (-1*\r,\r)
	(-1*\r,\r) -- (0,\r)
;
}

\node at (0,1.25*\r) {\footnotesize $?$};

\node at (0,-0.5*\r) {\footnotesize $m$-gon};

\end{tikzpicture}
\caption{}
\label{Subfig-forbidden-black-app}
\end{subfigure}
\begin{subfigure}[t]{0.4\linewidth}
\centering
\begin{tikzpicture}
\tikzmath{
\r=1.25;
\rr=0.08*\r;
}

\foreach \a in {-1,0,1} {
\tikzset{xshift=\a*\r cm}

\draw[]
	(-0.5*\r,0) -- (0.5*\r,0)
;
}

\foreach \aa in {-1,1} {
\tikzset{xscale=\aa}

\draw[]
	(-0.5*\r,0) -- (0,\r)
	(-0.5*\r,0) -- (-1.5*\r,\r)
;

\node at (-0.65*\r,0.75*\r) {\footnotesize $n$-gon};

}

\foreach \a in {0,1} {
\tikzset{xshift=\a*\r cm}
\draw[fill=white]
	(-0.5*\r,0) circle (\rr)
;
}

\node at (0,1.25*\r) {\footnotesize $?$};

\node at (0,-0.5*\r) {\footnotesize $m$-gon};
\end{tikzpicture}
\caption{}
\label{Subfig-forbidden-white-app}
\end{subfigure}

\begin{subfigure}[t]{0.325\linewidth} 
\centering
\begin{tikzpicture}
\tikzmath{
\r=1.25;
\rr=0.08*\r;
}

\draw[]
	(-1.25*\r,0) -- (1.25*\r,0)
;

\foreach \aa in {-1,1} {
\tikzset{xscale=\aa}
\draw[fill=white]
	(-0.5*\r,0) circle (\rr)
;
}

\node at (0,0.5*\r) {\footnotesize $n$-gon};

\node at (0,-0.5*\r) {\footnotesize $m$-gon};

\end{tikzpicture} 
\caption{}
\label{Subfig-forbidden-ww-app}
\end{subfigure} 
\begin{subfigure}[t]{0.325\linewidth} 
\centering
\begin{tikzpicture}
\tikzmath{
\r=1.25;
\rr=0.08*\r;
}

\foreach \a in {-1,0,1} {
\tikzset{xshift=\a*\r cm}

\draw[]
	(-0.5*\r,0) -- (0.5*\r,0)
;
}

\foreach \a in {-1,0,1} {
\tikzset{xshift=\a*\r cm}
\fill[]
	(0,0) circle (\rr)
;
}

\foreach \aa in {-1,1} {
\tikzset{xscale=\aa}
\draw[]
	(0,0) -- (-0.5*\r,\r)
;
\node at (-0.6*\r,0.3*\r) {\footnotesize $n$-gon};
}

\node at (0,0.5*\r) {\footnotesize $?$};

\node at (0,-0.5*\r) {\footnotesize $m$-gon};

\end{tikzpicture}
\caption{}
\label{Subfig-forbidden-bbb-app}
\end{subfigure}
\begin{subfigure}[t]{0.325\linewidth} 
\centering
\begin{tikzpicture}
\tikzmath{
\r=1.25;
\rr=0.08*\r;
}

\foreach \a in {-1,0,1} {
\tikzset{xshift=\a*\r cm}

\draw[]
	(-0.5*\r,0) -- (0.5*\r,0)
;
}

\foreach \aa in {-1,1} {
\tikzset{xscale=\aa}
\draw[]
	(0,0) -- (-0.5*\r,\r)
	(-\r,0) -- (-0.5*\r,\r)
;
}

\fill[]
	(0,0) circle (\rr)
;

\foreach \aa in {-1,1} {
\tikzset{xscale=\aa}
\draw[fill=white]
	(-\r,0) circle (\rr)
;
}

\node at (0,0.5*\r) {\footnotesize $?$};

\node at (0,-0.5*\r) {\footnotesize $m$-gon};

\end{tikzpicture} 
\caption{}
\label{Subfig-forbidden-wbw-app}
\end{subfigure}

\caption{Forbidden arrangements}
\label{Fig-forbidden-2al3almaln-app}
\end{figure}

\medskip\noindent\emph{Type $\al_3\al_4^3$.}
We begin by determining the tilings when there is at least one square
having no adjacent triangles. We then derive a contradiction in the
case in which every square has at least one adjacent triangle.

The square $\mathbf{1}$ in the centre of
Figure~\ref{Subfig-rhombicuboctahedron} is adjacent to four squares
$\mathbf{2,3,4,5}$. It is straightforward to determine that tiles
$\mathbf{6,7,8,9}$ are triangles and that $\mathbf{10,\dots,17}$ are
squares. The situation is identical in Figure~\ref{Subfig-J37}.
Consider the undetermined vertex incident with $\mathbf{10}$ and $\mathbf{17}$; it
must have an incident triangle, which we denote by $\mathbf{18}$ (and
highlight with $\ast$). This triangle is adjacent either to
$\mathbf{17}$, as in Figure~\ref{Subfig-rhombicuboctahedron}, or to
$\mathbf{10}$, as in Figure~\ref{Subfig-J37}. In both cases the tiles
$\mathbf{19,\dots,26}$ are determined similarly. The tilings are, respectively,
the rhombicuboctahedron and $J_{37}$.

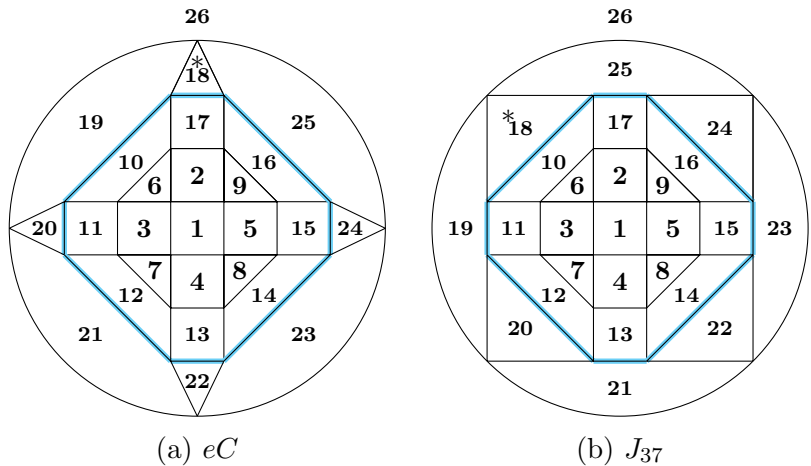
\begin{figure}[h!] 
\centering
\begin{subfigure}[t]{0.4\linewidth}
\centering
\begin{tikzpicture}

\tikzmath{
\r=0.5;
\th = 360/3;
\ph = 360/4;
\x = \r*cos(0.5*\ph);
\R=sqrt(2*(4*\x)^2);
}

\foreach \a in {0,1,2,3,4} {
\tikzset{rotate=\a*\ph}

\draw[cyan!50, line width=2]
	(-\x,5*\x) -- (\x,5*\x) 
	(\x,5*\x) -- (5*\x,\x)
;

\draw[]
	(90-0.5*\ph:\r) -- (90+0.5*\ph:\r)
	(\x,\x) -- (\x,3*\x)
	(-\x,\x) -- (-\x,3*\x)
	(-\x,3*\x) -- (\x,3*\x)
	(\x,3*\x) -- (3*\x,\x)
	(\x,\x) -- (\x,5*\x)
	(-\x,\x) -- (-\x,5*\x)
	(-\x,5*\x) -- (\x,5*\x)
	(\x,5*\x) -- (5*\x,\x)
	%
	%
	(\x,5*\x) -- (0,\R+\r)
	(-\x,5*\x) -- (0,\R+\r)
;
}

\draw[] (0,0) circle (\R+\r);

\node at (0,6.25*\x) {\small $\ast$};

\node at (0,0) {\footnotesize \bm{$1$}};
\node at (0,2*\x) {\footnotesize \bm{$2$}};
\node at (-2*\x,0) {\footnotesize \bm{$3$}};
\node at (0,-2*\x) {\footnotesize \bm{$4$}};
\node at (2*\x,0) {\footnotesize \bm{$5$}};

\node at (-1.6*\x,1.6*\x) {\footnotesize \bm{$6$}};
\node at (-1.6*\x,-1.6*\x) {\footnotesize \bm{$7$}};
\node at (1.6*\x,-1.6*\x) {\footnotesize \bm{$8$}};
\node at (1.6*\x,1.6*\x) {\footnotesize \bm{$9$}};

\node at (-2.5*\x,2.5*\x) {\scriptsize \bm{$10$}};
\node at (-4*\x,0) {\scriptsize \bm{$11$}};

\node at (-2.5*\x,-2.5*\x) {\scriptsize \bm{$12$}};
\node at (0,-4*\x) {\scriptsize \bm{$13$}};

\node at (2.5*\x,-2.5*\x) {\scriptsize \bm{$14$}};
\node at (4*\x,0) {\scriptsize \bm{$15$}};

\node at (2.5*\x,2.5*\x) {\scriptsize \bm{$16$}};
\node at (0,4*\x) {\scriptsize \bm{$17$}};

\node at (0,5.75*\x) {\scriptsize \bm{$18$}};
\node at (-4*\x,4*\x) {\scriptsize \bm{$19$}};

\node at (-5.75*\x,0) {\scriptsize \bm{$20$}};
\node at (-4*\x,-4*\x) {\scriptsize \bm{$21$}};

\node at (0,-5.75*\x) {\scriptsize \bm{$22$}};
\node at (4*\x,-4*\x) {\scriptsize \bm{$23$}};

\node at (5.75*\x,0) {\scriptsize \bm{$24$}};
\node at (4*\x,4*\x) {\scriptsize \bm{$25$}};

\node at (0,8*\x) {\scriptsize \bm{$26$}};

\end{tikzpicture}
\caption{$eC$}
\label{Subfig-rhombicuboctahedron}
\end{subfigure}
\begin{subfigure}[t]{0.4\linewidth}
\centering
\begin{tikzpicture}

\tikzmath{
\r=0.5;
\th = 360/3;
\ph = 360/4;
\x = \r*cos(0.5*\ph);
\R=sqrt(2*(4*\x)^2);
}

\foreach \a in {0,1,2,3,4} {
\tikzset{rotate=\a*\ph}

\draw[cyan!50, line width=2]
	(-\x,5*\x) -- (\x,5*\x) 
	(\x,5*\x) -- (5*\x,\x)
;

\draw[]
	(90-0.5*\ph:\r) -- (90+0.5*\ph:\r)
	(\x,\x) -- (\x,3*\x)
	(-\x,\x) -- (-\x,3*\x)
	(-\x,3*\x) -- (\x,3*\x)
	(\x,3*\x) -- (3*\x,\x)
	(\x,\x) -- (\x,5*\x)
	(-\x,\x) -- (-\x,5*\x)
	(-\x,5*\x) -- (\x,5*\x)
	(\x,5*\x) -- (5*\x,\x)
	(\x,5*\x) -- (5*\x,5*\x)
	(5*\x,\x) -- (5*\x,5*\x)
;
}

\draw[] (0,0) circle (\R+\r);

\node at (-4.2*\x,4.2*\x) {\small $\ast$};

\node at (0,0) {\footnotesize \bm{$1$}};
\node at (0,2*\x) {\footnotesize \bm{$2$}};
\node at (-2*\x,0) {\footnotesize \bm{$3$}};
\node at (0,-2*\x) {\footnotesize \bm{$4$}};
\node at (2*\x,0) {\footnotesize \bm{$5$}};

\node at (-1.6*\x,1.6*\x) {\footnotesize \bm{$6$}};
\node at (-1.6*\x,-1.6*\x) {\footnotesize \bm{$7$}};
\node at (1.6*\x,-1.6*\x) {\footnotesize \bm{$8$}};
\node at (1.6*\x,1.6*\x) {\footnotesize \bm{$9$}};

\node at (-2.5*\x,2.5*\x) {\scriptsize \bm{$10$}};
\node at (-4*\x,0) {\scriptsize \bm{$11$}};

\node at (-2.5*\x,-2.5*\x) {\scriptsize \bm{$12$}};
\node at (0,-4*\x) {\scriptsize \bm{$13$}};

\node at (2.5*\x,-2.5*\x) {\scriptsize \bm{$14$}};
\node at (4*\x,0) {\scriptsize \bm{$15$}};

\node at (2.5*\x,2.5*\x) {\scriptsize \bm{$16$}};
\node at (0,4*\x) {\scriptsize \bm{$17$}};

\node at (-3.75*\x,3.75*\x) {\scriptsize \bm{$18$}};
\node at (-6*\x,0) {\scriptsize \bm{$19$}};

\node at (-3.75*\x,-3.75*\x) {\scriptsize \bm{$20$}};
\node at (0,-6*\x) {\scriptsize \bm{$21$}};

\node at (3.75*\x,-3.75*\x) {\scriptsize \bm{$22$}};
\node at (6*\x,0) {\scriptsize \bm{$23$}};

\node at (3.75*\x,3.75*\x) {\scriptsize \bm{$24$}};
\node at (0,6*\x) {\scriptsize \bm{$25$}};

\node at (0,8*\x) {\scriptsize \bm{$26$}};

\end{tikzpicture}
\caption{$J_{37}$}
\label{Subfig-J37}
\end{subfigure}
\caption{The two tilings with a square adjacent to four squares}
\label{Fig-sq-adj-4-sqs}
\end{figure}

Now we assume that every square has an adjacent triangle.
Since no vertex is incident with two triangles, each square has at
most two adjacent triangles. We first assume that there is a square
with two adjacent triangles and derive a contradiction. We then show
that if there is a square with exactly one adjacent triangles, there
must be a square with two adjacent triangles, completing the argument.

The square $\mathbf{1}$ in the centre of
Figure~\ref{Fig-sq-consec-adj-tri} has two adjacent triangles
$\mathbf{2}$ and $\mathbf{3}$. We determine that tiles
$\mathbf{4,\dots,9}$ are squares. This determines that $\mathbf{10}$
is a square\footnote{It is indeed a square, despite appearing somewhat
triangular in the Figure.}. Every square has an adjacent
triangle by assumption; the triangle adjacent to square $\mathbf{4}$ 
is denoted by $\mathbf{11}$. This determines that $\mathbf{12}$ is a
square, which in turn determines that $\mathbf{13}$ is a triangle. A
symmetrical argument determines tiles $\mathbf{14}$, $\mathbf{15}$ and
$\mathbf{16}$; in particular, $\mathbf{16}$ is a triangle. But there
is a vertex incident with the two triangles $\mathbf{13}$ and
$\mathbf{16}$, which is a contradiction.

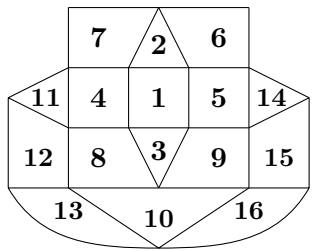
\begin{figure}[h!]
\centering
\begin{tikzpicture}
\tikzmath{
\x=0.4;
\X=3*\x;
\n=4;
\nn=\n-1;
\th=360/\n;
}

\foreach \a in {0,...,\nn} {
\tikzset{rotate=\a*\th}
\draw[]
	(\X,\X) -- (-\X,\X)
;
}

\foreach \aa in {-1,1} {
{
\tikzset{yscale=\aa}
\draw[]
	 (\X, \x) -- (-\X,\x)
;
}
{
\tikzset{xscale=\aa}
\draw[]
	(\x,\x) -- (\x,-\x)
	(\x,\x) -- (0,\X)
	(\x,-\x) -- (0,-\X)
	(\X,\x) -- (\X+2*\x,0)
	(\X,-\x) -- (\X+2*\x,0)
	(\X+2*\x,0) -- (\X+2*\x,-\X)
	(\X,-\X) -- (\X+2*\x,-\X)
	(\X,-\X) -- (0,-\X-2*\x)
	(0,-\X-2*\x) to[out=0,in=240] (\X+2*\x,-\X)
;
}
}

\node at (0,0) {\small \bm{$1$}};
\node at (0,1.75*\x) {\small \bm{$2$}};
\node at (0,-1.75*\x) {\small \bm{$3$}};

\node at (-2*\x,0) {\small \bm{$4$}};
\node at (2*\x,0) {\small \bm{$5$}};

\node at (2*\x,2*\x) {\small \bm{$6$}};
\node at (-2*\x,2*\x) {\small \bm{$7$}};

\node at (2*\x,-2*\x) {\small \bm{$9$}};
\node at (-2*\x,-2*\x) {\small \bm{$8$}};

\node at (0,-4*\x) {\footnotesize \bm{$10$}};
\node at (-3.75*\x,0) {\footnotesize \bm{$11$}};
\node at (-4*\x,-2*\x) {\footnotesize \bm{$12$}};
\node at (-\X,-\X-0.75*\x) {\footnotesize \bm{$13$}};

\node at (3.75*\x,0) {\footnotesize \bm{$14$}};
\node at (4*\x,-2*\x) {\footnotesize \bm{$15$}};
\node at (\X,-\X-0.75*\x) {\footnotesize \bm{$16$}};

\end{tikzpicture}
\caption{Deduction of a square with two adjacent triangles}
\label{Fig-sq-consec-adj-tri} 
\end{figure}

It remains to show that if there is a square with exactly one incident
triangle, there must be a square with two incident triangles. Consider
the square $\mathbf{4}$ in Figure~\ref{Fig-sq-consec-adj-tri}; if it
has exactly one adjacent triangle (namely, $\mathbf{11}$), that
determines that $\mathbf{1}$, $\mathbf{7}$ and $\mathbf{8}$ are squares.
This in turn determines that $\mathbf{2}$ and $\mathbf{3}$ are
triangles, giving the desired result.

\medskip\noindent\emph{Type $\al_3\al_4^2\al_5$.}
A vertex $\alpha_3\alpha_4^2\alpha_{5}$ has two angle arrangements, $\alpha_3\alpha_4\alpha_{5}\alpha_4$ and $\alpha_3\alpha_4\alpha_4\alpha_{5}$. 

If every vertex in the tiling has angle arrangement $\al_3 \al_4 \al_5
\al_4$, then it is straightforward to construct the tiling, namely,
$eD$, using the same process by which we constructed $aC$, $aD$ and
$eC$.

\begin{figure}[h!]
		\centering
		\begin{tikzpicture}
				\tikzmath{
						\r=0.65;
						\rr=0.1*\r;
						\n=5;
						\nn=\n-1;
						\N=2*\n;
						\Nn=\N-1;
						\M=\N+\n;
						\Mm=\M-1;
						\ph=360/\n;
						\x=\r*sin(0.5*\ph);
						\y=\r*cos(0.5*\ph);
						\l=1.055*sqrt( \r^2 + (2*\x)^2 - 2*\r*(2*\x)*cos(90+0.5*\ph) );
						\Ph=360/\N;
						\PH=360/\M;
						\L=3.1*\r;
						\LL=4.1*\r;
				}

				\fill[gray!40]
						(90-0.5*\Ph:\l) -- (90+0.5*\Ph:\l) -- (90+1.5*\Ph:\l) -- (90+2.5*\Ph:\l) 
						-- (90+3.5*\Ph:\l) -- (90+4.5*\Ph:\l) -- (90+5.5*\Ph:\l) -- (90+6.5*\Ph:\l) 
						-- (90+7.5*\Ph:\l) -- (90+8.5*\Ph:\l) -- (90+9.5*\Ph:\l)
						;

				\foreach \a in {0,...,\Nn} {
						\tikzset{rotate=\a*\Ph}
						\draw[]
								(90-0.5*\Ph:\l) -- (90+0.5*\Ph:\l)
								%
								;
				}

				\foreach \a in {0,...,\Mm} {
						\tikzset{rotate=\a*\PH+0.5*\ph}
						\draw[]
								(90:\L) -- (90+\PH:\L)
								%
								;
				}
				\foreach \a in {0,...,\nn} {
						\tikzset{rotate=\a*\ph+0.5*\ph}
						\draw[]
								(90-0.5*\Ph:\l) -- (90-\PH:\L)
								(90+0.5*\Ph:\l) -- (90+\PH:\L)
								;
				}

				\foreach \a in {0,...,\nn} {
						\tikzset{rotate=\a*\ph+0.5*\ph}
						\draw[]
								(90:\r) -- (90+\ph:\r)
								(90:\r) -- (90-0.5*\Ph:\l)
								(90:\r) -- (90+0.5*\Ph:\l)
								;
				}

				\node[inner sep=2,draw,fill=white,shape=circle] at (90+0.5*\Ph:\l) {\scriptsize $1$};
				\node[inner sep=2,draw,fill=white,shape=circle] at (90+1.5*\Ph:\l) {\scriptsize $2$};
				\node[inner sep=1.5,draw,fill=white,shape=circle] at (90+0.5*\ph:\r)
						{\scriptsize $1'$};
				\node[inner sep=2,draw,fill=white,shape=circle] at (90+2.5*\Ph:\l)
						{\scriptsize $3$};
				\node[inner sep=2,draw,fill=white,shape=circle] at (90+3.5*\Ph:\l)
						{\scriptsize $4$};
				\node[inner sep=1.5,draw,fill=white,shape=circle] at (90+1.5*\ph:\r)
						{\scriptsize $2'$};

		\end{tikzpicture}
		\caption{}
		\label{fig:app-P37}
\end{figure}

Suppose every vertex in the tiling is of type $\al_3 \al_4^2
\al_5$ but there is at least one vertex with angle arrangement
$\al_3 \al_4 \al_4 \al_5$. This vertex has a neighbour with which
it shares two squares and and a neighbour with which it shares a
triangle and a pentagon. The angle arrangement of these two
neighbours is clearly also $\al_3 \al_4 \al_4 \al_5$. Repeating
this argument, we deduce that there is a cycle $C$ of vertices
with angle arrangement $\al_3 \al_4 \al_4 \al_5$. We denote the
vertices of $C$ by $1, 2, \dots, n$, see Figure~\ref{fig:app-P37}.
This cycle separates the sphere into two parts; we claim that one
of these is a pentagonal cupola. Consider a triangle having two
vertices $1$ and $2$ on $C$; denote the third vertex of the
triangle by $1'$. The triangle shares the edge between $1$ and $2$
with a pentagon. The other two edges of the triangle are shared
with squares, one of which has vertices {$1'$}, {$2$} and {$3$};
denote its fourth vertex by {$2'$}. Observe that the square shares
the edge between
{$1'$} and {$2'$} with a pentagon. Now, consider
the tile on the same side of $C$ as the triangle having vertices
${3}$ and ${4}$. This tile cannot be a pentagon, since one of its
vertices is {$2'$}, and is therefore a triangle. Repeating this
argument, we see that the tiles on this side of $C$ alternate
between triangles and squares and share a pentagon, hence the
claim.

Rotating the pentagonal cupola by $\frac{1}{5}\pi$, we obtain a
new tiling; the vertices that had angle arrangement $\al_3 \al_4
\al_4 \al_5$ now have arrangement $\al_3 \al_4 \al_5 \al_4$.
Repeating this argument, we see that our tiling can be obtained
from $eD$ by rotating some
non-overlapping set of pentagonal cupolas by $\frac{1}{5}\pi$. Up
to symmetry, there are four sets of non-overlapping pentagonal
cupolas in $eD$. Rotating one cupola gives $J_{72}$. Rotating two
opposite cupolas gives $J_{73}$. Rotating two non-opposite cupolas
gives $J_{74}$. Rotating three cupolas gives $J_{75}$. There is no
set of four or more non-overlapping pentagonal cupolas in $eD$.
See Figure~\ref{fig:app-eDJ72-75} for diagrams of each of these
tilings.

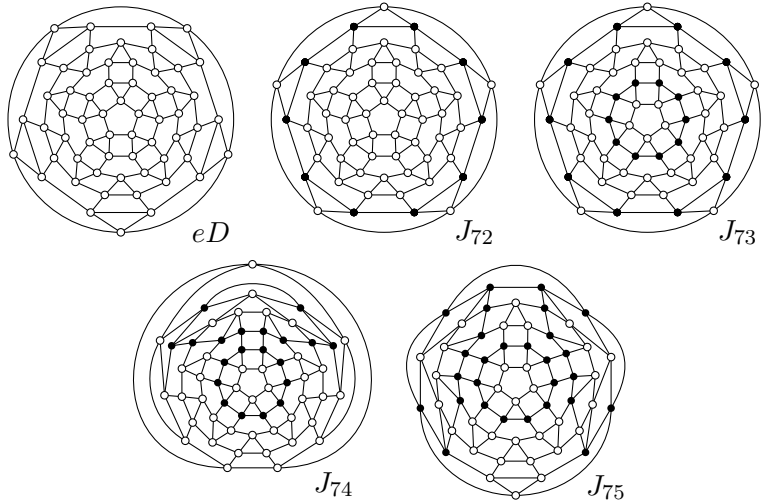
\begin{figure}
		\centering
		\begin{tikzpicture}

				\tikzmath{
						\S = 3.5;
				}

				\begin{scope}[] 

						\tikzmath{
								\XS = \S;
								\r=0.25;
								\rr=0.2*\r;
								\n=5;
								\nn=\n-1;
								\N=2*\n;
								\Nn=\N-1;
								\M=\N+\n;
								\Mm=\M-1;
								\ph=360/\n;
								\x=\r*sin(0.5*\ph);
								\l=1.055*sqrt( \r^2 + (2*\x)^2 - 2*\r*(2*\x)*cos(90+0.5*\ph) );
								\Ph=360/\N;
								\PH=360/\M;
								\L=3.1*\r;
								\LL=4.1*\r;
								\R=5.2*\r;
								\RR=6*\r;
						}

						\foreach \xs in {0,1,2} {
								\tikzset{xshift=\xs*\XS cm}

								\foreach \a in {0,...,\Nn} {
										\tikzset{rotate=\a*\Ph}
										\draw[]
												(90-0.5*\Ph:\l) -- (90+0.5*\Ph:\l)
												(90-0.5*\Ph:\R) -- (90+0.5*\Ph:\R)
												;
								}

								\foreach \a in {0,...,\Mm} {
										\tikzset{rotate=\a*\PH}
										(90-0.5*\PH:\L) -- (90+0.5*\PH:\L)
										;
						}

						\foreach \a in {0,...,\Mm} {
								\tikzset{rotate=\a*\PH}
								\draw[]
										(90-0.5*\PH:\L) -- (90+0.5*\PH:\L)
										(90:\LL) -- (90+\PH:\LL)
										;
						}

						\foreach \a in {0,...,\nn} {
								\tikzset{rotate=\a*\ph}
								\draw[]
										(90-0.5*\Ph:\l) -- (90-0.5*\PH:\L)
										(90+0.5*\Ph:\l) -- (90+0.5*\PH:\L)
										(90-0.5*\PH:\L) -- (90:\LL)
										(90+0.5*\PH:\L) -- (90:\LL)
										(90-1.5*\PH:\L) -- (90-\PH:\LL)
										(90+1.5*\PH:\L) -- (90+\PH:\LL)
										(90-\PH:\LL) -- (90-0.5*\Ph:\R)
										(90+\PH:\LL) -- (90+0.5*\Ph:\R)
										;
						}

						\draw[] (0,0) circle (\RR);

						\foreach \a in {0,...,\Mm} {
								\tikzset{rotate=\a*\PH}
								\draw[fill=white]
										(90+0.5*\PH:\L) circle (\rr) 
										(90:\LL) circle (\rr) 
										;
						}
				}

				\begin{scope}[] 

						\foreach \a in {0,...,\nn} {
								\tikzset{rotate=\a*\ph}
								\draw[]
										(90:\r) -- (90+\ph:\r)
										(90:\r) -- (90-0.5*\Ph:\l)
										(90:\r) -- (90+0.5*\Ph:\l)
										;
						}

						\foreach \a in {0,...,\nn} {
								\tikzset{rotate=\a*\ph+0.5*\ph}
								\draw[]
										(90:\RR) -- (90-0.5*\Ph:\R)
										(90:\RR) -- (90+0.5*\Ph:\R)
										;
						}

						\foreach \a in {0,...,\nn} {
								\tikzset{rotate=\a*\ph}
								\draw[fill=white]
										(90:\r) circle (\rr)
										;
						}

						\foreach \a in {0,...,\Nn} {
								\tikzset{rotate=\a*\Ph}
								\draw[fill=white]
										(90-0.5*\Ph:\l) circle (\rr)
										;
						}

						\foreach \a in {0,...,\Nn} {
								\tikzset{rotate=\a*\Ph}
								\draw[fill=white]
										(90-0.5*\Ph:\R) circle (\rr)
										;
						}

						\foreach \a in {0,...,\nn} {
								\tikzset{rotate=\a*\ph}
								\draw[fill=white]
										(270:\RR) circle (\rr)
										;
						}

						\node at (0.8*\RR,-\RR) {\small $eD$};

				\end{scope}

				\begin{scope}[xshift=\XS cm] 

						\foreach \a in {0,...,\nn} {
								\tikzset{rotate=\a*\ph}
								\draw[]
										(90:\r) -- (90+\ph:\r)
										(90:\r) -- (90-0.5*\Ph:\l)
										(90:\r) -- (90+0.5*\Ph:\l)
										;
						}

						\foreach \a in {0,...,\nn} {
								\tikzset{rotate=\a*\ph}
								\draw[]
										(90:\RR) -- (90-0.5*\Ph:\R)
										(90:\RR) -- (90+0.5*\Ph:\R)
										;

								\foreach \a in {0,...,\Nn}
								\tikzset{rotate=\a*\Ph}
								\fill[]
										(90-0.5*\Ph:\R) circle (\rr)
										;
						}

						\foreach \a in {0,...,\nn} {
								\tikzset{rotate=\a*\ph}
								\draw[fill=white]
										(90:\r) circle (\rr)
										;
						}

						\foreach \a in {0,...,\Nn} {
								\tikzset{rotate=\a*\Ph}
								\draw[fill=white]
										(90-0.5*\Ph:\l) circle (\rr)
										;
						}

						\foreach \a in {0,...,\nn} {
								\tikzset{rotate=\a*\ph}
								\draw[fill=white]
										(90:\RR) circle (\rr)
										;
						}

						\node at (0.8*\RR,-\RR) {\small $J_{72}$};

				\end{scope}

				\begin{scope}[xshift=2*\XS cm] 

						\foreach \a in {0,...,\nn} {
								\tikzset{rotate=\a*\ph+0.5*\ph}
								\draw[]
										(90:\r) -- (90+\ph:\r)
										(90:\r) -- (90-0.5*\Ph:\l)
										(90:\r) -- (90+0.5*\Ph:\l)
										;
						}

						\foreach \a in {0,...,\nn} {
								\tikzset{rotate=\a*\ph}
								\draw[]
										(90:\RR) -- (90-0.5*\Ph:\R)
										(90:\RR) -- (90+0.5*\Ph:\R)
										;
								\fill[]
										(90-0.5*\Ph:\l) circle (\rr)
										(90+0.5*\Ph:\l) circle (\rr)
										;

								\foreach \a in {0,...,\Nn}
								\tikzset{rotate=\a*\Ph}
								\fill[]
										(90-0.5*\Ph:\R) circle (\rr)
										;
						}

						\foreach \a in {0,...,\nn} {
								\tikzset{rotate=\a*\ph}
								\draw[fill=white]
										(90-0.5*\ph:\r) circle (\rr)
										;
						}

						\foreach \a in {0,...,\nn} {
								\tikzset{rotate=\a*\ph}
								\draw[fill=white]
										(90:\RR) circle (\rr)
										;
						}

						\node at (0.8*\RR,-\RR) {\small $J_{73}$};

				\end{scope}

		\end{scope}

		\begin{scope}[yshift=-\S cm] 

				\begin{scope}[xshift=0.5*\S cm]

						\tikzmath{
								\XS=\S;
						\r=0.225;
				\rr=0.05;
		\n=5;
\nn=\n-1;
\N=2*\n;
\Nn=\N-1;
\M=\N+\n;
\Mm=\M-1;
\ph=360/\n;
\x=\r*sin(0.5*\ph);
\l=1.055*sqrt( \r^2 + (2*\x)^2 - 2*\r*(2*\x)*cos(90+0.5*\ph) );
\Ph=360/\N;
\PH=360/\M;
\L=3.1*\r;
\LL=4.1*\r;
\R=5.2*\r;
\RR=6*\r;
\H=16;
\Hh=\H-1;
\Th=360/\H;
\h=4.25*\r;
\O=11;
\Oo=\O-1;
\Ps=360/\O;
\o=5.25*\r;
}

(90-0.5*\Ph:\l) -- (90+0.5*\Ph:\l) -- (90+1.5*\Ph:\l) -- (90+2.5*\Ph:\l) 
-- (90+3.5*\Ph:\l) -- (90+4.5*\Ph:\l) -- (90+5.5*\Ph:\l) -- (90+6.5*\Ph:\l) 
-- (90+7.5*\Ph:\l) -- (90+8.5*\Ph:\l) -- (90+9.5*\Ph:\l)
;

\foreach \a in {0,...,\nn} {
		\tikzset{rotate=\a*\ph 
		}
		(90:\r) -- (90+\ph:\r)
		(90:\r) -- (90-0.5*\Ph:\l)
		(90:\r) -- (90+0.5*\Ph:\l)
		;
}

\foreach \a in {0,...,\Nn} {
		\tikzset{rotate=\a*\Ph}
		\draw[]
				(90-0.5*\Ph:\l) -- (90+0.5*\Ph:\l)
				%
				;
}

\foreach \a in {0,...,\Mm} {
		\tikzset{rotate=\a*\PH}
		(90-0.5*\PH:\L) -- (90+0.5*\PH:\L)
		;
}

\foreach \a in {0,...,\Mm} {
		\tikzset{rotate=\a*\PH}
		\draw[]
				(90-0.5*\PH:\L) -- (90+0.5*\PH:\L)
				%
				;
}

\foreach \a in {0,...,\Hh} {
		\tikzset{rotate=\a*\Th}
		\draw[]
				(90-0.5*\Th:\h) -- (90+0.5*\Th:\h) 
				;
}

\foreach \a in {0,...,\Oo} {
		\tikzset{rotate=\a*\Ps}
		\draw[]
				(90:\o) -- (90+\Ps:\o)
				;
}

\foreach \a in {0,...,\nn} {
		\tikzset{rotate=\a*\ph}
		\draw[]
				(90-0.5*\Ph:\l) -- (90-0.5*\PH:\L)
				(90+0.5*\Ph:\l) -- (90+0.5*\PH:\L)
				%
				%
				;
}

\draw[]
		(90-0.5*\PH:\L) -- (90-0.5*\Th:\h)
		(90+0.5*\PH:\L) -- (90+0.5*\Th:\h)
		(90-1.5*\PH:\L) -- (90-0.5*\Th:\h)
		(90+1.5*\PH:\L) -- (90+0.5*\Th:\h)
		(90-1.5*\PH:\L) -- (90-2.5*\Th:\h)
		(90+1.5*\PH:\L) -- (90+2.5*\Th:\h)
		(90-2.5*\PH:\L) -- (90-3.5*\Th:\h)
		(90+2.5*\PH:\L) -- (90+3.5*\Th:\h)
		(90-3.5*\PH:\L) -- (90-3.5*\Th:\h)
		(90+3.5*\PH:\L) -- (90+3.5*\Th:\h)
		(90-4.5*\PH:\L) -- (90-4.5*\Th:\h)
		(90+4.5*\PH:\L) -- (90+4.5*\Th:\h)
		(90-4.5*\PH:\L) -- (90-5.5*\Th:\h)
		(90+4.5*\PH:\L) -- (90+5.5*\Th:\h)
		(90-5.5*\PH:\L) -- (90-6.5*\Th:\h)
		(90+5.5*\PH:\L) -- (90+6.5*\Th:\h)
		(90-6.5*\PH:\L) -- (90-6.5*\Th:\h)
		(90+6.5*\PH:\L) -- (90+6.5*\Th:\h)	
		(90-7.5*\PH:\L) -- (90-7.5*\Th:\h)
		(90+7.5*\PH:\L) -- (90+7.5*\Th:\h)
		(90-1.5*\Th:\h) -- (90:\o) 
		(90+1.5*\Th:\h) -- (90:\o) 
		(90-1.5*\Th:\h) -- (90-2*\Ps:\o) 
		(90+1.5*\Th:\h) -- (90+2*\Ps:\o) 	
		(90-2.5*\Th:\h) -- (90-2*\Ps:\o) 
		(90+2.5*\Th:\h) -- (90+2*\Ps:\o) 
		(90-4.5*\Th:\h) -- (90-3*\Ps:\o) 
		(90+4.5*\Th:\h) -- (90+3*\Ps:\o) 
		(90-5.5*\Th:\h) -- (90-4*\Ps:\o) 
		(90+5.5*\Th:\h) -- (90+4*\Ps:\o) 
		(90-7.5*\Th:\h) -- (90-5*\Ps:\o) 
		(90+7.5*\Th:\h) -- (90+5*\Ps:\o)
		(90-\Ps:\o) -- (90-2*\Ps:6*\r)
		(90-3*\Ps:\o) -- (90-2*\Ps:6*\r)
		(90+\Ps:\o) -- (90+2*\Ps:6*\r)
		(90+3*\Ps:\o) -- (90+2*\Ps:6*\r)
		(90-\Ps:\o) to[out=120,in=60] (90+\Ps:\o)
		(90-4*\Ps:\o) to[out=30,in=-60] (90-2*\Ps:6*\r)
		(90+4*\Ps:\o) to[out=180-30,in=180+60] (90+2*\Ps:6*\r)
		(90:7*\r) to[out=-10,in=120] (90-2*\Ps:6*\r)
		(90:7*\r) to[out=180+10,in=60] (90+2*\Ps:6*\r)
		(90-5*\Ps:\o) to[out=0,in=0,distance=1.6*\R cm] (90:7*\r)
		(90+5*\Ps:\o) to[out=180,in=180,distance=1.6*\R cm] (90:7*\r)
		; 

\foreach \aa in {-1,1} {
		\tikzset{xscale=\aa}
		\foreach \a in {0,1} {
				\tikzset{rotate=\a*\PH}
				\fill[]
						(90+0.5*\PH:\L) circle (\rr)
						;
		}
		\foreach \a in {2} {
				\tikzset{rotate=\a*\Th}
				\fill[]
						(90+0.5*\Th:\h) circle (\rr)
						;
		}
		\foreach \a in {0,1} {
				\tikzset{rotate=\a*\Ps}
				\fill[]
						(90+\Ps:\o) circle (\rr)
						;
		}
}

\foreach \aa in {-1,1} {
		\tikzset{xscale=\aa}
		\foreach \a in {2,...,6} {
				\tikzset{rotate=\a*\PH}
				\draw[fill=white]
						(90+0.5*\PH:\L) circle (\rr) 
						;
		}
		\foreach \a in {0,1,3,4,5,6,7} {
				\tikzset{rotate=\a*\Th}
				\draw[fill=white]
						(90+0.5*\Th:\h) circle (\rr)
						;
		}
		\foreach \a in {3,4,5} {
				\tikzset{rotate=\a*\Ps}
				\draw[fill=white]
						(90:\o) circle (\rr) 
						;
		}
		\draw[fill=white]
				(90+2*\Ps:6*\r) circle (\rr)
				;
}
\draw[fill=white]
		(90:\o) circle (\rr) 
		(90:7*\r) circle (\rr) 
		(270:\L) circle (\rr) 
		;

\foreach \a in {0,...,\nn} {
		\tikzset{rotate=\a*\ph  + 0.5*\ph
		}
		\draw[]
				(90:\r) -- (90+\ph:\r)
				(90:\r) -- (90-0.5*\Ph:\l)
				(90:\r) -- (90+0.5*\Ph:\l)
				;
		\fill[]
				(90-0.5*\Ph:\l) circle (\rr)
				(90+0.5*\Ph:\l) circle (\rr)
				;
}

\foreach \a in {0,...,\nn} {
		\tikzset{rotate=\a*\ph}
		\draw[fill=white]
				(90-0.5*\ph:\r) circle (\rr)
				;
}

\node at (0.8*\RR,-\RR) {\small $J_{74}$};
\end{scope}


\begin{scope}[xshift=1.5*\S cm]

		\tikzmath{
				\XS=\S;
		\r=0.25;
\rr=0.2*\r;
\n=5;
\nn=\n-1;
\N=2*\n;
\Nn=\N-1;
\M=\N+\n;
\Mm=\M-1;
\ph=360/\n;
\x=\r*sin(0.5*\ph);
\l=1.055*sqrt( \r^2 + (2*\x)^2 - 2*\r*(2*\x)*cos(90+0.5*\ph) );
\Ph=360/\N;
\PH=360/\M;
\L=3.1*\r;
\LL=4.1*\r;
\R=5.2*\r;
\RR=6*\r;
\H=17;
\Hh=\H-1;
\Th=360/\H;
\h=4.25*\r;
\O=12;
\Oo=\O-1;
\Ps=360/\O;
\o=5.25*\r;
}

(90-0.5*\Ph:\l) -- (90+0.5*\Ph:\l) -- (90+1.5*\Ph:\l) -- (90+2.5*\Ph:\l) 
-- (90+3.5*\Ph:\l) -- (90+4.5*\Ph:\l) -- (90+5.5*\Ph:\l) -- (90+6.5*\Ph:\l) 
-- (90+7.5*\Ph:\l) -- (90+8.5*\Ph:\l) -- (90+9.5*\Ph:\l)
;

\foreach \a in {0,...,\nn} {
		\tikzset{rotate=\a*\ph 
		}
		(90:\r) -- (90+\ph:\r)
		(90:\r) -- (90-0.5*\Ph:\l)
		(90:\r) -- (90+0.5*\Ph:\l)
		;
}

\foreach \a in {0,...,\Nn} {
		\tikzset{rotate=\a*\Ph}
		\draw[]
				(90-0.5*\Ph:\l) -- (90+0.5*\Ph:\l)
				%
				;
}

\foreach \a in {0,...,\Mm} {
		\tikzset{rotate=\a*\PH}
		(90-0.5*\PH:\L) -- (90+0.5*\PH:\L)
		;
}

\foreach \a in {0,...,\Mm} {
		\tikzset{rotate=\a*\PH}
		\draw[]
				(90-0.5*\PH:\L) -- (90+0.5*\PH:\L)
				%
				;
}

\foreach \a in {0,...,\Hh} {
		\tikzset{rotate=\a*\Th}
		\draw[]
				(90:\h) -- (90+\Th:\h) 
				;
}

\foreach \a in {0,...,\Oo} {
		\tikzset{rotate=\a*\Ps}
		\draw[]
				(90-0.5*\Ps:\o) -- (90+0.5*\Ps:\o)
				;
}

\foreach \a in {0,...,\nn} {
		\tikzset{rotate=\a*\ph}
		\draw[]
				(90-0.5*\Ph:\l) -- (90-0.5*\PH:\L)
				(90+0.5*\Ph:\l) -- (90+0.5*\PH:\L)
				;
}

\draw[]
		(90-0.5*\PH:\L) -- (90:\h)
		(90+0.5*\PH:\L) -- (90:\h)
		(90-1.5*\PH:\L) -- (90-\Th:\h)
		(90+1.5*\PH:\L) -- (90+\Th:\h)
		(90-1.5*\PH:\L) -- (90-3*\Th:\h)
		(90+1.5*\PH:\L) -- (90+3*\Th:\h)
		(90-2.5*\PH:\L) -- (90-3*\Th:\h)
		(90+2.5*\PH:\L) -- (90+3*\Th:\h)
		(90-3.5*\PH:\L) -- (90-4*\Th:\h)
		(90+3.5*\PH:\L) -- (90+4*\Th:\h)
		(90-4.5*\PH:\L) -- (90-4*\Th:\h)
		(90+4.5*\PH:\L) -- (90+4*\Th:\h)
		(90-4.5*\PH:\L) -- (90-6*\Th:\h)
		(90+4.5*\PH:\L) -- (90+6*\Th:\h)
		(90-5.5*\PH:\L) -- (90-7*\Th:\h)
		(90+5.5*\PH:\L) -- (90+7*\Th:\h)
		(90-6.5*\PH:\L) -- (90-7*\Th:\h)
		(90+6.5*\PH:\L) -- (90+7*\Th:\h)
		(90-7.5*\PH:\L) -- (90-8*\Th:\h)
		(90+7.5*\PH:\L) -- (90+8*\Th:\h)
		(90-\Th:\h) -- (90-0.5*\Ps:\o) 
		(90+\Th:\h) -- (90+0.5*\Ps:\o) 
		(90-2*\Th:\h) -- (90-0.5*\Ps:\o) 
		(90+2*\Th:\h) -- (90+0.5*\Ps:\o) 
		(90-2*\Th:\h) -- (90-2.5*\Ps:\o) 
		(90+2*\Th:\h) -- (90+2.5*\Ps:\o) 
		(90-5*\Th:\h) -- (90-2.5*\Ps:\o) 
		(90+5*\Th:\h) -- (90+2.5*\Ps:\o) 
		(90-5*\Th:\h) -- (90-4.5*\Ps:\o) 
		(90+5*\Th:\h) -- (90+4.5*\Ps:\o) 
		(90-6*\Th:\h) -- (90-4.5*\Ps:\o) 
		(90+6*\Th:\h) -- (90+4.5*\Ps:\o) 	
		(90-8*\Th:\h) -- (90-5.5*\Ps:\o) 
		(90+8*\Th:\h) -- (90+5.5*\Ps:\o) 
		(90-5.5*\Ps:\o) -- (270:6*\r)
		(90+5.5*\Ps:\o) -- (270:6*\r)
		(90-1.5*\Ps:\o) to[out=120,in=60, distance=0.75*\R cm] (90+1.5*\Ps:\o)
		(90-1.5*\Ps:\o) to[out=-30,in=60, distance=0.5*\R cm] (90-3.5*\Ps:\o)
		(90+1.5*\Ps:\o) to[out=180+30,in=180-60, distance=0.5*\R cm] (90+3.5*\Ps:\o)
		(90-3.5*\Ps:\o) to[out=270,in=0] (270:6*\r)
		(90+3.5*\Ps:\o) to[out=270,in=180] (270:6*\r)
		;

\foreach \aa in {-1,1} {
		\tikzset{xscale=\aa}
		\foreach \a in {2,3,4,5} {
				\tikzset{rotate=\a*\PH}
				\fill[]
						(90-0.5*\PH:\L) circle (\rr)
						;
		}
		\foreach \a in {1,6} {
				\tikzset{rotate=\a*\Th}
				\fill[]
						(90:\h) circle (\rr)
						;
		}
		\foreach \a in {0,1,3,4} {
				\tikzset{rotate=\a*\Ps}
				\fill[]
						(90+0.5*\Ps:\o) circle (\rr)
						;
		}
		\foreach \a in {2,...,8} {
				\tikzset{rotate=\a*\Th}
				\draw[fill=white]
						(90:\h) circle (\rr)
						;
		}
		\foreach \a in {2,5} {
				\tikzset{rotate=\a*\Ps}
				\draw[fill=white]
						(90+0.5*\Ps:\o) circle (\rr)
						;
		}
		\draw[fill=white]
				(90+0.5*\PH:\L) circle (\rr)
				(90+5.5*\PH:\L) circle (\rr)
				(90+6.5*\PH:\L) circle (\rr)
				;
}
\draw[fill=white]
		(90+7.5*\PH:\L) circle (\rr)
		(90:\h) circle (\rr)
		(270:6*\r) circle (\rr)
		;

\foreach \a in {0,...,\nn} {
		\tikzset{rotate=\a*\ph  + 0.5*\ph
		}
		\draw[]
				(90:\r) -- (90+\ph:\r)
				(90:\r) -- (90-0.5*\Ph:\l)
				(90:\r) -- (90+0.5*\Ph:\l)
				;

		\fill[]
				(90-0.5*\Ph:\l) circle (\rr)
				(90+0.5*\Ph:\l) circle (\rr)
				;
}

\foreach \a in {0,...,\nn} {
		\tikzset{rotate=\a*\ph}
		\draw[fill=white]
				(90-0.5*\ph:\r) circle (\rr)
				;
}

\node at (0.8*\RR,-0.925*\RR) {\small $J_{75}$};
\end{scope}

\end{scope} 

\end{tikzpicture}

\caption{Tilings $eD, J_{72},\dots,J_{75}$ with
		$\circ=\alpha_3\alpha_4\alpha_5\alpha_4$, $\bullet =
\alpha_3\alpha_4\alpha_4\alpha_5$ } 
\label{fig:app-eDJ72-75}
\end{figure}

\section{Classification}
\label{Sec-Tilings}

In this section, we completely determine the set of tilings for each
of the possible vertex types given in
\eqref{List-deg3-al3}-\eqref{List-al5}. Every tiling appears in the
list given in the main theorem, as required.

\begin{prop}
		\label{Prop-al3=pi/2-2pi/5} 
		If a tiling has a triangle and $\alpha_3 = \frac{1}{2}\pi$, then the
		tiling is either the octahedron or the Johnson tiling $J_1$; if
		$\alpha_3 = \frac{2}{5}\pi$, then the tiling is the icosahedron,
		the pentagonal antiprism or one of the Johnson tilings $J_2$,
		$J_{11}$, $J_{62}$ or $J_{63}$.
\end{prop}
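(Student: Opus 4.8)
The plan is to use $\alpha_3$ to pin down the common edge length, read off which regular polygons and which admissible vertices that edge length allows, and then construct each tiling, reserving the pyramid subdivision of Lemma~\ref{lem:subdiv} to reduce the strictly convex pentagon case to the icosahedron. First I would solve for the edge length $x$: substituting $\alpha_3=\tfrac12\pi$ into \eqref{Eq-x-al} forces $\cos x=0$, i.e.\ $x=\tfrac12\pi$, and substituting $\alpha_3=\tfrac25\pi$ forces $\cos x=\tfrac1{\sqrt5}$. With $x$ fixed, \eqref{Eq-cosx} decides the available $m$-gons; when $\cos x=0$ it reduces to $\cos^2\tfrac12\alpha_m=-\cos\tfrac2m\pi$, whose only solutions are the triangle ($\alpha_3=\tfrac12\pi$) and the square ($\alpha_4=\pi$, a hemisphere). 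For $x=\arccos\tfrac1{\sqrt5}$ the relevant tiles are the triangle ($\tfrac25\pi$) and the regular pentagon, which by the complementation remark following \eqref{Eq-alm-aln-linear} is either the strictly convex pentagon ($\alpha_5=\tfrac45\pi$) or its concave complement ($\alpha_5=\tfrac65\pi$); the square also permitted at this edge length occurs in no admissible vertex, as a direct inspection of the angle sums in the lists of Lemma~\ref{Lem-all-vertices} confirms. Discarding every type in those lists whose angle sum is not $2\pi$ then leaves, for $\tfrac12\pi$, only $\alpha_3^4$ and $\alpha_3^2\alpha_4$, and for $\tfrac25\pi$, only $\alpha_3^5$, the concave type $\alpha_3^2\alpha_5$ (with $\alpha_5=\tfrac65\pi$), and the convex types $\alpha_3\alpha_5^2$ and $\alpha_3^3\alpha_5$ (with $\alpha_5=\tfrac45\pi$).

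The case $\alpha_3=\tfrac12\pi$ is then immediate. By Lemma~\ref{Lem-cong} at most one hemisphere-square can occur. If none does, every vertex is $\alpha_3^4$ and the tiling is the octahedron. If one does, a spherical-area count settles it: each triangle has area $\tfrac12\pi$ and the hemisphere has area $2\pi$, so its complementary hemisphere is tiled by exactly four triangles; since each of the four boundary corners already carries the angle $\pi$ of the square it admits exactly two triangle-angles, and four triangles with this corner data are forced into a single fan about one apex, giving $J_1$. The identical bookkeeping disposes of the concave branch of $\alpha_3=\tfrac25\pi$: the concave pentagon has area $3\pi$, its complement (area $\pi$) consists of exactly five triangles (each of area $\tfrac\pi5$), and the same corner count forces a single pentagonal cap, namely $J_2$. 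When no pentagon is present every vertex is $\alpha_3^5$, so by Proposition~\ref{Prop-vertex-homog} the tiling is the icosahedron.

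The heart of the argument is the remaining branch, in which all pentagons are strictly convex. Here I would exploit $\alpha_5=\tfrac45\pi=2\alpha_3$: every such pentagon admits a pyramid subdivision, which by Lemma~\ref{lem:subdiv} produces a regular tiling. Carrying it out on all pentagons simultaneously (their interiors are disjoint, so the subdivisions do not interfere) turns each surviving type $\alpha_3^5$, $\alpha_3\alpha_5^2$, $\alpha_3^3\alpha_5$ into an $\alpha_3^5$ vertex and introduces new $\alpha_3^5$ centres, so every vertex of the result is of type $\alpha_3^5$ and the result is the icosahedron. Consequently the original tiling is recovered from the icosahedron by the inverse operation --- \emph{diminishing}, i.e.\ replacing the five triangles around a chosen vertex by a pentagon --- applied to a set $S$ of vertices; because two diminished vertices may not share a triangle, $S$ must be an independent set of the icosahedral graph.

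It then remains to enumerate the independent sets up to the icosahedral symmetry group. The independence number is $3$: after deleting a vertex and its five neighbours one is left with a $5$-cycle together with the antipode, whose largest independent set has size $2$. So $|S|\in\{1,2,3\}$, and the orbit count gives $|S|=1\mapsto J_{11}$; $|S|=2$ splitting into the antipodal (para) pair, yielding the pentagonal antiprism, and the non-antipodal (meta) pair, yielding $J_{62}$; and $|S|=3$ forming a single orbit, yielding $J_{63}$. Assembling the branches produces the octahedron and $J_1$ for $\alpha_3=\tfrac12\pi$, and the icosahedron, pentagonal antiprism, $J_2$, $J_{11}$, $J_{62}$, $J_{63}$ for $\alpha_3=\tfrac25\pi$, as asserted. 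I expect the genuine obstacle to be this last step: verifying that diminishing is exactly the inverse of the simultaneous subdivision (so that what is recovered is truly a diminished icosahedron rather than merely a combinatorial look-alike), and that the listed independent-set orbits are complete --- in particular that no larger or differently placed $S$ yields an admissible tiling.
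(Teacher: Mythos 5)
Your proposal is correct and follows essentially the same route as the paper: a trigonometric restriction of the possible polygons (you fix the edge length via \eqref{Eq-x-al} and \eqref{Eq-cosx} where the paper substitutes directly into \eqref{Eq-alm-aln-linear} --- an equivalent computation), identification of the concave cases as $J_1$ and $J_2$ via the unique hemisphere/concave tile, and reduction of the convex-pentagon case to the icosahedron by simultaneous pyramid subdivision followed by enumerating independent sets up to symmetry (one vertex gives $J_{11}$, an antipodal pair the pentagonal antiprism, a distance-2 pair $J_{62}$, a triple $J_{63}$), exactly as in the paper. Your area-count and fan arguments for $J_1$ and $J_2$, and your explicit proof that the icosahedron's independence number is $3$, merely flesh out steps the paper asserts tersely.
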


\begin{proof} For $\alpha_3 = \frac{1}{2}\pi$ or $\frac{2}{5}\pi$, if a tiling having a triangle with angle value $\alpha_3$ is weakly vertex-homogenous, then by Theorem \ref{thm:weakly} such a tiling is the octahedron for $\alpha_3 = \frac{1}{2}\pi$ and the icosahedron for $\alpha_3 =\frac{2}{5}\pi$. In the remaining discussion, we assume that there is at least one $m$-gon where $m>3$. 
		\begin{case*}[$\alpha_3 = \frac{1}{2}\pi$] 


				For an $m$-gon with $m>3$, we may
				substitute the values $n = 3$ and $\alpha_n = \frac{1}{2}\pi$ into
				\eqref{Eq-alm-aln-linear} to obtain
				\begin{align*} 
						\cos \alpha_m = -1 - 2\cos \tfrac{2}{m}\pi. 
				\end{align*} 
				The above equation has no solution for $m>4$, since in that
				case the right hand side is less than $-1$; for $m=4$, the
				only solution is $\al_m = \pi$.
				We deduce that the tiling has exactly one
				4-gon which covers a hemisphere by
				Lemma~\ref{Lem-cong}. The other hemisphere is
				triangulated; thus, the tiling is the square pyramid $J_1$.

\end{case*}

\begin{case*}[$\alpha_3=\frac{2}{5}\pi$] 
		%

		For an $m$-gon with $m>3$, we may substitute
		$n = 3$ and $\alpha_3 = \frac{2}{5}\pi$ into
		\eqref{Eq-alm-aln-linear}, recalling that $\cos \frac{2}{5}\pi =
		\frac{1}{4}(\sqrt{5}-1)$, to obtain 
		\begin{align*}
				\cos \alpha_m 
				=
				\tfrac{1}{2}(\sqrt{5}-3+ (\sqrt{5}-5) \cos \tfrac{2}{m}\pi).
		\end{align*}
		For $m \ge 6$, the right hand side of the above equation is less
		than $-1$; therefore, the equation only has solutions for $m = 4$
		and $m = 5$. 
		
		For $m = 4$, the solutions are
		\begin{align*}
				\alpha_4 &= \cos^{-1} \tfrac{1}{2}(\sqrt{5}-3)\text{ and}\\
				\alpha_4 &= 2\pi-\cos^{-1} \tfrac{1}{2}(\sqrt{5}-3).
		\end{align*}

		For $m=5$, the solutions are
		\begin{align*}
				\alpha_5 &=\tfrac{4}{5}\pi\text{ and}\\
				\alpha_5 &= \tfrac{6}{5}\pi.
		\end{align*}

		Since $\alpha_3$ and $\alpha_5$ are multiples of
		$\tfrac{1}{5}\pi$, the same must hold for $\alpha_4$, a
		contradiction. Therefore there are only $3$- and $5$-gons in the
		tiling. 

		If there is at least one pentagon in the tiling and $\alpha_5 =
		\frac{6}{5}\pi$, then the pentagon is concave and hence there is
		exactly one pentagon; the complement of this pentagon is
		triangulated and therefore the tiling is given by the Johnson
		tiling $J_2$.

		If there is at least one pentagon in the tiling and $\alpha_5 =
		\frac{4}{5}\pi = 2\al_3$ then we may perform a pyramid subdivision
		on each pentagon in the tiling, thereby obtaining a new tiling
		with only triangular tiles; that is, we obtain the icosahedron by
		these subdivisions. From this we deduce that the remaining tilings
		in this case are obtained from an icosahedron by deleting an
		independent set of vertices. Deleting one vertex from the
		icosahedron yields $J_{11}$. Up to symmetry, there are two
		possible ways to delete two independent vertices from the
		icosahedron: deleting two vertices at distance 2 yields $J_{62}$;
		deleting two vertices at distance 3 yields the pentagonal
		antiprism. There is exactly one way to delete three independent
		vertices from the icosahedron, which yields $J_{63}$. Since there
		is no set of four or more independent vertices in the icosahedron,
		this completes the proof.\qedhere
\end{case*} 
\end{proof}

\begin{prop}
		\label{Prop-al32alm} 
		If a tiling has a vertex of type $\alpha_3\alpha_m^2$ with
		$m\ge 3$, then the tiling is the tetrahedron ($m = 3$), the
		triangular prism ($m=4$),
		one of the Johnson tilings $J_{62}$ or $J_{63}$ ($m = 5$), the
		truncated tetrahedron $tT$ ($m = 6$), the truncated cube $tC$ ($m
		= 8$), and the truncated dodecahedron $tD$ ($m = 10$). 
\end{prop}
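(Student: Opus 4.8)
The plan is to reduce the whole statement to the two classifications already available: the vertex-homogeneous classification (Proposition~\ref{Prop-vertex-homog}) and the analysis of $\alpha_3 = \tfrac25\pi$ (Proposition~\ref{Prop-al3=pi/2-2pi/5}). First I would dispose of $m=3$: the vertex is $\alpha_3^3$ with $\alpha_3 = \tfrac23\pi$, and the remainder $R(\alpha_3^2) = \tfrac23\pi = \alpha_3$ can be realised only by a single further triangle, since two incident angles would exceed $\tfrac23\pi$ by \eqref{Ineq-alm-lb} and no polygon other than a triangle has angle $\tfrac23\pi$ by Lemma~\ref{Lem-ang-asc-seq}. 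Hence $\alpha_3^2\cdots = \alpha_3^3$, so Lemma~\ref{Lem-vertex-transitive} makes the tiling vertex-homogeneous and Proposition~\ref{Prop-vertex-homog} identifies it as the tetrahedron.

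For $m\ge 4$, the angle sum $\alpha_3 + 2\alpha_m = 2\pi$ together with \eqref{Ineq-alm-lb} (which gives $\alpha_3 > \tfrac13\pi$ and $\alpha_m > (1-\tfrac2m)\pi$) immediately forces $m < 12$. By Lemma~\ref{Lem-small-remainder}, every vertex carrying two $m$-gons is exactly $\alpha_3\alpha_m^2$, because $R(\alpha_m^2) = \alpha_3$ is the minimal angle. The decisive question is whether every vertex carrying one triangle and one $m$-gon is forced to be $\alpha_3\alpha_m^2$ as well. Here $R(\alpha_3\alpha_m) = \alpha_m$, and I would analyse how this remainder can be filled. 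Since $\alpha_m = \pi - \tfrac12\alpha_3 < \tfrac56\pi$ while each incident angle exceeds $\tfrac13\pi$ by \eqref{Ineq-alm-lb}, the remainder admits at most two further angles. A single filling angle equal to $\alpha_m$ must come from an $m$-gon by the monotonicity in Lemma~\ref{Lem-ang-asc-seq}; two filling angles each lie below $\tfrac12\pi$ (they sum to $\alpha_m<\tfrac56\pi$ with each above $\tfrac13\pi$) and hence must both be $\alpha_3$, as $\alpha_k > \tfrac12\pi$ for all $k\ge 4$ by \eqref{Ineq-alm-lb}. This last possibility forces $\alpha_m = 2\alpha_3$, that is $\alpha_3 = \tfrac25\pi$.

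Thus exactly one of two situations occurs. If every $\alpha_3\alpha_m$-vertex equals $\alpha_3\alpha_m^2$, then taking $\alpha = \alpha_3$ and $\beta=\gamma=\alpha_m$ in Lemma~\ref{Lem-vertex-transitive} (whose three hypotheses are precisely the forcings just established) makes the tiling vertex-homogeneous, and Proposition~\ref{Prop-vertex-homog} matches the arrangement $\alpha_3\alpha_m^2$ to exactly $m\in\{4,6,8,10\}$, namely the triangular prism, $tT$, $tC$ and $tD$. Otherwise some $\alpha_3\alpha_m$-vertex differs from $\alpha_3\alpha_m^2$; by the previous paragraph this forces $\alpha_3 = \tfrac25\pi$, so Proposition~\ref{Prop-al3=pi/2-2pi/5} applies, $m=5$, and the tiling is one of the icosahedron, the pentagonal antiprism, $J_2$, $J_{11}$, $J_{62}$ or $J_{63}$. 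The two situations are exhaustive and neither is compatible with $m\in\{7,9,11\}$, so no tiling arises for those values. Finally I would filter the six candidates by the hypothesis that some vertex carries two pentagons (equivalently, two pentagons sharing an edge), which leaves precisely $J_{62}$ and $J_{63}$: the icosahedron has no pentagon, and in $J_2$, $J_{11}$ and the pentagonal antiprism no two pentagons are adjacent.

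The main obstacle is the second branch of the dichotomy: one must be certain that the only way an $\alpha_3\alpha_m$-vertex can fail to be $\alpha_3\alpha_m^2$ is the configuration $\alpha_3^3\alpha_m$ with $\alpha_3 = \tfrac25\pi$. This is exactly where \eqref{Ineq-alm-lb} does the decisive work, both in capping the number of filling angles at two and in excluding every filling angle below $\tfrac12\pi$ except $\alpha_3$. Everything else is bookkeeping against the two cited propositions, and the final selection of $J_{62}$ and $J_{63}$ needs only a check of which of the six tilings with $\alpha_3 = \tfrac25\pi$ has two pentagons meeting along an edge.
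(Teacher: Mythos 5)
Your proof is correct, but it follows a genuinely more unified route than the paper. The paper splits into the cases $m=3$, $m=4$, $m=5$ and $m\ge6$, and for $m=4$ and $m=5$ it first pins down the exact angle values by substituting into the edge-length compatibility equation \eqref{Eq-alm-aln-linear} (obtaining $\alpha_3 = 4\tan^{-1}\tfrac{1}{\sqrt7}$ for $m=4$, and $\alpha_3=\tfrac25\pi$ for $m=5$) before forcing the vertex types; for $m\ge6$ it runs a remainder analysis whose candidate list $\{\alpha_3^3\alpha_m,\ \alpha_3^2\alpha_4\alpha_m,\ \alpha_3^2\alpha_5\alpha_m,\ \alpha_3\alpha_m^2\}$ is then pruned using the angle-sum equation $3\alpha_3+\alpha_\ell=2\pi$. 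You instead treat all $m\ge4$ at once and entirely trigonometry-free: since $\alpha_m=\pi-\tfrac12\alpha_3<\tfrac56\pi$, your filling analysis of $R(\alpha_3\alpha_m)=\alpha_m$ caps the number of filling angles at two and, in the two-angle case, bounds each below $\tfrac12\pi$, which kills the paper's intermediate candidates $\alpha_3^2\alpha_4\alpha_m$ and $\alpha_3^2\alpha_5\alpha_m$ immediately (indeed $\alpha_3+\alpha_\ell>\tfrac56\pi>R(\alpha_3\alpha_m)$ for $\ell\ge4$) and leaves only the clean dichotomy: either every $\alpha_3\alpha_m$-vertex is $\alpha_3\alpha_m^2$, whence Lemma~\ref{Lem-vertex-transitive} and Proposition~\ref{Prop-vertex-homog} read off $m\in\{4,6,8,10\}$ and the four homogeneous tilings, or some vertex is $\alpha_3^3\alpha_m$, forcing $\alpha_3=\tfrac25\pi$ combinatorially and handing off to Proposition~\ref{Prop-al3=pi/2-2pi/5}. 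This buys you two things the paper does not make explicit: you never need \eqref{Eq-alm-aln-linear} at all (the paper's exact values for $m=4$ are used only to verify inequalities your $\tfrac12\pi$ bound gives uniformly), and your final filtering of the six $\tfrac25\pi$-tilings down to $J_{62}$ and $J_{63}$ via the hypothesis that some vertex carries two pentagons is spelled out, where the paper's $m=5$ case relies on the same check silently. The only point worth a clarifying line is the single-filling case: the angle of value $\alpha_m$ must belong to a \emph{strictly convex} polygon (its value is below $\pi$, so this is automatic), which is what licenses the appeal to Lemma~\ref{Lem-ang-asc-seq}; similarly Lemma~\ref{Lem-cong} guarantees $\alpha_3$ is well defined. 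The paper's trigonometric detour does produce the exact angle data recorded in the appendix tables, but for the classification itself your argument is leaner.
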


\begin{proof}
		From the angle sum of $\al_3\al_m^2$, we have that $\al_m = \pi -
		\frac{1}{2}\al_3$; that is,
		$R(2\alpha_m) = \alpha_3$. Since $\alpha_3 < \alpha_n$ for any
		$n>3$ by Lemma~\ref{Lem-ang-asc-seq}, we deduce that
		$\alpha_m^2\cdots = \alpha_3\alpha_m^2$.
		We consider $m=3,4,5$ and $m\ge6$ as follows.
		
		If $m = 3$, then by
		Lemma~\ref{Lem-vertex-transitive} the tiling is vertex-homogenous, and by Theorem \ref{thm:weakly} the tiling is the tetrahedron. 

		If $m=4$, then substituting it and $\al_m = \pi - \frac{1}{2}\al_3$ into
		\eqref{Eq-alm-aln-linear}, we obtain
		\[
				\alpha_3 = 4 \tan^{-1} \tfrac{1}{\sqrt{7}},
				\hspace{1em}
				\alpha_4 = \pi - 2 \tan^{-1} \tfrac{1}{\sqrt{7}}.
		\]
		We see that $2\alpha_3 > \alpha_4 = R(\alpha_3\alpha_4)$. Since we
		also have $\alpha_3 < R(\alpha_3\alpha_4) < \alpha_5$, we deduce
		that $\alpha_3\alpha_4\cdots = \alpha_3\alpha_4^2$.
		By Lemma~\ref{Lem-vertex-transitive}, the tiling is vertex
		homogenous and by Theorem \ref{thm:weakly} is therefore the triangular prism. 

		If $m=5$, then it and $\alpha_m = \pi - \frac{1}{2}\alpha_3$ into
		\eqref{Eq-alm-aln-linear}, we obtain
		\[
				\alpha_3 = \tfrac{2}{5}\pi,
				\hspace{1em}
				\alpha_5 = \tfrac{4}{5}\pi.
		\]
		By Proposition~\ref{Prop-al3=pi/2-2pi/5}, the tiling
		is $J_{62}$ or $J_{63}$.

		If $m \ge 6$, then we have $\alpha_3 + \alpha_m > \pi$ by
		\eqref{Ineq-alm-lb}; that is, $R(\alpha_3\alpha_m)<\pi$. On the
		other hand, $3\alpha_3 > \pi$. Thus the angle sums that are at
		most
		$R(\alpha_3\alpha_4)$ are $2\alpha_3$, $\alpha_3+\alpha_4$,
		$\alpha_3+\alpha_5$, $\alpha_4+\alpha_5$ and $\alpha_m$. Since
		$\alpha_3\alpha_4\alpha_5\alpha_m$ is not a valid vertex type by
		Lemma~\ref{Lem-all-vertices}, we conclude that 
		\[
				\alpha_3\alpha_m\cdots \in 
				\{
						\alpha_3^3\alpha_m, \alpha_3^2\alpha_4\alpha_m,
						\alpha_3^2\alpha_5\alpha_m, \alpha_3\alpha_m^2
				\}.
		\]
		If $\alpha_3\alpha_m\cdots \not= \alpha_3\alpha_m^2$, then
		there is a vertex of type $\alpha_3 \alpha_\ell \alpha_m$ in the
		tiling for some $\ell \in \{3,4,5\}$. Rearranging the angle sum
		for this type gives $\alpha_m = 2\pi - 2\alpha_3 - \alpha_\ell$;
		substituting $\alpha_m = \pi - \frac{1}{2}\alpha_3$ into this
		equation gives $3\alpha_3 + \alpha_\ell = 2\pi$. If $\ell = 3$, we
		obtain $\alpha_3 = \frac{2}{5}\pi$ and thus $m = 5$ by
		Proposition~\ref{Prop-al3=pi/2-2pi/5}, a contradiction. But if
		$\ell \ge 4$ then $3\alpha_3 + \alpha_\ell > 2\pi$ by
		\eqref{Ineq-alm-lb}, also a contradiction. We conclude that
		$\alpha_3\alpha_m\cdots = \alpha_3\alpha_m$. Since
		$\alpha_m^2\cdots = \alpha_3\alpha_m$, by Lemma~\ref{Lem-vertex-transitive} the tiling is vertex
		transitive, and by Theorem \ref{thm:weakly}
		it is the truncated tetrahedron
		($m=6$); the truncated cube ($m=8$); and the truncated
		dodecahedron ($m=10$).\qedhere
		%

\end{proof}

\begin{prop}
		\label{Prop-2al3alm} 
		If a tiling has a vertex of type $\alpha_3^2\alpha_m$ with
		$m\ge4$, then the tiling is one of the Johnson tilings $J_1$ or
		$J_2$.
\end{prop}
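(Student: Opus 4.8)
The plan is to use the angle sum at the given vertex to determine $\alpha_3$ exactly and then to invoke Proposition~\ref{Prop-al3=pi/2-2pi/5}. First I would record that the angle sum of $\alpha_3^2\alpha_m$ gives $\alpha_m = 2\pi - 2\alpha_3$, so that $\cos\alpha_m = \cos 2\alpha_3 = 2\cos^2\alpha_3 - 1$. Next I substitute into \eqref{Eq-alm-aln-linear}, taking the second polygon to be the triangle (i.e.\ with $\tfrac{2}{n}\pi = \tfrac{2}{3}\pi$). Since $\cos\tfrac{2}{3}\pi = -\tfrac12$, the factor $1 + \cos\alpha_3 + 2\cos\tfrac{2}{3}\pi$ collapses to $\cos\alpha_3$, so the right-hand side becomes $(1-\cos\alpha_m)\cos\alpha_3$. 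Using $1+\cos\alpha_m = 2\cos^2\alpha_3$ and $1-\cos\alpha_m = 2(1-\cos\alpha_3)(1+\cos\alpha_3)$ and cancelling the common factor $2(1-\cos\alpha_3)$, the identity reduces to $\cos^2\alpha_3 + \cos\tfrac{2}{m}\pi = \cos\alpha_3 + \cos^2\alpha_3$, that is $\cos\tfrac{2}{m}\pi = \cos\alpha_3$. As both angles lie in $(0,\pi)$, this yields $\alpha_3 = \tfrac{2}{m}\pi$.

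Second, I apply the lower bound \eqref{Ineq-alm-lb}, namely $\alpha_3 > \tfrac13\pi$. Combined with $\alpha_3 = \tfrac{2}{m}\pi$ this forces $\tfrac{2}{m} > \tfrac13$, i.e.\ $m < 6$; together with the hypothesis $m \ge 4$ we get $m \in \{4,5\}$. Hence $\alpha_3 = \tfrac12\pi$ when $m=4$ and $\alpha_3 = \tfrac25\pi$ when $m=5$, so Proposition~\ref{Prop-al3=pi/2-2pi/5} applies and the tiling belongs to the list it provides.

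Third, I narrow that list down using non-strict-convexity. The key observation is that $\alpha_m = 2\pi - \tfrac{4}{m}\pi \ge \pi$ in both cases: for $m=4$ we get $\alpha_4 = \pi$ (the $4$-gon is a hemisphere) and for $m=5$ we get $\alpha_5 = \tfrac65\pi > \pi$ (the pentagon is concave). Thus the $m$-gon is not strictly convex, so by Lemma~\ref{Lem-cong} there is exactly one $m$-gon (all $m$-gons are congruent, and at most one polygon of the tiling fails to be strictly convex). Among the tilings of Proposition~\ref{Prop-al3=pi/2-2pi/5}, the octahedron and icosahedron contain no $m$-gon with $m>3$, while the pentagonal antiprism, $J_{11}$, $J_{62}$ and $J_{63}$ all have \emph{strictly convex} pentagons, since they arise from the branch $\alpha_5 = \tfrac45\pi$ in that proof. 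Only $J_1$ (for $m=4$) and $J_2$ (for $m=5$) consist of a single non-strictly-convex $m$-gon whose complement is triangulated, matching our situation exactly; hence the tiling is $J_1$ or $J_2$.

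The algebra in the first step is routine once one uses $\cos\tfrac23\pi = -\tfrac12$, so I expect the main care to be needed in the third step: correctly matching the forced non-strict-convexity of the $m$-gon against each candidate produced by Proposition~\ref{Prop-al3=pi/2-2pi/5}, and in particular separating $J_2$ from $J_{11}$ (both of which have a single pentagon) by noting that $J_{11}$'s pentagon is strictly convex whereas ours is concave.
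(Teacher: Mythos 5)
Your proof is correct, but it follows a genuinely different route from the paper's. The paper argues combinatorially: since $R(\alpha_3\alpha_m)=\alpha_3$ and $\alpha_3$ is the smallest angle available (Lemma~\ref{Lem-ang-asc-seq}), every vertex of type $\alpha_3\alpha_m\cdots$ is exactly $\alpha_3^2\alpha_m$, so every vertex of the $m$-gon has this type; propagating around the boundary, the apexes of the surrounding triangles are forced to coincide in a single vertex of type $\alpha_3^m$ adjacent to all vertices of the $m$-gon. This yields $\alpha_3=\tfrac{2}{m}\pi$ with $m\in\{4,5\}$ (the apex has degree $m\le 5$ by Lemma~\ref{lem:degreebound}), and then Proposition~\ref{Prop-al3=pi/2-2pi/5} finishes. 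You reach the same equation $\alpha_3=\tfrac{2}{m}\pi$ purely trigonometrically, by feeding the angle sum $\alpha_m=2\pi-2\alpha_3$ into \eqref{Eq-alm-aln-linear} (your algebra checks out), and you bound $m$ via \eqref{Ineq-alm-lb} rather than via the degree bound. The paper's route buys structural information with no trigonometry at all: the apex vertex exists in the tiling itself, so the pyramid over the $m$-gon is reconstructed directly and the identification is nearly immediate. Your route buys a purely local computation at the single given vertex, with no propagation argument, and your third step usefully makes explicit the filtering of Proposition~\ref{Prop-al3=pi/2-2pi/5}'s list that the paper compresses into ``the result follows'': the forced value $\alpha_m=2\pi-\tfrac{4}{m}\pi\ge\pi$ means the $m$-gon is not strictly convex, which together with Lemma~\ref{Lem-cong} correctly separates $J_2$ (concave pentagon, $\alpha_5=\tfrac{6}{5}\pi$) from $J_{11}$, $J_{62}$, $J_{63}$ and the pentagonal antiprism (all with $\alpha_5=\tfrac{4}{5}\pi$). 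One point you should cite explicitly: applying \eqref{Eq-alm-aln-linear} when $\alpha_m\ge\pi$ is legitimate only because of the remark following that lemma (the equation involves only $\cos\alpha_m$ and is therefore invariant under $\alpha\mapsto 2\pi-\alpha$, covering the concave case, with the hemisphere $\alpha_4=\pi$ as the boundary case); the paper uses the equation in exactly this way inside Proposition~\ref{Prop-al3=pi/2-2pi/5}, so this is a matter of citation rather than a gap.
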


\begin{proof} 
		A vertex of type $\al_3^2\al_m$ has a neighbour of type
		$\al_3^2\cdots$ and two neighbours of type
		$\al_3\al_m\cdots$. Since $R(\alpha_3\alpha_m) =
		\alpha_3$ and $\alpha_3 < \alpha_n$ for all $n>3$ by
		Lemma~\ref{Lem-ang-asc-seq}, we have that
		$\alpha_3\alpha_m\cdots = \alpha_3^2\alpha_m$.
		Therefore, each vertex of the $m$-gon is of type $\al_3^2\al_m$.
		Two adjacent vertices of this type share a neighbour of type
		$\al_3^2\cdots$. By induction, there is a vertex adjacent to every
		vertex of the $m$-gon, which is of type $\al_3^m$. The result
		follows by Proposition~\ref{Prop-al3=pi/2-2pi/5}.
\end{proof}

\begin{prop}
		\label{Prop-3al3alm} 
		If a tiling has a vertex of type $\alpha_3^3\alpha_m$ with $m \ge
		3$, then the tiling
		is the octahedron, one of the Johnson tilings $J_1$, $J_{11}$,
		$J_{62}$ or $J_{63}$ or an antiprism.
\end{prop}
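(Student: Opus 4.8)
The plan is to split on the size of $m$ and then, for $m\ge 4$, to analyse the neighbours of the given vertex $v$ of type $\al_3^3\al_m$. First, if $m=3$ then $v$ has type $\al_3^4$, so $4\al_3=2\pi$ and $\al_3=\tfrac12\pi$; the conclusion is then immediate from Proposition~\ref{Prop-al3=pi/2-2pi/5}, since the tiling has a triangle and so is the octahedron or $J_1$. I would therefore assume $m\ge4$ for the remainder. The angle sum gives $\al_m=2\pi-3\al_3$, and I would record the bounds $\tfrac13\pi<\al_3<\tfrac12\pi$: the lower bound is \eqref{Ineq-alm-lb}, and the upper bound follows because $\al_3>\tfrac13\pi$ forces $\al_m<\pi$, so $\al_m$ is strictly convex and Lemma~\ref{Lem-ang-asc-seq} gives $\al_3<\al_m=2\pi-3\al_3$.

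The key step is a local dichotomy. Let $P$ be the $m$-gon at $v$; the two edges of $P$ meeting $v$ are each shared with a triangle, so the neighbour $u$ across such an edge is of type $\al_3\al_m\cdots$ with remainder $R(\al_3\al_m)=2\al_3$. Since $2\al_3<\pi$, any angle filling this remainder comes from a strictly convex polygon and so is at least $\al_3$; a short count then shows the remainder is filled either by two further triangles, giving $u=\al_3^3\al_m$, or by a single angle $\al_\ell=2\al_3$ with $\ell\ge4$, giving $u=\al_3\al_m\al_\ell$. I would then branch on whether the second possibility ever occurs in the tiling.

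In the first case (no vertex of type $\al_3\al_m\al_\ell$ with $\al_\ell=2\al_3$), every vertex of type $\al_3\al_m\cdots$ is $\al_3^3\al_m$; propagating this around $P$ edge by edge shows that every vertex of $P$ is $\al_3^3\al_m$. This surrounds $P$ by a band of triangles whose outer vertices again have type $\al_3^3\al_m$ and bound a second $m$-gon, so the tiling is the $m$-antiprism, exactly as in the type-$\al_3^3\al_m$ analysis in the proof of Theorem~\ref{thm:weakly}. In the second case there is an $\ell$-gon with $\al_\ell=2\al_3$; by Lemma~\ref{lem:subdiv} it pyramid-subdivides into $\ell$ regular triangles meeting at its centre, whose central angles each equal $\al_3$ and sum to $2\pi$, whence $\al_3=\tfrac{2}{\ell}\pi$. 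The bounds $\tfrac13\pi<\al_3<\tfrac12\pi$ force $\ell=5$, so $\al_3=\tfrac25\pi$, and Proposition~\ref{Prop-al3=pi/2-2pi/5} applies.

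The main obstacle, and the point requiring care, is this second case: one must check that the list delivered by Proposition~\ref{Prop-al3=pi/2-2pi/5} for $\al_3=\tfrac25\pi$ is compatible with the hypothesis. That list is the icosahedron, the pentagonal antiprism, $J_2$, $J_{11}$, $J_{62}$ and $J_{63}$, but the icosahedron (all vertices $\al_3^5$) and $J_2$ (vertices $\al_3^5$ and $\al_3^2\al_5$) possess no vertex of type $\al_3^3\al_m$ and so are excluded by the presence of $v$; the survivors are the pentagonal antiprism, $J_{11}$, $J_{62}$ and $J_{63}$, all of which appear in the statement. A minor subtlety in the first case is verifying that the band around $P$ genuinely closes into a single second $m$-gon rather than admitting extra faces, which follows from noting that the angle left at each outer vertex is precisely $2\pi-3\al_3=\al_m$.
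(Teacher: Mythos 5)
Your skeleton matches the paper's: the $m=3$ case via Proposition~\ref{Prop-al3=pi/2-2pi/5}, the bounds $\tfrac13\pi<\al_3<\tfrac12\pi$, the local dichotomy at a neighbour across a triangle--$m$-gon edge ($\al_3^3\al_m$ versus $\al_3\al_m\al_\ell$ with $\al_\ell=2\al_3$), and the pyramid-subdivision branch forcing $\al_3=\tfrac2\ell\pi$ and hence $\ell=5$. Your second case is correct and even slightly streamlines the paper, which treats $\ell=m$ separately via Proposition~\ref{Prop-al32alm} while your subdivision argument covers it uniformly; your explicit exclusion of the icosahedron and $J_2$ from the $\al_3=\tfrac25\pi$ list is also sound.

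However, your first case contains a genuine gap, and the step you dismiss as a ``minor subtlety'' is exactly where it sits. You claim the outer vertices of the triangle band around $P$ ``again have type $\al_3^3\al_m$'' because the remaining angle there is $2\pi-3\al_3=\al_m$. But an angle deficit of size $\al_m$ need not be filled by a single angle: since each filling angle is at least $\al_3$ and by \eqref{List-deg5-al3} the only degree-$5$ types containing $\al_3^3$ are $\al_3^5$, $\al_3^4\al_4$ and $\al_3^4\al_5$, the deficit can also be filled by $\al_3+\al_3$ (forcing $\al_3=\tfrac25\pi$) or by $\al_3+\al_q$ with $\al_q=2\pi-4\al_3$, $q\in\{4,5\}$. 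The first possibility is actually realised: $J_{11}$ has only the vertex types $\al_3^3\al_5$ and $\al_3^5$, so it has no vertex of type $\al_3\al_m\al_\ell$ whatsoever (indeed no degree-$3$ vertex) and therefore lies in your first case; yet the band around its pentagon has outer vertices of type $\al_3^5$ and closes onto an apex, not a second pentagon, and $J_{11}$ is not an antiprism. So your case-1 conclusion ``the tiling is the $m$-antiprism'' is false as stated, and your argument loses $J_{11}$, which you recover only in case 2 where it does not live. A patch is to run the same remainder analysis at the outer vertices: type $\al_3^3\al_m$ continues the propagation towards the antiprism; type $\al_3^5$ gives $\al_3=\tfrac25\pi$ and Proposition~\ref{Prop-al3=pi/2-2pi/5}; and type $\al_3^4\al_q$ must be excluded by a separate (trigonometric) argument --- note you cannot simply cite Proposition~\ref{Prop-4al3al4-4al3al5} for this, as its proof in the paper invokes the present proposition. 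The paper itself avoids confronting your band closure by splitting globally --- every vertex of type $\al_3^3\al_m$ (antiprism, via Theorem~\ref{thm:weakly}) versus some vertex of type $\al_3\al_m\al_n$ --- rather than propagating around $P$.
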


\begin{proof} 

		If $m=3$, then $\alpha_3 = \tfrac{1}{2}\pi$. By Proposition
		\ref{Prop-al3=pi/2-2pi/5}, the tiling is the octahedron or $J_1$.
		For the rest of the proof we assume that $m \ge 4$.

		Rearranging the angle sum of $\alpha_3^3\alpha_m$ gives $\alpha_m
		= 2\pi - 3\alpha_3$. Since $\alpha_3 < \alpha_m$ by Lemma
		\ref{Lem-ang-asc-seq}, we have $\alpha_3 < \tfrac{1}{2}\pi$.
		Moreover, $R( \alpha_3^2\alpha_m ) = \alpha_3$ and therefore
		$\alpha_3^2 \alpha_m \cdots = \alpha_3^3 \alpha_m$.

		Let $u$ denote the given vertex. There is a neighbour $v$ of $u$
		of type $\alpha_3\alpha_m\cdots$, which is either
		$\alpha_3^3\alpha_m$ or $\alpha_3\alpha_m\alpha_n\cdots$ for some
		$n > 3$. If every vertex is of type $\alpha_3^3\alpha_m$, then by
		Proposition~\ref{Prop-vertex-homog} the tiling is vertex
		homogenous and must be an antiprism. We
		may assume that $u$ is of type $\al_3\al_m\al_n\cdots$.

		Since $R(\alpha_3\alpha_m\alpha_n) \le
		R(\alpha_3^2\alpha_m) = \alpha_3$, we deduce that
		$\alpha_3\alpha_m\alpha_n\cdots = \alpha_3\alpha_m\alpha_n$.
		If $m = n$, then by Proposition \ref{Prop-al32alm} the tiling is
		either $J_{62}$ or $J_{63}$.

		Combining the angle sums for $\al_3^3\al_m$ and $\al_3\al_m\al_n$,
		we find that $\al_n = 2\al_3$. Therefore, we can perform a pyramid
		subdivision on the $n$-gon to obtain another tiling
		(Lemma~\ref{lem:subdiv}). The new vertex in
		this tiling is of type $\al_3^n$. Since $\al_3 < \frac{1}{2}\pi$
		we have $n>4$; by Lemma \ref{Lem-all-vertices}, $n=5$. But then
		$\al_3 = \frac{2}{5}\pi$, and by Proposition
		\ref{Prop-al3=pi/2-2pi/5}, the tiling is $J_{11}$, $J_{62}$ or
		$J_{63}$.
\end{proof}

\begin{prop} 
		\label{Prop-2al32alm} 
		If a tiling has a vertex of type $\alpha_3^2\alpha_m^2$ with
		$m\ge4$, then either
		\begin{enumerate}
				\item $m=4$ and the tiling is the cuboctahedron $aC$ or one of
						the Johnson tilings $J_3$ or $J_{27}$; or
				\item $m=5$ and the tiling is the icosidodecahedron $aD$ or
						one of the Johnson tilings $J_6$ or $J_{34}$.
		\end{enumerate}
\end{prop}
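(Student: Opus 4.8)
The plan is to reduce quickly to the already-established weakly vertex-homogeneous classification, treating the genuinely new tilings $J_3$ and $J_6$ as exactly the cases in which a hemisphere appears. First I would extract the geometry. The angle sum of $\alpha_3^2\alpha_m^2$ gives $\alpha_m = \pi - \alpha_3$; feeding this together with \eqref{Ineq-alm-lb} yields $\tfrac13\pi < \alpha_3 < \tfrac2m\pi$, forcing $m < 6$, so $m \in \{4,5\}$. Substituting $n=3$ and $\alpha_m = \pi - \alpha_3$ into \eqref{Eq-alm-aln-linear} and solving pins the values exactly: for $m=4$ one finds $\cos\alpha_3 = \tfrac13$ (equivalently the edge length satisfies $\cos x = \tfrac12$, i.e.\ $x=\tfrac13\pi$), and for $m=5$ one finds $\cos\alpha_3 = \tfrac1{\sqrt5}$ (edge length $\cos x = \tfrac14(1+\sqrt5)$).

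Next I would enumerate the admissible polygons and vertex types at this fixed edge length. Evaluating \eqref{Eq-x-al} at the computed $x$, the hexagon (in the $m=4$ case) and the decagon (in the $m=5$ case) turn out to satisfy $\alpha = \pi$, that is, they are hemispheres, while every polygon with more sides forces $\cos\alpha < -1$ and is therefore impossible; a short calculation rules out the intermediate sizes (pentagons when $m=4$, squares and hexagons when $m=5$) since none of them can complete a vertex to angle sum $2\pi$. Hence the only tiles are triangles, $m$-gons, and possibly a single hemisphere polygon (a hexagon for $m=4$, a decagon for $m=5$), uniqueness of the hemisphere coming from Lemma~\ref{Lem-cong}. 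Checking which angle combinations sum to $2\pi$ then shows that every vertex is either of type $\alpha_3^2\alpha_m^2$, or else of the degree-$3$ type $\alpha_6\alpha_3\alpha_4$ (when $m=4$), respectively $\alpha_{10}\alpha_3\alpha_5$ (when $m=5$), the latter occurring only if the hemisphere is present.

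The classification now splits on the presence of the hemisphere. If there is no hemisphere polygon, every vertex has type $\alpha_3^2\alpha_m^2$, so the tiling is weakly vertex-homogeneous; Theorem~\ref{thm:weakly} then identifies it as the cuboctahedron $aC$ or $J_{27}$ when $m=4$, and as the icosidodecahedron $aD$ or $J_{34}$ when $m=5$. If instead a hemisphere polygon $H$ is present, it is unique and covers one hemisphere, so the other hemisphere is tiled by triangles and $m$-gons whose boundary vertices all have the forced degree-$3$ type. Reading off that type shows the faces meeting $\partial H$ alternate (triangle, square, triangle, square, $\dots$ for $m=4$; triangle, pentagon, $\dots$ for $m=5$), which is precisely the band of a triangular cupola or a pentagonal rotunda; completing this band inward is forced by the vertex constraints and closes up to give $J_3$ ($m=4$) or $J_6$ ($m=5$).

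I expect the main obstacle to be two pieces of careful bookkeeping rather than any single hard idea. The first is the exhaustive exclusion of intermediate polygon sizes and spurious vertex types at the fixed edge length: this is where an error could slip in, and it must be carried out separately for $m=4$ and $m=5$. The second is verifying that, once the hemisphere is present, the inward completion of the cupola or rotunda band is genuinely forced and closes correctly; the rotunda ($m=5$) case is the more delicate, since its band contains pentagons as well as triangles, so one must track two layers of faces before reaching the top $m$-gon. Everything else follows cleanly by invoking Lemma~\ref{Lem-cong} and Theorem~\ref{thm:weakly}.
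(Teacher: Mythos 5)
Your proposal is correct and shares the paper's skeleton at the start ($\al_m = \pi - \al_3$ forcing $m\in\{4,5\}$, the exact values $\cos\al_3=\tfrac13$ resp.\ $\tfrac{1}{\sqrt{5}}$ via \eqref{Eq-alm-aln-linear}, and the split on whether every vertex has type $\al_3^2\al_m^2$, with Theorem~\ref{thm:weakly} handling the homogeneous case), but your middle and final steps take a genuinely different route. Where you take a global inventory of all regular polygons realisable at the fixed edge length, the paper argues purely locally: a deviant vertex $v$ may be assumed adjacent to one of type $\al_3^2\al_m^2$, hence is of type $\al_3\al_m\cdots$; since $R(\al_3\al_m)=\pi$ and $2\al_3<\pi<2\al_m$, the only completion other than $\al_3^2\al_m^2$ is a single angle $\al_n=\pi$, and substituting $\al_n=\pi$ into \eqref{Eq-alm-aln-linear} gives $n=6$ (resp.\ $n=10$). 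This sidesteps exactly the bookkeeping you flag as the main obstacle; note that your parenthetical list of intermediate sizes for $m=5$ is incomplete --- heptagons, octagons and nonagons also exist geometrically at edge length $\tfrac15\pi$ and must be excluded by angle-sum arithmetic with exact algebraic values, which is true but heavier than the paper's remainder argument. At the endgame the routes diverge in the opposite direction: rather than forcing the complement of the hemisphere to be a cupola/rotunda band as you do, the paper performs a cupola subdivision of the hemisphere hexagon (Lemma~\ref{lem:subdiv}, valid since $\al_6=\al_3+\al_4$), turning every vertex into type $\al_3^2\al_4^2$ so that Theorem~\ref{thm:weakly} identifies the result and the original tiling is obtained by diminishing a cupola, i.e.\ is $J_3$; the $m=5$ case is then declared ``identical'', which tacitly requires a rotunda-type subdivision that Lemma~\ref{lem:subdiv} does not actually provide. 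Your direct two-layer band completion --- whose delicacy for the rotunda you correctly anticipate --- is therefore arguably more self-contained for $J_6$, while the paper's subdivision trick buys a one-line reduction to the weakly vertex-homogeneous classification; both correctly use Lemma~\ref{Lem-cong} for uniqueness of the hemisphere tile.
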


\begin{proof} 
		From the angle sum of $\al_3^2\al_m^2$ we obtain $\al_m = \pi -
		\al_3$. By \eqref{Ineq-alm-lb}, $\al_3 > \frac{1}{3}\pi$ and hence
		$\al_m < \frac{2}{3}\pi$. Again by \eqref{Ineq-alm-lb}, we see that
		$m < 6$; i.e., $m=4$ or $m=5$.

		If every vertex is of the type $\alpha_3^2\alpha_m^2$ for $m\ge4$, then by Theorem \ref{thm:weakly} the tilings are $aC$ and $J_{27}$ for $m=4$; and $aD$ and $J_{34}$ for $m=5$. 
		
		In what follows, we assume that there is a vertex of type different from $\alpha_3^2\alpha_m^2$.
		\begin{case*}
				[$m=4$]
				Substituting $n=3$, $m=4$ and $\al_4 = \pi - \al_3$ into
				\eqref{Eq-alm-aln-linear}, we obtain $\al_3 = \cos^{-1}
				\frac{1}{3}$.



				Let $u$ be a vertex of type $\al_3^2\al_4^2$ and suppose there
				is at least one vertex $v$ of a different type. We may assume
				that $u$ and $v$ are adjacent; if $u$ is $\al_3\al_4\al_3\al_4$ then $v$ is of type $\al_3\al_4\cdots$.
				If $u$ is $\al_3\al_3\al_4\al_4$ and $v$ is
				type $\al_3^2\cdots$ or $\al_4^2\cdots$, then $v$ is incident
				to an $n$-gon with $n>4$ which must also be incident to the
				common neighbour of $u$ and $v$, whose type is
				$\al_3\al_4\cdots$. We may therefore assume that $v$ is of
				type $\al_3\al_4\cdots$. Note that $R(\al_3\al_4) = \pi$ and
				that $2\al_3 < \pi < 2\al_4$. It follows that $v$ is of type
				$\al_3\al_4\al_n$ for some $n$; subtituting $m = 3$, $\al_m =
				\cos^{-1} \frac{1}{3}$ and $\al_n = \pi$ into
				\eqref{Eq-alm-aln-linear}, we obtain $n = 6$. Since $\al_6 =
				\al_3 + \al_4$, we may perform a cupola subdivision on each
				$6$-gon in the tiling to perform a new tiling
				(Lemma~\ref{lem:subdiv}). The vertices on the boundary of this
				tiling, obtained from vertices of type $\al_3\al_4\al_6$, are
				of type $\al_3^2\al_4^2$; the new vertices are of the same
				type. By this operation we obtain $aC$.
				Therefore, the original tiling is obtained from
				$aC$ by diminishing a triangular cupola; that is,
				the tiling is $J_3$.

		\end{case*}

		\begin{case*}
				[$m=5$]
				Substituting $n=3$, $m=5$ and $\al_5 = \pi - \al_3$ into
				\eqref{Eq-alm-aln-linear} we obtain $\al_3 = \cos^{-1}
				\frac{1}{\sqrt{5}}$.



				Let $u$ be of type $\al_3^2\al_5^2$ and let $v$ be a vertex of
				a different type; as in the previous case, we may assume that
				$v$ is of type $\al_3\al_5\al_n$ for some $n$. Substituting
				$\al_3 = \cos^{-1} \frac{1}{\sqrt{5}}$ into
				\eqref{Eq-alm-aln-linear}, we obtain $n = 10$.
				The argument that the tiling must be $J_6$ is identical to the
				argument given in the previous case.\qedhere

		\end{case*}

\end{proof}

\begin{prop}
		\label{Prop-al33al4} 
		If a tiling has a vertex of type $\alpha_3\alpha_4^3$,
		then the tiling is the rhombicuboctahedron $eC$ or one of the
		Johnson tilings $J_4$, $J_{19}$ or $J_{37}$.
\end{prop}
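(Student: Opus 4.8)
The plan is to pin down $\al_3$ and $\al_4$ exactly, reduce the admissible tiles to triangles, squares and octagons, and then branch on whether every vertex is of type $\al_3\al_4^3$.

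First I would compute the angles. The angle sum gives $\al_3 + 3\al_4 = 2\pi$, so $\cos\al_3 = \cos 3\al_4$; substituting $m = 3$, $n = 4$ into \eqref{Eq-alm-aln-linear} (with $\cos\tfrac{2}{3}\pi = -\tfrac12$ and $\cos\tfrac{2}{4}\pi = 0$) gives $\cos\al_4 = 2\cos\al_3 - 1$. Eliminating $\al_3$ yields $8\cos^3\al_4 - 7\cos\al_4 - 1 = (\cos\al_4 - 1)(8\cos^2\al_4 + 8\cos\al_4 + 1) = 0$, whose root consistent with the angle sum is $\cos\al_4 = \tfrac14(\sqrt2 - 2)$, so $\cos\al_3 = \tfrac18(2 + \sqrt2)$. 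Writing $x$ for the common edge length, I would then record from \eqref{Eq-x-al} the two identities that drive the whole argument: the regular octagon of edge length $x$ has convex angle $\al_8 = 2\pi - 2\al_4 = \al_3 + \al_4$ and hence concave complement $2\pi - \al_8 = 2\al_4$. These are exactly the hypotheses needed for a cupola subdivision (on the convex octagon) and a prism subdivision respectively.

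Next I would restrict the tiles. By Lemma~\ref{Lem-small-remainder}, the triangle at our vertex forces $\al_4^3\cdots = \al_3\al_4^2\cdots = \al_3\al_4^3$. Solving \eqref{Eq-cosx} for $\sin^2\tfrac12\al_m = (1 + \cos\tfrac{2}{m}\pi)/(1 + \cos x)$ shows a regular $m$-gon of edge length $x$ exists only for $m \le 8$, so the tiles are triangles, squares and octagons, together with at most one concave octagon (Lemma~\ref{Lem-cong}). It then remains to exclude $5$-, $6$- and $7$-gons: computing $\al_5, \al_6, \al_7$ from \eqref{Eq-cosx} and testing them against every admissible vertex of \eqref{List-al4}--\eqref{List-al5} (and their triangle-incident analogues), none of the resulting angle sums equals $2\pi$ --- in particular $\al_5 \ne \tfrac23\pi$, so even $\al_5^3$ is inadmissible. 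This finite but delicate enumeration is the main obstacle. It leaves exactly three vertex types: $\al_3\al_4^3$; the vertex $\al_4^2\al_8$, in which the remainder $R(\al_4^2) = 2\pi - 2\al_4$ is filled by a convex octagon; and the vertex $\al_3\al_4\al_8$, in which the remainder $R(\al_3\al_4) = 2\al_4$ is filled by the concave octagon.

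Finally I would carry out the dichotomy. If every vertex is $\al_3\al_4^3$, the tiling is weakly vertex-homogenous and Theorem~\ref{thm:weakly} identifies it as $eC$ or $J_{37}$. Otherwise an octagon is present. If it is convex, all its vertices are $\al_4^2\al_8$; since $\al_8 = \al_3 + \al_4$, a cupola subdivision (Lemma~\ref{lem:subdiv}) on each convex octagon turns every $\al_4^2\al_8$ vertex into $\al_3\al_4^3$ and yields a tiling all of whose vertices are $\al_3\al_4^3$, hence $eC$ or $J_{37}$ by the previous case. Reversing the subdivision, the original tiling is $eC$ or $J_{37}$ with one square cupola diminished; it cannot have two diminished cupolas, since that is the triangle-free octagonal prism, so it is $J_{19}$. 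If instead the octagon is concave, it is the unique non-strictly-convex tile, and each of its eight vertices is $\al_3\al_4\al_8$; thus its boundary edges are flanked alternately by four triangles and four squares, whose outer boundary closes up as a single square with vertices again of type $\al_3\al_4^3$. This exhibits the tiling as the square cupola $J_4$. In every case the tiling is $eC$, $J_4$, $J_{19}$ or $J_{37}$, as required.
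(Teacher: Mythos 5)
Your proposal is correct in substance but takes a genuinely different route from the paper. The paper never bounds the admissible polygon sizes globally: it argues locally that a vertex of a type other than $\al_3\al_4^3$ may be assumed adjacent to one of type $\al_3\al_4^3$, hence has partial type $\al_3\al_4\cdots$ or $\al_4^2\cdots$; remainder estimates force the full types $\al_3\al_4\al_m$ or $\al_4^2\al_m$ (the remaining candidate $\al_3^2\al_4\al_m$ is killed because substituting $\al_m=5\al_4-2\pi$ into \eqref{Eq-alm-aln-linear} returns $m=5.73\ldots$), and one substitution then gives $m=8$ in each case. Your replacement --- the existence bound $\sin^2\tfrac{1}{2}\al_m=(1+\cos\tfrac{2}{m}\pi)/(1+\cos x)\le 1$, i.e.\ $x\le\tfrac{2}{m}\pi$, which with $\cos x=\tfrac{1}{17}(7+4\sqrt{2})$ forces $m\le 8$, followed by exhaustive testing of vertex types --- is sound, and the edge-length bound is a genuinely useful device the paper does not invoke; but it buys you a heavier verification, since you must test every multiset from Lemma~\ref{Lem-all-vertices} against the exact values of $\al_5,\al_6,\al_7,\al_8$, convex and concave, whereas the paper needs only two substitutions. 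Your exact values $\cos\al_4=\tfrac14(\sqrt2-2)$ and $\cos\al_3=\tfrac18(2+\sqrt2)$ agree with the paper's $\al_4=2\tan^{-1}\sqrt{7-4\sqrt2}$, and your three-case endgame (all vertices $\al_3\al_4^3$ giving $eC$ or $J_{37}$; convex octagon giving $J_{19}$ via cupola subdivision; concave octagon giving $J_4$) matches the paper's, except that for the concave case the paper exploits $2\pi-\al_8=2\al_4$ to perform a prism subdivision (Lemma~\ref{lem:subdiv}) and reduce to the $J_{19}$ case already settled, while you reconstruct the square cupola directly; the paper's reduction is slicker, your direct reconstruction is in the spirit of its $J_5$ argument and works, though the closure of the band of four triangles and four squares into a single top square deserves a sentence of vertex-by-vertex propagation rather than assertion.

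Two soft spots to repair. First, the decisive enumeration is asserted, not performed; you rightly identify it as the main obstacle and correctly single out the near-miss, and indeed it checks out --- e.g.\ $\sin^2\tfrac{1}{2}\al_5=\tfrac{17(3+\sqrt5)}{16(6+\sqrt2)}>\tfrac34$, so $\al_5\ne\tfrac23\pi$ and $\al_5^3$ fails by about $0.18^\circ$ --- but as written this is an IOU, and you should either carry out the finite check or adopt the paper's local propagation, which makes it unnecessary. Second, Lemma~\ref{Lem-small-remainder} applied to the vertex $\al_3\al_4^3$ yields only $\al_4^3\cdots=\al_3\al_4^3$; the companion claim $\al_3\al_4^2\cdots=\al_3\al_4^3$ needs the separate observations $R(\al_3\al_4^2)=\al_4$ and $\al_4<\min(2\al_3,\al_5)$ (the paper checks $\al_4<2\al_3$ explicitly, which holds since $\al_4<\tfrac59\pi<\tfrac47\pi$). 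Finally, note that your treatment of the convex and concave octagon cases as exclusive is justified by Lemma~\ref{Lem-cong}, which rules out a tiling containing both strictly convex and concave $8$-gons; it is worth saying so explicitly.
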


\begin{proof} 

		The angle sum of $\al_3\al_4^3$ implies that $\al_3 = 2\pi
		- 3\al_4$ and in particular that $\al_4 < 2\al_3$. Also,
		$\al_4<\al_m$ for $m>4$ by Lemma~\ref{Lem-ang-asc-seq}. Since
		$R(\al_3\al_4^2) = \al_4$, we have that $\al_3\al_4^2\cdots =
		\al_3\al_4^3$. We also have that $\al_4^3\cdots =
		\al_3\al_4^3$ since $R(\al_4^3) = \al_3$.

		Substituting $m=3$, $n=4$ and $\al_3 = 2\pi-3\al_4$ into
		\eqref{Eq-alm-aln-linear}, we can compute an exact formula for
		$\al_4$, namely:
		\[
				\al_4 = 2\tan^{-1} \sqrt{7 - 4\sqrt{2}}.
		\]

		If every vertex in the tiling is of type $\al_3\al_4^3$, then by
		Proposition~\ref{Prop-vertex-homog} the tiling is either $eC$ or
		$J_{37}$. We may therefore assume that there is at least one
		vertex, denoted by $u$, of a different type. We may further assume
		that $u$ is either of type $\al_3\al_4\cdots$ or of type
		$\al_4^2\cdots$.

		\begin{case*}[$u$ is of type $\al_4^2\cdots$]
				Since $\al_3\al_4^2\cdots = \al_4^3\cdots = \al_3\al_4^3$,
				we have that $u$ is of type $\al_4^2\al_m\cdots$ for some
				$m>4$. In fact, since $R(\al_4^2\al_m) < R(\al_4^3) = \al_3$,
				we have that $u$ is of type $\al_4^2\al_m$. Now $\al_m = \al_3
				+ \al_4 = 2\pi - 2\al_4$; substituting $n=4$,
				$\al_m = 2\pi - 2\al_4$ and our exact value for $\al_4$ given
				above into
				\eqref{Eq-alm-aln-linear}, we obtain $m=8$.

				It is easy to see that every vertex incident with an octagon
				is also of type $\al_4^2\al_8$ by considering that $\al_3 +
				\al_4 < R(\al_8) = 2\al_4 < \al_{n} + \al_{n'}$ for
				any $n,n' \geq 4$. Since
				$\al_8 = \al_3 + \al_4$, we can perform a cupola subdivision
				on each octagon (Lemma~\ref{lem:subdiv}). This operation
				transforms each vertex of type $\al_4^2\al_8$ into one of type
				$\al_3\al_4^3$; the new vertices are also of type
				$\al_3\al_4^3$. Hence the obtained tiling is $eC$ or $J_{37}$.
				Therefore, the original tiling is obtained from $eC$ or
				$J_{37}$ by diminishing a square cupola; that is, the tiling
				is $J_{19}$.
		\end{case*}

		\begin{case*}[$u$ is of type $\al_3\al_4\cdots$]
				We claim that $u$ cannot be of type $\al_3^2\al_4\cdots$. To
				see this, consider that $\al_4 < 2\al_3 < R(\al_3^2\al_4) =
				2\al_4 - \al_3 < \al_4 + \al_3 < 3\al_3$. From this we deduce
				that $\al_3^2\al_4\cdots = \al_3^2\al_4\al_m$ for some $m>4$
				and that $\al_m = 2\al_4 - \al_3 = 5\al_4 - 2\pi$.
				Substituting into \eqref{Eq-alm-aln-linear} we
				find that $m = 5.7325...$, which is a contradiction since $m$
				is an integer; this gives the claim. Since $\al_3\al_4^2\cdots
				= \al_3\al_4^3$ and by assumption $u$ is not of type
				$\al_3\al_4^3$, we deduce that $u$ is of type
				$\al_3\al_4\al_m\cdots$ for some $m>4$. But
				$R(\al_3\al_4\al_n) \leq R(\al_3\al_4^2) = \al_4 <2\al_3$,
				from which we conclude that $u$ is of type $\al_3\al_4\al_m$.

				The angle sum of $\al_3\al_4\al_m$ implies that $\al_m
				= 2\pi - \al_3 - \al_4 = 2\al_4$. Substituting
				into \eqref{Eq-alm-aln-linear}, we obtain $m=8$. Since $\al_8
				= 2\al_4 > \pi$, the octagon is concave; that is, there is
				exactly one. It is easy to see that the neighbour with which
				$u$ shares a triangle and the octagon is also of type
				$\al_3\al_4\al_8$ since $\al_3 < R(\al_3\al_8)<
				\min\{2\al_3,\al_{n>4}\}$. By a similar argument, the
				neighbour with which $u$ shares a square and the octagon (and
				hence every vertex incident with the octagon by induction) is
				of type $\al_3\al_4\al_8$.

				Since $\al_8 = 2\al_4$, we can perform a prism subdivision on
				the octagon. This operation transforms each vertex of type
				$\al_3\al_4\al_8$ into one of type $\al_3\al_4^3$; the new
				vertices are of type $\al_4^2\al_8$. Therefore the obtained
				tiling is the one we found in the previous case, namely,
				$J_{19}$. The original tiling is obtained from $J_{19}$ by
				deleting the vertices of an octagon; that is, the tiling is
				$J_4$. \qedhere
		\end{case*}
\end{proof}

\begin{prop}\label{Prop-al32al4al5} 
		If a tiling has a vertex of type $\alpha_3\alpha_4^2\alpha_5$,
		then the tiling is the rhombicosidodecahedron $eD$ or one of the
		Johnson tilings $J_5$ or $J_{72},\dots,J_{83}$.
\end{prop}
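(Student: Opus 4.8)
The plan is to reuse the architecture of the preceding propositions: reduce the homogenous case to Theorem~\ref{thm:weakly}, show that any deviation from the type $\al_3\al_4^2\al_5$ forces a decagon, and then reconstruct the tiling from $eD$ (or a gyration of it) by a subdivision. First I would dispose of the weakly vertex-homogenous case: if every vertex has type $\al_3\al_4^2\al_5$, then by Theorem~\ref{thm:weakly} the tiling is already on the list, namely $eD$ or one of the gyrations $J_{72},\dots,J_{75}$ obtained there. So henceforth I assume some vertex has a type other than $\al_3\al_4^2\al_5$.

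Next I would pin down the metric data. The angle sum gives $\al_5 = 2\pi - \al_3 - 2\al_4$; substituting $n=3$ and this relation into \eqref{Eq-alm-aln-linear} fixes a single edge length and hence the values $\al_3<\al_4<\al_5$ (the rhombicosidodecahedral angles, ordered by Lemma~\ref{Lem-ang-asc-seq}). The decisive computation is to feed $\al_3+\al_4$ into \eqref{Eq-alm-aln-linear}: I expect the unique convex regular polygon with this angle to be the decagon, so that $\al_{10}=\al_3+\al_4$, while by the complementation remark following \eqref{Eq-alm-aln-linear} the concave decagon has angle $2\pi-\al_3-\al_4$. Computing remainders, $R(\al_4\al_5)=\al_3+\al_4=\al_{10}$ and $R(\al_3\al_4)=2\pi-\al_3-\al_4$, I would argue that the only admissible completions of a deviating vertex are $\al_4\al_5\al_{10}$ (a convex decagon) and $\al_3\al_4\al_{10}$ (a concave decagon); every other closing of $\al_3\al_4\cdots$, $\al_4\al_5\cdots$, $\al_4^2\cdots$ or $\al_3\al_5\cdots$ is excluded because the required angle would fail the integrality of $m$ in \eqref{Eq-alm-aln-linear}, violate the ordering $\al_3<\al_4<\al_5$, or force a second non-convex tile against Lemma~\ref{Lem-cong}.

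Then I would split on the decagon. If the tiling contains a concave decagon, then by Lemma~\ref{Lem-cong} there is exactly one; its boundary vertices are all $\al_3\al_4\al_{10}$ and, working outward ring by ring, its complement is forced to be a pentagonal cupola, so the tiling is $J_5$. Otherwise every decagon is convex with boundary vertices $\al_4\al_5\al_{10}$; since $\al_{10}=\al_3+\al_4$ and $10$ is even, Lemma~\ref{lem:subdiv} permits a cupola subdivision of each decagon. Each vertex $\al_4\al_5\al_{10}$ becomes $\al_3\al_4^2\al_5$ and the new vertices are of the same type, so the subdivided tiling is weakly vertex-homogenous and hence, by Theorem~\ref{thm:weakly}, is $eD$ or one of $J_{72},\dots,J_{75}$. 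Reversing the operation, the original tiling is obtained from one of these by diminishing a non-empty set of pairwise non-overlapping pentagonal cupolas.

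The main obstacle is twofold. On the geometric side I must verify, via careful use of \eqref{Eq-alm-aln-linear} and the angle ordering, that no spurious polygon other than the two decagons can be introduced at a deviating vertex; this is the delicate step, since several candidate remainders are numerically close. On the combinatorial side I must enumerate, up to the symmetry of $eD$, the ways of diminishing a non-overlapping set of pentagonal cupolas and gyrating any subset of the survivors, and check that the distinct outcomes with at least one diminished cupola are exactly $J_{76},\dots,J_{83}$; in particular, as in the gyration analysis, there is no set of four or more pairwise non-overlapping pentagonal cupolas, which caps the number of simultaneous operations at three. Together with $eD$, $J_5$ and $J_{72},\dots,J_{75}$, this yields precisely the stated list.
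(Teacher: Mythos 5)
Your plan follows the paper's proof essentially step for step: dispose of the weakly vertex-homogenous case via Theorem~\ref{thm:weakly}; show the only deviant vertex types are $\al_3\al_4\al_{10}$ (concave decagon, whose complement is forced to be a pentagonal cupola, giving $J_5$) and $\al_4\al_5\al_{10}$ (convex decagon with $\al_{10}=\al_3+\al_4$, removed by cupola subdivision and reduced to $eD$, with $J_{76},\dots,J_{83}$ obtained by diminishing and rotating non-overlapping cupolas), the two being mutually exclusive by Lemma~\ref{Lem-cong}. The one step you defer as ``delicate'' --- excluding every other completion of $\al_3\al_4\cdots$, $\al_3\al_5\cdots$, $\al_4^2\cdots$, $\al_4\al_5\cdots$ --- is exactly what the paper settles by pinning down the angles through a reduced Gr\"obner basis for the system \eqref{PolySys-3445} and checking that each remaining candidate forces a non-integral $n$ (e.g.\ $n=8.093\ldots$ or $n=13.551\ldots$) in \eqref{Eq-alm-aln-linear}, so your outline is sound.
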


\begin{proof} 
		If every vertex is of the type $\alpha_3\alpha_4^2\alpha_5$, then the tilings are weakly vertex-homogenous. By Theorem \ref{thm:weakly}, they are $eD$ and $J72-75$.
		
		Next we assume that there are vertices of other type and use trigonometry and algebra to prove that the only other possible vertex types in the tiling are
		$\al_3\al_4\al_{10}$ and $\al_4 \al_5 \al_{10}$.

		The angle sum of $\alpha_3\alpha_4^2\alpha_5$ implies
		$\alpha_4 < \pi$ and gives $2\alpha_4=2\pi - (\alpha_3 +
		\alpha_5)$. Taking cosine and sine respectively on both sides
		gives
		\begin{align*}
				&\sin \alpha_3 \sin \alpha_5 - \cos \alpha_3 \cos \alpha_5 +
				2\cos^2 \alpha_4 - 1 = 0, \\
				&\cos \alpha_3\sin \alpha_5 + \sin \alpha_3 \cos \alpha_5 +
				2\cos \alpha_4 \sin \alpha_4 = 0. 
		\end{align*}

		Substituting the pairs $(\alpha_3, \alpha_4), (\alpha_3,
		\alpha_5), (\alpha_4, \alpha_5)$ into \eqref{Eq-alm-aln-linear}
		respectively, with $\cos \frac{2}{5}\pi =
		\frac{1}{4}(\sqrt{5}-1)$, gives
		\begin{align*}
				& 2\cos \alpha_3 - \cos \alpha_4 - 1 = 0, \\
				& \cos^2 \alpha_5 - 3\cos \alpha_3 \cos \alpha_5 + \cos
				\alpha_5 +\cos^2 \alpha_3+\cos \alpha_3 -1 =0, \\
				& 4\cos^2 \alpha_5 - 6\cos \alpha_4 \cos \alpha_5  - 2\cos
				\alpha_5 + \cos^2 \alpha_4 +4\cos \alpha_4  - 1 = 0. 
		\end{align*}

		Combining the above with $\cos^2\alpha_i + \sin^2\alpha_i - 1 = 0$
		for $i = 3,4,5$ gives a system of polynomials, where the variables
		are $x_i = \cos \alpha_i$ and $y_i = \sin \alpha_i$: 
		\begin{align}\label{PolySys-3445}
&2x_4^2+y_3y_5-x_3x_5-1, \quad 2x_4y_4+x_3y_5+x_5y_3, \quad
2x_3-x_4-1, \\ \notag
&x_5^2+x_3^2-3x_3x_5+x_5+x_3-1, \quad
4x_5^2+x_4^2-6x_4x_5-2x_5+4x_4-1, \\ \notag
&x_3^2+y_3^2-1, \quad x_4^2+y_4^2-1, \quad x_5^2+y_5^2-1.  
		\end{align}
		A reduced Gr\"obner basis for the above system is given below: 
		\begin{align}\label{Grobner-3445}
&1600y_4^{11}-2960y_4^9+1484y_4^7-123y_4^5, \\ \notag
&2400y_4^{10}-3840y_4^8+1326y_4^6-160y_4^5y_5+153y_4^4+120y_4^3y_5, \\
\notag
&-24000y_4^{10}+42000y_4^8-18020y_4^6-y_4^4-4y_4^2-8x_4+8, \\ \notag
&-24000y_4^{10}+42000y_4^8-18020y_4^6-y_4^4-4y_4^2-16x_3+16, \\ \notag
&-4800y_4^{10}+8880y_4^8-4452y_4^6-31y_4^4+140y_4^2+160y_4y_5-80x_5+80,
\\ \notag
&-800y_4^9+1440y_4^7-642y_4^5-16y_4^4y_5+5y_4^3+4y_4^2y_5+2y_5+4y_4+2y_3,
\\ \notag
&-4800y_4^{10}+8880y_4^8-3652y_4^6-631y_4^4-480y_4^3y_5+80y_5^2+280y_4^2+320y_4y_5. 
		\end{align}
		The first polynomial in the reduced Gr\"obner basis is univariate.
		Since $y_4 = \sin \alpha_4 > 0$ for $\alpha_4 \in
		(\frac{1}{2}\pi,\pi)$, the suitable  real roots of the univariate
		polynomial are those between $0$ and $1$; these are listed below:
		\begin{align}\label{Roots-y4-3445}
				y_4 = \textstyle\sqrt{ \frac{11-4\sqrt{5}}{2\sqrt{5}} }, \quad
				\sqrt{ \frac{ 4\sqrt{5}+11}{2\sqrt{5} } }, \quad
				\frac{\sqrt{3}}{2}.
		\end{align}
		Each value of $y_4$ can be used to find a zero of the second
		polynomial in the basis, giving a value for $y_5$. Repeating this
		process, we obtain values for $x_3 = \cos \alpha_3$, $x_4 =
		\cos \alpha_4$ and $x_5 = \cos \alpha_5$ which give solutions to
		our original system of trigonometric equations; these are as
		follows:
		\begin{align*}
&x_3 = \tfrac{1}{20} (5 - 2\sqrt{5}),& &x_4 = -\tfrac{1}{10} (5 +
				2\sqrt{5}),& &x_5 = \tfrac{1}{40} (5 + 9\sqrt{5});& \\
									 &x_3 = \tfrac{1}{20} (5 + 2\sqrt{5}),& &x_4=
				\tfrac{1}{10} (2\sqrt{5} - 5),& &x_5= \tfrac{1}{40} (5 -
				9\sqrt{5});&\\
									 &x_3=\tfrac{1}{4},& &x_4=-\tfrac{1}{2},&
									 &x_5=-\tfrac{3\sqrt{5}+1}{8};& \\
									 &x_3=\tfrac{1}{4},& &x_4=-\tfrac{1}{2},&
									 &x_5=\tfrac{3\sqrt{5}-1}{8}.&
		\end{align*}
		Only the second of the above satisfies both $\al_3 < \al_4 <
		\al_5$ and $\al_3 + 2 \al_4 + \al_5 = 2\pi$. We conclude that
		\begin{align}\label{Eq-3445-angles}
&\alpha_3 = \cos^{-1}\tfrac{1}{20} (5 + 2\sqrt{5}) =
				(0.342951...)\pi,& \\ \notag
												 &\alpha_4 = \cos^{-1} \tfrac{1}{10}
				(2\sqrt{5} - 5) = (0.516810...)\pi,&  \\ \notag
																					 &\alpha_5 = \cos^{-1}
				\tfrac{1}{40} (5 - 9\sqrt{5}) = (0.623427...)\pi. &
		\end{align}

		A vertex adjacent to $\alpha_3\alpha_4^2\alpha_{5}$ is one of
		$\alpha_3\alpha_4\cdots, \alpha_3\alpha_5\cdots, \alpha_4^2\cdots,
		\alpha_4\alpha_5\cdots$. Combining the angle values with
		\eqref{List-deg3-al3},  \eqref{List-deg4-al3},
		\eqref{List-deg5-al3}, \eqref{List-al4}, and \eqref{List-al5}
		gives
		\begin{align*}
				\alpha_3\alpha_4\cdots &\in \{\alpha_3\alpha_4\alpha_{n},
				\alpha_3^2\alpha_4\alpha_n, \alpha_3\alpha_4^2\alpha_{5}\}; \\
						\alpha_3\alpha_5\cdots &\in \{\alpha_3\alpha_5\alpha_n,
						\alpha_3^2\alpha_5\alpha_n, \alpha_3\alpha_4^2\alpha_5\};
						\\
								\alpha_4^2\cdots &\in \{\alpha_4^2\alpha_n,
								\alpha_3\alpha_4^2\alpha_5\}; \\  
										\alpha_4\alpha_5\cdots &\in
										\{\alpha_4\alpha_5\alpha_n,
										\alpha_3\alpha_4^2\alpha_5\}.
								\end{align*}
								The angle sums of $\alpha_3\alpha_4^2\alpha_{5}$ and
								one of the other vertices give an expression for
								$\alpha_n$ which we can substitute into
								\eqref{Eq-alm-aln-linear} to determine $n$; we give
								the list of obtained values below:
								\begin{align*}
&\alpha_3\alpha_4\alpha_{n}:& 
&\alpha_n = \alpha_4 + \alpha_5,&
&n=10;& \\
&\alpha_3^2\alpha_4\alpha_{n}:&
&\alpha_n = \alpha_4 + \alpha_5 - \alpha_3,&
&n=8.093977...;& \\
&\alpha_3\alpha_5\alpha_n:&
&\alpha_n = 2\alpha_4,&
&n=13.551639...;& \\
&\alpha_3^2\alpha_5\alpha_n:&
&\alpha_n = 2\alpha_4 - \alpha_3,&
&\text{no solution for }n;& \\
&\alpha_4^2\alpha_n:&
&\alpha_n=\alpha_3+\alpha_5,&
&n=13.551639...;& \\
&\alpha_4\alpha_5\alpha_n:&
&\alpha_n=\alpha_3+\alpha_4,&
&n=10.&
								\end{align*}
								Hence 
								\begin{align*} 
		\alpha_3\alpha_4\cdots &\in \{\alpha_3\alpha_4\alpha_{10},
		\alpha_3\alpha_4^2\alpha_{5}\}; \\
		\alpha_3\alpha_5\cdots &= \alpha_3\alpha_4^2\alpha_5; \\
		\alpha_4^2\cdots &= \alpha_3\alpha_4^2\alpha_5; \\
		\alpha_4\alpha_5\cdots &\in \{\alpha_4\alpha_5\alpha_{10},
		\alpha_3\alpha_4^2\alpha_5\}. 
								\end{align*} 
								Therefore the set of possible vertex types is 
								\begin{align*} 
										\{ \alpha_3\alpha_4^2\alpha_{5}, \,
										\alpha_3\alpha_4\alpha_{10}, \,
								\alpha_4\alpha_5\alpha_{10} \}.
						\end{align*}

If there is a vertex of type $\al_3 \al_4 \al_{10}$ then $\al_{10} =
\al_4 + \al_5$; similarly, if there is a vertex of type $\al_3 \al_5
\al_{10}$ then $\al_{10} = \al_3 + \al_4$. By
Lemma~\ref{Lem-cong}, at most one of these vertex types can appear
in the tiling.

Suppose there is a vertex of type $\al_3 \al_4 \al_{10}$. Then,
denoting the vertices of the decagon by $1,\dots,10$, we follow an
analogous argument to the case in which all vertices are of type
$\al_3 \al_4^2 \al_5$, see Figure~\ref{fig:app-P37}. We deduce that the
complement of the decagon in the tiling is a pentagonal cupola; that
is, the tiling is $J_5$.

Suppose there is a vertex of type $\al_4 \al_5 \al_{10}$. Then, since
$\al_{10} = \al_3 + \al_4$, we can perform a cupola subdivision on the
decagon to obtain a new tiling (Lemma~\ref{lem:subdiv}). The vertices
of the decagon which had type $\al_4 \al_5 \al_{10}$ in the original
tiling have type $\al_3 \al_4^2 \al_5$ in the new tiling; note that
they may have angle arrangement $\al_3 \al_4 \al_4 \al_5$, in which
case we rotate the cupola by $\frac{1}{5}\pi$. Performing these
operations at each decagon, we obtain $eD$. Therefore, our original
tiling can be obtained from $eD$ by rotating and diminishing
non-overlapping pentagonal cupolas. Diminishing one cupola gives
$J_{76}$. Diminishing one cupola and rotating an opposite cupola gives
$J_{77}$. Diminishing one cupola and rotation a non-opposite cupola
gives $J_{78}$. Diminishing one cupola and rotating two cupolas gives
$J_{79}$. Diminishing two opposite cupolas gives $J_{80}$. Diminishing
two non-opposite cupolas gives $J_{81}$. Diminishing two non-opposite
cupolas and rotating a third gives $J_{82}$. Finally, diminishing
three cupolas gives $J_{83}$. See Figure~\ref{Fig-J76-J83} for
diagrams of each of these tilings.\qedhere

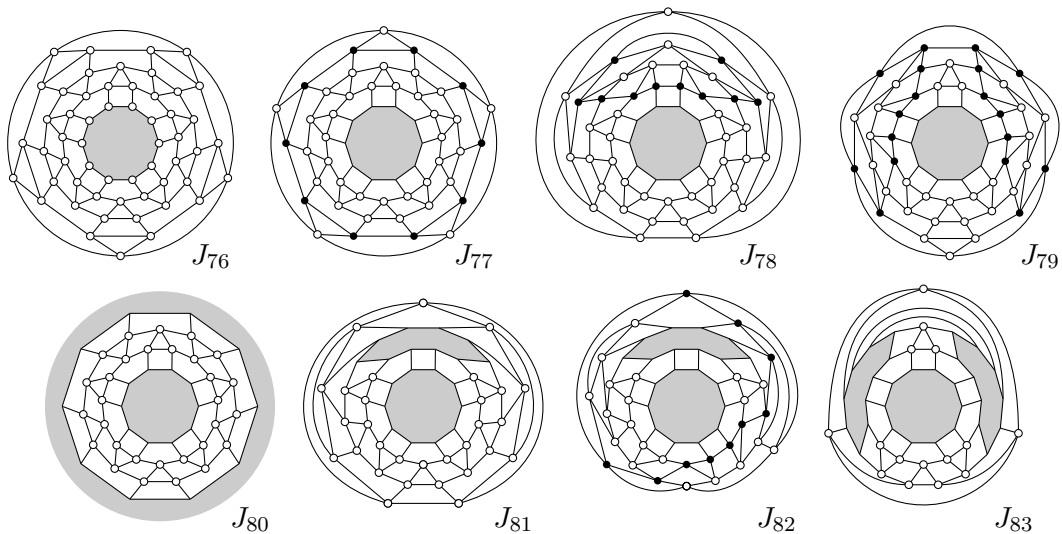
\begin{figure}[h!] 
\centering
\begin{tikzpicture}

\tikzmath{
\S = 3.5;
}

\begin{scope} 
				
\begin{scope}

				\tikzmath{
								\XS = \S;
								\r=0.25;
								\rr=0.2*\r;
								\n=5;
								\nn=\n-1;
								\N=2*\n;
								\Nn=\N-1;
								\M=\N+\n;
								\Mm=\M-1;
								\ph=360/\n;
								\x=\r*sin(0.5*\ph);
								\l=1.055*sqrt( \r^2 + (2*\x)^2 - 2*\r*(2*\x)*cos(90+0.5*\ph) );
								\Ph=360/\N;
								\PH=360/\M;
								\L=3.1*\r;
								\LL=4.1*\r;
								\R=5.2*\r;
								\RR=6*\r;
						}

								\fill[gray!40]
										(90-0.5*\Ph:\l) -- (90+0.5*\Ph:\l) -- (90+1.5*\Ph:\l) -- (90+2.5*\Ph:\l) 
										-- (90+3.5*\Ph:\l) -- (90+4.5*\Ph:\l) -- (90+5.5*\Ph:\l) -- (90+6.5*\Ph:\l) 
										-- (90+7.5*\Ph:\l) -- (90+8.5*\Ph:\l) -- (90+9.5*\Ph:\l)
										;

								\foreach \a in {0,...,\Nn} {
										\tikzset{rotate=\a*\Ph}
										\draw[]
												(90-0.5*\Ph:\l) -- (90+0.5*\Ph:\l)
												(90-0.5*\Ph:\R) -- (90+0.5*\Ph:\R)
												;
								}

								\foreach \a in {0,...,\Mm} {
										\tikzset{rotate=\a*\PH}
										(90-0.5*\PH:\L) -- (90+0.5*\PH:\L)
										;
						}

						\foreach \a in {0,...,\Mm} {
								\tikzset{rotate=\a*\PH}
								\draw[]
										(90-0.5*\PH:\L) -- (90+0.5*\PH:\L)
										(90:\LL) -- (90+\PH:\LL)
										;
						}

						\foreach \a in {0,...,\nn} {
								\tikzset{rotate=\a*\ph}
								\draw[]
										(90-0.5*\Ph:\l) -- (90-0.5*\PH:\L)
										(90+0.5*\Ph:\l) -- (90+0.5*\PH:\L)
										(90-0.5*\PH:\L) -- (90:\LL)
										(90+0.5*\PH:\L) -- (90:\LL)
										(90-1.5*\PH:\L) -- (90-\PH:\LL)
										(90+1.5*\PH:\L) -- (90+\PH:\LL)
										(90-\PH:\LL) -- (90-0.5*\Ph:\R)
										(90+\PH:\LL) -- (90+0.5*\Ph:\R)
										;
						}

						\foreach \a in {0,...,\Mm} {
								\tikzset{rotate=\a*\PH}
								\draw[fill=white]
										(90+0.5*\PH:\L) circle (\rr) 
										(90:\LL) circle (\rr) 
										;
						}
				
				\draw[] (0,0) circle (\RR);

						\foreach \a in {0,...,\nn} {
								\tikzset{rotate=\a*\ph}
								\draw[]
										%
										;
						}

						\foreach \a in {0,...,\nn} {
								\tikzset{rotate=\a*\ph+0.5*\ph}
								\draw[]
										(90:\RR) -- (90-0.5*\Ph:\R)
										(90:\RR) -- (90+0.5*\Ph:\R)
										;
						}

						\foreach \a in {0,...,\nn} {
								\tikzset{rotate=\a*\ph}
								\draw[fill=white]
										(270:\RR) circle (\rr)
										;
						}

						\foreach \a in {0,...,\Nn} {
								\tikzset{rotate=\a*\Ph}
								\draw[fill=white]
										(90-0.5*\Ph:\l) circle (\rr)
										;
						}

						\foreach \a in {0,...,\Nn} {
								\tikzset{rotate=\a*\Ph}
								\draw[fill=white]
										(90-0.5*\Ph:\R) circle (\rr)
										;
						}

						\node at (0.8*\RR,-\RR) {\small $J_{76}$};

\end{scope}

\begin{scope}[xshift=\S cm]

						\tikzmath{
								\XS = \S;
								\r=0.25;
								\rr=0.2*\r;
								\n=5;
								\nn=\n-1;
								\N=2*\n;
								\Nn=\N-1;
								\M=\N+\n;
								\Mm=\M-1;
								\ph=360/\n;
								\x=\r*sin(0.5*\ph);
								\l=1.055*sqrt( \r^2 + (2*\x)^2 - 2*\r*(2*\x)*cos(90+0.5*\ph) );
								\Ph=360/\N;
								\PH=360/\M;
								\L=3.1*\r;
								\LL=4.1*\r;
								\R=5.2*\r;
								\RR=6*\r;
						}


						(90-0.5*\Ph:\R) -- (90+0.5*\Ph:\R) -- (90+1.5*\Ph:\R) -- (90+2.5*\Ph:\R) 
						-- (90+3.5*\Ph:\R) -- (90+4.5*\Ph:\R) -- (90+5.5*\Ph:\R) -- (90+6.5*\Ph:\R) 
						-- (90+7.5*\Ph:\R) -- (90+8.5*\Ph:\R) -- (90+9.5*\Ph:\R)
						;

				\fill[gray!40]
						(90-0.5*\Ph:\l) -- (90+0.5*\Ph:\l) -- (90+1.5*\Ph:\l) -- (90+2.5*\Ph:\l) 
						-- (90+3.5*\Ph:\l) -- (90+4.5*\Ph:\l) -- (90+5.5*\Ph:\l) -- (90+6.5*\Ph:\l) 
						-- (90+7.5*\Ph:\l) -- (90+8.5*\Ph:\l) -- (90+9.5*\Ph:\l)
						;

				\foreach \a in {0,...,\Nn} {
						\tikzset{rotate=\a*\Ph}
						\draw[]
								(90-0.5*\Ph:\l) -- (90+0.5*\Ph:\l)
								(90-0.5*\Ph:\R) -- (90+0.5*\Ph:\R)
								;
				}

				\foreach \a in {0,...,\Mm} {
						\tikzset{rotate=\a*\PH}
						(90-0.5*\PH:\L) -- (90+0.5*\PH:\L)
						;
		}

		\foreach \a in {0,...,\Mm} {
				\tikzset{rotate=\a*\PH}
				\draw[]
						(90-0.5*\PH:\L) -- (90+0.5*\PH:\L)
						(90:\LL) -- (90+\PH:\LL)
						;
		}

		\foreach \a in {0,...,\nn} {
				\tikzset{rotate=\a*\ph}
				\draw[]
						(90-0.5*\Ph:\l) -- (90-0.5*\PH:\L)
						(90+0.5*\Ph:\l) -- (90+0.5*\PH:\L)
						(90-0.5*\PH:\L) -- (90:\LL)
						(90+0.5*\PH:\L) -- (90:\LL)
						(90-1.5*\PH:\L) -- (90-\PH:\LL)
						(90+1.5*\PH:\L) -- (90+\PH:\LL)
						(90-\PH:\LL) -- (90-0.5*\Ph:\R)
						(90+\PH:\LL) -- (90+0.5*\Ph:\R)
						;
		}

		\foreach \a in {0,...,\Mm} {
				\tikzset{rotate=\a*\PH}
				\draw[fill=white]
						(90+0.5*\PH:\L) circle (\rr) 
						(90:\LL) circle (\rr) 
						;
		}

				\draw[] (0,0) circle (\RR);

				\foreach \a in {0,...,\nn} {
						\tikzset{rotate=\a*\ph}
						\draw[]
								(90:\RR) -- (90-0.5*\Ph:\R)
								(90:\RR) -- (90+0.5*\Ph:\R)
								;
				}

				\foreach \a in {0,...,\nn} {
						\tikzset{rotate=\a*\ph+0.5*\ph}
						\draw[]
								%
								;
						\fill[]
								(90-0.5*\Ph:\R) circle (\rr)
								(90+0.5*\Ph:\R) circle (\rr)
								;
				}

				\foreach \a in {0,...,\nn} {
						\tikzset{rotate=\a*\ph}
						\draw[fill=white]
								(90:\RR) circle (\rr)
								;
				}

				\node at (0.8*\RR,-\RR) {\small $J_{77}$};

\end{scope}

\begin{scope}[xshift=2.08*\S cm]

				\tikzmath{
						\XS=\S;
				\r=0.25;
		\rr=0.05;
\n=5;
\nn=\n-1;
\N=2*\n;
\Nn=\N-1;
\M=\N+\n;
\Mm=\M-1;
\ph=360/\n;
\x=\r*sin(0.5*\ph);
\l=1.055*sqrt( \r^2 + (2*\x)^2 - 2*\r*(2*\x)*cos(90+0.5*\ph) );
\Ph=360/\N;
\PH=360/\M;
\L=3.1*\r;
\LL=4.1*\r;
\R=5.2*\r;
\RR=6*\r;
\H=16;
\Hh=\H-1;
\Th=360/\H;
\h=4.25*\r;
\O=11;
\Oo=\O-1;
\Ps=360/\O;
\o=5.25*\r;
}

\fill[gray!40]
		(90-0.5*\Ph:\l) -- (90+0.5*\Ph:\l) -- (90+1.5*\Ph:\l) -- (90+2.5*\Ph:\l) 
		-- (90+3.5*\Ph:\l) -- (90+4.5*\Ph:\l) -- (90+5.5*\Ph:\l) -- (90+6.5*\Ph:\l) 
		-- (90+7.5*\Ph:\l) -- (90+8.5*\Ph:\l) -- (90+9.5*\Ph:\l)
		;

\foreach \a in {0,...,\nn} {
		\tikzset{rotate=\a*\ph 
		}
		(90:\r) -- (90+\ph:\r)
		(90:\r) -- (90-0.5*\Ph:\l)
		(90:\r) -- (90+0.5*\Ph:\l)
		;
}

\foreach \a in {0,...,\Nn} {
		\tikzset{rotate=\a*\Ph}
		\draw[]
				(90-0.5*\Ph:\l) -- (90+0.5*\Ph:\l)
				%
				;
}

\foreach \a in {0,...,\Mm} {
		\tikzset{rotate=\a*\PH}
		(90-0.5*\PH:\L) -- (90+0.5*\PH:\L)
		;
}

\foreach \a in {0,...,\Mm} {
		\tikzset{rotate=\a*\PH}
		\draw[]
				(90-0.5*\PH:\L) -- (90+0.5*\PH:\L)
				%
				;
}

\foreach \a in {0,...,\Hh} {
		\tikzset{rotate=\a*\Th}
		\draw[]
				(90-0.5*\Th:\h) -- (90+0.5*\Th:\h) 
				;
}

\foreach \a in {0,...,\Oo} {
		\tikzset{rotate=\a*\Ps}
		\draw[]
				(90:\o) -- (90+\Ps:\o)
				;
}

\foreach \a in {0,...,\nn} {
		\tikzset{rotate=\a*\ph}
		\draw[]
				(90-0.5*\Ph:\l) -- (90-0.5*\PH:\L)
				(90+0.5*\Ph:\l) -- (90+0.5*\PH:\L)
				%
				%
				;
}

\draw[]
		(90-0.5*\PH:\L) -- (90-0.5*\Th:\h)
		(90+0.5*\PH:\L) -- (90+0.5*\Th:\h)
		(90-1.5*\PH:\L) -- (90-0.5*\Th:\h)
		(90+1.5*\PH:\L) -- (90+0.5*\Th:\h)
		(90-1.5*\PH:\L) -- (90-2.5*\Th:\h)
		(90+1.5*\PH:\L) -- (90+2.5*\Th:\h)
		(90-2.5*\PH:\L) -- (90-3.5*\Th:\h)
		(90+2.5*\PH:\L) -- (90+3.5*\Th:\h)
		(90-3.5*\PH:\L) -- (90-3.5*\Th:\h)
		(90+3.5*\PH:\L) -- (90+3.5*\Th:\h)
		(90-4.5*\PH:\L) -- (90-4.5*\Th:\h)
		(90+4.5*\PH:\L) -- (90+4.5*\Th:\h)
		(90-4.5*\PH:\L) -- (90-5.5*\Th:\h)
		(90+4.5*\PH:\L) -- (90+5.5*\Th:\h)
		(90-5.5*\PH:\L) -- (90-6.5*\Th:\h)
		(90+5.5*\PH:\L) -- (90+6.5*\Th:\h)
		(90-6.5*\PH:\L) -- (90-6.5*\Th:\h)
		(90+6.5*\PH:\L) -- (90+6.5*\Th:\h)	
		(90-7.5*\PH:\L) -- (90-7.5*\Th:\h)
		(90+7.5*\PH:\L) -- (90+7.5*\Th:\h)
		(90-1.5*\Th:\h) -- (90:\o) 
		(90+1.5*\Th:\h) -- (90:\o) 
		(90-1.5*\Th:\h) -- (90-2*\Ps:\o) 
		(90+1.5*\Th:\h) -- (90+2*\Ps:\o) 	
		(90-2.5*\Th:\h) -- (90-2*\Ps:\o) 
		(90+2.5*\Th:\h) -- (90+2*\Ps:\o) 
		(90-4.5*\Th:\h) -- (90-3*\Ps:\o) 
		(90+4.5*\Th:\h) -- (90+3*\Ps:\o) 
		(90-5.5*\Th:\h) -- (90-4*\Ps:\o) 
		(90+5.5*\Th:\h) -- (90+4*\Ps:\o) 
		(90-7.5*\Th:\h) -- (90-5*\Ps:\o) 
		(90+7.5*\Th:\h) -- (90+5*\Ps:\o)
		(90-\Ps:\o) -- (90-2*\Ps:6*\r)
		(90-3*\Ps:\o) -- (90-2*\Ps:6*\r)
		(90+\Ps:\o) -- (90+2*\Ps:6*\r)
		(90+3*\Ps:\o) -- (90+2*\Ps:6*\r)
		(90-\Ps:\o) to[out=120,in=60] (90+\Ps:\o)
		(90-4*\Ps:\o) to[out=30,in=-60] (90-2*\Ps:6*\r)
		(90+4*\Ps:\o) to[out=180-30,in=180+60] (90+2*\Ps:6*\r)
		(90:7*\r) to[out=-10,in=120] (90-2*\Ps:6*\r)
		(90:7*\r) to[out=180+10,in=60] (90+2*\Ps:6*\r)
		(90-5*\Ps:\o) to[out=0,in=0,distance=1.6*\R cm] (90:7*\r)
		(90+5*\Ps:\o) to[out=180,in=180,distance=1.6*\R cm] (90:7*\r)
		; 

\foreach \aa in {-1,1} {
		\tikzset{xscale=\aa}
		\foreach \a in {0,1} {
				\tikzset{rotate=\a*\PH}
				\fill[]
						(90+0.5*\PH:\L) circle (\rr)
						;
		}
		\foreach \a in {2} {
				\tikzset{rotate=\a*\Th}
				\fill[]
						(90+0.5*\Th:\h) circle (\rr)
						;
		}
		\foreach \a in {0,1} {
				\tikzset{rotate=\a*\Ps}
				\fill[]
						(90+\Ps:\o) circle (\rr)
						;
		}
}

\foreach \aa in {-1,1} {
		\tikzset{xscale=\aa}
		\foreach \a in {2,...,6} {
				\tikzset{rotate=\a*\PH}
				\draw[fill=white]
						(90+0.5*\PH:\L) circle (\rr) 
						;
		}
		\foreach \a in {0,1,3,4,5,6,7} {
				\tikzset{rotate=\a*\Th}
				\draw[fill=white]
						(90+0.5*\Th:\h) circle (\rr)
						;
		}
		\foreach \a in {3,4,5} {
				\tikzset{rotate=\a*\Ps}
				\draw[fill=white]
						(90:\o) circle (\rr) 
						;
		}
		\draw[fill=white]
				(90+2*\Ps:6*\r) circle (\rr)
				;
}
\draw[fill=white]
		(90:\o) circle (\rr) 
		(90:7*\r) circle (\rr) 
		(270:\L) circle (\rr) 
		;
\node at (0.8*\RR,-\RR) {\small $J_{78}$};

\end{scope}

\begin{scope}[xshift=3.15*\S cm]

		\tikzmath{
				\XS=\S;
		\r=0.25;
\rr=0.2*\r;
\n=5;
\nn=\n-1;
\N=2*\n;
\Nn=\N-1;
\M=\N+\n;
\Mm=\M-1;
\ph=360/\n;
\x=\r*sin(0.5*\ph);
\l=1.055*sqrt( \r^2 + (2*\x)^2 - 2*\r*(2*\x)*cos(90+0.5*\ph) );
\Ph=360/\N;
\PH=360/\M;
\L=3.1*\r;
\LL=4.1*\r;
\R=5.2*\r;
\RR=6*\r;
\H=17;
\Hh=\H-1;
\Th=360/\H;
\h=4.25*\r;
\O=12;
\Oo=\O-1;
\Ps=360/\O;
\o=5.25*\r;
}

\fill[gray!40]
		(90-0.5*\Ph:\l) -- (90+0.5*\Ph:\l) -- (90+1.5*\Ph:\l) -- (90+2.5*\Ph:\l) 
		-- (90+3.5*\Ph:\l) -- (90+4.5*\Ph:\l) -- (90+5.5*\Ph:\l) -- (90+6.5*\Ph:\l) 
		-- (90+7.5*\Ph:\l) -- (90+8.5*\Ph:\l) -- (90+9.5*\Ph:\l)
		;

\foreach \a in {0,...,\nn} {
		\tikzset{rotate=\a*\ph 
		}
		(90:\r) -- (90+\ph:\r)
		(90:\r) -- (90-0.5*\Ph:\l)
		(90:\r) -- (90+0.5*\Ph:\l)
		;
}

\foreach \a in {0,...,\Nn} {
		\tikzset{rotate=\a*\Ph}
		\draw[]
				(90-0.5*\Ph:\l) -- (90+0.5*\Ph:\l)
				%
				;
}

\foreach \a in {0,...,\Mm} {
		\tikzset{rotate=\a*\PH}
		(90-0.5*\PH:\L) -- (90+0.5*\PH:\L)
		;
}

\foreach \a in {0,...,\Mm} {
		\tikzset{rotate=\a*\PH}
		\draw[]
				(90-0.5*\PH:\L) -- (90+0.5*\PH:\L)
				%
				;
}

\foreach \a in {0,...,\Hh} {
		\tikzset{rotate=\a*\Th}
		\draw[]
				(90:\h) -- (90+\Th:\h) 
				;
}

\foreach \a in {0,...,\Oo} {
		\tikzset{rotate=\a*\Ps}
		\draw[]
				(90-0.5*\Ps:\o) -- (90+0.5*\Ps:\o)
				;
}

\foreach \a in {0,...,\nn} {
		\tikzset{rotate=\a*\ph}
		\draw[]
				(90-0.5*\Ph:\l) -- (90-0.5*\PH:\L)
				(90+0.5*\Ph:\l) -- (90+0.5*\PH:\L)
				;
}

\draw[]
		(90-0.5*\PH:\L) -- (90:\h)
		(90+0.5*\PH:\L) -- (90:\h)
		(90-1.5*\PH:\L) -- (90-\Th:\h)
		(90+1.5*\PH:\L) -- (90+\Th:\h)
		(90-1.5*\PH:\L) -- (90-3*\Th:\h)
		(90+1.5*\PH:\L) -- (90+3*\Th:\h)
		(90-2.5*\PH:\L) -- (90-3*\Th:\h)
		(90+2.5*\PH:\L) -- (90+3*\Th:\h)
		(90-3.5*\PH:\L) -- (90-4*\Th:\h)
		(90+3.5*\PH:\L) -- (90+4*\Th:\h)
		(90-4.5*\PH:\L) -- (90-4*\Th:\h)
		(90+4.5*\PH:\L) -- (90+4*\Th:\h)
		(90-4.5*\PH:\L) -- (90-6*\Th:\h)
		(90+4.5*\PH:\L) -- (90+6*\Th:\h)
		(90-5.5*\PH:\L) -- (90-7*\Th:\h)
		(90+5.5*\PH:\L) -- (90+7*\Th:\h)
		(90-6.5*\PH:\L) -- (90-7*\Th:\h)
		(90+6.5*\PH:\L) -- (90+7*\Th:\h)
		(90-7.5*\PH:\L) -- (90-8*\Th:\h)
		(90+7.5*\PH:\L) -- (90+8*\Th:\h)
		(90-\Th:\h) -- (90-0.5*\Ps:\o) 
		(90+\Th:\h) -- (90+0.5*\Ps:\o) 
		(90-2*\Th:\h) -- (90-0.5*\Ps:\o) 
		(90+2*\Th:\h) -- (90+0.5*\Ps:\o) 
		(90-2*\Th:\h) -- (90-2.5*\Ps:\o) 
		(90+2*\Th:\h) -- (90+2.5*\Ps:\o) 
		(90-5*\Th:\h) -- (90-2.5*\Ps:\o) 
		(90+5*\Th:\h) -- (90+2.5*\Ps:\o) 
		(90-5*\Th:\h) -- (90-4.5*\Ps:\o) 
		(90+5*\Th:\h) -- (90+4.5*\Ps:\o) 
		(90-6*\Th:\h) -- (90-4.5*\Ps:\o) 
		(90+6*\Th:\h) -- (90+4.5*\Ps:\o) 	
		(90-8*\Th:\h) -- (90-5.5*\Ps:\o) 
		(90+8*\Th:\h) -- (90+5.5*\Ps:\o) 
		(90-5.5*\Ps:\o) -- (270:6*\r)
		(90+5.5*\Ps:\o) -- (270:6*\r)
		(90-1.5*\Ps:\o) to[out=120,in=60, distance=0.75*\R cm] (90+1.5*\Ps:\o)
		(90-1.5*\Ps:\o) to[out=-30,in=60, distance=0.5*\R cm] (90-3.5*\Ps:\o)
		(90+1.5*\Ps:\o) to[out=180+30,in=180-60, distance=0.5*\R cm] (90+3.5*\Ps:\o)
		(90-3.5*\Ps:\o) to[out=270,in=0] (270:6*\r)
		(90+3.5*\Ps:\o) to[out=270,in=180] (270:6*\r)
		;

\foreach \aa in {-1,1} {
		\tikzset{xscale=\aa}
		\foreach \a in {2,3,4,5} {
				\tikzset{rotate=\a*\PH}
				\fill[]
						(90-0.5*\PH:\L) circle (\rr)
						;
		}
		\foreach \a in {1,6} {
				\tikzset{rotate=\a*\Th}
				\fill[]
						(90:\h) circle (\rr)
						;
		}
		\foreach \a in {0,1,3,4} {
				\tikzset{rotate=\a*\Ps}
				\fill[]
						(90+0.5*\Ps:\o) circle (\rr)
						;
		}
		\foreach \a in {2,...,8} {
				\tikzset{rotate=\a*\Th}
				\draw[fill=white]
						(90:\h) circle (\rr)
						;
		}
		\foreach \a in {2,5} {
				\tikzset{rotate=\a*\Ps}
				\draw[fill=white]
						(90+0.5*\Ps:\o) circle (\rr)
						;
		}
		\draw[fill=white]
				(90+0.5*\PH:\L) circle (\rr)
				(90+5.5*\PH:\L) circle (\rr)
				(90+6.5*\PH:\L) circle (\rr)
				;
}
\draw[fill=white]
		(90+7.5*\PH:\L) circle (\rr)
		(90:\h) circle (\rr)
		(270:6*\r) circle (\rr)
		;

\node at (0.8*\RR,-\RR) {\small $J_{79}$};
\end{scope} 

\end{scope} 


\begin{scope}[xshift=0.15*\S cm, yshift=-\S cm] 

\begin{scope}[]

\tikzmath{
								\XS = \S;
								\r=0.25;
								\rr=0.2*\r;
								\n=5;
								\nn=\n-1;
								\N=2*\n;
								\Nn=\N-1;
								\M=\N+\n;
								\Mm=\M-1;
								\ph=360/\n;
								\x=\r*sin(0.5*\ph);
								\l=1.055*sqrt( \r^2 + (2*\x)^2 - 2*\r*(2*\x)*cos(90+0.5*\ph) );
								\Ph=360/\N;
								\PH=360/\M;
								\L=3.1*\r;
								\LL=4.1*\r;
								\R=5.2*\r;
								\RR=6*\r;
						}

						{

								\fill[gray!40] (0,0) circle (1.02*\RR);

								\fill[white]
										(90-0.5*\Ph:\R) -- (90+0.5*\Ph:\R) -- (90+1.5*\Ph:\R) -- (90+2.5*\Ph:\R) 
										-- (90+3.5*\Ph:\R) -- (90+4.5*\Ph:\R) -- (90+5.5*\Ph:\R) -- (90+6.5*\Ph:\R) 
										-- (90+7.5*\Ph:\R) -- (90+8.5*\Ph:\R) -- (90+9.5*\Ph:\R)
										;
						}

								\fill[gray!40]
										(90-0.5*\Ph:\l) -- (90+0.5*\Ph:\l) -- (90+1.5*\Ph:\l) -- (90+2.5*\Ph:\l) 
										-- (90+3.5*\Ph:\l) -- (90+4.5*\Ph:\l) -- (90+5.5*\Ph:\l) -- (90+6.5*\Ph:\l) 
										-- (90+7.5*\Ph:\l) -- (90+8.5*\Ph:\l) -- (90+9.5*\Ph:\l)
										;

								\foreach \a in {0,...,\Nn} {
										\tikzset{rotate=\a*\Ph}
										\draw[]
												(90-0.5*\Ph:\l) -- (90+0.5*\Ph:\l)
												(90-0.5*\Ph:\R) -- (90+0.5*\Ph:\R)
												;
								}

								\foreach \a in {0,...,\Mm} {
										\tikzset{rotate=\a*\PH}
										(90-0.5*\PH:\L) -- (90+0.5*\PH:\L)
										;
						}

						\foreach \a in {0,...,\Mm} {
								\tikzset{rotate=\a*\PH}
								\draw[]
										(90-0.5*\PH:\L) -- (90+0.5*\PH:\L)
										(90:\LL) -- (90+\PH:\LL)
										;
						}

						\foreach \a in {0,...,\nn} {
								\tikzset{rotate=\a*\ph}
								\draw[]
										(90-0.5*\Ph:\l) -- (90-0.5*\PH:\L)
										(90+0.5*\Ph:\l) -- (90+0.5*\PH:\L)
										(90-0.5*\PH:\L) -- (90:\LL)
										(90+0.5*\PH:\L) -- (90:\LL)
										(90-1.5*\PH:\L) -- (90-\PH:\LL)
										(90+1.5*\PH:\L) -- (90+\PH:\LL)
										(90-\PH:\LL) -- (90-0.5*\Ph:\R)
										(90+\PH:\LL) -- (90+0.5*\Ph:\R)
										;
						}

						\foreach \a in {0,...,\Mm} {
								\tikzset{rotate=\a*\PH}
								\draw[fill=white]
										(90+0.5*\PH:\L) circle (\rr) 
										(90:\LL) circle (\rr) 
										;
						}

\node at (0.8*\RR,-\RR) {\small $J_{80}$};

\end{scope}

\begin{scope}[xshift=1*\S cm] 

						\tikzmath{
								\r=0.25;
						\rr=0.2*\r;
				\n=5;
		\nn=\n-1;
\N=2*\n;
\Nn=\N-1;
\M=\N+\n;
\Mm=\M-1;
\ph=360/\n;
\x=\r*sin(0.5*\ph);
\l=1.055*sqrt( \r^2 + (2*\x)^2 - 2*\r*(2*\x)*cos(90+0.5*\ph) );
\Ph=360/\N;
\PH=360/\M;
\L=3.1*\r;
\LL=4.1*\r;
\H=16;
\Hh=\H-1;
\Th=360/\H;
\h=4.25*\r;
\O=9;
\Oo=\O-1;
\Ps=360/\O;
\o=5.5*\r;
\R=5.2*\r;
\RR=6*\r;
}

\fill[gray!40]
		(90-0.5*\Ph:\l) -- (90+0.5*\Ph:\l) -- (90+1.5*\Ph:\l) -- (90+2.5*\Ph:\l) 
		-- (90+3.5*\Ph:\l) -- (90+4.5*\Ph:\l) -- (90+5.5*\Ph:\l) -- (90+6.5*\Ph:\l) 
		-- (90+7.5*\Ph:\l) -- (90+8.5*\Ph:\l) -- (90+9.5*\Ph:\l)
		;

\fill[gray!40]
		(90+1.5*\PH:\L) -- (90+0.5*\PH:\L) -- (90-0.5*\PH:\L) -- (90-1.5*\PH:\L) -- 
		(90-2.5*\Th:\h) -- (90-1.5*\Th:\h) -- (90-0.5*\Th:\h)  -- (90+0.5*\Th:\h) -- (90+1.5*\Th:\h) -- (90+2.5*\Th:\h) 
		-- cycle
		;

\foreach \a in {0,...,\Nn} {
		\tikzset{rotate=\a*\Ph}
		\draw[]
				(90-0.5*\Ph:\l) -- (90+0.5*\Ph:\l)
				%
				;
}

\foreach \a in {0,...,\Mm} {
		\tikzset{rotate=\a*\PH}
		\draw[]
				(90-0.5*\PH:\L) -- (90+0.5*\PH:\L)
				;
}
\foreach \a in {0,...,\nn} {
		\tikzset{rotate=\a*\ph+0.5*\ph}
		\draw[]
				(90-0.5*\Ph:\l) -- (90-\PH:\L)
				(90+0.5*\Ph:\l) -- (90+\PH:\L)
				;
}

\foreach \a in {0,...,\Hh} {
		\tikzset{rotate=\a*\Th}
		\draw[]
				(90-0.5*\Th:\h) -- (90+0.5*\Th:\h) 
				;
}

\foreach \a in {0,...,\Oo} {
		\tikzset{rotate=\a*\Ps}
		\draw[]
				(90:\o) -- (90+\Ps:\o)
				;
}

\foreach \aa in {-1,1} {
		\tikzset{xscale=\aa}
		\draw[]
				(90-1.5*\PH:\L) -- (90-2.5*\Th:\h)
				(90-2.5*\PH:\L) -- (90-3.5*\Th:\h)
				(90-3.5*\PH:\L) -- (90-3.5*\Th:\h)
				(90-4.5*\PH:\L) -- (90-4.5*\Th:\h)
				(90-4.5*\PH:\L) -- (90-5.5*\Th:\h)
				(90-5.5*\PH:\L) -- (90-6.5*\Th:\h)
				(90-6.5*\PH:\L) -- (90-6.5*\Th:\h)
				(90-7.5*\PH:\L) -- (90-7.5*\Th:\h)
				(90-0.5*\Th:\h) -- (90-\Ps:\o)
				(90-\Ps:\o) to[out=-30,in=45] (90-3*\Ps:\o)
				(90-1.5*\Th:\h) to[out=-30,in=135] (90-2*\Ps:\o)
				(90-4.5*\Th:\h) -- (90-2*\Ps:\o)
				(90-5.5*\Th:\h) -- (90-3*\Ps:\o) 
				(90-7.5*\Th:\h) -- (90-4*\Ps:\o) 
				(90-4*\Ps:\o) to[out=15,in=0,distance=7.25*\r cm] (90:\o)
				;
}


\foreach \aa in {-1,1} {
		\tikzset{xscale=\aa}
		;
\foreach \a in {0,1,2,3,4} {
		{
				\tikzset{rotate=\a*\PH}
				\draw[fill=white]
						(90+2.5*\PH:\L) circle (\rr)
						;
		}
		{
				\tikzset{rotate=\a*\Th}
				\draw[fill=white]
						(90+3.5*\Th:\h) circle (\rr)
						;
		}
		{
				\tikzset{rotate=\a*\Ps}
				\draw[fill=white]
						(90+\Ps:\o) circle (\rr)
						;
		}
}
\draw[fill=white]
		(90+7.5*\PH:\L) circle (\rr)
		(90:\o) circle (\rr)
		;
}

\node at (0.8*\RR,-\RR) {\small $J_{81}$};
\end{scope} 

\begin{scope}[xshift=2*\S cm] 

		\tikzmath{
				\r=0.25;
		\rr=0.2*\r;
\n=5;
\nn=\n-1;
\N=2*\n;
\Nn=\N-1;
\M=\N+\n;
\Mm=\M-1;
\ph=360/\n;
\x=\r*sin(0.5*\ph);
\l=1.055*sqrt( \r^2 + (2*\x)^2 - 2*\r*(2*\x)*cos(90+0.5*\ph) );
\Ph=360/\N;
\PH=360/\M;
\L=3.1*\r;
\LL=4.1*\r;
\H=17;
\Hh=\H-1;
\Th=360/\H;
\h=4.25*\r;
\R=5.2*\r;
\RR=6*\r;
}

\fill[gray!40]
		(90-0.5*\Ph:\l) -- (90+0.5*\Ph:\l) -- (90+1.5*\Ph:\l) -- (90+2.5*\Ph:\l) 
		-- (90+3.5*\Ph:\l) -- (90+4.5*\Ph:\l) -- (90+5.5*\Ph:\l) -- (90+6.5*\Ph:\l) 
		-- (90+7.5*\Ph:\l) -- (90+8.5*\Ph:\l) -- (90+9.5*\Ph:\l)
		;

\fill[gray!40]
		(90+1.5*\PH:\L) -- (90+0.5*\PH:\L) -- (90-0.5*\PH:\L) -- (90-1.5*\PH:\L) -- 
		(90-2.5*\Th:\h) -- (90-1.5*\Th:\h) -- (90-0.5*\Th:\h)  -- (90+0.5*\Th:\h) -- (90+1.5*\Th:\h) -- (90+2.5*\Th:\h) 
		-- cycle
		;

\foreach \a in {0,...,\Nn} {
		\tikzset{rotate=\a*\Ph}
		\draw[]
				(90-0.5*\Ph:\l) -- (90+0.5*\Ph:\l)
				%
				;
}

\foreach \a in {0,...,\Mm} {
		\tikzset{rotate=\a*\PH}
		\draw[]
				(90-0.5*\PH:\L) -- (90+0.5*\PH:\L)
				;
}
\foreach \a in {0,...,\nn} {
		\tikzset{rotate=\a*\ph+0.5*\ph}
		\draw[]
				(90-0.5*\Ph:\l) -- (90-\PH:\L)
				(90+0.5*\Ph:\l) -- (90+\PH:\L)
				;
}

\foreach \a in {0,...,\Hh} {
		\tikzset{rotate=\a*\Th}
		\draw[]
				(90-0.5*\Th:\h) -- (90+0.5*\Th:\h) 
				;
}

\foreach \aa in {-1,1} {
		\tikzset{xscale=\aa}
		\draw[]
				(90-1.5*\PH:\L) -- (90-2.5*\Th:\h)
				(90-2.5*\PH:\L) -- (90-3.5*\Th:\h)
				(90-3.5*\PH:\L) -- (90-3.5*\Th:\h)
				(90-4.5*\PH:\L) -- (90-4.5*\Th:\h)
				(90-7.5*\PH:\L) -- (90-7.5*\Th:\h)
				(90-0.5*\Th:\h) -- (90-1.5*\Th:\R)
				(90-1.5*\Th:\R) -- (90-2.5*\PH:\R) 
				(90-1.5*\Th:\h) -- (90-2.5*\PH:\R) 
				(90-4.5*\Th:\h) -- (90-2.5*\PH:\R) 
				(90-1.5*\Th:\R) -- (90:\RR)
				;
}

\draw[]
		(90-4.5*\PH:\L) -- (90-6.5*\Th:\h)
		(90-5.5*\PH:\L) -- (90-6.5*\Th:\h)
		(90-6.5*\PH:\L) -- (90-7.5*\Th:\h)
		(90+4.5*\PH:\L) -- (90+5.5*\Th:\h)
		(90+5.5*\PH:\L) -- (90+6.5*\Th:\h)
		(90+6.5*\PH:\L) -- (90+6.5*\Th:\h)
		(90-2.5*\PH:\R)  to[out=-70, in=45] (90-5.5*\Th:\h) 
		(90+2.5*\PH:\R) -- (90+4*\Th:\R)
		(90+5.5*\Th:\h) -- (90+4*\Th:\R)
		(90-5.5*\Th:\h) -- (90-5.5*\Th:\R)
		(270:\h) to[out=-25,in=230] (90-5.5*\Th:\R)
		(90-1.5*\Th:\R) to[out=-20,in=60,distance=3*\r cm] (90-5.5*\Th:\R)
		(90-5.5*\Th:\R) to[out=45,in=0,distance=4.5*\r cm] (90:\RR)
		(270:\h) to[out=200,in=310] (-\h,-\L)
		(90+7.5*\Th:\h) -- (-\h,-\L)
		(-\h,-\L) -- (90+4*\Th:\R)
		(90+1.5*\Th:\R) to[out=180+20,in=100,distance=2*\r cm] (90+4*\Th:\R) 
		(-\h,-\L) to[out=135,in=180,distance=5*\r cm] (90:\RR)
		;

\foreach \aa in {-1,1} {
		\tikzset{xscale=\aa}
		\draw[fill=white]
				(90+2.5*\PH:\L) circle (\rr)
				(90+3.5*\PH:\L) circle (\rr)
				(90+3.5*\Th:\h) circle (\rr)
				(90+5.5*\Th:\h) circle (\rr)
				(90+6.5*\Th:\h) circle (\rr)
				;
}

\draw[fill=white]
		(90+4.5*\PH:\L) circle (\rr)
		(90+5.5*\PH:\L) circle (\rr)
		(90+6.5*\PH:\L) circle (\rr)
		(90-5.5*\Th:\R) circle (\rr)
		(90+1.5*\Th:\R) circle (\rr)
		(90+2.5*\PH:\R) circle (\rr)
		(90+3.5*\PH:\R) circle (\rr)
		(90-7.5*\Th:\h) circle (\rr)
		(90+4.5*\Th:\h) circle (\rr)
		(90+8.5*\Th:\h) circle (\rr)
		(270:\h) circle (\rr)
		;

\fill[]
		(90-4.5*\PH:\L) circle (\rr)
		(90-5.5*\PH:\L) circle (\rr)
		(90-6.5*\PH:\L) circle (\rr)
		(90-7.5*\PH:\L) circle (\rr)
		(90-1.5*\Th:\R) circle (\rr)
		(90-2.5*\PH:\R) circle (\rr)
		(90-4.5*\Th:\h) circle (\rr)
		(90+7.5*\Th:\h) circle (\rr)
		(-\h,-\L) circle (\rr)
		(90:\RR) circle (\rr)
		;

\node at (0.8*\RR,-\RR) {\small $J_{82}$};

\end{scope} 

\begin{scope}[xshift=2.9*\S cm] 

		\tikzmath{
				\r=0.25;
		\rr=0.2*\r;
\n=5;
\nn=\n-1;
\N=2*\n;
\Nn=\N-1;
\M=\N+\n;
\Mm=\M-1;
\ph=360/\n;
\x=\r*sin(0.5*\ph);
\l=1.055*sqrt( \r^2 + (2*\x)^2 - 2*\r*(2*\x)*cos(90+0.5*\ph) );
\Ph=360/\N;
\PH=360/\M;
\L=3.1*\r;
\LL=4.1*\r;
\H=17;
\Hh=\H-1;
\Th=360/\H;
\h=4.25*\r;
\O=11;
\Oo=\O-1;
\Ps=360/\O;
\o=5.25*\r;
\R=5.2*\r;
\RR=6*\r;
}

\fill[gray!40]
		(90-0.5*\Ph:\l) -- (90+0.5*\Ph:\l) -- (90+1.5*\Ph:\l) -- (90+2.5*\Ph:\l) 
		-- (90+3.5*\Ph:\l) -- (90+4.5*\Ph:\l) -- (90+5.5*\Ph:\l) -- (90+6.5*\Ph:\l) 
		-- (90+7.5*\Ph:\l) -- (90+8.5*\Ph:\l) -- (90+9.5*\Ph:\l)
		;

\foreach \aa in {-1,1} {
		\tikzset{xscale=\aa}
		\fill[gray!40]
				(90+4.5*\PH:\L) -- (90+3.5*\PH:\L) -- (90+2.5*\PH:\L) -- (90+1.5*\PH:\L) -- 
				(90+\Th:\h) -- (90+2*\Th:\h) -- (90+3*\Th:\h) -- (90+4*\Th:\h) -- (90+5*\Th:\h) -- (90+6*\Th:\h)
				-- cycle
				;
}

\foreach \a in {0,...,\Nn} {
		\tikzset{rotate=\a*\Ph}
		\draw[]
				(90-0.5*\Ph:\l) -- (90+0.5*\Ph:\l)
				%
				;
}
\foreach \a in {0,...,\Mm} {
		\tikzset{rotate=\a*\PH}
		\draw[]
				(90-0.5*\PH:\L) -- (90+0.5*\PH:\L)
				;
}
\foreach \a in {0,...,\nn} {
		\tikzset{rotate=\a*\ph+0.5*\ph}
		\draw[]
				(90-0.5*\Ph:\l) -- (90-\PH:\L)
				(90+0.5*\Ph:\l) -- (90+\PH:\L)
				;
}

\foreach \a in {0,...,\Hh} {
		\tikzset{rotate=\a*\Th}
		\draw[]
				(90:\h) -- (90+\Th:\h) 
				;
}

\foreach \aa in {-1,1} {
		\tikzset{xscale=\aa}
		\draw[]
				(90-0.5*\PH:\L) -- (90:\h)
				(90-1.5*\PH:\L) -- (90-\Th:\h)
				(90-4.5*\PH:\L) -- (90-6*\Th:\h)
				(90-5.5*\PH:\L) -- (90-7*\Th:\h)
				(90-6.5*\PH:\L) -- (90-7*\Th:\h)
				(90-7.5*\PH:\L) -- (90-8*\Th:\h)
				(90-4*\Th:\h) to[out=90,in=-15, distance=3.25*\r cm] (90:6.25*\r)
				(90-5*\Th:\h) -- (90-5*\Th:\h+\r)
				(90-5*\Th:\h+\r) to[out=90,in=0, distance=4*\r cm] (90:6.25*\r)
				(90-8*\Th:\h) to[out=0,in=240] (90-5*\Th:\h+\r)
				;
}

\draw[]
		(90+2*\Th:\h) to[out=60,in=120, distance=2.5*\r cm] (90-2*\Th:\h)
		(90+3*\Th:\h) to[out=80,in=100, distance=4.5*\r cm] (90-3*\Th:\h)
		(90+5*\Th:\h+\r) arc (90+5*\Th:90+5*\Th+7*\Th:\h+\r)
		;

\foreach \aa in {-1,1} {
		\tikzset{xscale=\aa}
		;

\draw[fill=white]
		(90+0.5*\PH:\L) circle (\rr)
		(90+5.5*\PH:\L) circle (\rr)
		(90+6.5*\PH:\L) circle (\rr)
		(90+7*\Th:\h) circle (\rr) 
		(90+8*\Th:\h) circle (\rr) 
		(90+5*\Th:\h+\r) circle (\rr) 
		;

}

\draw[fill=white]
		(90+7.5*\PH:\L) circle (\rr)
		(90:\h) circle (\rr) 
		(90:6.25*\r) circle (\rr) 
		;

\node at (0.8*\RR,-\RR) {\small $J_{83}$};

\end{scope} 

\end{scope} 

\end{tikzpicture}
\caption{Tilings $J_{76},\dots,J_{83}$ with
				$\circ=\alpha_3\alpha_4\alpha_5\alpha_4$, $\bullet =
				\alpha_3\alpha_4\alpha_4\alpha_5$}
\label{Fig-J76-J83}
\end{figure}
\end{proof}

In Table~\ref{Table-eD-DR}, we summarise how to modify $eD$ to obtain
$J_{72},\dots,J_{83}$. The entries show how many cupolas must be
rotated or diminished.
When exactly two operations are
necessary, a superscript $o$ denotes that they should be performed on
opposite cupolas and a superscript $n$ denotes that they should be
performed on non-opposite cupolas.

\begin{table}[h!]
\begin{center}
\bgroup
\def\arraystretch{1.25}
    \begin{tabular}[]{| c | c | c | c | c | c | c |} 
    \hline
    Tiling & $J_{72}$ & $J_{73}$ & $J_{74}$ & $J_{75}$ & $J_{76}$ & $J_{77}$ \\
    \hline
    \hline
    dim & $0$ & $1^o$ & $1^n$ & $0$ & $1$ & $1^o$ \\
    \hline
    rot & $1$ & $1^o$ & $1^n$ & $3$ & $0$ & $1^o$ \\
    \hline
    \end{tabular}
\egroup
\end{center}
\begin{center}
\bgroup
\def\arraystretch{1.25}
    \begin{tabular}[]{| c | c | c | c | c | c | c |} 
    \hline
    Tiling & $J_{78}$ & $J_{79}$ & $J_{80}$ & $J_{81}$ & $J_{82}$ & $J_{83}$ \\
    \hline
    \hline
    dim & $1^n$ & $1$ & $2^{o}$ & $2^{n}$ & $2$ & $3$ \\
    \hline
    rot & $1^n$ & $2$ & $0$ & $0$ & $1$ & $0$ \\
    \hline
    \end{tabular}
\egroup
\end{center}
\caption{Modifying $eD$ into $J_{72},...,J_{83}$ by diminishing and
rotating cupolas}
\label{Table-eD-DR}
\end{table}

\begin{prop}
		\label{Prop-4al3al4-4al3al5} 
		If a tiling has a vertex of type $\alpha_3^4 \alpha_4$, then the
		tiling is the snub cube $sC$. If a tiling has a vertex of type
		$\alpha_3^4 \alpha_5$, then the tiling is the snub dodecahedron
		$sD$.
\end{prop}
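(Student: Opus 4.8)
The plan is to reduce both assertions to Theorem~\ref{thm:weakly}: I will show that a tiling containing a single vertex of type $\alpha_3^4\alpha_4$ (respectively $\alpha_3^4\alpha_5$) is forced to be weakly vertex-homogenous, with every vertex of that type. Because a degree-$5$ vertex whose incident multiset is $\alpha_3^4\alpha_4$ (respectively $\alpha_3^4\alpha_5$) has a unique angle arrangement up to rotation and reflection, weak homogeneity coincides with homogeneity here, and Theorem~\ref{thm:weakly} then names the tiling as the snub cube $sC$ (respectively the snub dodecahedron $sD$).

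First I would fix the edge length. The angle sum of $\alpha_3^4\alpha_4$ gives $\alpha_4 = 2\pi - 4\alpha_3$; feeding this together with $\cos\tfrac{2}{3}\pi = -\tfrac12$ and $\cos\tfrac{2}{4}\pi = 0$ into \eqref{Eq-alm-aln-linear} collapses it to $\cos\alpha_4 = 2\cos\alpha_3 - 1$, and substituting $\cos\alpha_4 = \cos 4\alpha_3$ produces a quartic in $c_3 := \cos\alpha_3$ with a unique root in the window $\tfrac13\pi < \alpha_3 < \tfrac38\pi$ imposed by \eqref{Ineq-alm-lb}. The case $\alpha_3^4\alpha_5$ is identical with $\cos\tfrac{2}{5}\pi = \tfrac{\sqrt5-1}{4}$, giving $\cos\alpha_5 = \phi^2\cos\alpha_3 - \phi$ for $\phi = \tfrac{1+\sqrt5}{2}$ and again a single admissible $\alpha_3\in(\tfrac13\pi,\tfrac{7}{20}\pi)$. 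With $c_3$ fixed, \eqref{Eq-alm-aln-linear} with $n=3$ turns every conceivable tile angle into the discrete quantity $\cos\alpha_m = (2c_3 - 1) - 2(1-c_3)\cos\tfrac{2}{m}\pi$, so an angle of prescribed size is realised by a genuine polygon only when the corresponding $m$ is an integer.

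The core of the proof is local propagation. Around a vertex $v$ of type $\alpha_3^4\alpha_4$ the five edges split into three triangle--triangle edges and two square--triangle edges; a neighbour across the former has partial type $\alpha_3^2\cdots$ with remainder $2\pi - 2\alpha_3$, and a neighbour across the latter has partial type $\alpha_3\alpha_4\cdots$ with remainder $R(\alpha_3\alpha_4) = 3\alpha_3$ (the same two remainders arise in the $\alpha_3^4\alpha_5$ case, with the square replaced by a pentagon). In each case I would fill the remainder by angles, each at least $\alpha_3$ by Lemma~\ref{Lem-ang-asc-seq} and at most three in number by Lemma~\ref{lem:degreebound}. When three angles are used the ordering forces exactly $\alpha_3^3$ (remainder $3\alpha_3$) or $\alpha_3^2\alpha_4$ (remainder $2\pi-2\alpha_3$), reconstructing $\alpha_3^4\alpha_4$; the one- and two-angle fillings would instead require a tile angle equal to $2\alpha_3$, $3\alpha_3$, or $2\pi - 2\alpha_3 - \alpha_j$, and each such value I would discard by substitution into the formula for $\cos\alpha_m$ and noting that the resulting $m$ is not an integer. (The cleanest of these is the value $2\alpha_3$: it forces $\cos\tfrac{2}{m}\pi = c_3$, i.e. $m = 2\pi/\alpha_3\notin\mathbb{Z}$.) Thus every neighbour of an $\alpha_3^4\alpha_4$ vertex is again of type $\alpha_3^4\alpha_4$; since the one-skeleton is connected, the type propagates to the whole tiling, which is therefore weakly vertex-homogenous, and Theorem~\ref{thm:weakly} yields $sC$. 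The $\alpha_3^4\alpha_5$ argument is verbatim the same and yields $sD$.

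I expect the exclusion step to be the real obstacle, because several decisive inequalities are razor-thin. For the snub cube, discarding two-angle fillings hinges on $\tfrac32\alpha_3 < \alpha_4$, which holds only because $\alpha_3 < \tfrac{4}{11}\pi$ by a slender margin, and the rejected value $2\alpha_3$ corresponds to an $m \approx \tfrac{11}{2}$ that is only barely non-integral; analogous tight margins occur for $sD$. These comparisons cannot survive loose numerical bounds and must be certified using the exact algebraic value of $c_3$, for example through the resultant/Gr\"obner-basis bookkeeping already employed in the proof of Proposition~\ref{Prop-al32al4al5}. A smaller but necessary preliminary is to confirm that the quartic (respectively its golden-ratio analogue) has exactly one root in the admissible interval, so that the edge length and the entire angle spectrum $\{\alpha_m\}$ are genuinely determined before the combinatorial propagation starts.
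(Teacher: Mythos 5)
Your proposal is correct in outline but takes a genuinely different route from the paper. The paper's proof never solves for the angles at all: after dispatching the homogeneous case via Proposition~\ref{Prop-vertex-homog}, it enumerates the competing partial types $\alpha_3^3\cdots$ and $\alpha_3\alpha_m\cdots$ and eliminates each by citing the classification already established for that type --- a vertex $\alpha_3^3\alpha_n$ forces one of the tilings of Proposition~\ref{Prop-3al3alm}, a vertex $\alpha_3^2\alpha_m\alpha_n$ forces $\alpha_n=2\alpha_3$ and a pyramid subdivision reduces to Proposition~\ref{Prop-al3=pi/2-2pi/5}, the types $\alpha_3\alpha_4^3$ and $\alpha_3\alpha_4^2\alpha_5$ invoke Propositions~\ref{Prop-al33al4} and~\ref{Prop-al32al4al5}, and $\alpha_3\alpha_m\alpha_n$ is killed by a concavity argument --- and in each case none of the resulting tilings contains a vertex $\alpha_3^4\alpha_m$. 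Because this leaves the partial type $\alpha_3^2\cdots$ unresolved, the paper then needs a delicate combinatorial propagation (the vertex-labelled argument of Figure~\ref{fig:P38}) showing that the neighbours $2,3,4$ of an $\alpha_3^4\alpha_m$ vertex actually acquire a third triangle or a triangle plus an $m$-gon. You instead resolve $\alpha_3^2\cdots$ head-on by exact trigonometry, which makes your propagation step immediate; the price is that you must certify, with the exact algebraic value of $\cos\alpha_3$, a handful of spectrum-gap and non-integrality facts that the paper never touches. Your approach buys self-containedness (no dependence on the order in which the other propositions are proved), while the paper's buys freedom from any per-case computation by recycling classification work already done. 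Two small corrections to your sketch: the cosine relation \eqref{Eq-alm-aln-linear} cannot distinguish $\alpha$ from $2\pi-\alpha$, which conveniently means your exclusions of $2\alpha_3$ and $3\alpha_3$ automatically also dispose of the concave one-angle fillings $2\pi-2\alpha_3$ and $2\pi-3\alpha_3$ that your enumeration omits; and $m=2\pi/\alpha_3\approx \tfrac{11}{2}$ is not \emph{barely} non-integral --- the crude bounds $\tfrac{1}{3}\pi<\alpha_3<\tfrac{2}{5}\pi$ already place it strictly between $5$ and $6$, so the only genuinely thin comparison is $\tfrac{3}{2}\alpha_3<\alpha_4$ (and, in the $sD$ case, $2\alpha_4>\alpha_3+\alpha_5$, which your ``ordering forces'' parenthetical silently assumes and which must also be certified exactly).
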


\begin{proof} 
		If every vertex is of type $\al_3^4 \al_4$ then the tiling is the
		$sC$ by Proposition~\ref{Prop-vertex-homog}. By the same
		result, the tiling is a $sD$ if every vertex is of
		type $\al_3^4 \al_5$. We argue that, in either case, every vertex
		is of the same type; that is, we show that the tiling is
		vertex-homogenous. The argument is the same for both cases; let $m
		\in \{4,5\}$. We begin by showing that $\al_3\cdots = \al_3 \al_m
		\cdots = \al_3^4 \al_m$.

		Consider a vertex of type $\al_3^3\cdots$. It cannot have be of
		type $\al_3^3$ because $3\al_3 = 2\pi-\al_4$ by the vertex angle
		sum of $\al_3^3\al_4$. If it has degree 4 then it is
		$\al_3^3\al_n$ for some $n>m$. If it is degree 5 then it must be
		$\al_3^4 \al_m$. Now consider a vertex of type $\al_3 \al_m
		\cdots$. If it has degree 3 then it is of type $\al_3 \al_m \al_n$
		for some $n \ge m$. If it is of degree 4 then it is of type
		$\al_3^2 \al_m \al_n$ for some $n \ge m$ or of type $\al_3
		\al_4^3$ or of type $\al_3 \al_4^2 \al_5$. If it is of degree 5
		then it must be of type $\al_3^4 \al_m$. To summarise the above,
		we have:
		\begin{align*}
				\al_3^3\cdots &\in \{\al_3^3\al_{n>m}, \al_3^4\al_m\}, \\
				\al_3 \al_m\cdots &\in 
				\{\al_3\al_m\al_{n\ge m}, \al_3^2\al_m\al_{n \ge m}, \al_3
				\al_4^3, \al_3\al_4^2\al_5, \al_3^4 \al_m\}.
		\end{align*}
		If there is a vertex of type $\al_3^3\al_n$ then by
		Proposition~\ref{Prop-3al3alm} the tiling is $J_1$, $J_{11}$,
		$J_{62}$ or $J_{63}$; it is straightforward to check that there is
		no vertex of type $\al_3^4 \al_m$, for any $m$, in these tilings,
		which gives a contradiction. Therefore $\al_3^3\cdots =
		\al_3^4\al_m$.

		For each vertex type in the set of possibilities for
		$\al_3\al_m\cdots$, other than $\al_3^4\al_m$, we will observe
		that the tiling is one of $eD,J_2,J_4,J_5,
		J_{11}, J_{19}, J_{37},$ $J_{62},J_{63}, J_{72},\dots,J_{83}$. It is
		straightforward to check that there is no vertex of type $\al_3^4
		\al_m$, for any $m$, in these tilings, from which we derive a
		contradiction. We will conclude that $\al_3\al_m \cdots = \al_3^3
		\al_m$.

		If there is a vertex of type $\al_3^2 \al_m \al_n$ for $n \ge m$,
		then $\al_n = 2\pi - 2\al_3 - \al_m = 2\al_3$. Therefore, we can
		perform a pyramid subdivision to an $n$-gon to obtain a new tiling
		by Lemma~\ref{lem:subdiv}; the new vertex in this tiling is of
		type $\al_3^n$ and so $n<6$. If $n=4$ then $\al_3 =
		\frac{1}{2}\pi$ and if $n=5$ then $\al_3 = \frac{2}{5}\pi$. By
		Proposition~\ref{Prop-al3=pi/2-2pi/5}, the tiling is $J_2$,
		$J_{11}$, $J_{62}$ or $J_{63}$.
		
		If there is a vertex of type $\al_3 \al_4^3$, then by
		Proposition~\ref{Prop-al33al4} the tiling is $J_4$, $J_{19}$ or
		$J_{37}$.
		
		If there is a vertex of type $\al_3 \al_4^2 \al_5$, then by
		Proposition~\ref{Prop-al32al4al5} the tiling is $eD$, $J_5$ or one
		of $J_{72},\dots,J_{83}$. 

		If there is a vertex of type $\al_3 \al_m \al_n$ for $n \ge m$,
		then $\al_n = 3\al_3 > \pi$. Therefore,
		by Lemma~\ref{Lem-cong}, there is exactly one $n$-gon
		and all its vertices are of type $\al_3\al_m\al_n$.
		Now consider a triangle sharing an edge with the $n$-gon; its two
		other edges are shared with $m$-gons. Therefore the vertex of the
		triangle not shared with the $m$-gon is of type
		$\al_3\al_m^2\cdots$. But then it must be of type $\al_3\al_4^3$
		or $\al_3\al_4^2\al_5$ and we derive a contradiction as above.

		To complete the proof, we show that every vertex is of type
		$\al_3^4\al_m$. If there is a vertex of another type, we can
		assume that it has a neighbour of type $\al_3^4\al_4$. Denote a
		vertex of type $\al_3^4 \al_m$ by $0$ and its five neighbours by
		$1,\dots,5$ so that the vertices 1 and 5 are incident with the
		$m$-gon; see Figure~\ref{fig:P38}. Observe that 1 and 5 are of
		type $\al_3 \al_m \cdots = \al_3^4 \al_m$ and 2, 3 and 4 are of
		type $\al_3^2 \cdots$. We demonstrate that 2, 3 and 4 are of type
		$\al_3^3\cdots = \al_3^4 \al_m$. Since 1 and 2 do not share an
		$m$-gon, and 1 is of type $\al_3^4 \al_m$, we see that 1 and 2
		share two triangles.
		We deduce that 2 is of type $\al_3^3\cdots = \al_3^4 \al_m$
		because 2 shares a triangle with 3 that is not shared with 1. But
		2 shares exactly one triangle with 3 and hence it shares a
		triangle and an $m$-gon. This shows that the vertex 3 is of type
		$\al_3 \al_m\cdots = \al_3^4 \al_m$; the proof that the vertex 4
		is of the same type is symmetrical, hence the result.\qedhere

		\begin{figure}[h!]
				\centering
				\begin{tikzpicture}
		\tikzmath{
				\r=1.75;
		}
		
		\foreach \a in {0,...,4}{
				\draw (0,0) -- (\a*45:\r);
		}

		\foreach \a in {0,...,3}{
				\draw (\a*45:\r) -- (\a*45+45:\r);
		}

		\draw (\r,0) -- (\r+1,-0.5);
		\draw (-\r,0) -- (-\r-1,-0.5);
		\node[inner sep=1,draw,fill=white,shape=circle] at (0,0) {\small
		$0$};
		\foreach \a in {1,...,5}{
				\node[inner sep=1,draw,fill=white,shape=circle] at (\a*45-45:\r)
				{\small $\a$};
		}
		\end{tikzpicture}
				\caption{Deduction of $\al_3^4\al_m$}
				\label{fig:P38}
		\end{figure}

\end{proof}

\begin{prop}
		\label{Prop-al3almaln}
		If a tiling has a vertex of type $\al_3 \al_m \al_n$ with $n > m \ge
		4$, then $m=4$, $n=10$ and the tiling is the Johnson tiling $J_5$.
\end{prop}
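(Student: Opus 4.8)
The plan is to pin down the integers $m,n$ by trigonometry, recover a cupola around the largest face, and then hand off to Proposition~\ref{Prop-al32al4al5}. The angle sum gives $\alpha_n = 2\pi - \alpha_3 - \alpha_m$. Feeding the pairs $(\alpha_3,\alpha_m)$ and $(\alpha_3,\alpha_n)$ into \eqref{Eq-alm-aln-linear} and using $\cos\tfrac{2}{3}\pi = -\tfrac12$ collapses each relation to $\cos\tfrac{2}{m}\pi = \frac{2\cos\alpha_3 - 1 - \cos\alpha_m}{2(1-\cos\alpha_3)}$ and its analogue for $n$; combined with $\cos\alpha_n = \cos(\alpha_3+\alpha_m)$, the whole configuration depends on the single unknown $\cos\alpha_3$ (equivalently the common edge length), so that for each candidate $m$ the admissible $n$ is cut out by one equation. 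First I would show $m=4$ (whence $\cos\alpha_4 = 2\cos\alpha_3 - 1$); ruling out $m\ge 5$ is the step that needs the most care, and I would carry it out by combining these identities with \eqref{Ineq-alm-lb} and the ordering $\alpha_3<\alpha_m<\alpha_n$, discarding spurious roots exactly as is done after \eqref{PolySys-3445}.

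A convenient by-product of the $(\alpha_3,\alpha_m)$ identity is the equivalence $\alpha_m < 2\alpha_3 \iff \alpha_3 < \tfrac{2}{m}\pi$, which I would establish first. Granting it, I organise the faces around the largest face $N$ (the $n$-gon). By Lemma~\ref{Lem-small-remainder} every vertex meeting both $N$ and an $m$-gon equals $\alpha_3\alpha_m\alpha_n$; and since $R(\alpha_3\alpha_n)=\alpha_m<2\alpha_3$ is too small to be subdivided, every vertex meeting both $N$ and a triangle is $\alpha_3\alpha_m\alpha_n$ as well. Walking around $N$ therefore shows that its incident faces alternate between triangles and $m$-gons, so by Lemma~\ref{Lem-even-m} the integer $n$ is even, and $N$ is ringed by $\tfrac n2$ triangles and $\tfrac n2$ $m$-gons in the pattern of a cupola.

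With $m=4$ this is precisely the side of a cupola over the base $N$: the free edges of consecutive triangles and squares match up and close into a top $\tfrac n2$-gon, each of whose vertices meets that top face together with two squares and a triangle, hence is of type $\alpha_3\alpha_4^2\alpha_{n/2}$. By Lemma~\ref{Lem-all-vertices} a vertex $\alpha_3\alpha_4^2\alpha_p$ is admissible only for $p\le 5$, so $\tfrac n2\le 5$. The case $\tfrac n2 = 5$ gives $n=10$, the top is a regular pentagon, and the decagon $N$ has all vertices $\alpha_3\alpha_4\alpha_{10}$; this is exactly the situation treated in the proof of Proposition~\ref{Prop-al32al4al5}, where the complement of such a decagon is shown to be a pentagonal cupola, so that the tiling is $J_5$. (The shorter cupolae $\tfrac n2 = 3,4$ have apex vertices $\alpha_3^2\alpha_4^2$ and $\alpha_3\alpha_4^3$ and correspond to $J_3$ and $J_4$, already obtained through Propositions~\ref{Prop-2al32alm} and~\ref{Prop-al33al4}; the decagonal case is the one contributing the new tiling $J_5$.)

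The main obstacle is the opening algebraic step — proving $m=4$ and isolating $n=10$ — since the remaining combinatorics (parity, cupola closure, and the appeal to Proposition~\ref{Prop-al32al4al5}) are then routine. The two delicate points I would watch are the inequality $\alpha_m<2\alpha_3$, without which the type $\alpha_3\alpha_m\alpha_n$ cannot be propagated past the triangle-side vertices of $N$, and the elimination of the extraneous solutions of the trigonometric system, which I would handle through the strict ordering of the angles together with \eqref{Ineq-alm-lb}.
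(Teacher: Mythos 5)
You have correctly identified where the difficulty lies, but the proposal fails at precisely the step it defers. The claim that the trigonometry will force $m=4$ and then isolate $n=10$ is false: the system consisting of the angle sum $\al_3+\al_m+\al_n=2\pi$ together with the equal-edge-length relations \eqref{Eq-alm-aln-linear} admits valid solutions not only for $(m,n)=(4,10)$ but also for $(4,6)$, $(4,8)$ and $(5,10)$, realised respectively by the tilings $J_3$ (vertices $\al_3\al_4\al_6$ with $\al_6=\pi$), $J_4$ (vertices $\al_3\al_4\al_8$ with $\al_8=2\al_4$) and $J_6$ (vertices $\al_3\al_5\al_{10}$ with $\al_{10}=\pi$); see Table~\ref{tab:OIaCeCaD}. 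Consequently no discarding of \quotes{spurious roots} via \eqref{Ineq-alm-lb} and the ordering $\al_3<\al_m<\al_n$ can leave $(4,10)$ alone --- note moreover that Lemma~\ref{Lem-ang-asc-seq} presumes strict convexity, while the $n$-gons in the realisable cases above are not strictly convex, so even the ordering you invoke needs care. In particular $m=5$ is genuinely possible, and your sketch never confronts it: $J_6$ is absent even from your parenthetical. That parenthetical, conceding that $\tfrac n2=3,4$ yield $J_3$ and $J_4$ \quotes{already obtained through earlier propositions}, is not a legitimate move inside a proof of this statement: a tiling satisfying the hypothesis may simply \emph{be} $J_3$, $J_4$ or $J_6$, so these cases must be engaged explicitly rather than waved off, and since they satisfy all of your algebraic constraints, the trigonometric pincer you describe cannot exclude them.

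The paper's proof takes an entirely different, computation-free route: it first notes that no vertex-homogeneous tiling (Proposition~\ref{Prop-vertex-homog}) has a vertex of type $\al_3\al_m\al_n$ with $n>m\ge4$; it then shows that some vertex incident with a triangle must have a type other than $\al_3\al_k\al_\ell$, and disposes of each such type by cross-referencing Propositions~\ref{Prop-al3=pi/2-2pi/5}--\ref{Prop-2al3almaln}, the surviving case $\al_3\al_4^2\al_5$ handing the conclusion to Proposition~\ref{Prop-al32al4al5}. Your cupola reconstruction, by contrast, rests on two unproved steps. First, the inequality $\al_m<2\al_3$, needed to propagate the vertex type past the triangle-side vertices of the $n$-gon, is supported only by the asserted (not derived) equivalence $\al_m<2\al_3 \iff \al_3<\tfrac{2}{m}\pi$. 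Second, and more seriously, after ringing the $n$-gon with $\tfrac n2$ triangles and $\tfrac n2$ $m$-gons you treat the inner boundary as the boundary of a single regular $\tfrac n2$-gon tile; that closure is exactly the nontrivial content of the walk argument around Figure~\ref{fig:app-P37} and cannot be assumed, since a priori the enclosed region may contain many tiles, so applying Lemma~\ref{Lem-all-vertices} to the putative vertex type $\al_3\al_4^2\al_{n/2}$ to conclude $\tfrac n2\le5$ is premature. As written, the proposal is a plan whose pivotal algebraic step is both unexecuted and unachievable as stated, and whose combinatorial steps assume the closure they would need to prove.
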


\begin{proof}
		Observe that none of the
		tilings in Proposition~\ref{Prop-vertex-homog} has vertex type
		$\al_3 \al_m \al_n$ for $n > m \ge 4$. So the tiling cannot be
		vertex-homogenous.

		Suppose that every vertex incident with a triangle is of degree 3
		with three distinct angles; that is, every such vertex is of type
		$\al_3 \al_k \al_\ell$ for some $k,\ell$ such that $\ell > k \ge
		4$. It follows that $\al_3 \al_m \cdots = \al_3 \al_n \cdots =
		\al_3 \al_m \al_n$. Also, $\al_m \al_n \cdots = \al_3 \al_m \al_n$
		by Lemma~\ref{Lem-small-remainder}. But by
		Lemma~\ref{Lem-vertex-transitive}, the tiling is
		vertex-homogenous, which is a contradiction.

		We have deduced that there is at least one vertex incident with a
		triangle whose type is not $\al_3 \al_k \al_\ell$ with $\ell > k
		\ge 4$. This vertex is either of type $\al_3^3$, $\al_3^2 \al_p$
		or $\al_3 \al_p^2$ for some $p \ge 4$, or it is of degree greater
		than 3. For each of these types, the existence of such a vertex
		contradicts the existence of a vertex of type $\al_3 \al_m \al_n$
		(Propositions~\ref{Prop-al3=pi/2-2pi/5}--\ref{Prop-2al3almaln}),
		with the exception of the type $\al_3 \al_4^2 \al_5$, in which
		case $m=4$, $n=10$ and the tiling is $J_5$ as required.
\end{proof}

\begin{prop} 
		\label{Prop-2al3almaln} 
		There is no tiling with a vertex of type
		$\alpha_3^2\alpha_m\alpha_n$ for
		$n>m\ge4$.
\end{prop}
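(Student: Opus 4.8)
The plan is to suppose, for contradiction, that a tiling $T$ has a vertex $v$ of type $\alpha_3^2\alpha_m\alpha_n$ with $n>m\ge 4$, and to force $T$ to be weakly vertex-homogenous. First I would pin down the numerics: the angle sum $2\alpha_3+\alpha_m+\alpha_n=2\pi$ combined with \eqref{Ineq-alm-lb} yields $\tfrac13\pi<\alpha_3<\tfrac12\pi$ and $\tfrac12\pi<\alpha_m<\alpha_n<\tfrac56\pi$, so that every incident tile is strictly convex and, exactly as in the treatment of this type within the proof of Theorem~\ref{thm:weakly}, $m\in\{4,5\}$. If every vertex of $T$ were of type $\alpha_3^2\alpha_m\alpha_n$ then $T$ would be weakly vertex-homogenous; but this type does not occur in Theorem~\ref{thm:weakly} (equivalently, the colouring argument used in its proof forces $3\mid m$, contradicting $m\in\{4,5\}$). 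Hence I may assume there is a vertex of a different type, and I may take it to be a neighbour $w$ of a vertex $v'$ of type $\alpha_3^2\alpha_m\alpha_n$.

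The heart of the proof is then a completion analysis of $w$ according to the tile-pair it shares with $v'$ across their common edge. Depending on which of the two arrangements $v'$ realises, $w$ is of partial type $\alpha_m\alpha_n\cdots$, $\alpha_3^2\cdots$, $\alpha_3\alpha_m\cdots$ or $\alpha_3\alpha_n\cdots$, and I would complete each using the remainder function together with the identity $\alpha_m+\alpha_n=2\pi-2\alpha_3$. Since $R(\alpha_m\alpha_n)=2\alpha_3$, a vertex $\alpha_m\alpha_n\cdots$ is either $\alpha_3^2\alpha_m\alpha_n$ or $\alpha_m\alpha_n\alpha_p$ with $\alpha_p=2\alpha_3$; in the latter case a pyramid subdivision of the $p$-gon (Lemma~\ref{lem:subdiv}) produces a vertex $\alpha_3^p$, forcing $\alpha_3\in\{\tfrac12\pi,\tfrac25\pi\}$ and hence Proposition~\ref{Prop-al3=pi/2-2pi/5}. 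A vertex $\alpha_3^2\cdots$ completes to $\alpha_3^2\alpha_m\alpha_n$, or to a type governed by Proposition~\ref{Prop-2al3alm} ($\alpha_3^2\alpha_p$), Proposition~\ref{Prop-3al3alm} ($\alpha_3^3\alpha_q$), Proposition~\ref{Prop-2al32alm} ($\alpha_3^2\alpha_a^2$) or Proposition~\ref{Prop-4al3al4-4al3al5} ($\alpha_3^4\alpha_4,\alpha_3^4\alpha_5$); candidates such as $\alpha_3^3\alpha_m$ and $\alpha_3^2\alpha_m^2$ are ruled out at once, since their angle sums force $\alpha_n\le\alpha_m$. Finally, a vertex $\alpha_3\alpha_m\cdots$ or $\alpha_3\alpha_n\cdots$ that happens to carry all three of $\alpha_3,\alpha_m,\alpha_n$ is exactly $\alpha_3^2\alpha_m\alpha_n$ by Lemma~\ref{Lem-small-remainder}, while the remaining admissible completions are $\alpha_3\alpha_4^3$ (Proposition~\ref{Prop-al33al4}), $\alpha_3\alpha_4^2\alpha_5$ (Proposition~\ref{Prop-al32al4al5}) and $\alpha_3^4\alpha_m$ (Proposition~\ref{Prop-4al3al4-4al3al5}). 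In every one of these cases the cited proposition determines $T$ completely, and I would finish by observing that none of the resulting tilings (the octahedron, the snub cube and snub dodecahedron, $eC$, $eD$, and the listed Johnson tilings) carries a vertex of type $\alpha_3^2\alpha_m\alpha_n$ with $n>m\ge 4$.

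I expect the main obstacle to be the one completion not covered above: a degree-$3$ escape of the form $\alpha_3\alpha_m\alpha_\ell$ (with $\alpha_\ell=\alpha_3+\alpha_n$) or $\alpha_3\alpha_n\alpha_\ell$ (with $\alpha_\ell=\alpha_3+\alpha_m$). Its hypotheses coincide with those of Proposition~\ref{Prop-al3almaln}, which however relies on the present proposition; to avoid a circular dependency I must eliminate this sub-case independently. The plan is to do so by trigonometric elimination: substituting the forced value of $\alpha_\ell$, together with the pairwise instances of \eqref{Eq-alm-aln-linear} relating $\alpha_3,\alpha_m,\alpha_n$ and $\alpha_\ell$, should produce a polynomial constraint on $\ell$ admitting no admissible integer solution for $m\in\{4,5\}$ and $n>m$. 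A closely related subtlety is a degree-$4$ completion $\alpha_3^2\alpha_a\alpha_b$ with $\{a,b\}\ne\{m,n\}$, which is itself of the forbidden shape; here I would use that all such vertices force $\alpha_a+\alpha_b=2\pi-2\alpha_3$ and argue, via \eqref{Eq-alm-aln-linear} and the strict monotonicity of Lemma~\ref{Lem-ang-asc-seq}, that the unordered pair of polygon sizes is unique, so that the forcing stays within a single pair $\{m,n\}$. With these two residual cases closed, every neighbour of a vertex of type $\alpha_3^2\alpha_m\alpha_n$ is again of this type, so $T$ is weakly vertex-homogenous and the first paragraph supplies the contradiction.
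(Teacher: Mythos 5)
Your proposal follows essentially the same skeleton as the paper's proof: pin $m\in\{4,5\}$ from the angle sum and \eqref{Ineq-alm-lb}; complete every neighbour's partial type ($\al_3^2\cdots$, $\al_3\al_m\cdots$, $\al_3\al_n\cdots$, $\al_m\al_n\cdots$) and dispose of each completion by checking that the tilings classified in Propositions~\ref{Prop-al3=pi/2-2pi/5}--\ref{Prop-4al3al4-4al3al5} contain no vertex of type $\al_3^2\al_m\al_n$ (including the $\al_m\al_n\al_p$ case via pyramid subdivision, exactly as in the paper); conclude that every vertex is of type $\al_3^2\al_p\al_q$ with a unique pair $\{p,q\}=\{m,n\}$; and finish with the parity obstruction. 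Your endgame routes through Theorem~\ref{thm:weakly} rather than repeating the black/white colouring argument, but that is the same argument — the proof of Theorem~\ref{thm:weakly} contains verbatim the colouring that forces $3\mid m$. The one genuinely different element is your treatment of the degree-3 escape $\al_3\al_k\al_\ell$: the paper dispatches it by citing Proposition~\ref{Prop-al3almaln}, and you correctly observe that as written this creates a mutual citation (the proof of Proposition~\ref{Prop-al3almaln} cites the range ending at Proposition~\ref{Prop-2al3almaln}, which in turn cites Proposition~\ref{Prop-al3almaln}); your plan to eliminate this sub-case independently by trigonometric elimination is therefore not just a variant but a repair, and it is feasible since $\al_n<\frac{5}{6}\pi$ bounds $n\le 11$, leaving a finite check of the kind the paper itself performs.

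Two caveats. First, your claim that \eqref{Eq-alm-aln-linear} and the monotonicity of Lemma~\ref{Lem-ang-asc-seq} yield uniqueness of the pair $\{p,q\}$ is overstated as written: monotonicity only gives the straddling order $p<m<n<q$ (hence $p=4$, $m=5$), and closing the case genuinely requires the computation the paper performs — bounding $n\in\{6,7\}$ from the angle sum, solving the Gr\"obner system exactly, and obtaining the non-integral value $q=10.56\ldots$ for $n=7$. Second, both of your deferred eliminations (the $\al_3\al_k\al_\ell$ sub-case and the pair-uniqueness) are asserted to succeed rather than executed; they are finite and of the same algebraic type as the paper's computations, so this is execution debt rather than a conceptual flaw, but the proof is not complete until they are carried out.
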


\begin{proof} 
		Suppose there is such a tiling for some fixed $m$ and $n$.
		Rearranging the angle sum for this type, we obtain $\al_m +
		\al_n = 2\pi - 2\al_3 < \frac{4}{3}\pi$ which implies that $\al_m
		< \frac{2}{3}\pi < \al_6$ by \eqref{Ineq-alm-lb}. Hence, $m \in
		\{4,5\}$.

		Observe that none of the
		tilings in Proposition~\ref{Prop-vertex-homog} has vertex type
		$\al_3^2 \al_m \al_n$ for $n > m \ge 4$. So
		if there is such a vertex, the tiling is not
		vertex-homogenous; we can assume that this vertex has a neighbour
		of type $\al_3^2\cdots$, $\al_3\al_m\cdots$, $\al_3\al_n\cdots$ or
		$\al_m\al_n\cdots$. 

		If there is a vertex of type $\al_3^2\al_{p}$ then $p>5$ and by
		Proposition~\ref{Prop-2al3alm} there is no such tiling. There is
		no vertex of type $\al_3^3\al_{p}$ since none of the tilings in
		Proposition~\ref{Prop-3al3alm} has a vertex of type
		$\al_3^2\al_m\al_n$. Thus $\al_3^2\cdots = \al_3^2\al_p\al_q$ for
		some $q > p \ge 4$.
		
		If there is a vertex of type $\al_3\al_k\al_\ell$ where $k \in
		\{m,n\}$ then $\ell>4$; we get a contradiction since none of the
		tilings in Propositions~\ref{Prop-al32alm} and
		\ref{Prop-al3almaln} has a vertex of type
		$\al_3^2\al_m\al_n$. Clearly a vertex of type $\al_3\al_k\cdots$
		of degree 4 is of type $\al_3^2\al_m\al_n$. If such a vertex is of
		degree 5, then it is either $\al_3^4\al_4$ or $\al_3^4\al_5$ and
		we get a contradiction since neither of the tilings in
		Proposition~\ref{Prop-4al3al4-4al3al5} has a vertex of type
		$\al_3^2\al_m\al_n$. Thus $\al_3\al_m\cdots = \al_3^2\al_m\al_n$.

		If there is a vertex of $\al_\ell\al_m\al_n$ for some $\ell$,
		then $\al_\ell = 2\al_3$, and we can perform a pyramid subdivision
		on the $\ell$-gon to obtain a new tiling by
		Lemma~\ref{lem:subdiv}. The new vertex in this tiling is of type
		$\al_3^\ell$ and hence $\al_3 \in \{\frac{2}{3}\pi,
				\frac{1}{4}\pi,
		\frac{2}{5}\pi\}$; we get a contradiction because none of the
		tilings in Proposition~\ref{Prop-al3=pi/2-2pi/5} have a vertex of
		type $\al_3^2\al_m\al_n$. Thus $\al_m\al_n\cdots =
		\al_3^2\al_m\al_n$.

		To summarise the above, every vertex in the tiling is of type
		$\al_3^2\al_p\al_q$ for some $p$ and $q$ such that $q > p \ge 4$.
		We argue that for every vertex, $p=m$ and $q=n$. Suppose the
		contrary; without loss of generality, $p<m<n<q$. We demonstrated
		that $p$ and $m$ are in $\{4,5\}$, so $p=4$ and $m=5$. The angle
		sum of $\al_3^2\al_5\al_n$ and \eqref{Ineq-alm-lb} give
		$(1-\frac{2}{n})\pi < \alpha_n = 2\pi - 2\alpha_3 - \alpha_5 <
		2\pi - 2\cdot \frac{1}{3}\pi - \frac{3}{5}\pi  =
		\frac{11}{15}\pi$, which implies $5 < n < \frac{15}{2}$. Hence
		$n\in\{ 6,7\}$. Applying the Gr\"obner basis technique from
		Proposition~\ref{Prop-al32al4al5}, we obtain exact values for
		$\al_3$, $\al_5$ and $\al_n$: when $n=6$, we obtain two solutions
		to the set of equations for which $\al_5>\al_6$ giving a
		contradiction; when $n=7$, the angle values are given below.
		\begin{align*}
				\al_3 &=(0.3357023573924277...)\pi,\\ \al_5
							&=(0.6056764771694325...)\pi,\\ \al_7
							&=(0.7229188642174295...)\pi.
		\end{align*}
		Substituting this value for $\al_3$ into
		\eqref{Eq-alm-aln-linear}, we obtain 
		\[
				\al_4 = (0.5041121622358487...)\pi.
		\] Using the vertex angle formula for $\al_3^2\al_4\al_n$, we
		futher obtain
		\[
				\al_n = (0.8244831229792959...)\pi.
		\] Finally, we substitute this value and the value for $\al_3$
		into \eqref{Eq-alm-aln-linear} to obtain $n=10.56076889342715...$,
		a contradiction.

		\begin{figure}[h!]
\centering
\begin{subfigure}[t]{0.4\linewidth}
\centering
\begin{tikzpicture}
\tikzmath{
\r=1.25;
\rr=0.08*\r;
}

\foreach \a in {-1,0,1} {
\tikzset{xshift=\a*\r cm}

\draw[]
	(-0.5*\r,0) -- (0.5*\r,0)
;
}

\foreach \a in {0,1} {
\tikzset{xshift=\a*\r cm}
\fill[]
	(-0.5*\r,0) circle (\rr)
;
}

\foreach \aa in {-1,1} {
\tikzset{xscale=\aa}

\draw[]
	(-0.5*\r,0) -- (0,\r)
	(-0.5*\r,0) -- (-1*\r,\r)
	(-1*\r,\r) -- (0,\r)
;
}

\node at (0,1.25*\r) {\footnotesize $?$};

\node at (0,-0.5*\r) {\footnotesize $m$-gon};

\end{tikzpicture}
\caption{}
\label{Subfig-forbidden-black}
\end{subfigure}
\begin{subfigure}[t]{0.4\linewidth}
\centering
\begin{tikzpicture}
\tikzmath{
\r=1.25;
\rr=0.08*\r;
}

\foreach \a in {-1,0,1} {
\tikzset{xshift=\a*\r cm}

\draw[]
	(-0.5*\r,0) -- (0.5*\r,0)
;
}

\foreach \aa in {-1,1} {
\tikzset{xscale=\aa}

\draw[]
	(-0.5*\r,0) -- (0,\r)
	(-0.5*\r,0) -- (-1.5*\r,\r)
;

\node at (-0.65*\r,0.75*\r) {\footnotesize $n$-gon};

}

\foreach \a in {0,1} {
\tikzset{xshift=\a*\r cm}
\draw[fill=white]
	(-0.5*\r,0) circle (\rr)
;
}

\node at (0,1.25*\r) {\footnotesize $?$};

\node at (0,-0.5*\r) {\footnotesize $m$-gon};
\end{tikzpicture}
\caption{}
\label{Subfig-forbidden-white}
\end{subfigure}

\begin{subfigure}[t]{0.325\linewidth} 
\centering
\begin{tikzpicture}
\tikzmath{
\r=1.25;
\rr=0.08*\r;
}

\draw[]
	(-1.25*\r,0) -- (1.25*\r,0)
;

\foreach \aa in {-1,1} {
\tikzset{xscale=\aa}
\draw[fill=white]
	(-0.5*\r,0) circle (\rr)
;
}

\node at (0,0.5*\r) {\footnotesize $n$-gon};

\node at (0,-0.5*\r) {\footnotesize $m$-gon};

\end{tikzpicture} 
\caption{}
\label{Subfig-forbidden-ww}
\end{subfigure} 
\begin{subfigure}[t]{0.325\linewidth} 
\centering
\begin{tikzpicture}
\tikzmath{
\r=1.25;
\rr=0.08*\r;
}

\foreach \a in {-1,0,1} {
\tikzset{xshift=\a*\r cm}

\draw[]
	(-0.5*\r,0) -- (0.5*\r,0)
;
}

\foreach \a in {-1,0,1} {
\tikzset{xshift=\a*\r cm}
\fill[]
	(0,0) circle (\rr)
;
}

\foreach \aa in {-1,1} {
\tikzset{xscale=\aa}
\draw[]
	(0,0) -- (-0.5*\r,\r)
;
\node at (-0.6*\r,0.3*\r) {\footnotesize $n$-gon};
}

\node at (0,0.5*\r) {\footnotesize $?$};

\node at (0,-0.5*\r) {\footnotesize $m$-gon};

\end{tikzpicture}
\caption{}
\label{Subfig-forbidden-bbb}
\end{subfigure}
\begin{subfigure}[t]{0.325\linewidth} 
\centering
\begin{tikzpicture}
\tikzmath{
\r=1.25;
\rr=0.08*\r;
}

\foreach \a in {-1,0,1} {
\tikzset{xshift=\a*\r cm}

\draw[]
	(-0.5*\r,0) -- (0.5*\r,0)
;
}

\foreach \aa in {-1,1} {
\tikzset{xscale=\aa}
\draw[]
	(0,0) -- (-0.5*\r,\r)
	(-\r,0) -- (-0.5*\r,\r)
;
}

\fill[]
	(0,0) circle (\rr)
;

\foreach \aa in {-1,1} {
\tikzset{xscale=\aa}
\draw[fill=white]
	(-\r,0) circle (\rr)
;
}

\node at (0,0.5*\r) {\footnotesize $?$};

\node at (0,-0.5*\r) {\footnotesize $m$-gon};

\end{tikzpicture} 
\caption{}
\label{Subfig-forbidden-wbw}
\end{subfigure}

\caption{Forbidden arrangements}
\label{Fig-forbidden-2al3almaln}
\end{figure}

		Since the tiling is not vertex-homogenous and every vertex is of
		type $\al_3^2\al_m\al_n$, we must have vertices with angle
		arrangement $\al_3\al_3\al_m\al_n$ and with angle arrangement
		$\al_3\al_m\al_3\al_n$. We call these vertices black and white
		respectively. We call an edge black (white) if both its endpoints
		are black (white); such edges are said to be monochromatic. It is
		clear from the angle arrangements that an edge shared by an
		$n$-gon and an $m$-gon is black and that an edge shared by two
		triangle is also black. We can deduce that an edge shared by a
		triangle and an $m$-gon is not monochromatic. If it is black
		(Figure~\ref{Subfig-forbidden-black}), then the vertex of the
		triangle not shared by the $m$-gon is incident with three
		triangles; if it is white (Figure~\ref{Subfig-forbidden-white}),
		then that vertex is incident with two $n$-gons. We claim that two
		consecutive vertices on an $m$-gon cannot be white: since the edge
		between them is monochromatic, the edge must be shared by an
		$n$-gon, but an edge shared by an $m$-gon and an $n$-gon is black
		(Figure~\ref{Subfig-forbidden-ww}). Furthermore, three consecutive
		vertices on an $m$-gon cannot be black: the two edges between them
		must be shared by an $n$-gon because they are monochromatic, so
		the middle vertex is either of degree 2 or incident to two
		$n$-gons (Figure~\ref{Subfig-forbidden-bbb}). Finally, three
		consecutive vertices on an $m$-gon cannot be coloured
		white-black-white: the two edges between them must be shared by
		triangles, so the middle vertex is of degree 2 or it has angle
		arrangement $\al_3\al_n\al_3\al_m$, a contradiction since such
		vertices are white (Figure~\ref{Subfig-forbidden-wbw}). We
		conclude that the sequence of colours around an $m$-gon is
		white-black-black-white-black-black and so on and that $m$ is
		divisible by 3. This contradicts the fact that $m\in \{4,5\}$.
		\qedhere

\end{proof}


\subsection*{Tilings without triangles}
We have completely characterised the tilings with at least one
triangle. For each vertex type in the lists \eqref{List-al4} and
\eqref{List-al5} of vertex types without $\al_3$, we check whether it
has appeared in one of the tilings given in
Propositions~\ref{Prop-al3=pi/2-2pi/5}--\ref{Prop-al3almaln} (see
Tables~\ref{tab:conway}--\ref{tab:OIaCeCaD}). We discover that $\al_4^2\al_8$ appears in $J_{19}$ and
that $\al_4\al_5\al_{10}$ appears in $J_{76},\dots,J_{83}$; every
other type from those lists does not appear in any tiling with at
least one triangle.  
Therefore, the following lemma and the subsequent proposition complete
the proof of the main theorem.

\begin{lem}
		\label{Lem-tri-free}
		If a tiling has no triangle, then it is vertex-homogenous.
\end{lem}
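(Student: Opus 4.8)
The plan is to reduce everything to degree-$3$ vertices and then invoke Lemma~\ref{Lem-vertex-transitive}. First I would show that every vertex has degree exactly $3$. Since there is no triangle, every tile is an $m$-gon with $m\ge 4$, and \eqref{Ineq-alm-lb} gives $\alpha_m>(1-\tfrac{2}{m})\pi\ge\tfrac{1}{2}\pi$ for every incident angle. Four such angles would sum to more than $2\pi$, which is impossible, so no vertex has degree $\ge 4$; as the minimum degree is $3$ by standing assumption, every vertex has degree exactly $3$. (The same conclusion can be read directly off the lists \eqref{List-al4}--\eqref{List-al5}, whose entries are all of degree $3$.)

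With degrees pinned to $3$, the heart of the argument is almost immediate. Fix any vertex $v$, of type $\alpha_l\alpha_m\alpha_n$ with $\alpha_l+\alpha_m+\alpha_n=2\pi$. If $w$ is any vertex whose incident angles include the values $\alpha_l$ and $\alpha_m$, then, $w$ having degree $3$, its third angle-value must be $2\pi-\alpha_l-\alpha_m=\alpha_n$; hence $w$ is again of type $\alpha_l\alpha_m\alpha_n$. This gives $\alpha_l\alpha_m\cdots=\alpha_l\alpha_m\alpha_n$, and the identical angle-sum computation for the remaining two pairs yields $\alpha_l\alpha_n\cdots=\alpha_m\alpha_n\cdots=\alpha_l\alpha_m\alpha_n$. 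Lemma~\ref{Lem-vertex-transitive}, applied with $(\alpha,\beta,\gamma)=(\alpha_l,\alpha_m,\alpha_n)$, then forces every vertex to have the type of $v$, so the tiling is vertex-homogenous.

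The single point deserving care is the degree reduction, and it is exactly what makes the clean argument special to the triangle-free case: once degrees may exceed $3$ a pair of angle-values no longer determines the rest of a vertex, since the remainder can be completed by several small angles. I would therefore state the degree-$3$ claim carefully, record the trivial bookkeeping when two of $\alpha_l,\alpha_m,\alpha_n$ coincide (so that, e.g., $\alpha_l^2\cdots$ is read as ``at least two copies of $\alpha_l$''), and use that adjacent tiles share a common edge length, so that a degree-$3$ vertex has a unique angle arrangement for a given multiset of angle-values and ``same type'' coincides with ``same angle arrangement''. No trigonometric input beyond \eqref{Ineq-alm-lb} is required.
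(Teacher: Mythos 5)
Your proposal is correct and takes essentially the same approach as the paper: reduce every vertex to degree $3$, use the angle sum to show that any two of the values $\alpha_\ell,\alpha_m,\alpha_n$ force the third (so $\alpha_\ell\alpha_m\cdots=\alpha_\ell\alpha_n\cdots=\alpha_m\alpha_n\cdots=\alpha_\ell\alpha_m\alpha_n$), and then apply Lemma~\ref{Lem-vertex-transitive}. The only cosmetic difference is that you rederive the degree bound directly from \eqref{Ineq-alm-lb}, whereas the paper reads it off the lists \eqref{List-al4}--\eqref{List-al5}, which were themselves obtained from that same inequality.
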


\begin{proof}
		By consulting the lists of possible vertex types in
		\eqref{List-al4} and \eqref{List-al5}, we see that every vertex in
		a triangle-free tiling is of degree $3$. Consider a triangle-free
		tiling with a vertex of type $\al_\ell \al_m \al_n$.
		Since $R(\al_\ell\al_m) = \al_n$ and since all vertices are of
		degree 3, we have $\al_\ell\al_m\cdots = \al_m\al_n\al_n$. By a symmetrical
		argument, we deduce that $\al_\ell\al_m\cdots =
		\al_\ell\al_n\cdots = \al_m\al_n\cdots = \al_\ell\al_m\al_n$, and
		the result follows from Lemma~\ref{Lem-vertex-transitive}.
\end{proof}

The following proposition follows from Lemma~\ref{Lem-tri-free} and
Proposition~\ref{Prop-vertex-homog}.

\begin{prop}
		\label{Prop-tri-free}
		If a tiling has a vertex without an incident triangle, then
		the tiling is given below.
		\begin{enumerate}

				\item $\al_4^3$: the cube

				\item $\al_4^2\al_{m>4}$: a prism or the Johnson tiling
						$J_{19}$ ($m=8$)

				\item $\al_4\al_{m>4}^2$: the truncated octahedron $tO$

				\item $\al_4\al_{m>4}\al_{n>m}$: the truncated cuboctahedron
						$bC$ or one of the Johnson tilings
						$J_{76},\dots,J_{83}$ ($m=5$, $n=10$)

				\item $\al_5^2\al_{m>4}$: the dodecahedron

				\item $\al_5\al_{m>5}^2$: the truncated icosahedron $tI$

				\item $\al_5\al_{m>5}\al_{n>6}$: the truncated
						icosidodecahedron $bD$

		\end{enumerate}
\end{prop}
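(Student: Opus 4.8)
The plan is to combine the two regimes already prepared in the paper: tilings that contain no triangle at all, and tilings that do. First I would observe that a vertex $v$ with no incident triangle must, by Lemma~\ref{Lem-all-vertices}, be of one of the types listed in \eqref{List-al4} or \eqref{List-al5}; imposing the stated inequalities on the indices, these are exactly the seven families $\al_4^3$, $\al_4^2\al_m$, $\al_4\al_m^2$, $\al_4\al_m\al_n$, $\al_5^2\al_m$, $\al_5\al_m^2$ and $\al_5\al_m\al_n$ enumerated in the statement. The task therefore reduces to deciding, for each family, which tilings can host such a vertex, and the natural dichotomy is whether the ambient tiling has a triangle.

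In the first regime, suppose the tiling has no triangle. Then Lemma~\ref{Lem-tri-free} makes it vertex-homogenous, and Proposition~\ref{Prop-vertex-homog} identifies it as a Platonic tiling, an Archimedean tiling, $J_{37}$, a prism or an anti-prism. I would then discard those that in fact contain a triangle: the anti-prisms (type $\al_3^3\al_m$), $J_{37}$ (type $\al_3\al_4^3$), and every Platonic or Archimedean tiling whose arrangement involves $\al_3$. What survives is precisely the cube ($\al_4^3$), the dodecahedron ($\al_5^3$), the prisms ($\al_4^2\al_m$), the truncated octahedron $tO$ ($\al_4\al_6^2$), the truncated cuboctahedron $bC$ ($\al_4\al_6\al_8$), the truncated icosahedron $tI$ ($\al_5\al_6^2$) and the truncated icosidodecahedron $bD$ ($\al_5\al_6\al_{10}$). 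Matching these against the seven families supplies the vertex-homogenous entry in each case.

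In the second regime, suppose the tiling contains a triangle but nonetheless has a vertex without an incident triangle. Every tiling with at least one triangle has already been determined in Propositions~\ref{Prop-al3=pi/2-2pi/5}--\ref{Prop-al3almaln}, so it remains only to scan that finite list for tilings possessing a triangle-free vertex. Reading off the vertex data (Tables~\ref{tab:conway}--\ref{tab:OIaCeCaD}), the only such tilings are $J_{19}$, whose triangle-free vertices are of type $\al_4^2\al_8$, and $J_{76},\dots,J_{83}$, whose triangle-free vertices are of type $\al_4\al_5\al_{10}$; this places $J_{19}$ in case~(2) with $m=8$ and $J_{76},\dots,J_{83}$ in case~(4) with $m=5$, $n=10$, while every other already-classified tiling is seen to carry a triangle at each of its vertices.

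The argument is essentially bookkeeping, so the only genuine obstacle is completeness of the two cross-checks: that the list of triangle-free vertex-homogenous tilings extracted from Proposition~\ref{Prop-vertex-homog} is exhaustive, and that no tiling from the triangle case other than $J_{19}$ and $J_{76},\dots,J_{83}$ conceals a triangle-free vertex. Both are routine but rely on the angle-value tables being correct, so I would verify them directly against the explicit data in Appendix~\ref{app:data} rather than re-deriving each vertex figure by hand.
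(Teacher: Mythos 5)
Your proposal is correct and follows essentially the same route as the paper: the triangle-containing case is handled exactly as in the paragraph preceding Lemma~\ref{Lem-tri-free}, where the already-classified tilings of Propositions~\ref{Prop-al3=pi/2-2pi/5}--\ref{Prop-al3almaln} are scanned to find that only $J_{19}$ (type $\al_4^2\al_8$) and $J_{76},\dots,J_{83}$ (type $\al_4\al_5\al_{10}$) possess a triangle-free vertex, while the triangle-free case is settled by Lemma~\ref{Lem-tri-free} together with Proposition~\ref{Prop-vertex-homog}, just as the paper states. Your explicit matching of the surviving vertex-homogenous tilings to the seven families is the same bookkeeping the paper leaves implicit.
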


\appendix

\section*{Appendix}
\setcounter{section}{1}
\renewcommand\thesection{A}

\subsection{Proof of Theorem \ref{thm:weakly}}
\label{app:weakly}

\subsection{Tiling Data}
\label{app:data}

In this section we present the various data associated with the
tilings listed in the main result. For each tiling we give the vertex
types and their multiplicities, as well as the values for the angles
of the polygons. We make use of Conway's notation for the Archimedean
tilings given in Table~\ref{tab:conway}.

\begin{table}
\begin{center}
\def\arraystretch{1.25}
    \begin{tabular}[]{| c | c |} 
    		\hline
    		Conway's notation & Name \\
    		\hline
		\hline 
		$tT$ & truncated tetrahedron \\
		\hline
		$aC = aO = eT$ & cuboctahedron \\
		\hline
		$tC$ & truncated cube \\
		\hline
		$tO = bT$ & truncated octahedron \\
		\hline 
		$eC = eO$ & rhombicuboctahedron \\
		\hline 
		$bC = bO$ & truncated cuboctahedron \\
		\hline 
		$sC = sO$ & snub cube \\
		\hline 
		$aD = aI$ & icosidodecahedron \\
		\hline 
		$tD$ & truncated dodecahedron \\
		\hline
		$tI$ & truncated icosahedron \\
		\hline
		$eD = eI$ & rhombicosidodeca­hedron \\
		\hline
		$bD = bI$ & truncated icosidodecahedron \\
		\hline
		$sD = sI$ & snub dodecahedron \\
		\hline
    \end{tabular}
\end{center}
\caption{Conway's notation for the 13 Archimedean tilings}
\label{tab:conway}
\end{table}

As we have seen, several tilings are obtained from other tilings by
operations that preserve the edge length and therefore the value of
$\al_m$ for each (convex) $m$-gon. The tilings that are not related to
any other tiling in this way are presented in
Table~\ref{tab:standalone}. The
values are given in an exact form with the exception of the tiling
$sD$; the value $\cos \al_3$ for that tiling is defined to be one a
root of the polynomial $64\xi^6+128\xi^5+64\xi^4-24\xi^3-24\xi^2+1$
(approximately, $\xi = 0.471575629621941...$) which was obtained via
the Gr\"obner basis.

\begin{table}
\begin{center}
\bgroup
\def\arraystretch{1.25}
\begin{adjustbox}{width=\columnwidth,center}
    \begin{tabular}[]{| c | c | c | c | c |} 
		\hline
		Tiling & Vertex Types & Angles \& Edge  &  Polygons  \\
		\hline
		\hline 
		$T$ & $\{ 4\alpha_3^3 \}$ & $\alpha_3=\tfrac{2}{3}\pi$, \quad $x=\cos^{-1} \tfrac{-1}{3}$ & $f_3=4$ \\
		\hline
		$C$ & $\{ 8\alpha_4^3 \}$ & $\alpha_4=\tfrac{2}{3}\pi$, \quad  $x=\cos^{-1} \tfrac{1}{3}$ & $f_4=6$ \\
		\hline
		$D$ & $\{ 20\alpha_5^3 \}$ & $\alpha_5=\tfrac{2}{3}\pi$, \quad $x=\cos^{-1}\tfrac{\sqrt{5}}{3}$ & $f_5=12$ \\
		\hline
		$tT$ & $\{ 12\alpha_3\alpha_6^2 \}$ & $\alpha_3=4\cot^{-1}\sqrt{11}$, \quad $\alpha_6=\pi - \tfrac{1}{2}\alpha_3$, \quad $x=\cos^{-1}\tfrac{7}{11}$ & $f_3=4$, $f_6=4$ \\
		\hline
		$tC$ & $\{ 24\alpha_3\alpha_8^2 \}$ & $\alpha_3 = 4\cot^{-1} \textstyle \sqrt{7+4\sqrt{2}}$, \quad $\alpha_8=\pi - \tfrac{1}{2}\alpha_3$, \quad $x=\cos^{-1} \tfrac{1}{17}(3+8\sqrt{2})$ & $f_3=8$, $f_8=6$ \\
		\hline
		$tO$ & $\{ 24\alpha_4\alpha_6^2 \}$ & $\alpha_4=4\cot^{-1}\sqrt{5}$, \quad $\alpha_6=\pi - \tfrac{1}{2}\alpha_4$, \quad $x=\cos^{-1} \tfrac{4}{5}$ & $f_4=6$, $f_6=8$ \\
		\hline
		$tD$ & $\{ 60\alpha_3\alpha_{10}^2 \}$ & $\alpha_3 =4\cot^{-1} \textstyle \sqrt{9+2\sqrt{5}}$, \quad $\alpha_{10}=\pi - \tfrac{1}{2}\alpha_3$, \quad $x=\cos^{-1}\tfrac{1}{61} (24 + 15\sqrt{5})$ & $f_3=20$, $f_{10}=12$ \\
		\hline
		$tI$ & $\{60\alpha_5\alpha_6^2\}$ & $\alpha_5 = 4\tan^{-1} \textstyle \sqrt{\tfrac{1}{109} (17 + 6\sqrt{5})}$, \quad $\alpha_{6}=\pi - \tfrac{1}{2}\alpha_5$, \quad $x=\cos^{-1} \tfrac{1}{109} (80 + 9\sqrt{5})$ & $f_5=12$, $f_6=20$ \\
		\hline
		\hline
		$sC$ & $\{ 24\alpha_3^4\alpha_4 \}$ & \parbox[c]{12.5cm}{$\alpha_3 = 2\cot^{-1} \textstyle\sqrt{\tfrac{19}{21} + \tfrac{1}{21} \textstyle\sqrt[3]{4528 - 336\textstyle\sqrt{33}} + \tfrac{2}{21} \sqrt[3]{ 556 + 42 \sqrt{33} } }$, \\ $\alpha_4= 2\pi - 4\alpha_3$, \quad $x=\cos^{-1} \tfrac{1}{21} (-1 + \sqrt[3]{566 - 42\sqrt{33}} + \sqrt[3]{566 + 42 \sqrt{33}})$} & $f_3=32$, $f_4=6$ \\
		\hline
		$sD$ & $\{ 60\alpha_3^4\alpha_5 \}$ & $\alpha_3 = \cos^{-1} \xi$, \quad $\alpha_5=2\pi - 4\alpha_3$, \quad $x=\cos^{-1} \tfrac{\xi}{1-\xi}$ & $f_3=80$, $f_5=12$ \\
		\hline		
		\hline
		$aC$ & $\{ 12(\alpha_3\alpha_4)^2 \}$ & $\alpha_3 = \cos^{-1} \tfrac{1}{3}$, \quad $\alpha_4=\pi-\alpha_3$, \quad $x=\tfrac{1}{3}\pi$ & $f_3=8$, $f_4=6$ \\
		\hline
		$aD$ & $\{ 30(\alpha_3\alpha_5)^2 \}$ & $\alpha_3 = \cos^{-1} \tfrac{1}{\sqrt{5}}$, \quad $\alpha_5=\pi - \alpha_3$, \quad $x=\tfrac{1}{5}\pi$ & $f_3 = 20$, $f_5=12$ \\
		\hline
		\hline
		$bC$ & $\{ 48\alpha_4\alpha_6\alpha_8 \}$ & \parbox[c]{9cm}{$\alpha_4=\cos^{-1}\tfrac{1}{12} (\sqrt{2}-2)$, \quad $\alpha_6=\cos^{-1}\tfrac{1}{8} (\sqrt{2}-6)$, \\ $\alpha_8=\cos^{-1} \tfrac{-1}{12} (6\sqrt{2} + 1)$, \quad $x=\cos^{-1} \tfrac{1}{97} (71+12\sqrt{2})$} & $f_4=12$, $f_6=8$, $f_8=6$ \\
		\hline
		$bD$ & $\{ 120\alpha_4\alpha_6\alpha_{10} \}$ & \parbox[c]{9.5cm}{$\alpha_4=\cos^{-1} \tfrac{1}{30}(2\sqrt{5}-5)$, \quad $\alpha_6=\cos^{-1} \tfrac{1}{20}(2\sqrt{5}-15)$, \\ $\alpha_{10}=\cos^{-1} \tfrac{-1}{24} (9 + 5\sqrt{5})$, \quad $x=\cos^{-1}\tfrac{1}{241} (179 + 24\sqrt{5})$} & $f_4=30$, $f_6=20$, $f_{10}=12$ \\
		\hline
	\end{tabular}
\end{adjustbox}
\egroup
\end{center}
\caption{Standalone tilings}
\label{tab:standalone}
\end{table}

In Tables~\ref{tab:eD} and~\ref{tab:OIaCeCaD} we give the data
associated with the tilings that can be grouped together according to
their edge length. Table~\ref{tab:eD} includes the tilings related to
$eD$; the remaining tilings appear in Table~\ref{tab:OIaCeCaD}.

\begin{table}[h!]
\begin{center}
\bgroup
\def\arraystretch{2}
\begin{adjustbox}{width=\columnwidth,center}
    \begin{tabular}[]{| c | c | c | c | c |} 
		\hline
		Tilings & Vertex Types & Angles \& Edge  &  Polygons  \\
		\hline
		\hline
		$J_5$ & $\{ 5\alpha_3\alpha_4\alpha_5\alpha_4, 10\alpha_3\alpha_4\alpha_{10} \}$ & \parbox[c]{5cm}{\begin{align*} &\alpha_3 =\cos^{-1}\tfrac{1}{20} (5 + 2\sqrt{5}), \\ &\alpha_4 = \cos^{-1} \tfrac{1}{10} (2\sqrt{5} - 5), \\ &\alpha_5 = \cos^{-1}  \tfrac{1}{40} (5 - 9\sqrt{5}),\\  &\alpha_{10}=2\pi - (\alpha_3 + \alpha_4), \\ &x=\cos^{-1}\tfrac{1}{41}(19 + 8\sqrt{5}) \end{align*}} & $f_3=5$, $f_4=5$, $f_5=1$, $f_{10}=1$  \\
		\hline
		$eD$ & $\{ 60\alpha_3\alpha_4\alpha_5\alpha_4 \}$ & \multirow{13}{*}{\parbox[c]{5cm}{\begin{align*} &\alpha_3 =\cos^{-1}\tfrac{1}{20} (5 + 2\sqrt{5}), \\ &\alpha_4 = \cos^{-1} \tfrac{1}{10} (2\sqrt{5} - 5), \\ &\alpha_5 = \cos^{-1}  \tfrac{1}{40} (5 - 9\sqrt{5}),\\  &\alpha_{10}=2\pi - (\alpha_4 + \alpha_5), \\ &x=\cos^{-1}\tfrac{1}{41}(19 + 8\sqrt{5}) \end{align*}}} & $f_3=20$, $f_4=30$, $f_5=12$ \\
		\cline{1-2}\cline{4-4}
		$J_{72}$ & $\{ 50\alpha_3\alpha_4\alpha_5\alpha_4, 10\alpha_3\alpha_4^2\alpha_5 \}$ & & $f_3=20$, $f_4=30$, $f_5=12$ \\
		\cline{1-2}\cline{4-4}
		$J_{73}$ & $\{ 40\alpha_3\alpha_4\alpha_5\alpha_4, 20\alpha_3\alpha_4^2\alpha_5 \}$ & & $f_3=20$, $f_4=30$, $f_5=12$ \\
		\cline{1-2}\cline{4-4}
		$J_{74}$ & $\{ 40\alpha_3\alpha_4\alpha_5\alpha_4, 20\alpha_3\alpha_4^2\alpha_5 \}$ & & $f_3=20$, $f_4=30$, $f_5=12$ \\
		\cline{1-2}\cline{4-4}
		$J_{75}$ & $\{ 30\alpha_3\alpha_4\alpha_5\alpha_4, 30\alpha_3\alpha_4^2\alpha_5 \}$ & & $f_3=20$, $f_4=30$, $f_5=12$ \\
		\cline{1-2}\cline{4-4}
		$J_{76}$ & $\{ 45\alpha_3\alpha_4\alpha_5\alpha_4, 10\alpha_4\alpha_5\alpha_{10} \}$ & & $f_3=15$, $f_4=25$, $f_5=11$, $f_{10}=1$  \\
		\cline{1-2}\cline{4-4}
		$J_{77}$ & $\{ 35 \alpha_3\alpha_4\alpha_5\alpha_4, 10\alpha_3\alpha_4^2\alpha_5, 10\alpha_4\alpha_5\alpha_{10} \}$ & & $f_3=15$, $f_4=25$, $f_5=11$, $f_{10}=1$ \\
		\cline{1-2}\cline{4-4}
		$J_{78}$ & $\{ 35\alpha_3\alpha_4\alpha_5\alpha_4, 10\alpha_3\alpha_4^2\alpha_5, 10\alpha_4\alpha_5\alpha_{10} \}$ & & $f_3=15$, $f_4=25$, $f_5=11$, $f_{10}=1$ \\
		\cline{1-2}\cline{4-4}
		$J_{79}$ & $\{ 25\alpha_3\alpha_4\alpha_5\alpha_4, 20 \alpha_3\alpha_4^2\alpha_5, 10\alpha_4\alpha_5\alpha_{10} \}$ & & $f_3=15$, $f_4=25$, $f_5=11$, $f_{10}=1$ \\
		\cline{1-2}\cline{4-4}
		$J_{80}$ & $\{ 30 \alpha_3\alpha_4\alpha_5\alpha_4, 20\alpha_4\alpha_5\alpha_{10} \}$ & & $f_3=10$, $f_4=20$, $f_5=10$, $f_{10}=2$ \\
		\cline{1-2}\cline{4-4}
		$J_{81}$ & $\{ 30\alpha_3\alpha_4\alpha_5\alpha_4, 20 \alpha_4\alpha_5\alpha_{10} \}$ & & $f_3=10$, $f_4=20$, $f_5=10$, $f_{10}=2$ \\
		\cline{1-2}\cline{4-4}
		$J_{82}$ & $\{ 20 \alpha_3\alpha_4\alpha_5\alpha_4, 10\alpha_3\alpha_4^2\alpha_5, 20\alpha_4\alpha_5\alpha_{10} \}$ & & $f_3=10$, $f_4=20$, $f_5=10$, $f_{10}=2$ \\
		\cline{1-2}\cline{4-4}
		$J_{83}$ & $\{ 15\alpha_3\alpha_4\alpha_5\alpha_4, 30\alpha_4\alpha_5\alpha_{10}\}$ & & $f_3=5$, $f_4=15$, $f_5=9$, $f_{10}=3$ \\
		\hline
	\end{tabular}
\end{adjustbox}
\egroup
\end{center}
\caption{Tilings related to $eD$}
\label{tab:eD}
\end{table}

\begin{table}
\begin{center}
\bgroup
\def\arraystretch{1.25}
\begin{adjustbox}{width=\columnwidth,center}
    \begin{tabular}[]{| c | c | c | c | c |} 
		\hline
		Tiling & Vertex Types & Angles \& Edge  &  Polygons  \\
		\hline
		\hline 
		$O$ & $\{ 6\alpha_3^4 \}$ & \multirow{2}{*}{$\alpha_3 = \tfrac{1}{2}\pi$, \quad $\alpha_4=\pi$, \quad $x=\tfrac{1}{2}\pi$} & $f_3=8$ \\
		\cline{1-2}\cline{4-4}
		$J_1$ & $\{ 4\alpha_3^2\alpha_m, 1\alpha_3^4 \}$ & & $f_3=4$, $f_4=1$ \\
		\hline
		\hline
		$J_2$ & $\{ 5\alpha_3^2\alpha_m, 1\alpha_3^5 \}$ & $\alpha_3 = \tfrac{2}{5}\pi$, \quad $\alpha_5=\tfrac{6}{5}\pi$, \quad $x=\cos^{-1}\tfrac{1}{\sqrt{5}}$ & $f_3=5$, $f_5=1$ \\
		\hline
		$I$ & $\{ 12\alpha_3^5 \}$ & \multirow{4}{*}{\parbox[c]{5cm}{\begin{align*}  \alpha_3 = \tfrac{2}{5}\pi, \quad \alpha_5=\tfrac{4}{5}\pi,  \quad x=\cos^{-1}\tfrac{1}{\sqrt{5}} \end{align*}}}  & $f_3=20$ \\
		\cline{1-2}\cline{4-4}
		$J_{11}$ & $\{ 5\alpha_3^3\alpha_5, 6\alpha_3^5 \}$ & & $f_3=15$, $f_5=1$ \\
		\cline{1-2}\cline{4-4}
		$J_{62}$ & $\{ 2\alpha_3\alpha_5^2, 6\alpha_3^3\alpha_5, 2\alpha_3^5 \}$ & & $f_3=10$, $f_5=2$  \\
		\cline{1-2}\cline{4-4}
		$J_{63}$ & $\{ 6 \alpha_3\alpha_5^2, 3 \alpha_3^3\alpha_5 \}$ & & $f_3=5$, $f_5=3$ \\
		\hline
		\hline
		$aC$ & $\{ 12(\alpha_3\alpha_4)^2 \}$ & \multirow{3}{*}{\parbox[c]{6cm}{$\alpha_3=\cos^{-1} \tfrac{1}{3}$, \quad $\alpha_4=\pi-\alpha_3$, \\ $\alpha_6=2\pi-(\alpha_3+\alpha_4)$, \quad $x=\tfrac{1}{3}\pi$}} & $f_3=8$, $f_4=6$ \\
		\cline{1-2}\cline{4-4}
		$J_3$ & $\{ 6\alpha_3\alpha_4\alpha_6, 3(\alpha_3\alpha_4)^2 \}$ & & $f_3=4$, $f_4=3$, $f_6=1$ \\
		\cline{1-2}\cline{4-4}
		$J_{27}$ & $\{ 6\alpha_3^2\alpha_4^2, 6(\alpha_3\alpha_4)^2 \}$ & & $f_3=8$, $f_4=6$ \\
		\hline
		\hline
		$eC$ & $\{ 24\alpha_3\alpha_4^3 \}$ & \multirow{4}{*}{\parbox[c]{7.5cm}{$\alpha_3 = 2\pi - 3\alpha_4$, \quad $\alpha_4=2\tan^{-1} \textstyle\sqrt{7-4\sqrt{2}}$, \\ $\alpha_{10} = 2\pi - 2\alpha_4$, \quad $x=\cos^{-1}\tfrac{1}{17} (7 + 4 \sqrt{2})$}}  & $f_3=8$, $f_4=18$ \\
		\cline{1-2}\cline{4-4}
		$J_4$ & $\{ 8\alpha_3\alpha_4\alpha_8, 4\alpha_3\alpha_4^3 \}$ & & $f_3=4$, $f_4=5$, $f_{8}=1$ \\
		\cline{1-2}\cline{4-4}
		$J_{19}$ & $\{ 12\alpha_3\alpha_4^3, 8\alpha_4^2\alpha_8 \}$ & & $f_3=4$, $f_4=13$, $f_{10}=1$   \\
		\cline{1-2}\cline{4-4}
		$J_{37}$ & $\{ 24 \alpha_3\alpha_4^3 \}$ & & $f_3=8$, $f_4=18$  \\
		\hline
		\hline
		$aD$ & $\{ 30(\alpha_3\alpha_5)^2 \}$ & \multirow{3}{*}{\parbox[c]{5.5cm}{$\alpha_3 =  \cos^{-1} \tfrac{1}{\sqrt{5}}$, \quad $\alpha_5 = \pi - \alpha_3$, \\ $\alpha_{10} =  \pi$, \quad $x=\tfrac{1}{5}\pi$}} & $f_3 = 20$, $f_5=12$ \\
		\cline{1-2}\cline{4-4}
		$J_6$ & $\{ 10(\alpha_3\alpha_5)^2, 10\alpha_3\alpha_5\alpha_{10} \}$ & & $f_3=10$, $f_5=6$, $f_{10}=1$ \\
		\cline{1-2}\cline{4-4}
		$J_{34}$ & $\{ 20(\alpha_3\alpha_5)^2, 10\alpha_3^2\alpha_5^2 \}$ & & $f_3=20$, $f_5=12$  \\
		\hline
	\end{tabular}
\end{adjustbox}
\egroup
\end{center}
\caption{Grouped tilings}
\label{tab:OIaCeCaD}
\end{table}

\end{document}